\definecolor{gr}{rgb}   {0.,   0.69,   0.23 }
\definecolor{bl}{rgb}   {0.,   0.5,   1. }
\definecolor{mg}{rgb}   {0.85,  0.,    0.85}
\definecolor{yl}{rgb}   {0.8,  0.7,   0.}
\definecolor{or}{rgb}  {0.7,0.2,0.2}
\tikzset{
	ddot/.style={circle,fill=white,draw=black,inner sep=0pt,minimum size=0.8mm},
	>=stealth,
	}
\tikzset{
	ddot2/.style={circle,fill=black,draw=black,inner sep=0pt,minimum size=0.8mm},
	>=stealth,
	}
\newtheorem{theorem}{Theorem} [section]
\newtheorem{lemma}[theorem]{Lemma}
\newtheorem{proposition}[theorem]{Proposition}
\newtheorem{remark}[theorem]{Remark}
\newtheorem{definition}[theorem]{Definition}
\newtheorem{corollary}[theorem]{Corollary}
\DeclareMathOperator*{\supp}{supp}
\newcommand{\1}{\hspace{0.5mm}\text{I}\hspace{0.5mm}}
\newcommand{\II}{\text{I \hspace{-2.8mm} I} }
\newcommand{\III}{\text{I \hspace{-2.9mm} I \hspace{-2.9mm} I}}
\newcommand{\IV}{\text{I \hspace{-2.9mm} V}}
\newcommand{\noi}{\noindent}
\newcommand{\Z}{\mathbb{Z}}
\newcommand{\R}{\mathbb{R}}
\newcommand{\T}{\mathbb{T}}
\newcommand{\deff}{\stackrel{\textup{def}}{=}}
\newcommand{\U}{\Theta}
\newcommand{\B}{\mathbf{B}}
\newcommand{\cc}{\mathbf{q}}
\let\Re=\undefined\DeclareMathOperator*{\Re}{Re}
\let\Im=\undefined\DeclareMathOperator*{\Im}{Im}
\let\P= \undefined
\newcommand{\P}{\mathbf{P}}
\newcommand{\PP}{\mathbb{P}}
\newcommand{\Pc}{\mathcal{P}}
\DeclareMathOperator*{\Law}{Law}
\DeclareMathOperator*{\Var}{Var}
\newcommand{\Q}{\mathbf{Q}}
\newcommand{\E}{\mathbb{E}}
\newcommand{\En}{\mathcal{E}}
\renewcommand{\L}{\mathcal{L}}
\newcommand{\F}{\mathcal{F}}
\newcommand{\al}{\alpha}
\newcommand{\be}{\beta}
\newcommand{\dl}{\delta}
\newcommand{\nb}{\nabla}
\newcommand{\too}{\longrightarrow}
\newcommand{\Dl}{\Delta}
\newcommand{\eps}{\varepsilon}
\newcommand{\g}{\gamma}
\newcommand{\G}{\Gamma}
\newcommand{\ld}{\lambda}
\newcommand{\Ld}{\Lambda}
\newcommand{\s}{\sigma}
\newcommand{\ft}{\widehat}
\newcommand{\wt}{\widetilde}
\newcommand{\cj}{\overline}
\newcommand{\dt}{\partial_t}
\newcommand{\dd}{\partial}
\newcommand{\ta}{\theta}
\renewcommand{\l}{\ell}
\newcommand{\om}{\omega}
\renewcommand{\O}{\Omega}
\newcommand{\les}{\lesssim}
\newcommand{\ges}{\gtrsim}
\newcommand{\jb}[1]
{\langle #1 \rangle}
\newcommand{\ind}{\mathbf 1}
\newcommand{\M}{\mathcal{M}}
\newcommand{\N}{\mathbb{N}}
\newcommand{\X}{\mathcal{X}}
\newcommand{\Y}{\mathcal{Y}}
\newcommand{\NN}{\mathcal{N}}
\newcommand{\ZZ}{\mathcal{Z}}
\renewcommand{\H}{\mathcal{H}}
\newcommand{\D}{\mathcal{D}}
\newcommand{\Id}{\textup{Id}}
\newcommand{\Ph}{\Psi^\textup{heat}_N}
\newcommand{\Pw}{\Psi^\textup{wave}_N}
\newcommand{\sh}{\s^\textup{heat}_N}
\newcommand{\sw}{\s^\textup{wave}_N}
\newcommand{\rh}{\rho_{\textup{heat}}}
\newcommand{\rhN}{\rho_{\textup{heat}, N}}
\newcommand{\rw}{\rho_{\textup{wave}}}
\newcommand{\rwN}{\rho_{\textup{wave}, N}}
\newcommand{\Eh}{E_{\textup{heat}}}
\newcommand{\Ew}{E_{\textup{wave}}}
\newcommand{\bw}{\beta_{\textup{wave}}}
\newcommand{\bh}{\beta_{\textup{heat}}}
\newcommand{\BB}{\mathcal{B}}
\newtheorem*{ackno}{Acknowledgements}
\numberwithin{equation}{section}
\numberwithin{theorem}{section}
\begin{document}
\baselineskip = 15pt

\title[Parabolic and hyperbolic Liouville equations]
{On the parabolic and
hyperbolic Liouville equations}

\author[T.~Oh, T.~Robert, and Y.~Wang]
{Tadahiro Oh, Tristan Robert, and Yuzhao Wang}

\address{
Tadahiro Oh, School of Mathematics\\
The University of Edinburgh\\
and The Maxwell Institute for the Mathematical Sciences\\
James Clerk Maxwell Building\\
The King's Buildings\\
Peter Guthrie Tait Road\\
Edinburgh\\ 
EH9 3FD\\
 United Kingdom}

\email{hiro.oh@ed.ac.uk}

\address{
Tristan Robert, University of Rennes\\
 CNRS, IRMAR - UMR 6625\\ 
 F-35000, Rennes\\
France}

\email{tristan.robert@ens-rennes.fr}

\address{
Yuzhao Wang\\
School of Mathematics, 
University of Birmingham, 
Watson Building, 
Edgbaston, 
Birmingham\\
B15 2TT, 
United Kingdom}

\email{y.wang.14@bham.ac.uk}

\subjclass[2010]{35L71, 35K15, 60H15}

\keywords{stochastic nonlinear heat equation; 
stochastic nonlinear wave equation; 
exponential nonlinearity;
Liouville equation;
Gibbs measure}

\begin{abstract}
We study the two-dimensional
stochastic nonlinear heat equation (SNLH) 
and stochastic damped nonlinear  wave equation (SdNLW)
with an exponential nonlinearity $\ld\be  e^{\be u }$, forced by an additive space-time white noise.
(i)~We first study 
SNLH for general $\ld \in \R$.
By establishing 
higher moment bounds of 
the relevant Gaussian multiplicative chaos
and exploiting the positivity of the Gaussian multiplicative chaos,  
we  prove local well-posedness of SNLH
for the range 
$0 < \be^2 < \frac{8 \pi}{3 + 2 \sqrt 2} \simeq 1.37 \pi$.
Our argument yields stability under the noise
perturbation, 
thus improving 
 Garban's local well-posedness result (2020).
(ii)~In the defocusing case $\ld >0$, 
we exploit a certain sign-definite  structure in the equation
and the positivity of the Gaussian multiplicative chaos.
This allows us to 
 prove global well-posedness of SNLH
for the range: $0 < \be^2 < 4\pi$.
(iii) 
As for SdNLW in the defocusing case $\ld > 0$, 
we go beyond the Da Prato-Debussche argument and introduce a decomposition of the nonlinear component, 
  allowing us to recover
 a sign-definite structure
for a rough part of the unknown,
while the other part enjoys a stronger smoothing property.
As a result, we reduce SdNLW into a system
of equations (as in the paracontrolled approach  for the dynamical $\Phi^4_3$-model)
and prove local well-posedness of SdNLW 
for the  range: $0 < \be^2 < \frac{32 - 16\sqrt3}{5}\pi \simeq 0.86\pi$.
This result 
(translated to the context of random data well-posedness
for the deterministic nonlinear wave equation with an exponential nonlinearity) 
solves  an open question posed by Sun and Tzvetkov (2020).
(iv)~When $\ld > 0$, 
these models formally preserve the associated Gibbs measures
with the exponential nonlinearity.
Under the same assumption on  $\be$ as in (ii) and (iii) above, we prove almost sure global well-posedness
(in particular for SdNLW)
and invariance of the Gibbs measures
in both the parabolic and hyperbolic settings.
(v) In Appendix, 
we present an argument
for proving local well-posedness of SNLH
for general $\ld \in \R$ {\it without} using
the positivity of the Gaussian multiplicative chaos.
This proves local well-posedness of SNLH
for the  range 
$0 < \be^2 < \frac 43 \pi \simeq 1.33 \pi$, 
slightly smaller
than that in (i),
but provides Lipschitz continuity 
of the solution map 
in initial data as well as the noise.

\end{abstract}



\maketitle
%


\tableofcontents

\baselineskip = 14pt

\section{Introduction}\label{SEC:1}

\subsection{Parabolic and hyperbolic Liouville equations}

We study the two-dimensional stochastic heat 
and wave equations with exponential nonlinearities, 
driven by an additive space-time white noise forcing.
More precisely, we consider the following stochastic nonlinear heat equations
(SNLH) on the two-dimensional torus
 $\T^2 = (\R/2\pi\Z)^2$:
\begin{align}
\begin{cases}
\dt u + \frac 12 (1- \Dl)  u   +  \frac 12 \ld \be e^{\be u} = \xi\\
u |_{t = 0} = u_0, 
\end{cases}
\quad ( t, x) \in \R_+ \times \T^2
\label{NH1}
\end{align}

\noi
and stochastic damped nonlinear wave equations (SdNLW) on $\T^2$:
\begin{align}
\begin{cases}
\dt^2 u + \dt u +  (1- \Dl)  u   +  \ld \be e^{\be u} = \sqrt{2} \xi\\
(u, \dt u) |_{t = 0} = (u_0, u_1) , 
\end{cases}
\quad ( t, x) \in \R_+ \times \T^2, 
\label{NW1}
\end{align}

\noi
where $\be, \ld \in \R \setminus \{0\}$
and  $\xi $ denotes a space-time white noise on $\R_+ \times \T^2$.
Our main goal is to establish local and global well-posedness
of these equations for certain ranges of the parameter $\be^2 >0$
and also prove invariance of the associated Gibbs measures in the defocusing case $\ld >0 $.
As we see below, 
due to the exponential  nonlinearity, the difficulty of these equations
depends
 sensitively on the value of $\be^2 > 0$
 as well as the sign of $\ld$.

Our study is  motivated by a number of perspectives. 
From the viewpoint of analysis on singular stochastic PDEs, 
the equations  \eqref{NH1} and \eqref{NW1} on $\T^2$
are very interesting models.
The main sources of the difficulty of these equations
come from the roughness of the space-time white noise forcing
and the non-polynomial nature of the nonlinearity.
The first difficulty can  already be seen at the 
level of the associated linear equations 
whose solutions (namely, stochastic convolutions) are known to be merely  distributions
for the spatial dimension $d \geq 2$.
This requires us to introduce
a proper  {\it renormalization}, adapted to the exponential nonlinearity, 
to give a precise meaning to the equations.
In recent years, we have seen a tremendous development
in the study of singular stochastic PDEs, in particular in the parabolic setting
\cite{DPD, Hairer0, Hairer, GIP, CC, Kupi, MW1, 
HS, CHS, Garban}.
Over the last few years, we have also witnessed a rapid progress
in the theoretical understanding of
 nonlinear wave equations with singular stochastic forcing
 and/or rough random initial data \cite{OTh2, GKO, GKO2, GKOT,  
 Deya1, Deya2, OPTz,ORTz, OOR, OOTz, STz, ORSW, ORSW2, OOTol, Bring2, OOTol2}.
On the two-dimensional torus $\T^2$, 
the stochastic heat and wave equations 
with a monomial  nonlinearity $u^k$ (see~\eqref{NH2} and~\eqref{NW2} below)
have been studied in \cite{DPD, GKO, GKOT}.
In particular, in the seminal work \cite{DPD}, 
Da Prato and Debussche introduced
the so-called Da Prato-Debussche trick\footnote{See
also the work by McKean \cite{McKean} and Bourgain  \cite{BO96}.} 
(see Subsection~\ref{SUBSEC:1.3})
which set a new standard in 
the study of  singular stochastic PDEs.
We point out that 
many of the known results focus on polynomial nonlinearities
and thus it is of great interest to extend
the existing solution theory to the case of non-polynomial nonlinearities.
We will come back and elaborate further
this viewpoint later. 
Furthermore, 
in this paper, we study 
both SNLH \eqref{NH1} and SdNLW \eqref{NW1}, 
which allows us to point out
similarity and difference
 between
the analysis of the stochastic heat and wave equations.
See also \cite{OO} for a comparison
of the stochastic heat and wave equations
on $\T^2$ with a quadratic nonlinearity 
driven by fractional derivatives
of a space-time white noise.

Another important point of view comes
from mathematical physics.
It is well known that many of singular stochastic PDEs
studied in the references mentioned above
correspond to parabolic and hyperbolic\footnote{This is the so-called
``canonical'' stochastic quantization equation.  See \cite{RSS}.} 
stochastic quantization equations
for various models arising in 
Euclidean quantum field theory;
namely, the resulting dynamics preserves
a certain Gibbs measure on an infinite-dimensional state space
of distributions.
See \cite{PW, RSS}.
For example, 
the well-posedness results in \cite{DPD, GKO, GKOT}
show that, for an odd integer $k \geq 3$,  
the $\Phi^{k+1}_2$-measure\footnote{In the hyperbolic case, 
it is coupled with the white noise measure $\mu_0$ on the $\dt u$-component.
See \eqref{Gibbs2}.}
is invariant under the dynamics of 
the parabolic $\Phi^{k+1}_2$-model on $\T^2$:
\begin{align}
\dt u + \tfrac 12 (1- \Dl)  u   +  u^{k} = \xi
\label{NH2}
\end{align}

\noi
and the hyperbolic $\Phi^{k+1}_2$-model on $\T^2$:
\begin{align}
\dt^2 u + \dt u +  (1- \Dl)  u   +  u^k = \sqrt{2} \xi, 
\label{NW2}
\end{align}

\noi
respectively.
From this point of view, 
when $\ld >0$, 
the equations \eqref{NH1} and \eqref{NW1}
correspond to 
the parabolic and hyperbolic stochastic quantization
equations
for the $\exp(\Phi)_2$-measure constructed in \cite{AH74}
(see \eqref{Gibbs1} and \eqref{Gibbs2} below);
namely, they formally preserve the associated
Gibbs measures with the exponential nonlinear potential.
This provides another motivation to study well-posedness
of the equations \eqref{NH1} and \eqref{NW1}.
We also point out that 
 the  $\exp(\Phi)_2$-measure and the resulting Gaussian multiplicative chaos
play an important role
in Liouville quantum gravity 
\cite{Ka85, DS11, DKRV, DRV, DS19,ORTW};
see also a recent paper \cite{Garban}
for a nice exposition and further references therein.
We also mention the works \cite{AY02, ADG}
on the elliptic $\exp(\Phi)_2$-model.

\medskip

Let us now come back to the viewpoint of analysis on singular stochastic PDEs
and discuss the known results for the stochastic heat and wave equations
with non-polynomial nonlinearities.
In the one-dimensional case,
the stochastic convolution (for the heat or wave equation)
has positive regularity
and thus there is no need for renormalization.
In this case, 
the well-posedness theory for \eqref{NH1} and~\eqref{NW1} on
the one-dimensional torus~$\T$ and invariance of the associated Gibbs measures
(when $\ld >0$) follow in  a straightforward manner
\cite{AKR, STz}.
In the two-dimensional case, 
the stochastic convolution is only a distribution, 
making the problem much more delicate.
To illustrate this, 
we first discuss the case of the sine-Gordon models on $\T^2$
studied in \cite{HS, CHS, ORSW, ORSW2}.
In the parabolic setting,
Hairer-Shen \cite{HS}
and Chandra-Hairer-Shen \cite{CHS}
studied the following 
parabolic sine-Gordon model on $\T^2$:
 \begin{align}
\dt u + \tfrac 12 (1- \Dl)  u   +   \sin(\be u) = \xi.
\label{SG2}
\end{align}

\noi
In this series of work, 
they observed that 
the difficulty of the problem depends sensitively on the value of $\be^2 > 0$.
By comparing the regularities of the relevant singular
stochastic terms,\footnote
{Namely, compare the regularities
of the imaginary Gaussian multiplicative chaos
with the stochastic convolution  
for the  $\Phi^3_d$-model
and with the renormalized square power of the stochastic convolution
for  the $\Phi^4_d$-model.}
we can compare this sine-Gordon model \eqref{SG2}
with  the $\Phi^3_d$- and  $\Phi^4_d$-models, 
at least at a heuristic level;
for example, the $\Phi^3_d$-model 
(and  the $\Phi^4_d$-model, respectively) 
formally corresponds 
to \eqref{SG2} with $d = 2+ \frac{\be^2}{2\pi}$
(and  $d = 2+ \frac{\be^2}{4\pi}$, respectively).
In terms of the actual well-posedness theory,  
the Da Prato-Debussche trick \cite{DPD}
along with a standard Wick renormalization yields
local well-posedness of \eqref{SG2}
for  $0 < \be^2 <4\pi$.
For the  sine-Gordon model \eqref{SG2} on $\T^2$, 
 there is an infinite number of thresholds:
$\be^2 = \frac{j}{j+1}8\pi$, $j\in\N$, 
where one encounters new divergent stochastic objects, requiring further renormalizations.
By using the theory of regularity structures~\cite{Hairer}, 
Chandra, 
Hairer,  and Shen proved local well-posedness of \eqref{SG2}
for  the entire subcritical regime $0 < \be^2<8\pi$. 
More recently, the authors with P.\,Sosoe
studied the hyperbolic counterpart
of the sine-Gordon problem
\cite{ORSW, ORSW2}.
Due to a weaker smoothing property
of the wave propagator, however, 
the resulting solution theory
is much less satisfactory than that in the parabolic case;
 in the damped wave case, local well-posedness
 was established
only for  $0 < \be^2 < 2\pi$.
See also Remark \ref{REM:SW}\,(ii) below.
It is this lack of strong smoothing in the wave case
which makes the problems in the hyperbolic setting
much more analytically challenging than those in the parabolic setting,\footnote{We
mention the recent works
\cite{GKO2, OOTol, Bring2, OOTol2}
on the paracontrolled approach 
to study the stochastic wave equations  on 
the three-dimensional torus $\T^3$, 
which are substantially more involved than 
the paracontrolled approach in the parabolic setting \cite{CC, MW1}.
Note that a standard application 
of the Da Prato-Debussche trick suffices 
to handle the quadratic nonlinearity on $\T^3$ 
in the parabolic setting~\cite{EJS}, while it is not the case in the hyperbolic setting
considered in \cite{GKO2}.
}
and one of our main goals in this paper
is to make a progress in the solution
theory of 
the more challenging
SdNLW \eqref{NW1} with the exponential nonlinearity.
See also 
Remark \ref{REM:STz}.

In terms of regularity analysis, 
SNLH \eqref{NH1} and SdNLW \eqref{NW1}
with the exponential nonlinearity
can also be formally compared to the
$\Phi^3_d$- and  $\Phi^4_d$-models
by 
the  heuristic argument mentioned above,
which yields the same correspondence as in the sine-Gordon case.
While 
 the sine-Gordon model enjoys a certain charge cancellation property
\cite{HS, ORSW}, 
there is no such cancellation property in the exponential model
under consideration, 
which provides an additional difficulty in 
studying the regularity property of the relevant stochastic term
(see Proposition \ref{PROP:Ups} below).
See also \cite{Garban} for  a discussion on intermittency of the problem
with an exponential nonlinearity.

In a recent paper \cite{Garban},
motivated from the viewpoint of Liouville quantum gravity,  
Garban studied the stochastic nonlinear heat equation \eqref{NH1} on $\T^2$
with an exponential nonlinearity~$e^{\be u}$:
\begin{align}
\dt u - \tfrac 12 \Dl  u   +    \tfrac{1}{(2\pi)^\frac{3}{2}} e^{\be u} = \xi.
\label{NH2a}
\end{align}

\noi
See also \eqref{NHG} below.
By studying the regularity property of the Gaussian multiplicative
chaos
(see \eqref{re-non} below)
and applying Picard's iteration argument, 
he proved local well-posedness  of \eqref{NH2a}
for $0 < \be^2 < \frac{8 \pi}{7 + 4\sqrt{3}} \simeq 0.57 \pi$.\footnote{Here, 
the numerology is converted to our scaling convention.
See Remark \ref{REM:Garban} below.}
Furthermore, by exploiting the {\it positivity} of the Gaussian multiplicative chaos, 
he also proved local well-posedness
for the range: $\frac{8 \pi}{7 + 4\sqrt{3}}\leq  \be^2 < \frac{8 \pi}{(1+\sqrt 2)^2} \simeq 1.37 \pi$.
This latter result is without stability under the perturbation of the noise
and,  
in particular, 
the solution $u$ was not shown to be a limit of the solutions
with regularized noises.

\medskip

Before we state  our first main  result
on SNLH \eqref{NH1}, 
let us introduce some notations.
Given $N \in \N$, 
we denote by 
$\P_N$  a smooth frequency projector
onto the (spatial) frequencies  $\{n\in\Z^2:|n|\leq N\}$,
associated with  a Fourier multiplier
\begin{align}
\chi_N(n) = \chi\big(N^{-1}n\big)
\label{chi}
\end{align}

\noi
for some fixed non-negative even function $\chi \in C^\infty_c(\R^2)$
with $\supp \chi \subset \{\xi\in\R^2:|\xi|\leq 1\}$ and $\chi\equiv 1$ 
on $\{\xi\in\R^2:|\xi|\leq \tfrac12\}$. 
Let 
$\{ g_n \}_{n\in\Z^2}$ and $\{ h_n \}_{n\in\Z^2}$ be sequences of mutually independent standard complex-valued\footnote
{This means that $g_0,h_0\sim\NN_\R(0,1)$
and  
$\Re g_n, \Im g_n, \Re h_n, \Im h_n \sim \NN_\R(0,\tfrac12)$
for $n \ne 0$.}
 Gaussian random variables 
 on 
a probability space $(\O_0,P)$ 
conditioned so that $g_{-n} = \cj{g_n}$ and $h_{-n} = \cj{h_n}$, $n \in \Z^2$.
Moreover, we assume that 
$\{ g_n \}_{n\in\Z^2}$ and $\{ h_n \}_{n\in\Z^2}$ are independent from the space-time white noise
$\xi$ in the equations~\eqref{NH1} and \eqref{NW1}.
Then, we define random functions $w_0$ and $w_1$ by setting
\begin{equation} 
w_{0}^{\om} = \sum_{n \in \Z^2 } \frac{ g_n(\om)}{\jb{n}} e_n
 \qquad
\text{and}
\qquad
w_{1}^{\om} = \sum_{n\in \Z^2}  h_n(\om) e_n,
\label{IV2}
\end{equation}

\noi
where  $\jb{n} = \sqrt{1 + |n|^2}$ and 
 $e_n(x) = \frac 1{2\pi}e^{in \cdot x}$ as in \eqref{exp}.
Lastly, given $s \in \R$, 
let $\mu_s$ denote the Gaussian measure on $\D'(\T^2)$
with the density:
\begin{align}
d \mu_s = Z_s^{-1} e^{ - \frac {1}{2} \| u \|_{H^s}^2} du. 
\label{Gauss1}
\end{align}

\noi
On $\T^2$, it is well known that $\mu_s$ is a Gaussian probability measure supported
on $W^{s - 1 - \eps, p}(\T^2)$ for any $\eps > 0$ and $1 \leq p \leq \infty$.
Note that 
the laws of $w_0$ and $w_1$ in \eqref{IV2} 
are given by the massive Gaussian free field $\mu_1$ and the white noise measure $\mu_0$, respectively.

We study
the following truncated SNLH:
\begin{align}
\begin{cases}
\dt u_N + \frac 12 (1- \Dl)  u_N   +  \frac 12 \ld \be C_N  e^{\be u_N} = \P_N \xi\\
u_N |_{t = 0} = u_{0, N}
\end{cases}
\label{NH3}
\end{align}

\noi
for a suitable  renormalization constant  $C_N > 0$, 
with  initial data $u_{0, N}$ of the form:
\begin{align}
u_{0, N} = v_0 + \P_N w_0, 
\label{IV1}
\end{align}

\noi
where $v_0$ is a given deterministic function 
and $w_0$ is as in \eqref{IV2}.
We now state our first local well-posedness result
for SNLH \eqref{NH1}.

\begin{theorem}[local well-posedness in the general case]\label{THM:1}
Let $\ld \ne 0$ and 
$0 < \be^2 < \bh^2: = \frac{8 \pi}{3 + 2 \sqrt 2}\simeq 1.37 \pi$.
%
Then, there exists a sequence of  positive constants $\{C_N \}_{N\in \N}$, tending to $0$, 
 \textup{(}see~\eqref{C1} below\textup{)} such that
 the stochastic nonlinear  heat equation \eqref{NH1}
is locally well-posed
in the following sense; 
given  $v_0 \in L^\infty(\T^2)$, 
there exist
 an almost surely positive  stopping time $\tau =  \tau\big(\|v_0\|_{L^\infty}, \be, \ld \big)$ 
 and a non-trivial\footnote{Here, non-triviality means that the limiting process $u$ is not zero 
 or a linear solution. 
 As we see below, the limiting process $u$ admits a decomposition $u = v + z + \Psi$, 
 where $z = P(t)v_0$ denotes the (deterministic) linear solution defined in \eqref{linear}, 
 $\Psi$ denotes the stochastic convolution defined in \eqref{Ph1}, 
 and the residual term $v$ satisfies the nonlinear equation~\eqref{NH5a}.
See, for example,  \cite{HRW,OOR,ORSW},  where in contrast some triviality phenomena appear.
A similar comment applies
in the following statements.} stochastic process $u\in  C([0, \tau]; H^{-\eps}(\T^2))$
for any $\eps > 0$
such that, given any small $T >0 $,  
on the event $\{ \tau \geq T\}$, 
the solution $u_N$ 
to  the truncated SNLH~\eqref{NH3}
with initial data $u_{0, N}$ of the form \eqref{IV1}
converges in probability to $u$ in 
$C([0, T]; H^{-\eps}(\T^2))$.
\end{theorem}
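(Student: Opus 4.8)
The plan is to implement a Da Prato--Debussche style decomposition $u = z + \Psi + v$, where $z(t) = P(t) v_0$ is the deterministic linear flow, $\Psi$ is the stochastic convolution associated with the (frequency-truncated) additive noise, and $v$ is the nonlinear remainder which we expect to lie in a function space of positive regularity. First I would derive the equation satisfied by $v_N$ from \eqref{NH3}: subtracting the equations for $z_N$ and $\Psi_N$, the nonlinearity $\frac12 \ld \be C_N e^{\be u_N}$ becomes $\frac12 \ld \be C_N e^{\be z_N} e^{\be \Psi_N} e^{\be v_N}$. The key analytic input, supplied by Proposition~\ref{PROP:Ups} in the excerpt, is that the renormalized exponential $\Theta_N := C_N e^{\be \Psi_N}$ — the Gaussian multiplicative chaos — converges, for $0 < \be^2 < \bh^2 = \frac{8\pi}{3 + 2\sqrt2}$, in $C([0,T]; W^{-\alpha, p}(\T^2))$ for suitable small $\alpha > 0$ and large finite $p$, with higher moment bounds; crucially $\Theta_N \geq 0$ and $\Theta$ is a nonnegative measure. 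So $v_N$ solves a heat equation with forcing $-\frac12 \ld \be\, e^{\be z_N}\, \Theta_N\, e^{\be v_N}$.

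The second step is the fixed point argument for $v_N$ in $C([0,\tau]; H^s(\T^2))$ (or a Besov variant) for some $s \in (\alpha, 1)$ small enough that the product $e^{\be z} \cdot \Theta \cdot e^{\be v}$ makes sense via the fractional-Leibniz / paraproduct estimates (one needs $s > \alpha$ so that $H^s$ pairs with $W^{-\alpha,p}$, and $s < 1$ so that the parabolic smoothing $e^{t\Delta/2}: W^{-\alpha,p} \to H^s$ gains enough derivatives over a short time with a $\tau$-power gaining factor $\tau^{\theta}$). Here the positivity of $\Theta$ is exploited exactly as in Garban's argument: when $\ld > 0$ the forcing term has a favorable sign, but more importantly for \emph{both} signs of $\ld$ one controls $\| \Theta_N e^{\be v_N} \|$ using $\Theta_N \geq 0$ so that $\|\Theta_N e^{\be v_N}\|$ is bounded in terms of $\Theta_N$ tested against $e^{\be \|v_N\|_{L^\infty}}$-type quantities — i.e. one avoids having to estimate a signed distribution paired against an exponential, which would be ill-behaved. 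Since $H^s(\T^2)$ for $s < 1$ does not embed in $L^\infty$, a little care is needed: one works with $W^{s, p}$ or $\mathcal{C}^s$ type spaces with $s$ positive but small, using that $e^{\be v}$ still makes sense and the composition estimate $\|e^{\be v} - e^{\be w}\| \lesssim (\dots) \|v - w\|$ holds with constants depending on the norms of $v, w$. The contraction is then set up on a ball in this space on a random time interval $[0, \tau]$ with $\tau$ chosen (as a stopping time) depending on $\|v_0\|_{L^\infty}$ and on the size of the stochastic data $\Theta$, $z$, $\Psi$ over $[0,T]$.

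The third step is convergence: since the enhanced data $(\Psi_N, \Theta_N)$ converge in probability in the relevant spaces (with uniform-in-$N$ moment bounds), and the solution map $(\text{data}) \mapsto v$ constructed by the contraction is continuous (indeed Lipschitz on bounded sets of data), the $v_N$ converge in probability to a limit $v$ in $C([0,\tau]; H^s)$, whence $u_N = z_N + \Psi_N + v_N \to u := z + \Psi + v$ in $C([0,\tau]; H^{-\eps})$ (note $\Psi_N \to \Psi$ only at negative regularity, which is why the final statement is in $H^{-\eps}$). On the event $\{\tau \geq T\}$ this gives convergence on $[0,T]$; non-triviality follows since $v$ solves a genuinely nonlinear equation with nonzero forcing. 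The stopping time is almost surely positive because the stochastic norms are almost surely finite.

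\textbf{Main obstacle.} The crux is the product estimate for $e^{\be z_N} \cdot \Theta_N \cdot e^{\be v_N}$ at the threshold regularity: one must show the GMC $\Theta$ lives in $W^{-\alpha, p}$ for $\alpha$ small enough (this is where the precise numerology $\frac{8\pi}{3+2\sqrt2}$ enters — it is exactly the constraint that $\Theta \in \mathcal{C}^{0-}$ with enough room for the paraproduct with $H^{1-}$), and then to close the contraction one needs to handle the composition $v \mapsto e^{\be v}$ in a space that is simultaneously (a) regular enough to multiply $\Theta$, (b) an algebra-like space where $e^{\be v}$ is controlled, and (c) gained into by the parabolic propagator with a positive power of $\tau$. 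Balancing (a) and (c) forces $s$ into the narrow window $(\alpha, 1)$, and keeping track of the exponential (hence super-polynomial) dependence of the composition constants on $\|v\|$ — which is what prevents a naive global argument and ties the time of existence to $\|v_0\|_{L^\infty}$ — is the delicate bookkeeping. The positivity of $\Theta$ is the essential structural ingredient that makes (a)+(b) compatible beyond the range reachable by multilinear estimates alone.
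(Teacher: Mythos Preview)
Your decomposition and the fixed point scheme are essentially correct and match the paper's Proposition~\ref{PROP:v1}: one works in $C([0,T];W^{s,p}(\T^2))$ with $sp>2$ (so that $W^{s,p}\hookrightarrow L^\infty$, which is what makes $e^{\be v}$ tractable), and the positivity of $\U$ enters exactly through the product estimate of Lemma~\ref{LEM:posprod}, i.e.\ $\|\jb{\nabla}^{-\al}(f\U)\|_{L^p}\les \|f\|_{L^\infty}\|\jb{\nabla}^{-\al}\U\|_{L^p}$ for $\U\ge 0$. The numerology $\bh^2=\frac{8\pi}{3+2\sqrt2}$ arises from optimizing the constraints $(p-1)\frac{\be^2}{4\pi}<\al$, $sp>2$, and $p'\frac{\al+s}{2}<1$, not from a $\mathcal C^{0-}$ vs.\ $H^{1-}$ pairing as you suggest. (Also, $\U_N$ converges in $L^p_TW^{-\al,p}_x$, not $C_TW^{-\al,p}_x$.)

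The genuine gap is in Step 3. You assert that the solution map $(\text{data})\mapsto v$ is ``Lipschitz on bounded sets of data'' and hence $v_N\to v$. But the contraction and difference estimates \eqref{Gv1}--\eqref{Gv2} rely on Lemma~\ref{LEM:posprod}, which requires the distributional factor to be \emph{positive}. When you estimate $\Phi_{v_0,\U_N}(v)-\Phi_{v_0,\U}(v)$ you must handle the factor $\U_N-\U$, which has no sign, so Lemma~\ref{LEM:posprod} is unavailable and the Lipschitz-in-$\U$ claim does not follow from the contraction. (Indeed, the paper shows in Appendix~\ref{SEC:A} that a direct Lipschitz argument without positivity only reaches $\be^2<\tfrac43\pi$, strictly less than $\bh^2$.) The paper closes this gap by a compactness argument: one shows $\{v_N\}$ is bounded in a slightly stronger space $C_TW^{\wt s,p}_x$ together with a bound on $\dt v_N$ in $L^p_TW^{s-2,p}_x$, applies the Aubin--Lions lemma (Lemma~\ref{LEM:AL}) to extract a subsequence $v_{N_k}\to\wt v$ in $C_TW^{s,p}_x$, checks (using Lemma~\ref{LEM:prod1}\,(ii) in place of Lemma~\ref{LEM:posprod} on the difference, at the cost of landing only in a weaker space $L^1_TW^{s',p}_x$) that $\wt v$ solves the limiting equation~\eqref{NH5a}, and then invokes uniqueness from Proposition~\ref{PROP:v1} to conclude that the full sequence converges. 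You should replace your Step 3 by this compactness--plus--uniqueness argument.
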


Formally speaking, the limit $u$ in Theorem \ref{THM:1} is a solution to the following ``equation'':
\begin{align}
\begin{cases}
\dt u + \frac 12 (1- \Dl)  u   +  \frac 12  \infty^{-1}\cdot \ld \be  e^{\be u} =  \xi\\
u |_{t = 0} = v_0 + w_0.
\end{cases}
\label{NH3a}
\end{align}

\noi
We will describe a precise meaning of this limiting equation in Subsection \ref{SUBSEC:1.3}.

Note that the model \eqref{NH2a} studied in  \cite{Garban} corresponds to (a massless version of)
our model~\eqref{NH1}
with  $\ld = \frac{2 \be^{-1}}{(2\pi)^\frac{3}{2}}$.
In view of 
the symmetry (in law) for \eqref{NH1}: $(u, \xi, \be, \ld) \mapsto (-u, -\xi, -\be, \ld)$, 
Garban's result covers both\footnote{What is important is
the sign of $\ld$, not its magnitude.
Furthermore, as for the local well-posedness theory, 
there is no essential difference between the massive and massless case.} $\ld > 0$ and $\ld < 0$
as in  Theorem~\ref{THM:1}.
After rescaling,  the upper bound
$\frac{8 \pi}{3 + 2 \sqrt 2}\simeq 1.37 \pi$ on $\be^2$
in Theorem~\ref{THM:1}
agrees with the ``critical'' value $\g_\text{pos} = 2\sqrt 2 - 2$ in~\cite{Garban}.
See Remark \ref{REM:Garban} below.
Namely, the ranges of the parameter $\be^2$
in Theorem~\ref{THM:1} and \cite[Theorems 1.7 and  1.11]{Garban}
agree.
The difference between the result in \cite{Garban}
and Theorem~\ref{THM:1}
 for the range $\frac{8 \pi}{7 + 4\sqrt{3}} \simeq 0.57 \pi
\leq \be^2 < 
\frac{8 \pi}{3 + 2 \sqrt 2}\simeq 1.37 \pi$
appears in the approximation property of the solution.
In \cite{Garban}, 
Garban proved local well-posedness 
of the limiting equation \eqref{NH3a}
in the Da Prato-Debussche formulation
but without continuity in the noise.
In Section~\ref{SEC:fpa}, we will prove 
convergence of the solution $u_N$ of the truncated SNLH~\eqref{NH3}
to the limit $u$, thus establishing continuity in the noise.

In proving Theorem~\ref{THM:1}, 
we apply the Da Prato-Debussche trick as in \cite{Garban}.
By exploiting the positivity of the   Gaussian multiplicative chaos, 
we construct a solution 
by  a standard Picard's iteration argument.
For this purpose, 
we study  
higher moment bounds
of  the  Gaussian multiplicative chaos. This is done with two different approaches: the first one using
 the  Brascamp-Lieb inequality \cite{BL}\footnote
{This
is not to be confused with the  
Brascamp-Lieb concentration inequality \cite[Theorem 5.1]{BL2}
in probability theory, 
which was used in the study of the Gibbs measure
for the defocusing nonlinear Schr\"odinger equations on the real line~\cite{BO00}.}
(see Lemma~\ref{LEM:BL} below), and the other one relying on Kahane's classical approach.

This local well-posedness result by a contraction argument
does not directly provide  continuity in the noise
since in studying
the difference of Gaussian multiplicative chaoses, 
we can no longer exploit any positivity.
In order to prove convergence of the solutions $u_N$ to
the truncated SNLH~\eqref{NH3}, 
we employ a more robust 
 energy method (namely, an a priori bound
and a compactness argument) 
and combine it with the uniqueness of a solution to the limiting 
equation~\eqref{NH3a} in the Da Prato-Debussche formulation. This is turn yields the continuity in the noise property.
See also Remark \ref{REM:intro}\,(ii) below.

In the defocusing case $\ld >0$, we can improve the local well-posedness result of Theorem~\ref{THM:1} 
on two aspects. The first one is that the defocusing nonlinearity
allows us to prove  a global-in-time result in place of a local one. The second and less obvious one is that we can improve on the range of $\be^2>0$. Namely, the defocusing nature of the nonlinearity also improves the local Cauchy theory.

\begin{theorem}[global well-posedness in the defocusing case]\label{THM:1.2} 
Let  $\ld > 0$ and  $0<\be^2 < (\bh^*)^2: = 4\pi $.
Let $\{C_N\}_{N\in\N}$ be as in Theorem \ref{THM:1}.
Then, the stochastic nonlinear heat equation \eqref{NH1}
is globally well-posed in the following sense;
 given 
 $v_0 \in L^\infty(\T^2)$, 
there exists a 
non-trivial stochastic process $u\in C(\R_+; H^{-\eps}(\T^2))$
for any $\eps >0$
such that, given any $T>0$,  
the solution $u_N$ to the truncated SNLH~\eqref{NH3}
with initial data $u_{0, N}$ of the form~\eqref{IV1}
converges in probability to $u$ 
in $C([0, T]; H^{-\eps}(\T^2))$.
\end{theorem}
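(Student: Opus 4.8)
The plan is to run the Da Prato--Debussche scheme, but to exploit that in the defocusing case $\ld>0$ the remainder of the decomposition has a definite sign; this both globalizes the solution and removes any need for exponential (Moser--Trudinger-type) integrability, which is what lets us reach the full range $0<\be^2<4\pi$. By the symmetry $(u,\xi,\be,\ld)\mapsto(-u,-\xi,-\be,\ld)$ we may assume $\be>0$. Following the decomposition used for Theorem~\ref{THM:1}, write the solution $u_N$ of \eqref{NH3} as $u_N=z+\Psi_N+v_N$, where $z=P(t)v_0$ is the linear evolution \eqref{linear}, $\Psi_N$ is the truncated stochastic convolution (so $\Psi_N\to\Psi$, the object of \eqref{Ph1}), and the remainder $v_N$ has $v_N|_{t=0}=0$ and solves (cf.\ \eqref{NH5a})
\begin{align*}
\dt v_N+\tfrac12(1-\Dl)v_N+\tfrac12\ld\be\,e^{\be z}\,\Theta_N\,e^{\be v_N}=0,\qquad \Theta_N:=C_N e^{\be\Psi_N}.
\end{align*}
For $C_N$ as in \eqref{C1}, the stochastic estimates recalled before (Proposition~\ref{PROP:Ups} and its consequences) give, \emph{precisely for $0<\be^2<4\pi$} (finiteness of the second moment of the Gaussian multiplicative chaos), that $\Theta_N\to\Theta$ in $C([0,T];H^{-\eps}(\T^2))$ in probability, where $\Theta\ge0$ is a.s.\ a positive distribution; in particular $M_T:=\sup_N\sup_{t\in[0,T]}\|\Theta_N(t)\|_{H^{-\eps}}<\infty$ a.s. This is the only point at which $\be^2<4\pi$ enters.

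\textbf{The sign-definite structure.} Since $\ld,\be>0$ and $\Theta_N\ge0$, the source term above is a nonpositive distribution; as $P(t)$ preserves positivity and $v_N|_{t=0}=0$, Duhamel's formula forces
\begin{align*}
v_N(t)=-\frac{\ld\be}{2}\int_0^t P(t-s)\big(e^{\be z}\Theta_N e^{\be v_N}\big)(s)\,ds\le0\qquad\text{for all }t\ge0,
\end{align*}
i.e.\ $u_N\le z+\Psi_N$. Hence $0<e^{\be v_N}\le1$, so $e^{\be z}\Theta_N e^{\be v_N}$ is pointwise dominated by the \emph{solution-independent} positive distribution $e^{\be z}\Theta_N$. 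This is the structure announced in the introduction, and it is what makes exponential integrability estimates on $e^{\be v_N}$ unnecessary.

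\textbf{Uniform a priori bound and passage to the limit.} Fix $0<\eps<s_0<1$ with $s_0+1+\eps<2$. For $t>0$ the coefficient $z(t)$ is smooth, with $\|z(t)\|_{L^\infty}\le\|v_0\|_{L^\infty}$ and $\|z(t)\|_{\Cb^{s_0}}\les t^{-s_0/2}\|v_0\|_{L^\infty}$ (integrable at $t=0$). Combining a paraproduct bound $\|fg\|_{H^{-\eps}}\les\|f\|_{\Cb^{s_0}}\|g\|_{H^{-\eps}}$ with $0<e^{\be v_N}\le1$ gives
\begin{align*}
\big\|e^{\be z(s)}\Theta_N(s)e^{\be v_N(s)}\big\|_{H^{-\eps}}\les e^{\be\|v_0\|_{L^\infty}}\big(1+\|z(s)\|_{\Cb^{s_0}}+\|v_N(s)\|_{\Cb^{s_0}}\big)\|\Theta_N(s)\|_{H^{-\eps}}.
\end{align*}
Inserting this in the Duhamel formula above, using $\|P(\tau)\|_{H^{-\eps}\to\Cb^{s_0}}\les\tau^{-(s_0+1+\eps)/2}$, and invoking the singular Gronwall inequality yields $\sup_{t\in[0,T]}\|v_N(t)\|_{\Cb^{s_0}}\le C(\|v_0\|_{L^\infty},\be,\ld,M_T,T)$, uniformly in $N$. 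Running the same scheme for the difference $v_N-v_M$ — expanding $e^{\be v_N}-e^{\be v_M}=\be\big(\int_0^1 e^{\be(\tau v_N+(1-\tau)v_M)}d\tau\big)(v_N-v_M)$ and using once more $v_N,v_M\le0$ to bound the $\tau$-average by $1$ — gives
\begin{align*}
\sup_{t\in[0,T]}\|v_N(t)-v_M(t)\|_{\Cb^{s_0}}\les_{v_0,\be,\ld,M_T,T}\sup_{t\in[0,T]}\|\Theta_N(t)-\Theta_M(t)\|_{H^{-\eps}}\longrightarrow0
\end{align*}
in probability as $N,M\to\infty$. Thus $v_N\to v$ in $C([0,T];\Cb^{s_0}(\T^2))$ in probability, $v\le0$ solves the corresponding mild equation with $\Theta$ in place of $\Theta_N$, and this limiting equation has a unique solution by the monotonicity $\langle\be(e^{\be v_1}-e^{\be v_2})\,e^{\be z}\Theta,\,v_1-v_2\rangle\ge0$ (valid since $e^{\be z}\Theta\ge0$). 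Setting $u:=z+\Psi+v$ and using $\Psi_N\to\Psi$ in $C([0,T];H^{-\eps})$ gives $u_N\to u$ in $C([0,T];H^{-\eps})$ in probability; since $T<\infty$ was arbitrary and all bounds hold on each $[0,T]$, we obtain $u\in C(\R_+;H^{-\eps}(\T^2))$, which proves Theorem~\ref{THM:1.2}.

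\textbf{Main obstacle.} The crux is the uniform-in-$N$ a priori estimate: one must set up a functional framework in which the product $e^{\be z}\Theta e^{\be v}$ is well defined and controlled at the very low regularity of the Gaussian multiplicative chaos (available at the $H^{-\eps}$-level exactly when $\be^2<4\pi$) while coping with the mild singularity of $z$ as $t\downarrow0$. The observation that closes the argument — and that pushes the admissible range all the way to $\be^2<4\pi$ — is the sign-definiteness $v\le0$.
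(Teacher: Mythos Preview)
You have correctly identified the crucial sign-definite structure $\be v_N\le 0$ (hence $0<e^{\be v_N}\le 1$), which is exactly the mechanism the paper exploits. However, your implementation has a genuine gap that prevents you from reaching the full range $0<\be^2<4\pi$.

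The problem is the interplay between the paraproduct estimate and the Schauder estimate. Your bound $\|fg\|_{H^{-\eps}}\les\|f\|_{\Cb^{s_0}}\|g\|_{H^{-\eps}}$ requires $s_0>\eps$, while integrability of $(t-s)^{-(s_0+1+\eps)/2}$ requires $s_0+\eps<1$; together these force $\eps<\tfrac12$. But Proposition~\ref{PROP:Ups} with $p=2$ only yields $\Theta_N\in W^{-\al,2}$ for $\al>\tfrac{\be^2}{4\pi}$, so your $\eps$ must satisfy $\eps>\tfrac{\be^2}{4\pi}$. These constraints are compatible only for $\be^2<2\pi$, not $4\pi$. (A related oversight: Proposition~\ref{PROP:Ups} gives $\Theta_N\to\Theta$ in $L^p([0,T];W^{-\al,p})$, not in $C([0,T];H^{-\eps})$ as you assert; your singular Gronwall tacitly relies on an $L^\infty_T$ bound on $\|\Theta_N(t)\|_{H^{-\eps}}$ which is not supplied.)

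The paper circumvents this by using, in place of the standard paraproduct estimate, the refined product estimate of Lemma~\ref{LEM:posprod}: for \emph{positive} $g$ one has $\|\jb{\nabla}^{-s}(fg)\|_{L^p}\les\|f\|_{L^\infty}\|\jb{\nabla}^{-s}g\|_{L^p}$, with no H\"older regularity required on $f$. Together with the bound $\|e^{\be z}F(\be v_N)\|_{L^\infty}\le e^{C\|v_0\|_{L^\infty}}$ (which follows from $F$ bounded), this yields uniform a priori bounds in the energy space $\ZZ_T=C_TL^2\cap L^2_TH^1$ directly from $\U_N\in L^2_TH^{-1+\eps}$ (Proposition~\ref{PROP:flow}), for the full range $\be^2<4\pi$. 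For the convergence $v_N\to v$, the difference $\U_N-\U$ is no longer positive, so Lemma~\ref{LEM:posprod} is unavailable; rather than a direct difference estimate, the paper uses compactness (Aubin--Lions, Lemma~\ref{LEM:AL}) to extract a convergent subsequence and then invokes uniqueness in $\ZZ_T$ via the monotonicity-based energy argument (essentially your pairing inequality) to identify the limit. Your direct Cauchy-in-$N$ estimate would again require the standard paraproduct bound on the term containing $\Theta_N-\Theta_M$, running into the same $\be^2<2\pi$ obstruction.
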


When $\ld > 0$, the equation \eqref{NH3}
indeed has a {\it sign-definite structure};
see \eqref{posX} for example.
We exploit such a sign-definite structure
at the level of the Da Prato-Debussche
formulation 
to prove Theorem \ref{THM:1.2}.
For $\be^2 \geq \frac{8 \pi}{3 + 2 \sqrt 2}$, 
we need to employ 
an energy method
even to prove existence of  solutions.
Both the sign-definite structure
and the positivity of the   Gaussian multiplicative chaos
play an important role.
We then prove uniqueness by establishing 
an energy estimate for the difference of two solutions.
Continuity in the noise
is shown by an analogous argument
to that in the proof of Theorem \ref{THM:1}.
 Theorem~\ref{THM:1.2} thus
 shows that there is 
 a significant improvement 
from~\cite{Garban}
on the range of $\be^2$
from 
$0 < \be^2 < \frac{8 \pi}{7 + 4\sqrt{3}} \simeq 0.57 \pi$ in \cite{Garban}
to $0 < \be^2 < 4\pi$
when $\ld > 0$.
This answers  Question 7.1 in~\cite{Garban}, 
showing that the value $\g_\text{pos}$ in \cite{Garban}
does {\it not} correspond to a critical threshold, 
at least in the $\ld > 0$ case. 
In view of the heuristic comparison to 
the $\Phi^4_d$-model mentioned above, 
 the range: 
$0 < \be^2 < 4\pi$
in Theorem~\ref{THM:1.2} 
corresponds to 
 the sub-$\Phi^4_3$ case.
 Note that in this range,  
 the Da Prato-Debussche trick and a contraction argument suffice
 for the parabolic sine-Gordon model~\cite{HS, ORSW2}.

\begin{remark}\label{REM:intro}\rm 
(i) For the sake of the argument, 
Theorems~\ref{THM:1} and~\ref{THM:1.2} are stated for  the initial data $u_{0, N} $ of the form \eqref{IV1}.
By a slight modification of the argument, 
 however, we can also treat general deterministic initial data $u_{0, N} = v_0 \in L^\infty(\T^2)$.
 See Remark \ref{REM:SW} below.
 A similar comment applies to Theorem \ref{THM:3}
 for SdNLW \eqref{NW1}.

\smallskip

\noi
(ii) 
In Appendix \ref{SEC:A}, 
we present a local well-posedness argument in the sense of Theorem~\ref{THM:1}, 
in particular for any $\ld \in \R\setminus \{0\}$, 
for the slightly smaller range 
$0 < \be^2 < \frac 43 \pi \simeq 1.33 \pi$
than that in Theorem \ref{THM:1}, 
but {\it without} using the positivity of the 
  Gaussian multiplicative chaos
  or any sign-definite structure of the equation.
This argument also provides stronger
Lipschitz dependence on initial data and noise.
See also Remark \ref{REM:uniq2}.

\smallskip

\noi
(iii) 
The well-posedness results in Theorem \ref{THM:1} and Theorem \ref{THM:A}
for general $\ld \ne 0$
are directly  applicable to the following 
parabolic sinh-Gordon equation on $\T^2$:
\begin{align}
\dt u + \tfrac 12 (1- \Dl)  u   +  \tfrac{1}{2} \be \sinh (\be u)  = \xi, 
\label{NH1x}
\end{align}

\noi
providing local well-posedness of \eqref{NH1x} for the same range of $\be^2$, 
in particular, 
with continuity in the noise.
The model \eqref{NH1x}
corresponds to the so-called $\cosh$-interaction in quantum field theory.
See Remark \ref{REM:cosh} below.
\end{remark}

We now investigate an issue of invariant measures
for \eqref{NH1}  when $\ld > 0$.
Define the energy $\Eh$ by 
\begin{align}
\Eh (u) = \frac 12 \int_{\T^2} |\jb{\nb} u|^2 dx + \ld \int_{\T^2} e^{\be u} dx,
\label{energy1}
\end{align}

\noi
where $\jb{\nb} =  \sqrt{1-\Dl}$.
The condition $\ld > 0$
guarantees that 
the problem is defocusing.
Note that the equation~\eqref{NH1}
formally preserves
the Gibbs measure $\rh$
associated with the energy $\Eh$,
whose density is  formally given by 
\begin{align}
\text{``}d \rh = 
Z^{-1} e^{- \Eh(u)} du 
= 
Z^{-1} \exp\bigg(- \ld \int_{\T^2}e^{\be u} dx \bigg) d\mu_1\text{''}, 
\label{Gibbs1}
\end{align}

\noi
where $\mu_1$ is the massive Gaussian free field defined  in \eqref{Gauss1}.
In view of the low regularity of the support of $\mu_1$, 
we need to apply a renormalization to the density in \eqref{Gibbs1}
so that $\rh$ can be realized as a weighted Gaussian measure
on $\D'(\T^2)$.

In order to preserve the sign-definite structure of the equation for $\ld > 0$, 
we can not use an arbitrary approximation to the identity
for regularization
but we need to use those with non-negative convolution kernels.
Let $\rho$ be a smooth, non-negative, even function compactly supported in $\T^2\simeq [-\pi,\pi)^2$ and such that $\int_{\R^2}\rho(x)dx = 1 $.
Then, given $N \in \N$, we define a smoothing operator $\Q_N$ by setting
\begin{equation}\label{Q}
\Q_Nf  =  \rho_N* f =  \sum_{n\in\Z^2}\big( 2\pi \ft\rho_N(n)\big) \ft f(n)e_n, 
\end{equation}

\noi
where  the mollifier $\rho_N$ is defined by 
\begin{align}
\rho_N(x) = N^2\rho(Nx).
\label{Q2} 
\end{align}
We then
 define the truncated Gibbs measure $\rhN$
by 
\begin{align}
d \rhN = Z_N^{-1} \exp\bigg(- \ld C_N \int_{\T^2}e^{\be \, \Q_N u} dx \bigg) d\mu_1, 
\label{Gibbs1a}
\end{align}

\noi
where $C_N$ is the renormalization constant from  Theorem \ref{THM:1}
but with $\Q_N$ instead of $\P_N$.
As a corollary to  the analysis on the Gaussian multiplicative chaos
(see Proposition \ref{PROP:Ups} below), 
we have the following convergence result.

\begin{proposition}\label{PROP:Gibbs}
Let  $\ld > 0$ and $0 < \be^2 <  (\bh^*)^2 = 4  \pi$.
The sequence $\{\rhN\}_{N \in \N}$
of the renormalized truncated Gibbs measures
converges in total variation to 
some limiting probability measure.
With a slight abuse of notation, 
 we denote the limit by $\rh$.
Then, the limiting renormalized Gibbs measure $\rh$
and the massive Gaussian free field $\mu_1$
are 
mutually  absolutely continuous.

\end{proposition}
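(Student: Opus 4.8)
The plan is to realize both $\rh$ and $\mu_1$ as mutually equivalent weighted Gaussian measures over $\mu_1$, thereby reducing the statement to the $L^p$-convergence of the renormalized Gaussian multiplicative chaos. Writing, for $u$ distributed according to $\mu_1$,
\[
\Upsilon_N = \Upsilon_N(u) := C_N \int_{\T^2} e^{\be\, \Q_N u}\, dx,
\qquad
R_N := e^{-\ld \Upsilon_N},
\]
we have, by \eqref{Gibbs1a}, $d\rhN = Z_N^{-1} R_N\, d\mu_1$ with $Z_N := \E_{\mu_1}[R_N]$; since $\ld > 0$ and $\Upsilon_N \geq 0$, the density satisfies $0 \leq R_N \leq 1$ pointwise. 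First I would invoke Proposition~\ref{PROP:Ups}, applied to the Gaussian multiplicative chaos associated with the massive Gaussian free field $\mu_1$ and with the non-negative mollifier $\Q_N$ (together with the corresponding renormalization constant $C_N$) in place of the sharp-frequency projector $\P_N$: for $0 < \be^2 < 4\pi$ the family $\{\Upsilon_N\}_{N \in \N}$ is Cauchy in $L^p(\mu_1)$ for suitable $p \geq 1$, hence converges to some $\Upsilon \in L^p(\mu_1)$. As an a.s.\ limit of non-negative random variables, $\Upsilon \geq 0$, and $\Upsilon < \infty$ $\mu_1$-almost surely.

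Next I would transfer this to the densities. Convergence in $L^p(\mu_1)$ implies convergence in $\mu_1$-probability; since $x \mapsto e^{-\ld x}$ is continuous and bounded by $1$ on $[0, \infty)$, the continuous mapping theorem gives $R_N \to R := e^{-\ld \Upsilon}$ in $\mu_1$-probability, and then $0 \leq R_N \leq 1$ together with the bounded convergence theorem yields $R_N \to R$ in $L^q(\mu_1)$ for every $q \in [1, \infty)$. In particular $Z_N \to Z := \E_{\mu_1}[R]$. Moreover, since $\Upsilon < \infty$ $\mu_1$-a.s.\ and $\ld > 0$, we have $0 < R \leq 1$ $\mu_1$-a.s., so $Z > 0$; consequently $Z_N^{-1}$ stays bounded and $Z_N^{-1} \to Z^{-1}$.

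For the total variation convergence I would set $d\rh := Z^{-1} R\, d\mu_1$, which is a probability measure, and estimate, for any Borel set $A \subset \D'(\T^2)$,
\[
\big| \rhN(A) - \rh(A) \big|
\leq Z_N^{-1} \int_A |R_N - R|\, d\mu_1 + \big| Z_N^{-1} - Z^{-1} \big| \int_A R\, d\mu_1
\leq Z_N^{-1} \| R_N - R \|_{L^1(\mu_1)} + Z\, \big| Z_N^{-1} - Z^{-1} \big|,
\]
whose right-hand side tends to $0$ as $N \to \infty$, \emph{uniformly in $A$}, by the previous step; hence $\rhN \to \rh$ in total variation. Mutual absolute continuity then follows at once: $d\rh = Z^{-1} R\, d\mu_1$ with $R \geq 0$ gives $\rh \ll \mu_1$, while if $A$ is Borel with $\rh(A) = 0$ then $\int_A R\, d\mu_1 = 0$, and since $R > 0$ $\mu_1$-a.s.\ this forces $\mu_1(A) = 0$, i.e.\ $\mu_1 \ll \rh$.

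The only substantial ingredient is Proposition~\ref{PROP:Ups}, namely the $L^p(\mu_1)$-convergence (for $\be^2 < 4\pi$) of the renormalized Gaussian multiplicative chaos built from the massive Gaussian free field, which itself rests on the higher moment bounds obtained via the Brascamp--Lieb inequality and Kahane's approach; once that is granted, everything above is soft. The single point that needs a little care is that the renormalization and the chaos estimates must be run with the non-negative mollifier $\Q_N$ rather than with the sharp-frequency projector $\P_N$ — which is precisely why $C_N$ in \eqref{Gibbs1a} is taken relative to $\Q_N$ — but Proposition~\ref{PROP:Ups} is designed to accommodate such approximations.
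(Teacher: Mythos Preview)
Your proof is correct and follows essentially the same route as the paper: invoke Proposition~\ref{PROP:Ups} (at $t=0$, with $\Q_N$ in place of $\P_N$) to obtain convergence of the integrated chaos $\Upsilon_N$, then use the uniform bound $0\le R_N\le 1$ together with convergence in probability to upgrade to $L^q(\mu_1)$-convergence of the densities, from which total variation convergence and mutual absolute continuity are immediate. In fact your write-up is more complete than the paper's, which leaves the passage from $L^p$-convergence of $R_N$ to total variation convergence and the mutual absolute continuity of $\rh$ and $\mu_1$ as a ``standard argument''; the only point where the paper is slightly more explicit is in noting that one must restrict the chaos convergence of Proposition~\ref{PROP:Ups} to the fixed time slice $t=0$ and then pass to the zeroth Fourier mode to extract convergence of the scalar quantity $\Upsilon_N$.
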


\begin{remark}\label{REM:focusing}\rm
We only discuss the construction and invariance of the Gibbs measure in the defocusing case $\ld>0$. Indeed, in the focusing case $\ld<0$, the Gibbs measure \eqref{Gibbs1a} is not normalizable. More precisely, in \cite[Appendix A]{ORTW}, 
 N.~Tzvetkov and  the authors showed that the partition function satisfies 
\begin{align*}
Z_N = \int\exp\bigg(- \ld C_N \int_{\T^2}e^{\be  \Q_N u} dx \bigg) d\mu_1 \too \infty, 
\end{align*}
 as $N\to\infty$ in the case $\ld<0$. See Proposition A.1 in \cite{ORTW}.
 See also 
\cite{LRS, BS,  BB14, OST, OOTol, OST2, OOTol2, Robert}
on  non-normalizability results
for focusing Gibbs measures.

\end{remark}

The truncated Gibbs measure $\rhN$ is invariant under 
the following truncated SNLH:
\begin{align}
\begin{cases}
\dt u_N + \frac 12 (1- \Dl)  u_N   +  \frac 12 \ld \be C_N \Q_N  e^{\be \, \Q_N u_N} = \xi\\
u_N |_{t = 0} = u^\textup{Gibbs}_{0, N}\sim\rhN.
\end{cases}
\label{NH4}
\end{align}

\noi
See Lemma~\ref{LEM:inv} 
below.
By taking $N \to \infty$, we 
then have the following almost sure global well-posedness
and invariance of the renormalized Gibbs measure $\rh$ for SNLH \eqref{NH1}.

\begin{theorem}\label{THM:2}
Let $\ld >0$ and $0 < \be^2  < (\bh^*)^2= 4 \pi$.
Then, 
the stochastic nonlinear heat equation~\eqref{NH1}
is almost surely globally well-posed 
with respect to the random initial data distributed
by the renormalized Gibbs measure $\rh$.
Furthermore, 
 the renormalized Gibbs measure $\rh$ is invariant under the resulting dynamics.

More precisely, 
there exists a 
non-trivial stochastic process $u\in C(\R_+; H^{-\eps}(\T^2))$
for any $\eps >0$
such that, given any $T>0$,  
the solution $u_N$ to the truncated SNLH~\eqref{NH4}
with the random initial data $u_{0, N}^\textup{Gibbs}$ distributed by the truncated Gibbs measure $\rhN$ in \eqref{Gibbs1a}
converges in probability to $u$ 
in $C([0, T]; H^{-\eps}(\T^2))$.
Furthermore, the law of $u(t)$ for any $t \in \R_+$
is given by the renormalized Gibbs measure $\rh$.
\end{theorem}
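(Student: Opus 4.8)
The plan is to follow the now-standard Bourgain-type scheme for proving almost sure global well-posedness and invariance of a Gibbs measure, adapting it to the parabolic setting with the exponential nonlinearity. First I would record the deterministic inputs: by Theorem~\ref{THM:1.2} (or rather its proof), the truncated equation \eqref{NH4} is globally well-posed for any initial data in (say) $H^{-\eps}(\T^2)\cap$ the support of $\mu_1$, with solutions converging in probability in $C([0,T];H^{-\eps})$ as $N\to\infty$ when the initial data converge appropriately; crucially, the renormalization constant $C_N$ and the smoothing $\Q_N$ here are exactly the ones compatible with both the sign-definite structure and Proposition~\ref{PROP:Gibbs}. I would also invoke Lemma~\ref{LEM:inv}, which states that $\rhN$ is invariant under the finite-dimensional (in the low-frequency part) dynamics \eqref{NH4}; this is a routine Liouville-theorem / Fokker-Planck computation combined with the fact that the truncated energy is formally conserved by the drift and $\mu_1$ is stationary for the linear Ornstein-Uhlenbeck part, so I would not belabor it.

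The core of the argument has two steps. Step one is a uniform-in-$N$ control of the approximate dynamics on the statistical ensemble: using invariance of $\rhN$ together with Proposition~\ref{PROP:Gibbs} (which gives uniform integrability of the densities $\frac{d\rhN}{d\mu_1}$, in fact $L^p(\mu_1)$ bounds for some $p>1$ uniform in $N$), I would transfer the a priori estimates from the proof of Theorem~\ref{THM:1.2} into probabilistic bounds that do not degenerate in $N$. Concretely, for the solution $u_N$ with $u_N(0)\sim\rhN$ one controls $\|u_N\|_{C([0,T];H^{-\eps})}$ (after subtracting the stochastic convolution, on the Da~Prato--Debussche residual $v_N$) in probability, uniformly in $N$, by combining the stationarity (law of $u_N(t)$ equals $\rhN$ for all $t$) with the deterministic energy/Gaussian-multiplicative-chaos estimates; the positivity of the GMC is what makes these bounds robust. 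Step two is to upgrade the convergence $u_N\to u$: since the initial data $u^{\mathrm{Gibbs}}_{0,N}\sim\rhN$ converge in total variation (hence in distribution, and one can couple them to converge a.s. along a subsequence, or argue directly in probability) to a limit distributed by $\rh\ll\mu_1$, and since the solution map is continuous in the noise and in the (rough) initial data in the sense established for Theorem~\ref{THM:1.2}, one obtains a limiting process $u\in C(\R_+;H^{-\eps})$ to which $u_N$ converges in probability in $C([0,T];H^{-\eps})$ for every $T$. Finally, invariance passes to the limit: for fixed $t$, $\mathrm{Law}(u_N(t))=\rhN\to\rh$ in total variation while $u_N(t)\to u(t)$ in probability, so $\mathrm{Law}(u(t))=\rh$; one then promotes this to genuine invariance of the limiting flow (well-definedness of the flow $\rh$-a.s. and the semigroup/Markov property) by the standard argument that a.s. global existence plus stationarity of the one-time marginals, together with uniqueness from the Da~Prato--Debussche formulation, yields a $\rh$-preserving dynamical system.

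I expect the main obstacle to be Step one, the uniform-in-$N$ a priori bound that makes the whole compactness-free convergence argument work at the level of the Gibbs ensemble rather than just for fixed initial data. The delicate point is that the energy method used for Theorem~\ref{THM:1.2} produces bounds depending on $\|v_0\|_{L^\infty}$ and on various norms of the stochastic objects; when $v_0$ is replaced by a random field distributed according to $\rhN$ (which is only in $W^{-\eps,p}$, not $L^\infty$), one must redo the estimates so that all the data-dependence is through quantities that are uniformly integrable in $N$ under $\rhN$ --- and here one really uses that $\rhN\ll\mu_1$ with uniformly bounded densities in $L^p(\mu_1)$, plus the moment bounds on the Gaussian multiplicative chaos (Proposition~\ref{PROP:Ups}) that feed into the nonlinear estimate. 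A secondary technical nuisance is the mismatch between the frequency projector $\P_N$ appearing in Theorems~\ref{THM:1}--\ref{THM:1.2} and the mollifier $\Q_N$ forced on us by the requirement of a non-negative kernel in \eqref{NH4}; one must check that the well-posedness and convergence theory is insensitive to this choice, i.e.\ that replacing $\P_N$ by $\Q_N$ changes nothing in the estimates (the GMC analysis in Proposition~\ref{PROP:Ups} is already stated for $\Q_N$, so this should be a matter of bookkeeping). Everything else --- the Bourgain invariant-measure argument, passing the invariance to the limit, and extracting global existence from local existence plus the a priori control --- is by now routine and I would present it compactly by citation to the relevant lemmas.
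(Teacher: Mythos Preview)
Your outline is broadly correct, but it is over-engineered in one key respect and misidentifies the main difficulty.

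The paper does \emph{not} use Bourgain's invariant measure argument here. Since $\ld>0$, Theorem~\ref{THM:1.2} (more precisely Proposition~\ref{PROP:flow}) already gives \emph{global} well-posedness deterministically, so there is no need to bootstrap local-in-time bounds via invariance. Your ``Step one'' --- the uniform-in-$N$ a priori control on the Gibbs ensemble that you flag as the main obstacle --- is simply not needed. (Contrast the wave case, Theorem~\ref{THM:4}, where only local well-posedness is available and Bourgain's argument is genuinely required.)

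Relatedly, your concern about the initial data being only in $W^{-\eps,p}$ rather than $L^\infty$ is a non-issue in the paper's setup. For the Gibbs problem there is no deterministic part $v_0$: one takes initial data $w_0\sim\mu_1$ and absorbs it entirely into the stochastic convolution $\Psi^{\text{heat}}$, so that the Da~Prato--Debussche residual $v_N$ starts from zero and $z\equiv 0$. The estimates of Proposition~\ref{PROP:flow} then apply verbatim. To pass from $\mu_1$-initial data to $\rhN$-initial data one only uses that the density $R_N = d\rhN/d\mu_1$ is uniformly bounded by $1$ (by positivity of $\ld$ and of the GMC) and converges in $L^p(\mu_1)$; no uniform integrability gymnastics are needed.

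Two points you underestimate. First, the invariance of $\rhN$ under \eqref{NH4} (Lemma~\ref{LEM:inv}) is \emph{not} a routine Fokker--Planck computation, because $\Q_N$ does not have compact Fourier support, so \eqref{NH4} is not a finite-dimensional SDE. The paper handles this by a further approximation $\Q_N\mapsto \P_M\Q_N$, proving invariance of the doubly truncated measure $\rho_{\text{heat},N,M}$ by a genuine finite-dimensional generator computation, and then passing $M\to\infty$. Second, the $\P_N$ versus $\Q_N$ mismatch is slightly more than bookkeeping: when proving that $v_N\to v$ one picks up an extra commutator-type term involving $\Q_N-\Id$ (see \eqref{Dv4}), which must be estimated separately using \eqref{QN-1}.

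With these adjustments --- drop Bourgain's argument, set $v_0=0$ and absorb the random data into $\Psi$, and treat Lemma~\ref{LEM:inv} with the extra $\P_M$-approximation --- your outline matches the paper's proof.
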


A variant of  Theorem \ref{THM:1.2}
implies global well-posedness of \eqref{NH4}.
Then, 
in view of the mutual absolute continuity of the renormalized Gibbs measure $\rh$
and the massive Gaussian free field $\mu_1$
and the  convergence in total variation of the truncated Gibbs measure $\rhN$ in \eqref{Gibbs1a}
to the limiting renormalized Gibbs measure $\rh$
(Proposition \ref{PROP:Gibbs}), 
the proof of Theorem \ref{THM:2}
follows 
from a standard argument.
See Subsection \ref{SUBSEC:Gibbs1}.

\begin{remark}\rm
Note that the positivity of the operator $\Q_N$
is needed only for proving local well-posedness of the truncated SNLH \eqref{NH4}
and that Proposition \ref{PROP:Gibbs}
holds with $\P_N$ (or any approximation to the identity) in place of $\Q_N$.
Then, noting that the proof
of  Theorem~\ref{THM:1} does not exploit any sign-definite structure
of the equation, 
we conclude that 
even if we replace $\Q_N$
with $\P_N$ in \eqref{NH4}, 
the conclusion of 
Theorem \ref{THM:2} holds 
true for the range 
$0 < \be^2 < \frac{8 \pi}{3 + 2 \sqrt 2}\simeq 1.37 \pi$.
Since Theorem~\ref{THM:1} 
only provides local well-posedness, 
we need to use Bourgain's invariant measure
argument \cite{BO94, BO96}
to construct almost sure global-in-time dynamics. We refer to \cite{HM,ORTz,GKOT, OOTol,  OOTol2} for the implementation of Bourgain's invariant measure argument in the context of singular SPDEs.

\end{remark}

\medskip

Next, we turn our attention to the stochastic  damped nonlinear wave equation \eqref{NW1}.
Due to a weaker smoothing property of the associated linear operator, 
the problem in this hyperbolic setting is harder than that in the parabolic setting discussed above.
In the following, we restrict our attention
to the defocusing case ($\ld > 0$), where we can hope to exploit a (hidden) sign-definite  structure of the equation.
Given $N \in \N$, we study the following truncated SdNLW:
\begin{align}
\begin{cases}
\dt^2 u_N + \dt u_N +  (1- \Dl)  u_N   +  \ld \be C_N  e^{\be u_N} = \sqrt{2} \P_N \xi\\
(u_N, \dt u_N) |_{t = 0} = (u_{0, N}, u_{1, N}) 
\end{cases}
\label{NW3}
\end{align}

\noi
with  the renormalization constant  $C_N$  from  Theorem \ref{THM:1}
and  initial data $(u_{0, N}, u_{1, N})$  of the form:
\begin{align}
(u_{0, N}, u_{1, N})  = (v_0, v_1)  + (\P_N w_0, \P_N w_1), 
\label{IV4}
\end{align}

\noi
where $(v_0, v_1)$ is a pair of given deterministic functions 
and $(w_0, w_1)$ is as in \eqref{IV2}.

\begin{theorem}\label{THM:3}
Let $\ld > 0$,   $0<\be^2 <
\bw^2: = \frac{32-16\sqrt{3}}{5}\pi  \simeq 0.86\pi$, 
and $s > 1$.
Then, 
 the stochastic  damped nonlinear  wave  equation~\eqref{NW1}
is locally well-posed
in the following sense; 
given $(v_0, v_1)  \in \H^s(\T^2) =  H^s(\T^2)\times H^{s-1}(\T^2)$, 
there exist
 an almost surely positive  stopping time $\tau =  \tau\big(\|(v_0,v_1)\|_{\H^s}, \be, \ld \big)$ 
 and a non-trivial stochastic process $(u, \dt u) \in  C([0, \tau]; \H^{-\eps}(\T^2))$
for any $\eps > 0$
such that, given any small $T >0 $,  
on the event $\{ \tau \geq T\}$, 
the solution $(u_N, \dt u_N) $ 
to  the truncated SdNLW~\eqref{NW3}
with initial data $(u_{0, N}, u_{1, N})$ of the form \eqref{IV4}
converges in probability to $(u, \dt u)$ in 
$C([0, T]; \H^{-\eps}(\T^2))$.

\end{theorem}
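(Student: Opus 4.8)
The plan is to go beyond the naive Da Prato-Debussche trick, which fails here because the relevant Gaussian multiplicative chaos $\Psi$ (the renormalized exponential of the stochastic convolution $\Psi^{\textup{wave}}_N$) has regularity too low to close a fixed point for $u = z + \Psi + v$ directly in this range of $\be^2$. Instead, I would decompose the nonlinear remainder $v = u - z - \Psi$ into two pieces $v = X + Y$, where $X$ is designed to carry the \emph{sign-definite} structure (so that a priori/contraction estimates for $X$ can exploit positivity of the Gaussian multiplicative chaos, as in Theorems \ref{THM:1.2} and \ref{THM:2}), while $Y$ solves a companion equation with better smoothing — the model being the paracontrolled/system formulation used for $\Phi^4_3$ and for the wave equations on $\T^3$ referenced in the introduction. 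Concretely: writing $\jb{\nb}$ for $\sqrt{1-\Delta}$ and $\mathcal{D} = \dt^2 + \dt + (1-\Delta)$, one lets $X$ solve the equation driven by the ``rough'' part of $\ld\be e^{\be u} = \ld\be\, \Psi\, e^{\be(z+v)}$ with the full exponential factor kept intact (so the $X$-nonlinearity retains a definite sign when $\ld>0$), and $Y$ solve the equation driven by the difference, which should be amenable to a standard contraction in a space of higher regularity $C([0,T];\H^{s_1})$ with $s_1>0$. The damping term $\dt u$ only helps (it does not change the deterministic regularity count).

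The key steps, in order, would be: (1) Record the probabilistic inputs: by Proposition \ref{PROP:Ups} (and the analysis behind Theorem \ref{THM:1}), the stochastic convolution $\Psi^{\textup{wave}}$ lies in $C([0,T];W^{-\eps,\infty})$ and the Gaussian multiplicative chaos $\Psi = \lim_N C_N e^{\be\Psi^{\textup{wave}}_N}$ converges in $C([0,T];W^{-\alpha,p})$ for an explicit $\alpha = \alpha(\be^2)$, with $\Psi \geq 0$; one also needs higher moment bounds on $\Psi$ (via Brascamp-Lieb, Lemma \ref{LEM:BL}, or Kahane's method) to run $L^p$-based estimates. (2) Set up the system for $(X,Y)$ with zero data, solved by Duhamel against the damped-wave propagator, and identify the function spaces: $X \in C([0,T];\H^{-\eps})$ and $Y \in C([0,T];\H^{s_1})$, $0 < s_1 < s-1$. (3) Prove a contraction (locally in time, or up to a stopping time $\tau$ depending only on $\|(v_0,v_1)\|_{\H^s}$, $\be$, $\ld$): for $Y$ this is a product/Schauder estimate using the gain from the wave propagator; for $X$ this uses the bound $e^{\be X} \geq 0$ together with $\Psi\geq 0$ to control the $X$-nonlinearity, plus the fact that $z\in C([0,T];L^\infty)$ and $Y\in C([0,T];C^0)$ (Sobolev embedding, since $s_1>0$) so that $e^{\be(z+Y)}$ is a bounded continuous multiplier. (4) The numerology: optimizing the split between the regularity demanded of $\Psi$ (which degrades as $\be^2$ grows), the regularity recovered for $Y$, and the wave smoothing (only $\tfrac12$ derivative's worth of space-time gain, versus a full derivative for heat) produces the threshold $\bw^2 = \frac{32-16\sqrt3}{5}\pi$. (5) Transfer convergence of $(u_N,\dt u_N)$: show $(X_N,Y_N)$ converge in the respective spaces; for $X_N$ one cannot use positivity for \emph{differences}, so as in the heat case one argues by an a priori bound plus compactness and identifies the limit by a uniqueness statement for the limiting system, giving convergence in probability in $C([0,T];\H^{-\eps})$.

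The main obstacle I expect is step (3)–(4) for the $X$-equation combined with the bookkeeping that makes the split consistent. Because the wave propagator gains only half a derivative, $Y$ cannot be made as smooth as one would like, so the ``bad'' factor $e^{\be Y}$ must be controlled via $Y \in C^0$, which forces $s_1 > 0$ and hence eats into the smoothing budget; simultaneously $X$ must be smooth enough that $\Psi\, e^{\be X}$ makes sense as a well-defined distribution (a product of a negative-regularity object with a merely continuous, non-smooth one), which requires $X \in C([0,T];C^{\alpha+})$ — but $X$ only lives in $\H^{-\eps}$! Resolving this tension is the crux: one must further split $\Psi e^{\be(z+v)}$ so that the genuinely rough contribution is paired only with the smoother component $Y$ (where a paraproduct/resonant decomposition in the spirit of the paracontrolled calculus applies), while $X$ absorbs only the part of the nonlinearity that is already a function times $\Psi$ with a \emph{continuous} factor — this is exactly where the positivity of $\Psi$ is indispensable, since it lets one bound $\|\Psi\, f\|$ by $\|f\|_{L^\infty}\|\Psi\|$ in an $L^1$-type norm even though $\Psi$ is a distribution, bypassing the usual Young/paraproduct regularity restriction. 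Making this decomposition precise, checking it is self-consistent (the equation for $X$ really does close with the claimed sign), and then verifying the resulting exponents yield $\be^2 < \frac{32-16\sqrt3}{5}\pi$ is the heart of the proof; everything else (the stopping-time mechanism, continuity in the noise via energy+compactness+uniqueness) parallels the heat case already treated in Theorems \ref{THM:1} and \ref{THM:1.2}.
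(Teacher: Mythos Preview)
Your high-level instinct---decompose the remainder as $v=X+Y$, with $X$ carrying sign-definiteness and $Y$ enjoying extra smoothing---matches the paper exactly. But the \emph{mechanism} you propose for the split is wrong, and this is a genuine gap. You suggest splitting the \emph{nonlinearity} $\U\,e^{\be(z+v)}$ via a paraproduct/resonant decomposition, so that ``the genuinely rough contribution is paired only with $Y$.'' That cannot work as stated: if both $X$ and $Y$ are driven by the same damped-wave Duhamel operator $\D(t)$, there is no reason $Y$ should be smoother than $X$, and you yourself identify the resulting inconsistency (``$X$ only lives in $\H^{-\eps}$'' yet you need $e^{\be X}$ to make sense).

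The paper's key idea is to split the \emph{propagator}, not the forcing. One writes $\D(t) = e^{-t/2}S(t) + \big(\D(t)-e^{-t/2}S(t)\big)$, where $S(t)=\sin(t|\nabla|)/|\nabla|$ is the free wave propagator. The point is twofold: (a) the kernel of $S(t)$ on $\T^2$ is \emph{positive} (Lemma~\ref{LEM:waveker}, via Poisson's formula for the 2D wave equation), so the $X$-equation inherits $\be X\le 0$ and one may replace $e^{\be X}$ by a bounded Lipschitz cutoff $F(\be X)$; and (b) the difference $\D(t)-e^{-t/2}S(t)$ gains \emph{two} full derivatives (Lemma~\ref{LEM:smooth}), not half a derivative as you assume. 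This puts $Y\in C_T H^{s_2}$ with $s_2>1$, hence $Y\in L^\infty$, while $X$ lives in a Strichartz space $\X_T^{s_1}$ with $\tfrac14<s_1<\tfrac34$ (positive regularity, not $\H^{-\eps}$). The product $F(\be X)e^{\be(z+Y)}\U$ is then handled by Lemma~\ref{LEM:posprod} (positivity of $\U$), and a straightforward contraction closes. The threshold $\bw^2=\frac{32-16\sqrt3}{5}\pi$ falls out of optimizing the Strichartz/interpolation exponents in \eqref{exponent1}. Finally, continuity in $\U$ is proved \emph{directly} (Step~3 of the proof of Theorem~\ref{THM:wave}) via Lemma~\ref{LEM:prod1}\,(ii) on the difference term---no compactness argument is needed here, unlike the heat case.
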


Due to a weaker smoothing property of the linear  wave operator, 
 the range of $\be^2$ in Theorem~\ref{THM:3}
is much smaller than that in Theorem~\ref{THM:1.2}
and we can only prove local well-posedness for SdNLW \eqref{NW1}.
Furthermore,   we do not know how to prove
local well-posedness
of SdNLW \eqref{NW1}  in 
the focusing case ($\ld < 0$). 
Namely, there is no analogue of Theorem~\ref{THM:1} 
in this hyperbolic setting at this point.

As in the proof of Theorem \ref{THM:1.2}, 
we proceed with the Da Prato-Debussche trick
but the  proof of  Theorem \ref{THM:3} in the hyperbolic setting is 
 more involved than 
that of Theorem~\ref{THM:1.2} in the parabolic setting.
Due to the oscillatory nature of the Duhamel integral operator
(see \eqref{D} below)
associated with the damped Klein-Gordon
operator
$\dt^2   + \dt  +  (1- \Dl)    $, 
we can not exploit any sign-definite structure as it is.
We point out, however,  that near the singularity, 
the kernel for the Duhamel integral operator
is essentially non-negative.
This observation motivates us
to decompose
 the residual term $v$
in the Da Prato-Debussche argument 
as $v = X+ Y$, 
where the low regularity part $X$ enjoys a sign-definite structure
and the other part $Y$ enjoys 
a stronger smoothing property.
As a result, we reduce the equation~\eqref{NW3}
to a system of equations; see \eqref{XY2} below.
This decomposition of the unknown 
into a less regular but structured part $X$
and a smoother part $Y$ is reminiscent 
of the paracontrolled approach to the dynamical $\Phi^4_3$-model
in \cite{CC, MW1}.  See also \cite{GKO2}.
We will describe an outline of the proof
of Theorem \ref{THM:3}
 in Subsection~\ref{SUBSEC:1.3}.

\medskip

Lastly, we study the Gibbs measure $\rw$
for SdNLW \eqref{NW1} associated with the energy:
\begin{align*}
\Ew (u, \dt u) = \Eh(u) + \frac 12 \int_{\T^2} (\dt u)^2 dx, 
\end{align*}

\noi
where $\Eh$ is as in \eqref{energy1}.
As in the parabolic case, we need to introduce a renormalization.
Define the truncated Gibbs measure $\rwN$
 by 
\begin{align}
d \rwN (u, \dt u) = Z_N^{-1} d(\rhN \otimes \mu_0) (u, \dt u), 
\label{Gibbs2a}
\end{align}

\noi
where $\mu_0$ is the white noise measure defined in \eqref{Gauss1}.
Then, it follows from Proposition~\ref{PROP:Gibbs} that 
 when $0 < \be^2 < 4\pi $, 
the truncated Gibbs measure $\rwN$
converges in total variation
to the 
 renormalized Gibbs measure $\rw$ given by 
 \begin{align}
d \rw (u, \dt u) = Z^{-1} d(\rh \otimes \mu_0) (u, \dt u).
\label{Gibbs2}
\end{align}

Now,  consider  the following truncated SdNLW:
\begin{align}
\begin{cases}
\dt^2 u_N + \dt u_N +  (1- \Dl)  u_N   +  \ld \be C_N  \Q_N e^{\be\,  \Q_N u_N} = \sqrt{2} \xi\\
(u_N, \dt u_N) |_{t = 0} = (u_{0, N}^\textup{Gibbs}, u_{1, N}^\textup{Gibbs}) , 
\end{cases}
\label{NW3a}
\end{align}

\noi
where $\Q_N$ is the mollifier with a non-negative kernel defined in \eqref{Q}
and  $C_N$ is the renormalization constant from  Theorem \ref{THM:1}
but with $\Q_N$ instead of $\P_N$.
Decomposing the truncated SdNLW \eqref{NW3a}
into the deterministic nonlinear wave dynamics:
\begin{align*}
\dt^2 u_N  +  (1- \Dl)  u_N   +  \ld \be C_N  \Q_N e^{\be\,  \Q_N u_N} = 0
\end{align*}

\noi
and the Ornstein-Uhlenbeck process (for $\dt u_N$):
\begin{align*}
\dt^2 u_N +  \dt u_N +  (1- \Dl)  u_N      = \sqrt{2} \xi, 
 \end{align*}

\noi
we see that the truncated Gibbs measure $\rwN$ is invariant under the truncated SdNLW~\eqref{NW3a}.
See Section 4 in \cite{GKOT}.
As a result, we obtain the following almost sure global well-posedness
of \eqref{NW1}
and invariance of the renormalized Gibbs measure $\rw$.

\begin{theorem}\label{THM:4}
Let $\ld > 0$ and  $0<\be^2 <
\bw^2 = \frac{32-16\sqrt{3}}{5}\pi 
\simeq 0.86\pi$.
Then, 
the stochastic damped nonlinear wave equation \eqref{NW1}
is almost surely globally well-posed 
with respect to the renormalized Gibbs measure $\rw$.
Furthermore, 
 the renormalized Gibbs measure $\rw$ is invariant under the resulting dynamics.

More precisely, 
there exists a 
non-trivial stochastic process $(u, \dt u) \in C(\R_+; \H^{-\eps}(\T^2))$
for any $\eps >0$
such that, given any $T>0$,  
the solution $(u_N, \dt u_N)$ to the truncated SdNLW~\eqref{NW3a}
with the random initial data $(u_{0, N}^\textup{Gibbs}, u_{1, N}^\textup{Gibbs})$ distributed by the truncated Gibbs measure $\rwN$ 
in~\eqref{Gibbs2a}
converges in probability to $(u, \dt u)$ 
in $C([0, T]; \H^{-\eps}(\T^2))$.
Furthermore, the law of $(u(t), \dt u(t))$ for any $t \in \R_+$
is given by the renormalized Gibbs measure $\rw$.

\end{theorem}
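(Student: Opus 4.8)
The plan is to run Bourgain's invariant measure argument \cite{BO94, BO96}, in the form adapted to singular SPDEs in \cite{ORTz, GKOT, OOTol, OOTol2}, combining four ingredients: (a) invariance of the truncated Gibbs measure $\rwN$ under the truncated dynamics \eqref{NW3a}; (b) a local well-posedness and stability theory for \eqref{NW3a} that is quantitative and uniform in $N$, valid for $\rwN$-distributed initial data; (c) a globalization step deducing from (a)--(b) that \eqref{NW3a} generates global-in-time dynamics, with the relevant norm of $(u_N,\dt u_N)$ controlled off an event of arbitrarily small probability, uniformly in $N$; and (d) passage to the limit $N\to\infty$ via the total variation convergence $\rwN \to \rw$ from Proposition~\ref{PROP:Gibbs}.

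For (a), I would use the splitting indicated after \eqref{NW3a}: the deterministic nonlinear wave flow of $\dt^2 u_N + (1-\Dl) u_N + \ld\be C_N \Q_N e^{\be\Q_N u_N} = 0$ conserves the renormalized energy $\Ew$, while the Ornstein--Uhlenbeck flow of $\dt^2 u_N + \dt u_N + (1-\Dl) u_N = \sqrt 2\,\xi$ preserves $\mu_1 \otimes \mu_0$; combining these via the argument of \cite[Section~4]{GKOT} yields invariance of $\rwN$ under \eqref{NW3a}. For (b), the key observation I would exploit is that, since $\ld > 0$ and $C_N > 0$ (Theorem~\ref{THM:1}), the exponent in \eqref{Gibbs1a} is non-positive, whence
\begin{align*}
\frac{d\rhN}{d\mu_1}(u) = Z_N^{-1}\exp\Big(-\ld C_N\int_{\T^2} e^{\be\,\Q_N u}\,dx\Big) \le Z_N^{-1},
\end{align*}
and, by Proposition~\ref{PROP:Gibbs}, $0 < Z := \lim_{N\to\infty} Z_N \le 1$ with $Z_N > 0$ for every $N$, so that $\sup_N \|\tfrac{d\rhN}{d\mu_1}\|_{L^\infty} < \infty$. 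Hence every $(\mu_1 \otimes \mu_0)$-almost sure statement holds $\rwN$-almost surely, with probability bounds uniform in $N$, and I would then rerun the proof of Theorem~\ref{THM:3} for \eqref{NW3a}: after absorbing the linear evolution of the ($\rwN$-distributed) data into the stochastic part, the Gaussian multiplicative chaos $C_N\Q_N e^{\be\Q_N(\,\cdot\,)}$ converges by Proposition~\ref{PROP:Ups} (which is insensitive to the choice of mollifier), and the $X$--$Y$ decomposition \eqref{XY2} of the residual term goes through — crucially, since $\Q_N$ has a non-negative kernel, the sign-definite structure used to close the estimate for the rough component $X$ is preserved. This should yield, for $\rwN$-a.e. datum, a local solution on a random interval $[0,\tau_N]$ with $\sup_N \PP(\tau_N < \delta) \to 0$ as $\delta \downarrow 0$, plus a stability estimate comparing the solutions of two truncations on their common existence interval.

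For (c), I would fix $T, \eps > 0$, partition $[0,T]$ into $\sim T/\delta$ subintervals, and use the invariance from (a) to restart the local solution at each subinterval endpoint; a union bound over the $\sim T/\delta$ restarts, combined with the uniform smallness of $\PP(\tau_N < \delta)$ from (b), produces $\delta = \delta(T,\eps) > 0$ and $R = R(T,\eps) < \infty$, independent of $N$, such that off an event of probability $\le \eps$ the solution $u_N$ of \eqref{NW3a} exists on $[0,T]$ with the relevant norm of $(u_N, \dt u_N)$ bounded by $R$ there; letting $\eps\downarrow 0$ gives $\rwN$-almost sure global well-posedness of \eqref{NW3a}, and $\Law\big((u_N(t), \dt u_N(t))\big) = \rwN$ for all $t \ge 0$ by (a). For (d), using the maximal coupling of $\rwN$ with $\rw$ (whose failure probability $\tfrac12\|\rwN - \rw\|_{\mathrm{TV}} \to 0$ by Proposition~\ref{PROP:Gibbs}), it suffices to treat a common $\rw$-distributed datum; the stability estimate from (b) and the uniform bounds from (c) then show $\{(u_N, \dt u_N)\}_N$ is Cauchy in probability in $C([0,T]; \H^{-\eps})$, with limit $(u, \dt u)$ (and, letting $T\to\infty$, a global-in-time process) giving the claimed global well-posedness. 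Invariance would follow by passing to the limit: for $F \in C_b(\H^{-\eps})$ and $t \ge 0$,
\begin{align*}
\int F\big(u(t), \dt u(t)\big)\, d\PP
&= \lim_{N\to\infty} \int F\big(u_N(t), \dt u_N(t)\big)\, d\PP \\
&= \lim_{N\to\infty}\int F\, d\rwN = \int F\, d\rw,
\end{align*}
using convergence in probability with $F$ bounded, then invariance of $\rwN$, then total variation convergence; hence $\Law\big((u(t), \dt u(t))\big) = \rw$ for every $t$.

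The hard part will be ingredient (b): making the proof of Theorem~\ref{THM:3} quantitative and stable under $N\to\infty$, simultaneously with the change of mollifier from $\P_N$ to $\Q_N$ and the use of genuinely $\rw$-distributed (hence only $\H^{-\eps}$-regular) initial data. I would need to check that the multilinear and energy estimates behind the $X$--$Y$ system \eqref{XY2} are uniform in $N$, that the sign-definiteness needed for the $X$-equation survives the appearance of $\Q_N$ both inside and outside the exponential, and that the Gaussian multiplicative chaos built from the stochastic convolution plus the linear evolution of $\rw$-distributed data still converges in the required space-time topology. Once (b) is in place, (a), (c) and (d) should be routine.
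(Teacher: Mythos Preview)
Your proposal is correct and follows essentially the same approach as the paper: Bourgain's invariant measure argument built on (a) invariance of $\rwN$ via the splitting after \eqref{NW3a}, (b) a $\Q_N$-adapted, $N$-uniform version of Theorem~\ref{THM:wave} exploiting that the non-negative kernel of $\Q_N$ preserves the sign-definite structure $\be\Q_N X_N\le 0$, and (c)--(d) globalization and passage to the limit via Proposition~\ref{PROP:Gibbs}. The one technical point you should be explicit about in step (b) is the treatment of the operator $\Q_N-\Id$ appearing when comparing the truncated and limiting systems; the paper handles this via the multiplier bound $\|(\Q_N-\Id)f\|_{W^{s,p}}\lesssim N^{s-s_1}\|f\|_{W^{s_1,p}}$ (see \eqref{QN-1}), which costs $\eps$ derivatives but gains $N^{-\eps}$ and does not shrink the range $0<\be^2<\bw^2$.
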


Unlike Theorem \ref{THM:1.2} in the parabolic setting, 
Theorem \ref{THM:3}
does not yield global well-posedness of SdNLW \eqref{NW1}.
Therefore, in order to prove Theorem \ref{THM:4}, 
we need to employ Bourgain's invariant measure argument \cite{BO94, BO96}
to first prove almost sure global well-posedness
by exploiting invariance of the truncated Gibbs measure $\rhN$
for the truncated dynamics~\eqref{NW3a}.
Since such an argument is by now standard, 
 we omit details.
See, for example,~\cite{ORTz, STz, OOTol, Bring2, OOTol2}
in the context of the (stochastic) nonlinear wave equations.

\begin{remark} \label{REM:STz} \rm

In \cite{STz}, Sun and Tzvetkov studied the following
 (deterministic) dispersion-generalized 
nonlinear wave equation (NLW) on $\T^d$ with the exponential nonlinearity:
\begin{align}
\dt^2 u  +  (1- \Dl)^\al  u   +    e^{ u} =0
\label{NW4}
\end{align}

\noi
and the associated Gibbs measure $\rho_\al$.
When $\al > \frac d2$, 
they proved almost sure global well-posedness
of \eqref{NW4} with respect to the Gibbs measure $\rho_\al$
and invariance of $\rho_\al$. 
We point out that, 
 when $\al > \frac d2$, 
a solution $u$ is a function and no normalization is required.
As such, 
 the analysis in \cite{STz} also applies to\footnote{In the massless case:
$\dt^2 u +( - \Dl)^\al  u   +    e^{ \be u} =0$, 
by scaling analysis, we can reduce the problem to the $\be = 1$ case 
(on a dilated torus, where the analysis in \cite{STz} still applies).}  
\begin{align}
\dt^2 u  +  (1- \Dl)^\al  u   +    e^{ \be u} =0
\label{NW4a}
\end{align}

\noi
for {\it any} $\be \in \R\setminus\{0\}$
and a precise value of $\be$ is irrelevant in this non-singular setting.


When $d = 2$, their result barely misses the $\al = 1$
case, 
corresponding to the wave equation, 
and
the authors in \cite{STz} posed the $\al = 1$ case on $\T^2$ as an interesting and challenging open problem.
By adapting the proofs of Theorems~\ref{THM:3} and~\ref{THM:4} to the deterministic NLW setting, 
our argument yields
 almost sure global well-posedness
of \eqref{NW4a} 
for $\al= 1$ and $0<\be^2<\bw^2$ with respect to the (renormalized) Gibbs measure $\rho_1$ (\,$= \rw$ in \eqref{Gibbs2})
and invariance of $\rw$, thus answering the open question in an affirmative manner in this regime of $\be^2$.

\end{remark}

\subsection{On the Gaussian multiplicative chaos}
\label{SUBSEC:1.2}

In this subsection, we go over a renormalization procedure for our problems.
In the following, we present a discussion in terms of the frequency truncation operator
$\P_N$ but exactly the same results hold
for the smoothing operator $\Q_N$ defined in \eqref{Q}.
We begin by studying the following 
linear stochastic heat equation with a regularized noise:
\[
\begin{cases}
\partial_t \Ph +\frac12(1-\Delta) \Ph = \P_N \xi\\
\Ph |_{t= 0} = \P_N w_0, 
\end{cases}
\]

\noi
where $w_0$ is the random distribution defined in \eqref{IV2}, 
distributed according to  the massive Gaussian free field $\mu_1$.
Then, the truncated stochastic convolution $\Ph$
is given by 
\begin{align}
\label{conv}
\Ph (t) = P(t) \P_N w_0  + \int_0^t P(t-t') \P_N  dW(t') ,
\end{align}

\noi
where  
 $P(t) = e^{\frac{t}2(\Dl-1)}$ denotes the linear heat operator
defined by 
\begin{align}
\label{heat1}
P(t) f  = e^{\frac{t}2(\Dl-1)} f = \sum_{n\in \Z^2} e^{-\frac{t}2 (1+|n|^2)} \ft f (n)e_n
\end{align}

\noi
and $W$ denotes 
the cylindrical Wiener process on $L^2(\T^2)$   defined by
\begin{align}
W(t) = \sum_{n\in \Z^2}  B_n (t)  e_n.
\label{Wiener}
\end{align}

\noi
Here, 
$\{B_n\}_{n\in \Z^2}$ is a family of mutually independent
complex-valued Brownian motions conditioned so that $B_{-n} = \cj{B_n}$, 
$n\in \Z^2$.
By convention, we normalize $B_n$ such that $\text{Var}(B_n(t)) = t$
and assume that $\{B_n\}_{n\in \Z^2}$
is independent from $w_0$ and $w_1$ in \eqref{IV2}.

Given $N \in \N$, 
we have $\Ph \in C(\R_+ \times \T^2)$.
For each fixed $t\ge 0$ and $x\in \T^2$,
it is easy to  see that $\Ph (t,x)$ is a mean-zero real-valued Gaussian random variable with variance
(independent of $(t, x) \in \R_+ \times \T^2$):
\begin{align}
\sh &  = \E \big[\Ph(t,x)^2\big]
 =  \frac{1}{4\pi^2}\sum_{n \in \Z^2}  \chi_N^2(n)
\bigg( \frac{e^{-t\jb{n}^2}}{ \jb{n}^2} +  \int_0^t \bigg[e^{- \frac12 (t-t') \jb{n}^2} \bigg]^2 dt' \bigg) 
\notag \\
& =  \frac{1}{4\pi^2}\sum_{n \in \Z^2} \chi_N^2(n) \frac{1}{ \jb{n}^2}  \sim  \log N
\too \infty, 
\label{sig}
\end{align}
  
\noi
as $N \to \infty$.
This essentially shows that   $\{\Psi_N (t)\}_{N\in \N}$ is almost surely unbounded in $W^{0,p}(\T^2)$
for any $1\le p \le \infty$.

In the case of the  wave equation, 
we consider the following linear stochastic damped wave equation with a regularized noise:
\begin{equation*}
\begin{cases}
\dt^2\Pw + \dt\Pw + (1-\Dl)\Pw  = \sqrt 2\P_N \xi,\\
(\Pw,\dt\Pw)|_{t=0}=(\P_N w_0, \P_N w_1),
\end{cases}
\end{equation*}

\noi
where $w_0$ and $w_1$ are as in \eqref{IV2}.
Then, the stochastic convolution $\Pw$ in this case is given by 
\begin{equation}
\Pw(t) =  \dt\D(t)\P_Nw_0 +  \D(t)\P_N\big(w_0+w_1) + \sqrt 2 \int_0^t\D(t-t')\P_N dW(t'), 
\label{W2}
\end{equation}

\noi
where the linear damped wave operator $\D(t)$ is given by
\begin{equation}\label{D}
\D(t) = e^{-\frac t2 }\frac{\sin\Big(t\sqrt{\tfrac34-\Dl}\Big)}{\sqrt{\tfrac34-\Dl}}, 
\end{equation}

\noi
viewed as a Fourier multiplier operator:
\begin{align}
\D(t) f  =  e^{-\frac t2}\sum_{n\in \Z^2} 
\frac{\sin\Big(t\sqrt{\tfrac34+ |n|^2 }\Big)}{\sqrt{\tfrac34 + |n|^2 }}
 \ft f (n) e_n.
\label{W3}
\end{align}

\noi
One can easily derive the propagator $\D(t)$ in \eqref{D} by writing the linear damped wave equation
$\dt^2 u + \dt u + (1-\Dl) u = 0$
 on the Fourier side:
 \[ \dt^2 \ft u(t, n)  + \dt \ft u(t, n) + \jb{n}^2 \ft u(t, n) =0\]
 
 \noi
  and solving it directly for each spatial frequency $n \in \Z^2$. 
Then, a standard variation-of-parameter argument yields 
the expression \eqref{W2}.
By a direct computation using \eqref{W2} and~\eqref{W3}, 
we obtain, for any $(t, x) \in \R_+ \times \T^2$, 
\begin{align}
\sw &  = \E \big[\Pw(t,x)^2\big]
 =  \frac{1}{4\pi^2}\sum_{n \in \Z^2} \chi_N^2(n) \frac{1}{ \jb{n}^2}  \sim  \log N
\too \infty, 
\label{sigW}
\end{align}

\noi
as $N \to \infty$.

In the following, we set  
\begin{align*}
\Psi_N = \Ph   \, \text{ or }\,   \Pw
\qquad \text{and}\qquad
\s_N = \sh = \sw.
\end{align*}

\noi
Since we do not study the stochastic heat and wave equations at the same time, 
their meaning will be clear from the context.

By a standard argument, we then have the following regularity and convergence  result
for the (truncated) stochastic convolution.
See, for example, 
\cite[Proposition 2.1]{GKO} in the context of the wave equation.

\begin{lemma}\label{LEM:psi}

Given any  $T,\eps>0$ and finite $p \geq 1$, 
 $\{\Psi_N\}_{N\in \N}$ is a Cauchy sequence in $L^p(\O;C([0,T];W^{-\eps,\infty}(\T^2)))$,
 converging to some limit $\Psi$ in $L^p(\O;C([0,T];W^{-\eps,\infty}(\T^2)))$.
Moreover,  $\Psi_N$  converges almost surely to the same  limit $\Psi\in C([0,T];W^{-\eps,\infty}(\T^2))$.
\end{lemma}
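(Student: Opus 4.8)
The plan is to follow the by-now-standard scheme for controlling stochastic convolutions (see, e.g., \cite[Proposition 2.1]{GKO}): reduce the statement to second-moment estimates on the Fourier coefficients of $\Psi_N$, upgrade these to arbitrary moments via Gaussian hypercontractivity (since $\Psi_N$ lies in the first Wiener chaos generated by $W$, $w_0$, $w_1$), convert the resulting pointwise-in-$(t,x)$ bounds into bounds in $C([0,T];W^{-\eps,\infty}(\T^2))$ by combining a Littlewood--Paley/Besov embedding with Kolmogorov's continuity criterion, and finally deduce almost sure convergence of the full sequence from a summable convergence rate.

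First I would record the basic stochastic estimates. Fix $N\le M$. From the explicit representations \eqref{conv} and \eqref{W2}, together with the independence of $W$, $w_0$, $w_1$ and It\^o's isometry, each coefficient $\ft{\Psi_N}(t,n)$ is a complex Gaussian, and one computes
\[
\E\big|\ft{\Psi_N}(t,n)-\ft{\Psi_M}(t,n)\big|^2 \lesssim \jb{n}^{-2}\,\ind_{\{|n|\gtrsim N\}},
\]
uniformly in $t\in[0,T]$: indeed $\chi_N-\chi_M$ is bounded by $1$ and supported on $|n|\gtrsim N$, while after the $dt'$-integration the heat/damped-wave multipliers in \eqref{heat1} and \eqref{W3} contribute only bounded factors, exactly as in the variance computations \eqref{sig} and \eqref{sigW}. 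For the time regularity I would similarly bound, for an arbitrarily small $\kappa>0$,
\[
\E\big|\ft{\Psi_N}(t,n)-\ft{\Psi_N}(t',n)\big|^2 \lesssim |t-t'|^{\kappa}\,\jb{n}^{-2+\kappa},
\]
where the loss of $\jb{n}^{\kappa}$ arises by interpolating the uniform bound $\jb{n}^{-2}$ against the crude Lipschitz-in-time bound that costs one factor of $\jb{n}$ (via $\partial_t^2\D\lesssim\jb{n}$ for the damped-wave propagator, equivalently through the $\dt\D(t)\P_N w_0$ term in \eqref{W2}, resp.\ via $\dt P(t)$ in the heat case); since $\kappa$ can be taken as small as we wish this is harmless.

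Next I would upgrade and sum. For a Littlewood--Paley block $\pi_j f=\sum_{|n|\sim 2^j}\ft f(n)e_n$, the random variable $\pi_j(\Psi_N-\Psi_M)(t,x)$ again lies in the first Wiener chaos, so Nelson's hypercontractivity gives $\E|X|^p\lesssim_p(\E|X|^2)^{p/2}$ for every finite $p\ge 2$. Combined with $\sum_{|n|\sim 2^j}\jb{n}^{-2}\sim 1$, this yields, uniformly in $(t,x)$,
\[
\E\big|\pi_j(\Psi_N-\Psi_M)(t,x)\big|^p \lesssim \big(2^{-2j}\,\#\{|n|\sim 2^j:\,|n|\gtrsim N\}\big)^{p/2}\lesssim \ind_{\{2^j\gtrsim N\}}.
\]
Fixing $q\ge p$ large (depending on $\eps$) and using the embedding $B^{-\eps/2}_{q,q}(\T^2)\hookrightarrow W^{-\eps,\infty}(\T^2)$, integrating in $x$ and summing $2^{-j\eps q/2}\ind_{\{2^j\gtrsim N\}}$ over $j$, I obtain $\E\big[\|(\Psi_N-\Psi_M)(t)\|_{W^{-\eps,\infty}}^p\big]\lesssim N^{-\delta}$ for some $\delta=\delta(\eps,q)>0$, uniformly in $t\in[0,T]$; the same computation with the time-increment bound gives $\E\big[\|(\Psi_N-\Psi_M)(t)-(\Psi_N-\Psi_M)(t')\|_{W^{-\eps,\infty}}^p\big]\lesssim N^{-\delta}|t-t'|^{\kappa p/2}$. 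Choosing $p$ large enough that $\kappa p/2>1$, Kolmogorov's continuity criterion for Banach-space-valued processes upgrades these to $\E\big[\|\Psi_N-\Psi_M\|_{C([0,T];W^{-\eps,\infty})}^p\big]\lesssim N^{-\delta}$. Hence $\{\Psi_N\}_{N\in\N}$ is Cauchy in $L^p(\O;C([0,T];W^{-\eps,\infty}(\T^2)))$; denoting the limit by $\Psi$, the summable rate along the dyadic subsequence $N=2^k$ together with the Borel--Cantelli lemma --- and a routine control of $\sup_{2^k\le N<2^{k+1}}\|\Psi_N-\Psi_{2^k}\|_{C([0,T];W^{-\eps,\infty})}$, whose $p$-th moment is also $O(2^{-k\delta})$ --- yields almost sure convergence of the whole sequence to the same $\Psi$.

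\emph{On the main difficulty.} There is no serious obstacle here: the argument is entirely standard, which is why the statement is phrased as holding ``by a standard argument''. The only point requiring mild care is the time-regularity estimate in the wave case, where $\D(t)$ and $\dt\D(t)$ oscillate; one has to check that differentiating in time costs only a single power of $\jb{n}$ so that, after interpolation with the uniform $\jb{n}^{-2}$ bound, the loss is an arbitrarily small power of $\jb{n}$. In the heat case the parabolic smoothing of $P(t)$ makes even this trivial.
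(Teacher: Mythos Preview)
Your proposal is correct and follows exactly the standard argument that the paper invokes (the paper does not give a proof but simply refers to \cite[Proposition 2.1]{GKO}). The ingredients you list --- second-moment bounds on Fourier coefficients, hypercontractivity in the first Wiener chaos, a Besov embedding plus Kolmogorov's criterion, and Borel--Cantelli for almost sure convergence --- are precisely those underlying that reference.
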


Clearly, the limiting stochastic convolution is given by formally taking $N \to \infty$
in \eqref{conv} or \eqref{W2}.
Namely, in the heat case, we have
\begin{align}
\Psi (t) = \Psi^\text{heat}(t) = P(t)  w_0  + \int_0^t P(t-t')   dW(t') , 
\label{Ph1}
\end{align}

\noi
while in the wave case, it is given by 
\begin{equation}
\Psi(t) = \Psi^\text{wave}(t) = \dt\D(t)w_0 + \D(t)\big(w_0+w_1) + \sqrt 2 \int_0^t\D(t-t')dW(t').
\label{W6}
\end{equation}

Next, we study the Gaussian multiplicative chaos 
formally given by 
\begin{align*}
e^{\be \Psi_N} = \sum_{k=0}^\infty \frac{\be^k}{k!} \Psi_N^k (t).
\end{align*}

\noi
Since $\Psi_N^k $, $k \geq 2$, 
does not have any nice limiting behavior as $N \to \infty$, 
we
 now introduce the Wick renormalization:
\begin{align}
:\!\Psi_N^k(t,x)\!:\, \stackrel{\text{def}}{=} H_k\big(\Psi_N(t,x);\s_N \big), 
\label{P2}
\end{align}

\noi
where  $H_k$ denotes the $k$th Hermite polynomial, defined through the generating function:
\begin{equation}\label{Hermite}
e^{tx-\frac{\s^2}2 t} = \sum_{k= 0}^\infty\frac{t^k}{k!}H_k(x;\s).
\end{equation}

\noi
From \eqref{P2} and \eqref{Hermite}, 
the (renormalized) Gaussian multiplicative chaos is then given by
 \begin{align}
\begin{split}
 \U_N(t,x)
 &  = \,:\!e^{\be\Psi_N(t,x)}\!:\,\,  
 \deff
 \sum_{k= 0}^\infty\frac{\be^k}{k!}:\!\Psi_N^k(t, x)\!:\\
 & =  e^{-\frac{\be^2}2 \s_N}e^{\be\Psi_N(t,x)}.
\end{split}
\label{re-non}
 \end{align} 

\noi
We also set $C_N = C_N(\be)$ by 
\begin{align}
C_N = e^{-\frac{\be^2}2 \sigma_N} \too 0,
\label{C1}
\end{align}

\noi
as $N\to \infty$.

The following proposition provides the regularity and convergence properties of the Gaussian multiplicative chaos $\U_N$.

\begin{proposition}\label{PROP:Ups} 
\textup{(i)} Given $0<\be^2<8\pi$, let $1\leq p<\frac{8\pi}{\be^2}$ and define $ \al=\al(p)$ by
\begin{equation}\label{al}
\frac{(p-1)\be^2}{4\pi} <\al(p)<2.
\end{equation}

\noi
Then, given any $T>0$, the sequence of 
stochastic processes  $\U_N$ is uniformly bounded in
\[L^p(\O;L^p([0,T];W^{-\al,p}(\T^2))).\]

\smallskip 
\noi
\textup{(ii)} Given $0<\be^2<4\pi$, let $1<  p < \frac{8\pi}{\be^2}$ and $\al(p)$ as in \eqref{al}. Then, given any $T>0$, $\{\U_N\}_{N\in\N}$ is a Cauchy sequence in $L^p(\O;L^p([0,T];W^{-\al,p}(\T^2)))$ and hence converges to some limit $\U$ in the same class.
In particular, $\U_N$ converges in probability 
to $\U$
in $L^p([0,T];W^{-\al ,p}(\T^2))$.

 \end{proposition}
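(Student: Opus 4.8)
The plan is to control the moments of $\U_N$ by first reducing everything to the $x$-variable via Minkowski's inequality and Fubini, then to estimate $\E\big[ \| \U_N(t)\|_{W^{-\al,p}}^p\big]$ for fixed $t$. Since $W^{-\al,p}$ can be characterized by Bessel potentials (or, more conveniently for explicit computations, by a Besov-type or Sobolev duality characterization), it suffices to bound quantities of the form $\E\big[ | \jb{\nabla}^{-\al}\U_N(t,x) \, \overline{\jb{\nabla}^{-\al}\U_N(t,y)} | \big]$ after pairing against test functions, or — more simply — to work with the fractional Sobolev (Gagliardo) norm and bound $\E\big[ \U_N(t,x)\U_N(t,y)\big]$ directly. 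The key point is the exact Gaussian computation: since $\U_N(t,x) = e^{-\frac{\be^2}{2}\s_N}e^{\be\Psi_N(t,x)}$ is an exponential of a mean-zero Gaussian, for any finite collection of points one has the classical identity
\begin{align}
\E\Big[ \prod_{j=1}^m \U_N(t,x_j)\Big] = \exp\Big( \be^2 \sum_{1\le j < k \le m} \G_N(x_j - x_k)\Big),
\label{prop-ups-plan-1}
\end{align}
where $\G_N$ is the (smoothed) covariance kernel of $\Psi_N(t,\cdot)$, which behaves like $-\frac{1}{2\pi}\log|x-y|$ up to bounded corrections (uniformly in $N$, with $\G_N \to \G$ pointwise off the diagonal and $\G_N(0) = \s_N \sim \log N$). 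The renormalization constant $e^{-\frac{\be^2}{2}\s_N}$ is exactly what cancels the diagonal blow-up in \eqref{prop-ups-plan-1}.

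For part (i), I would take $p$ a positive integer first (the general case follows by Hölder/interpolation, or one runs the argument for the Gagliardo norm with the $p$-th power expanded as a $p$-fold product). Expanding the $W^{-\al,p}$ norm and using \eqref{prop-ups-plan-1} with $m = p$, the $p$-th moment is bounded by an integral of the form $\int_{(\T^2)^p} \prod_{j<k} |x_j - x_k|^{-\frac{\be^2}{2\pi}} \prod_j (\text{weights from } \jb{\nabla}^{-\al}) \, dx$. Here the two approaches the authors mention enter: either invoke the \textbf{Brascamp–Lieb inequality} (Lemma \ref{LEM:BL}) to bound the multilinear singular integral $\int \prod_{j<k}|x_j-x_k|^{-\frac{\be^2}{2\pi}}dx$ by a finite constant, or use \textbf{Kahane's convexity/subadditivity argument} to reduce the $p$-fold correlation to a product of pairwise ones. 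Either way, one checks that the integral converges precisely when $\frac{(p-1)\be^2}{2\pi} < 4$, i.e. $p < \frac{8\pi}{\be^2}$, and the loss of $\al$ derivatives is needed to absorb the logarithmic/borderline factors — the condition $\frac{(p-1)\be^2}{4\pi} < \al < 2$ in \eqref{al} is exactly the integrability window for $\int_{\T^2}\int_{\T^2} |x-y|^{-2\al}\,(\text{GMC correlation})\,dx\,dy$, the upper bound $\al<2$ ensuring the weight $|x-y|^{-2\al}$ is still integrable in $2+2$ dimensions near the diagonal after the GMC contributes its own $|x-y|^{-(p-1)\be^2/(2\pi)}$. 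The uniformity in $N$ is automatic since all the kernel bounds on $\G_N$ are uniform in $N$ and the renormalization exactly kills the divergence. Finally, integrate in $t\in[0,T]$, which only contributes a harmless factor $T$.

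For part (ii), I would prove that $\{\U_N\}$ is Cauchy by estimating $\E\big[\|\U_N - \U_M\|_{W^{-\al,p}}^p\big]$ for $M \ge N$. Writing $\U_N - \U_M = (e^{-\frac{\be^2}{2}\s_N}e^{\be\Psi_N} - e^{-\frac{\be^2}{2}\s_M}e^{\be\Psi_M})$ and using the telescoping/difference structure, the relevant quantity is again Gaussian: one computes $\E[\U_N(t,x)\U_M(t,y)] = \exp(\be^2 \G_{N,M}(x-y))$ where $\G_{N,M}$ is the cross-covariance of $\Psi_N$ and $\Psi_M$, and the differences $\G_N - \G_{N,M}$, $\G_M - \G_{N,M}$ tend to $0$ (with quantitative rate, e.g.\ a power of $N^{-1}$ on frequencies $\lesssim N$ plus a tail) as $N\to\infty$. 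Expanding $\|\U_N - \U_M\|_{W^{-\al,p}}^p$ and using these, one gets a bound of the form $C\, N^{-\theta}$ for some $\theta>0$, provided the corresponding singular integrals still converge — and this is where the stronger restriction $\be^2 < 4\pi$ (rather than $8\pi$) together with $p > 1$ is used: one needs a little room, i.e.\ $\frac{2\be^2}{2\pi}<4$ so that \emph{two} GMC-type factors (one from each of $\U_N$, $\U_M$ appearing together) are simultaneously integrable against the Sobolev weights, which forces $\be^2<4\pi$. Convergence in probability in $L^p([0,T];W^{-\al,p})$ then follows from completeness of $L^p(\O;L^p([0,T];W^{-\al,p}))$ and Chebyshev.

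The main obstacle I expect is the borderline bookkeeping in part (i): getting the \emph{sharp} threshold $p<\frac{8\pi}{\be^2}$ and the precise admissible range \eqref{al} of $\al$ requires carefully tracking how the negative-order Sobolev weight interacts with the singular product $\prod_{j<k}|x_j-x_k|^{-\be^2/(2\pi)}$ — the naive pairwise bound is lossy, and it is exactly here that the Brascamp–Lieb inequality (or Kahane's argument) is essential to avoid losing a factor and shrinking the range of $\be^2$. The Cauchy estimate in part (ii) is conceptually the same computation with an extra smallness factor, so once (i) is done cleanly, (ii) should follow without new ideas.
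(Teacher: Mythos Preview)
Your plan captures the Gaussian computation \eqref{prop-ups-plan-1} correctly and identifies the right singular integral, but there is a real gap in part (i). You write that one can ``take $p$ a positive integer first (the general case follows by H\"older/interpolation)''. This is precisely what fails to give the sharp statement. The paper addresses this explicitly in Subsection~3.3: if $2m < p < 2m+2$ and you interpolate between the $L^{2m}$ and $L^{2m+2}$ bounds obtained via Brascamp--Lieb (Proposition~\ref{PROP:var}\,(ii)), you are forced to assume $2m+2 < \frac{8\pi}{\be^2}$, i.e.\ $\be^2 < \frac{4\pi}{2m+1}$, which is strictly stronger than the claimed $\be^2 < \frac{4\pi}{p-1}$. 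So interpolation from even integers does \emph{not} recover the full range in \eqref{al}. (Odd integer $p$ is also not directly accessible to the expansion argument, since $\jb{\nabla}^{-\al}\U_N$ is not manifestly non-negative and you cannot drop the absolute value.) The paper's remedy is Kahane's approach (Proposition~\ref{PROP:var2}): use Kahane's convexity inequality to compare $\Psi_N$ on small balls with a rescaled copy plus an independent Gaussian, derive the multifractal bound $\E[\M_N(t,B(x_0,r))^p] \lesssim r^{(2+\be^2/4\pi)p - (\be^2/4\pi)p^2}$ via Lemma~\ref{LEM:GMCmom}, and then dyadically decompose the Bessel potential. This works for \emph{all} real $p \in [1,\frac{8\pi}{\be^2})$ and gives exactly the condition $\al > (p-1)\frac{\be^2}{4\pi}$.

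For part (ii), your idea of computing $\E[\U_N(t,x)\U_M(t,y)]$ via the cross-covariance $\G_{N,M}$ is what the paper does, but only at the level $p=2$ (Proposition~\ref{PROP:var}\,(iii)), and \emph{that} is where $\be^2<4\pi$ enters: the second-moment integral converges when $\frac{\be^2}{4\pi}<\min(1,\al)$. The paper then interpolates this $p=2$ Cauchy estimate against the uniform $L^p$ bound from part (i) to get the $L^p$ Cauchy property. Your explanation that $\be^2<4\pi$ arises because ``two GMC-type factors'' must be simultaneously integrable is not the mechanism; rather, the Cauchy estimate is only available at $p=2$, and this imposes $\be^2<4\pi$ regardless of the target $p$. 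Directly expanding the $p$-th moment of $\U_N-\U_M$ for $p>2$ runs into the non-positivity of the difference and would not obviously extract the required rate.
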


\medskip

In the following, we write the limit $\U$ as 
\begin{align}
\U = \, :\!e^{\be \Psi}\!: \, = \lim_{N\to \infty} \U_N  
= \lim_{N\to \infty}  C_N e^{\be \Psi_N }.
\label{U2}
\end{align}

\noi
We point out that 
by applying Fubini's theorem,  
 a proof of Proposition \ref{PROP:Ups} 
reduces
 to analysis for fixed $(t, x) \in \R_+ \times \T^2$.
Therefore, the proof is identical for
$\Psi_N = \Ph    \text{ and }   \Pw$.

In \cite{Garban}, 
Garban established an analogous result on the Gaussian multiplicative chaos
but in the context of the space-time H\"older regularity;
see \cite[Theorem 3.10]{Garban}.
See also 
\cite[Theorem 6]{ADG} for an analogous approach in the elliptic setting, 
working in the $L^p$-based Besov spaces but only for $1 < p \leq 2$.

In the case of a polynomial nonlinearity
\cite{GKO, GKO2}, the $p$th moment bound follows directly from the second moment
estimate combined with  the Wiener chaos estimate (see, for example, Lemma 2.5 in \cite{GKO2}),
since the stochastic objects in \cite{GKO, GKO2}
all belong to Wiener chaoses of finite order. 
However, 
the Gaussian multiplicative chaos $\U_N$ in \eqref{re-non}
 does \emph{not} belong to any  Wiener chaos of finite order.
Therefore, we need to  estimate all the higher moments by hand. 
The  approach in \cite{Garban} is based on Kahane's convexity inequality \cite{Ka85};
see Lemma~\ref{LEM:Kahane}.
In Section \ref{SEC:Gauss},  we first  compute higher even moments,  
using the Brascamp-Lieb inequality \cite{BL,Lieb,BCCT}.
See Lemma~\ref{LEM:BL} and Corollary~\ref{COR:BL}.
We believe that our approach based on the Brascamp-Lieb inequality is of independent interest. In order to compare this approach with  Kahane's, we also provide a proof of Proposition~\ref{PROP:Ups} based on Kahane's inequality. See Propositions~\ref{PROP:var} and~\ref{PROP:var2}  as well as Appendix~\ref{SEC:B}.

We conclude this subsection by briefly discussing a proof of Proposition \ref{PROP:Gibbs}.

\begin{proof}[Proof of Proposition \ref{PROP:Gibbs}]

As mentioned above, the proof of Proposition \ref{PROP:Ups}
is based on reducing the problem for fixed $(t, x) \in \R_+\times \T^2$.
In particular, it follows from the proof of Proposition \ref{PROP:Ups}
presented in Section \ref{SEC:Gauss}
that $\U_N(0)$ at time $t = 0$ converges to $\U(0)$
in $L^p(\O;W^{-\al,p}(\T^2))$.
Then, by restricting to the (spatial) zeroth Fourier mode, 
we obtain convergence in probability (with respect to the Gaussian free field $\mu_1$ in \eqref{Gauss1})
of the 
density
\begin{align}
R_N =   \exp\bigg(- \ld C_N \int_{\T^2}e^{\be \, \Q_N u} dx \bigg)
  = \exp \big( - 2\pi \ld \ft \U_N(0, 0)\big)
\label{W7}
\end{align}

\noi
to 
\begin{align*}
R =   \exp\bigg(- \ld  \int_{\T^2}: e^{\be  u}:  dx \bigg)
  = \exp \big( - 2\pi \ld \ft \U(0, 0)\big).
\end{align*}

\noi
Moreover, by the positivity of $\U_N$
and $\ld$, 
the density $R_N$ in \eqref{W7} is uniformly bounded by 1.
Putting together, 
we conclude the $L^p(\mu_1)$-convergence of the density 
$R_N$ to $R$
 by a standard argument (see \cite[Remark~3.8]{TZ2}). 
More precisely, the $L^p$-convergence of $R_N$ 
 follows from the uniform $L^p$-bound on $R_N$ 
and the softer convergence in probability. \end{proof}

\subsection{Outline of the proof}
\label{SUBSEC:1.3}

In the following, we briefly describe an outline of the proofs
of Theorems~\ref{THM:1}, \ref{THM:1.2}, \ref{THM:2}, \ref{THM:3}, and \ref{THM:4}.

\medskip

\noi
$\bullet$ {\bf Parabolic case:}
Given $v_0 \in L^\infty(\T^2)$, 
we consider the truncated SNLH \eqref{NH3}.
We proceed with the Da Prato-Debussche trick
and write a solution $u_N$ to \eqref{NH3}
as
\begin{align*}
u_N  = v_N  + z + \Psi_N,
\end{align*}

\noi
where  $\Psi_N = \Ph$ is the truncated stochastic convolution in  \eqref{conv} 
and $z$ denotes the linear solution
given by 
\begin{align}
z = P(t) v_0.
\label{linear}
\end{align}

\noi
Then, the residual term $v_N$ satisfies the following equation:
\begin{align}
\label{NH5}
\begin{cases}
\partial_t v_N + \frac 12 (1- \Delta) v_N +  \frac 12 \ld \be
  e^{\be z }  e^{\be v_N} \U_N=0\\
v_N|_{t = 0} = 0,
\end{cases}
\end{align}

\noi
where $\U_N = \, :\!e^{\be \Ph}\!:$ denotes the Gaussian multiplicative noise defined in \eqref{re-non}.

When 
$0 < \be^2 < \frac{8 \pi}{3 + 2 \sqrt 2}\simeq 1.37 \pi$, 
we prove local well-posedness of \eqref{NH5}
by a standard contraction argument.
The key ingredients are
Proposition \ref{PROP:Ups}
on the regularity of the Gaussian multiplicative chaos $\U_N$
and the positivity of the nonlinearity, 
in particular the positivity of $\U_N$
(see Lemma \ref{LEM:posprod}).
In studying 
 continuity in the noise, 
we can no longer exploit any positivity.
For this part of the argument, 
we use a more robust energy method 
and combine it with the uniqueness of a solution to the limiting 
equation (see \eqref{NH5a} below).

Theorem~\ref{THM:1} follows once we prove the following 
local well-posedness result for \eqref{NH5}.

\begin{theorem}
\label{THM:para1}
Let $\ld \ne 0$ and 
$0 < \be^2 < \bh^2 = \frac{8 \pi}{3 + 2 \sqrt 2}\simeq 1.37 \pi$.
Given any $v_0 \in L^\infty(\T^2)$, 
the Cauchy problem \eqref{NH5}
is uniformly locally well-posed
in the following sense;
there exists $T_0 = T_0\big(\|v_0\|_{L^\infty}, \be, \ld \big) >0$
such that 
 given $0 < T\le T_0$
and $N \in \N$,  
there exists a set 
$\O_N(T)\subset \O$
such that 
\begin{itemize}
\item[\textup{(i)}]
for any $\om \in \O_N(T)$, 
there exists a unique solution $v_N$ to \eqref{NH5} 
in the class\textup{:} 
\[C([0,T];W^{s, p} (\T^2))
\subset C([0,T];L^\infty (\T^2))\]

\noi
for some appropriate $0 < s< 1$ and $ p \ge 2$, 
satisfying $sp > 2$.

\smallskip

\item[\textup{(ii)}] there exists a uniform estimate on the probability of 
the complement of $\O_N(T)$\textup{:}
\[P(\O_N(T)^c) \too 0,  \]

\noi
 uniformly in $N \in \N$, as $T \to 0$,

\end{itemize}

Furthermore, 
there exist
 an almost surely positive  stopping time $\tau =  \tau\big(\|v_0\|_{L^\infty}, \be \big)$ 
 and a stochastic process $v\in C([0,T];W^{s, p} (\T^2))$
such that, given any small $T >0 $,  
on the event $\{ \tau \geq T\}$, 
the sequence $\{v_N \}_{N\in \N}$
converges in probability to $v$ in 
$C([0,T];W^{s, p} (\T^2))$.

\end{theorem}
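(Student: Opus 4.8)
The plan is to set up the equation \eqref{NH5} as a fixed point problem for the map
\begin{align*}
\Gamma_N v = -\frac{\ld\be}{2} \int_0^t P(t-t') \Big( e^{\be z(t')} e^{\be v(t')} \U_N(t') \Big) dt'
\end{align*}
in a ball of $C([0,T]; W^{s,p}(\T^2))$, where $0 < s < 1$ and $p \geq 2$ are chosen so that $sp > 2$ (hence $W^{s,p}(\T^2) \hookrightarrow L^\infty(\T^2)$, which is what makes $e^{\be v}$ well-behaved) and simultaneously so that $\al(p)$ from \eqref{al} can be taken with $s + \al(p) < 2$ — this numerological balance is exactly where the threshold $\be^2 < \frac{8\pi}{3+2\sqrt2}$ enters. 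First I would fix such an exponent pair; since $z = P(t)v_0 \in C([0,T]; L^\infty)$ with a bound depending only on $\|v_0\|_{L^\infty}$, the factor $e^{\be z}$ lies in $C([0,T]; L^\infty)$ as well. Then I would record the parabolic smoothing estimate: $\int_0^t P(t-t') F(t') dt'$ maps $L^p([0,T]; W^{-\al,p})$ into $C([0,T]; W^{s,p})$ with operator norm $\lesssim T^{\ta}$ for some $\ta > 0$, provided $s + \al < 2$. This is the standard Schauder-type estimate for the heat semigroup on $\T^2$.

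The crucial structural input is the \emph{positivity} of $\U_N$ (from \eqref{re-non}, $\U_N = C_N e^{\be\Psi_N} > 0$ pointwise) together with $e^{\be z} > 0$. Because of this, in estimating $\|\Gamma_N v\|_{W^{s,p}}$ I can bound the $W^{-\al,p}$-norm of the product $e^{\be z} e^{\be v}\U_N$ by $\|e^{\be z}e^{\be v}\|_{C^1_x} \|\U_N\|_{W^{-\al,p}}$ — or more precisely by a fractional-Leibniz / paraproduct estimate where the low-regularity rough factor $\U_N$ keeps its negative-regularity norm while the smooth positive factors $e^{\be(z+v)}$ are controlled in a Sobolev space of positive regularity. (Here one uses Lemma~\ref{LEM:posprod} to make sense of the product of the positive distribution $\U_N$ with the positive function $e^{\be(z+v)}$, and the fact that $v \mapsto e^{\be v}$ is locally Lipschitz from $W^{s,p}$ to $W^{s,p}$ once $sp>2$.) Combining this with Proposition~\ref{PROP:Ups}(i), which gives $\U_N$ uniformly bounded in $L^p(\O; L^p([0,T]; W^{-\al,p}))$, I obtain: on the event $\O_N(T) = \{\|\U_N\|_{L^p([0,T];W^{-\al,p})} \leq K\}$, the map $\Gamma_N$ sends the ball $\{\|v\|_{C([0,T];W^{s,p})} \leq R\}$ into itself and is a contraction there, for $R = R(\|v_0\|_{L^\infty})$ fixed and $T = T_0$ small depending only on $\|v_0\|_{L^\infty}, \be, \ld$. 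The uniform bound $P(\O_N(T)^c) \to 0$ as $T \to 0$, uniformly in $N$, follows from Chebyshev and the uniform-in-$N$ moment bound of Proposition~\ref{PROP:Ups}(i). This gives parts (i) and (ii).

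For the convergence statement I would argue in two steps. By Proposition~\ref{PROP:Ups}(ii), $\U_N \to \U$ in $L^p(\O; L^p([0,T]; W^{-\al,p}))$ (note this requires $\be^2 < 4\pi$, which is compatible with the range here). One then considers the limiting equation
\begin{align}
\label{NH5a}
\begin{cases}
\partial_t v + \frac 12 (1- \Delta) v +  \frac 12 \ld \be \, e^{\be z} e^{\be v} \U = 0\\
v|_{t=0} = 0,
\end{cases}
\end{align}
and solves it by the same contraction argument on a common event $\O_\infty(T)$ of probability close to $1$, obtaining a unique solution $v$. Because the fixed-point ball and contraction constant can be chosen uniformly in $N$ on the intersection $\O_N(T) \cap \O_\infty(T)$, subtracting the two Duhamel formulations and using the smoothing estimate gives
\begin{align*}
\|v_N - v\|_{C([0,T];W^{s,p})} \lesssim T^\ta \Big( \|\U_N - \U\|_{L^p([0,T];W^{-\al,p})} + \|v_N - v\|_{C([0,T];W^{s,p})} \Big),
\end{align*}
so that for $T$ small the difference is controlled by $\|\U_N - \U\|_{L^p([0,T];W^{-\al,p})} \to 0$ in probability. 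A standard patching/stopping-time argument then produces the stopping time $\tau$ and upgrades local-in-$T$ convergence on $\{\tau \geq T\}$. Finally, a routine deterministic continuation argument extends the local solution and bootstraps $v \in C([0,\tau]; W^{s,p})$ with the stated inclusion into $C([0,\tau]; L^\infty)$. The main obstacle I anticipate is the product estimate for $e^{\be(z+v)} \cdot \U_N$: one must verify that the paraproduct decomposition genuinely loses only $\al < 2$ derivatives and that the resulting constraint $s + \al < 2$, together with $sp > 2$ and $\al > \frac{(p-1)\be^2}{4\pi}$, is simultaneously solvable precisely when $\be^2 < \frac{8\pi}{3+2\sqrt2}$ — optimizing over $(s,p,\al)$ is what pins down the threshold, and this is where the positivity is essential since without it one would need $\U_N$ paired against something in $C^\al$ rather than merely $W^{s,p}$.
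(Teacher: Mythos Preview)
Your contraction argument for parts (i) and (ii) is essentially what the paper does (its Proposition~\ref{PROP:v1}), including the use of Lemma~\ref{LEM:posprod} to exploit positivity of $\U_N$ and the parameter optimization leading to the threshold $\bh^2$. That part is fine.

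The gap is in your convergence argument. When you subtract the two Duhamel formulations you produce a term of the form $e^{\be(z+v)}(\U_N-\U)$, and here $\U_N-\U$ is \emph{not} a positive distribution, so Lemma~\ref{LEM:posprod} is unavailable. To estimate this product in $W^{-\al,p}$ via the ordinary bilinear estimate (Lemma~\ref{LEM:prod1}\,(ii)) you would need $e^{\be(z+v)}\in W^{\al,q}$ with $q\ge 2/\al$, hence $v\in W^{\al,q}$. But with the optimizing choice \eqref{cons2} one has $\al=2(\sqrt2-1)-2\eps>s=2-\sqrt2+\eps$, so $v\in W^{s,p}$ carries strictly fewer than $\al$ derivatives and this estimate fails. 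Consequently the displayed difference inequality you wrote cannot be closed in $C([0,T];W^{s,p})$ at the optimal exponents. (If you insist on a direct Lipschitz-in-$\U$ argument, the constraints force a smaller range: this is precisely the content of Appendix~\ref{SEC:A}, which yields only $0<\be^2<\tfrac43\pi$ and needs $v_0\in H^{1+\eps}$.)

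The paper's route around this is a compactness argument. One first shows $\{v_N\}$ is bounded in $C_TW^{\wt s,p}_x$ for some $\wt s>s$ and that $\{\dt v_N\}$ is bounded in $L^p_TW^{s-2,p}_x$; the Aubin--Lions lemma (Lemma~\ref{LEM:AL}) then gives a subsequence converging in $C_TW^{s,p}_x$. To identify the limit, one proves $\Phi_{v_0,\U_{N_k}}(v_{N_k})\to\Phi_{v_0,\U}(\wt v)$ only in a \emph{weak} topology $L^1_TW^{s',p}_x$ with $s'\le -s$: in that topology the product $e^{\be(z+\wt v)}(\U_{N_k}-\U)$ can be handled by Lemma~\ref{LEM:prod1}\,(ii) using a different exponent pair $(s-\eps,r)$ with $r$ as in~\eqref{r}, at the price of regularity one can afford there. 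Uniqueness of the fixed point for the limiting equation~\eqref{NH5a} in $C_TW^{s,p}_x$ (Remark~\ref{REM:uniq}) then upgrades subsequential convergence to convergence of the full sequence. You should replace your direct difference estimate by this compactness-plus-weak-identification scheme.
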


The limit $v$ satisfies the following equation:
\begin{align}
\label{NH5a}
\begin{cases}
\partial_t v + \frac 12 (1- \Delta) v +  \frac 12 \ld \be
 e^{\be z }  e^{\be v}\U=0\\
v|_{t = 0} = 0, 
\end{cases}
\end{align}
 
\noi
where $\U$ is the limit of $\U_N$ constructed in Proposition \ref{PROP:Ups}.
Then, $u = v + z+ \Psi$ formally satisfies the equation \eqref{NH3a}.

Next, we discuss the $\ld > 0$ case.
In this case, the equation \eqref{NH5} enjoys a sign-definite structure.
By writing \eqref{NH5} in the Duhamel formulation, we have
\begin{align*}
v_N(t) = - \frac 12 \ld \be\int_0^t 
P(t - t') \big(   e^{\be z }  e^{\be v_N} \U_N\big)(t') dt'.
\end{align*}

\noi
Since the  kernel for $P(t)= e^{\frac{t}2(\Dl-1)}$ 
and the integrand $ e^{\be z }  e^{\be v_N}\U_N$
are both positive, we see that 
\begin{align}
 \be v_N \leq  0.
\label{posX}
 \end{align}

\noi
This observation shows that the nonlinearity $e^{\be v_N}$ is in fact bounded, 
 allowing us to 
rewrite~\eqref{NH5} as
\begin{align}
\label{NH7}
\begin{cases}
\partial_t v_N + \frac 12 (1- \Delta) v_N +  \frac 12 \ld \be
 e^{\be z }  F(\be v_N) \U_N =0\\
v_N|_{t = 0} = 0,
\end{cases}
\end{align}

\noi
where $F$ is a smooth bounded function such that 
\begin{equation}
F(x) = e^x 
\label{F1}
\end{equation}

\noi
for $x \leq 0$
and $F|_{\R_+} \in C^{\infty}_c(\R_+; \R_+)$.
In particular,   
$F$ is Lipschitz.
By making use of this particular structure
and the positivity of the Gaussian multiplicative chaos $\U_N$,
we  prove a stronger well-posedness result,
from which Theorem \ref{THM:1.2} follows.

\begin{theorem}
\label{THM:para2}
Let $\ld > 0$ and  $0<\be^2 <(\bh^*)^2 = 4\pi $.
Given 
any $v_0 \in L^\infty (\T^2)$,  any $T>0$, 
and any $N \in \N$, there exists a unique solution $v_N$ to \eqref{NH5}
in the energy space\textup{:} 
\begin{align}
\ZZ_T=C([0,T];L^2(\T^2))\cap L^2([0,T];H^{1}(\T^2))
\label{energyZ}
\end{align}

\noi
almost surely
such that 
$v_N$ converges in probability to 
some limit $v$ in the class $\ZZ_T$.
Furthermore, $v$ is the unique solution to 
 the equation
 \eqref{NH5a} in the class $\ZZ_T$.

\end{theorem}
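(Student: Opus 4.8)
The plan is to treat \eqref{NH7} rather than \eqref{NH5} directly, exploiting the a priori bound \eqref{posX} which shows that solutions to \eqref{NH5} in the defocusing case automatically satisfy $\be v_N \le 0$, hence coincide with solutions of \eqref{NH7} with $F$ the bounded Lipschitz modification of the exponential. The upshot is that the ``dangerous'' factor $e^{\be v_N}$ is replaced by $F(\be v_N)$, which is bounded and Lipschitz uniformly in $N$. I would first record the deterministic input from Proposition \ref{PROP:Ups}: for $0<\be^2<4\pi$ we may pick $1<p<\tfrac{8\pi}{\be^2}$ and $\al(p)$ with $\tfrac{(p-1)\be^2}{4\pi}<\al(p)<2$ such that $\U_N\to\U$ in $L^p([0,T];W^{-\al,p}(\T^2))$, and one checks that $p,\al$ can be chosen with the additional constraint $\al<1$ (possible precisely because $\be^2<4\pi$), so that $W^{\al,p'}$ embeds nicely and products against $H^1$-type functions make sense by duality. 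Since $v_0\in L^\infty$, the linear evolution $z=P(t)v_0$ satisfies $e^{\be z}\in C([0,T];L^\infty)$, so it is a harmless bounded multiplier.

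\textbf{Existence and uniform bounds.} For fixed $N$, since $\U_N$ is a smooth (continuous) function of $(t,x)$ and $F$ is smooth and bounded with bounded derivatives, \eqref{NH7} is a semilinear heat equation with a globally Lipschitz (in $v_N$), bounded-in-$x$ nonlinearity; classical parabolic theory gives a unique global solution $v_N\in\ZZ_T$, in fact in $C([0,T];L^\infty)$. The point is to get bounds uniform in $N$ in the energy space $\ZZ_T$. Here I would test \eqref{NH7} against $v_N$ and integrate: the linear part yields $\tfrac12\tfrac{d}{dt}\|v_N\|_{L^2}^2+\tfrac12\|v_N\|_{H^1}^2$, while the nonlinear term is controlled by
\[
\Big|\int_{\T^2} \ld\be\, e^{\be z} F(\be v_N)\,\U_N\, v_N\, dx\Big|
\le C\|\U_N\|_{W^{-\al,p}}\, \|F(\be v_N) v_N\|_{W^{\al,p'}},
\]
and since $F$ is bounded and Lipschitz the function $F(\be v_N)v_N$ is controlled in $W^{\al,p'}$ (with $\al<1$) by an interpolation/fractional-Leibniz estimate in terms of $\|v_N\|_{H^1}^\theta\|v_N\|_{L^2}^{1-\theta}$ plus lower-order terms, with $\theta<1$ because $\al<1$ and $p'$ is chosen admissibly. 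Absorbing the top-order contribution into $\tfrac12\|v_N\|_{H^1}^2$ by Young's inequality and using Gronwall with the $L^p_tW^{-\al,p}_x$-bound on $\U_N$ from Proposition \ref{PROP:Ups}, I obtain a bound on $\|v_N\|_{\ZZ_T}$ uniform in $N$, on an event of probability one (indeed with moments). I would also extract, from the equation itself and this energy bound, a uniform bound on $\dt v_N$ in some negative-order space $L^2([0,T];H^{-1-\al})$ or similar.

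\textbf{Convergence and identification of the limit.} With the uniform $\ZZ_T$-bound plus the time-derivative bound, Aubin–Lions compactness gives a subsequence $v_{N_k}\to v$ strongly in $C([0,T];H^{-\sigma})$ and in $L^2([0,T];L^2)$, weakly in $L^2([0,T];H^1)$ and weak-$*$ in $C([0,T];L^2)$. Using strong $L^2_{t,x}$ convergence and Lipschitz continuity of $F$, together with $\U_{N_k}\to\U$ in $L^p_tW^{-\al,p}_x$, one passes to the limit in the weak formulation of \eqref{NH7}: the only subtlety is the product $F(\be v_{N_k})\U_{N_k}$ against a test function, which converges because $F(\be v_{N_k})\to F(\be v)$ strongly in, say, $C([0,T];H^{1-}) $ (from the $H^1$-bound and strong $L^2$ convergence via interpolation) while $\U_{N_k}\to\U$ in the dual class. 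Hence $v$ solves \eqref{NH7}, and by the deterministic a priori inequality for the limit equation ($\U\ge0$ as the limit of the positive $\U_N$, cf.\ Lemma \ref{LEM:posprod}) we get $\be v\le 0$ a.e., so $v$ in fact solves \eqref{NH5a}. For uniqueness in $\ZZ_T$: given two solutions $v^{(1)},v^{(2)}$, set $w=v^{(1)}-v^{(2)}$, test the difference equation against $w$, and estimate the nonlinear difference
\[
\ld\be\, e^{\be z}\big(F(\be v^{(1)})-F(\be v^{(2)})\big)\,\U
\]
exactly as above, using that $F$ is globally Lipschitz so $\|F(\be v^{(1)})-F(\be v^{(2)})\|_{W^{\al,p'}}$ is bounded by $\|w\|_{H^1}^\theta\|w\|_{L^2}^{1-\theta}$ times constants depending on $\|v^{(i)}\|_{\ZZ_T}$; Young plus Gronwall (with the $L^p_t$-in-time weight coming from $\|\U\|_{W^{-\al,p}}$) forces $w\equiv0$. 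Since the limit is unique, the whole sequence $v_N$ converges, and upgrading strong convergence from $C_tH^{-\sigma}_x$ to $\ZZ_T$ is done by a standard argument: weak convergence in $L^2_tH^1_x$ plus convergence of the energy-type quantity (obtained by testing the equations against $v_N$ and $v$ respectively and comparing) yields norm convergence, hence strong convergence in $L^2_tH^1_x$, and a similar comparison gives $C_tL^2_x$.

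\textbf{Main obstacle.} The crux is the uniform energy estimate with the very rough multiplier $\U_N\in W^{-\al,p}$ with $\al$ close to (but below) $1$: one must choose the quadruple $(p,p',\al,\theta)$ so that the fractional Leibniz rule places $F(\be v_N)v_N$ in $W^{\al,p'}$ with a power $\theta<1$ of the $H^1$-norm, which is exactly where the threshold $\be^2<4\pi$ enters (it is what makes $\al<1$ admissible in \eqref{al}). Getting this interpolation to close — and in particular handling the endpoint behavior of the Sobolev embeddings $H^1(\T^2)\hookrightarrow L^q$ for all finite $q$ — is the delicate bookkeeping; everything else is soft (compactness, Lipschitz continuity, Gronwall). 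The sign-definiteness \eqref{posX} is not used for the a priori bound itself (that only needs boundedness of $F$), but it is what guarantees the limit solves the original equation \eqref{NH5a} and not merely the modified one.
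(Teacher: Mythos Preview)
Your overall architecture (rewrite \eqref{NH5} as \eqref{NH7}, get uniform bounds, Aubin--Lions, pass to the limit, energy for uniqueness) matches the paper's, but there are two genuine gaps that prevent your argument from reaching the full range $0<\be^2<4\pi$.

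\textbf{A priori bound.} You propose testing \eqref{NH7} against $v_N$ and controlling the nonlinear pairing by $\|\U_N\|_{W^{-\al,p}}\|e^{\be z}F(\be v_N)v_N\|_{W^{\al,p'}}$, claiming the second factor is $\les \|v_N\|_{H^1}^\theta\|v_N\|_{L^2}^{1-\theta}$ with $\theta<1$. But fractional Leibniz on $F(\be v_N)v_N$ produces a term of the type $\||\nb|^\al v_N\|_{L^{q_1}}\|v_N\|_{L^{q_2}}$, and in two dimensions each factor already costs close to one full power of $\|v_N\|_{H^1}$ when $\al$ is near $1$ and $p'$ near $2$ (which is forced when $\be^2$ is near $4\pi$). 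A careful count shows the resulting Gronwall weight $\|\U_N(t)\|_{W^{-\al,p}}^{2/(2-b)}$ fails to be in $L^1_t$ unless $\al\le 2-4/p$; combined with $\al>(p-1)\be^2/(4\pi)$ this gives at best $\be^2<8\pi(p-2)/(p(p-1))$, i.e.\ the range of Theorem~\ref{THM:1}, not $4\pi$. The paper avoids this entirely: it does \emph{not} use the energy identity for the a priori bound but applies the Schauder estimate (Lemma~\ref{LEM:heatker}) directly to the Duhamel formula and then the positivity product lemma (Lemma~\ref{LEM:posprod}) to get
\[
\big\|\jb{\nb}^{-1+\eps}\big(e^{\be z}F(\be v_N)\U_N\big)\big\|_{L^2_x}\les\|e^{\be z}F(\be v_N)\|_{L^\infty_x}\|\U_N\|_{H^{-1+\eps}_x}\les e^{C\|v_0\|_{L^\infty}}\|\U_N\|_{H^{-1+\eps}_x},
\]
uniformly in $v_N$, since $F$ is bounded. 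Positivity of $\U_N$ is essential here (it is what replaces the $W^{\al,p'}$-norm by $L^\infty$); your remark that ``sign-definiteness is not used for the a priori bound'' misses this.

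\textbf{Uniqueness.} Your Gronwall argument for the difference faces the same obstruction: writing $F(\be v^{(1)})-F(\be v^{(2)})=\be w\,G(v^{(1)},v^{(2)})$, the pairing against $w$ produces $\int w^2 e^{\be z}G\,\U$, and estimating $\|w^2 G\|_{W^{\al,p'}}$ again costs two powers of $H^1$ near the endpoint. The paper's uniqueness proof is instead a pure sign argument: since $\be v^{(j)}\le 0$, one has $G=\int_0^1 e^{\tau\be v^{(1)}+(1-\tau)\be v^{(2)}}d\tau\ge 0$, and with $\ld>0$, $\U\ge 0$ the energy $\En(t)=\|w(t)\|_{L^2}^2+\tfrac12\int_0^t\|w\|_{H^1}^2$ satisfies $\tfrac{d}{dt}\En(t)=-\ld\be^2\int w^2 e^{\be z}G\,\U\,dx\le 0$, giving $w\equiv 0$ with no estimation whatsoever. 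This is precisely where the defocusing sign $\ld>0$ and the positivity of $\U$ are exploited, and it is what pushes the range to $4\pi$.
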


For Theorem \ref{THM:para2}, 
a contraction argument does not suffice
even for constructing solutions
and thus we proceed with an 
 energy method.
 Namely, we first establish
 a uniform (in $N$)  a priori bound for a solution to~\eqref{NH7}.
Then, by  applying a compactness lemma (Lemma~\ref{LEM:AL})
and extracting a convergent subsequence, 
we  prove existence of a solution.
Uniqueness follows from 
 an energy consideration for the difference of two solutions
 in the energy space $\ZZ_T$.
As for  continuity in the noise, in particular convergence of $v_N$ to $v$, 
we lose the positivity of the stochastic term (i.e.~$\U_N - \U$ is not positive).
We thus first establish convergence in some weak norm
and then combine this with strong convergence  (up to a subsequence)
via the compactness argument mentioned above and the uniqueness of 
the limit $v$ as a solution to~\eqref{NH5a} in the energy space $\ZZ_T$.

\medskip

\noi
$\bullet$ {\bf Hyperbolic case:}
Next, we discuss the stochastic damped nonlinear wave equation
when $\ld > 0$.
Let  $N \in \N \cup \{\infty\}$.
Given $(v_0, v_1)\in  \H^s(\T^2)$, 
let $u_N$ be the solution to \eqref{NW3}.
Proceeding with the Da Prato-Debussche trick
$u_N = v_N + z + \Pw$, the residual term $v_N$ satisfies the following equation:
\begin{equation}
\begin{cases}
\dt^2v_N+ \dt v_N +(1-\Dl)v_N +  \ld \be  e^{\be z} e^{\be v_N} \U_N = 0\\
(v_N,\dt v_N)|_{t=0}=(0,0),
\end{cases}
\label{v4}
\end{equation}

\noi
where 
 $\U_N = \, :\!e^{\be \Pw}\!:$ for $N \in \N$, 
  $\U_\infty = \U = \lim_{N \to \infty} \U_N$
constructed in Proposition~\ref{PROP:Ups}, 
and 
$z$ denotes the linear solution  given by 
\begin{align}
z(t)=  \dt\D(t)v_0 + \D(t)\big(v_0+v_1), 
\label{zW}
\end{align}

\noi
satisfying the following linear equation:
\begin{equation*}
\begin{cases}
\dt^2z+ \dt z+(1-\Dl) z= 0\\
(z,\dt z)|_{t=0}=(v_0,v_1).
\end{cases}
\end{equation*}

\noi
Since the smoothing property of the wave operator is weaker than that of the heat equation, 
there is no uniform (in $N$) $L^\infty$-control for $v_N$
(which is crucial in bounding the nonlinearity $e^{\be v_N}$)
and thus  
we need to exploit a sign-definite structure as in SNLH \eqref{NH1} for $\ld > 0$ discussed above.
The main issue is the oscillatory nature of the 
kernel for $\D(t)$ defined in~\eqref{D}.
In particular, unlike the case of the heat equation, 
there is no explicit sign-definite structure for \eqref{v4}.

In the following, we drop the subscript $N$ for simplicity of notations.
Write \eqref{v4} in the Duhamel formulation:
\begin{equation*}
v(t)=- \ld \be \int_0^t \D(t-t')\big(e^{\be z} e^{\be v}\U\big)dt',
\end{equation*}

\noi
where $\D(t)$ is as in \eqref{D}.
The main point is that while the kernel for $\D(t)$ is not sign-definite, 
it is essentially non-negative near the singularity.
This motivates us to introduce a further  decomposition of the unknown:
\begin{equation}\label{XY}
v=X+Y, 
\end{equation}

\noi
where $(X,Y)$ solves the following system of equations:
\begin{align}
\begin{split}
X(t)& =- \ld \be \int_0^te^{-\frac{(t-t')}{2}} S(t-t')\big( e^{\be z}e^{\be X}e^{\be Y}\U\big)(t')dt',\\
Y(t)& = -\ld \be \int_0^t\big(\D(t-t')-e^{-\frac{(t-t')}{2}}S(t-t')\big)
\big( e^{\be z}e^{\be X}e^{\be Y}\U\big)(t')dt'.
\end{split}
\label{XY2}
\end{align}

\noi
Here,  $S(t)$ 
denotes the forward propagator for the standard wave equation: $\dt^2 u-\Dl u=0$ 
with initial data $(u,\dt u)|_{t=0} = (0,u_1)$ given by 
\begin{equation}\label{S1}
S(t) =  \frac{\sin(t|\nabla|)}{|\nabla|}.
\end{equation}

\noi
The key point in that, in view of the positivity of the kernel 
for $S(t)$ (see Lemma~\ref{LEM:waveker} below), 
there is a sign-definite structure for the $X$-equation when $\ld > 0$ 
and we have 
\begin{equation*}
\be X\le 0.
\end{equation*}

\noi
With $F$ as in \eqref{F1}, 
we can then write \eqref{XY2} as
\begin{align}
\begin{split}
X(t)& =- \ld \be \int_0^te^{-\frac{(t-t')}{2}} S(t-t')\big(e^{\be z}F(\be X)e^{\be Y}\U\big)(t')dt',\\
Y(t)& =-\ld \be \int_0^t\big(\D(t-t')-e^{-\frac{(t-t')}{2}}S(t-t')\big)
\big(e^{\be z} F(\be X)e^{\be Y}\U\big)(t')dt'.
\end{split}
\label{XY3}
\end{align}

\noi
Thus, the nonlinear contribution $F(\be X)$ from $X$ is bounded
thanks to the sign-definite structure.
This is crucial since, as we see below, $X$ does not have sufficient regularity
to be in $L^\infty(\T^2)$.
While $X$ and $Y$ both enjoy the Strichartz estimates, 
the difference  of the propagators in the $Y$-equation
provides an extra smoothing, gaining two derivatives
(see Lemma \ref{LEM:smooth} below).
This smoothing of two degrees allows us to place $Y$
in $C([0,T]; H^s(\T^2))$ for some $s>1$ and to make sense of $e^{\be Y}$. 
In Section \ref{SEC:wave}, we prove the following theorem.

\begin{theorem}\label{THM:wave}
Let $\ld > 0$,   $0<\be^2 <\bw^2 = \frac{32-16\sqrt{3}}{5}\pi \simeq 0.86\pi$, 
and $s > 1$.
Suppose that a deterministic positive distribution $\U$ satisfies the regularity property  stated in 
Proposition~\ref{PROP:Ups}.  Namely, 
$\U \in L^p([0,1];W^{-\al ,p}(\T^2))$
for  $1\le p < \frac{8\pi}{\be^2}$, 
where $\al = \al(p)$ is as in~\eqref{al}.
Then,
given $(v_0, v_1) \in \H^s(\T^2)$, 
 there exist $T=T\big(\|(v_0, v_1)\|_{\H^s}, \|\U\|_{L^p_T W^{-\al, p}_x}\big)>0$ 
 and a unique solution $(X,Y)$ to~\eqref{XY3} in the class\textup{:} 
 \begin{align*}
  \X_T^{s_1}\times \Y_T^{s_2}
  \subset 
  C([0,T];H^{s_1}(\T^2))\times   C([0,T];H^{s_2}(\T^2))
 \end{align*}
 
 \noi
 for some $0< s_1<1<s_2$
 and some $(\al, p)$ satisfying \eqref{al}.
Moreover, the solution $(X, Y)$ depends continuously 
on 
\[(v_0, v_1, \U) \in \H^s(\T^2)  
\times   L^p([0,T];W^{-\al + \eps ,p}(\T^2))\]

\noi
for sufficiently small $\eps > 0$
\textup{(}such that the pair $(\al + \eps, p)$ satisfies the condition \eqref{al}\textup{)}.

\end{theorem}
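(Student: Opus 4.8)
The plan is to solve the coupled system \eqref{XY3} by a Picard iteration in the product space $\X_T^{s_1}\times\Y_T^{s_2}$, where $\X_T^{s_1}$ and $\Y_T^{s_2}$ are spaces built on the Strichartz norms for the damped wave equation (intersected with $C([0,T];H^{s_i})$), choosing the exponents so that $s_1<1<s_2$, with $s_1$ just large enough that $W^{s_1,q}\hookrightarrow L^\infty$ fails to be needed for $X$ (only the sign bound is used there) while $s_2>1$ guarantees $Y\in L^\infty$ so that $e^{\be Y}$ makes sense as a multiplier. First I would record the relevant deterministic linear estimates: (a) the Strichartz estimates for $e^{-t/2}S(t)$ and for $\D(t)$ on $\T^2$, including the inhomogeneous versions with the source in a dual Strichartz space; (b) the two-derivative smoothing estimate for the difference $\D(t)-e^{-t/2}S(t)$ (this is Lemma~\ref{LEM:smooth} in the paper, which I would invoke directly); and (c) the positivity of the kernel of $S(t)$ in two dimensions (Lemma~\ref{LEM:waveker}), which immediately yields $\be X\le 0$ when $\ld>0$ because $e^{\be z}e^{\be X}e^{\be Y}\U\ge 0$ — this is what licenses replacing $e^{\be X}$ by the bounded Lipschitz function $F(\be X)$ as in \eqref{XY3}, \eqref{F1}.

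The heart of the nonlinear estimate is controlling the product $e^{\be z}F(\be X)e^{\be Y}\U$ in the dual Strichartz / negative Sobolev norm needed to close the fixed point, given only $\U\in L^p_TW^{-\al,p}_x$ with $\al<2$ and $p<8\pi/\be^2$. The strategy is: $e^{\be z}$ is smooth and bounded on $[0,T]$ (since $(v_0,v_1)\in\H^s$ with $s>1$, so $z\in C([0,T];H^s)\subset C([0,T];L^\infty)$), $F(\be X)$ is bounded and Lipschitz in $X$ with values controlled uniformly, and $e^{\be Y}$ is bounded with $\|e^{\be Y}-e^{\be Y'}\|$ controlled by $\|Y-Y'\|_{L^\infty}$ via $s_2>1$. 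Hence the product is essentially (bounded smooth factor)$\times\U$, and one needs the product estimate $W^{\sigma,r}\cdot W^{-\al,p}\hookrightarrow W^{-\al,\tilde r}$ (a fractional Leibniz / paraproduct estimate) to absorb $\U$, trading the positive regularity of the $X,Y,z$-factors against the negative regularity $-\al$ of $\U$; the regularity of the factor coming from $X$ is only $s_1<1$, which is why $\al$ and the Strichartz exponents must be tuned carefully and why the numerology $\be^2<\frac{32-16\sqrt3}{5}\pi$ appears. After the product bound, one applies the Strichartz estimate (with the two-derivative gain for the $Y$-equation) to map back into $\X_T^{s_1}\times\Y_T^{s_2}$ and, for $T$ small depending on $\|(v_0,v_1)\|_{\H^s}$ and $\|\U\|_{L^p_TW^{-\al,p}_x}$, obtain a contraction; uniqueness is immediate from the same difference estimate.

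Continuous dependence on $(v_0,v_1,\U)$ is obtained by the standard argument: run the difference of two solutions corresponding to data $(v_0,v_1,\U)$ and $(v_0',v_1',\U')$ through the same estimates; the only subtlety is that $\U-\U'$ carries no positivity, so one cannot use the sign bound for the difference — but one does not need to, since $F$ is globally Lipschitz and bounded, so $F(\be X)-F(\be X')$ is estimated by $\|X-X'\|$ with no sign input, and the difference of the stochastic forcing is absorbed by taking $\U$ in the slightly better space $L^p_TW^{-\al+\eps,p}_x$ (so that $\al-\eps$ still satisfies \eqref{al}), which gives a small loss that is compensated by the two-derivative smoothing in the $Y$-equation and a bit of room in the $X$-equation. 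I expect the main obstacle to be the bookkeeping of the triangle of exponents — the Strichartz admissibility pair, the Sobolev exponent $s_1$ for $X$, and the pair $(\al,p)$ for $\U$ — so that (i) the product estimate closes, (ii) $s_2>1$ is reached via the two-derivative gain, and (iii) the admissible range of $\be^2$ is exactly $(0,\frac{32-16\sqrt3}{5}\pi)$; this optimization, rather than any single estimate, is where the real work lies, and it is the wave-side analogue of the parabolic computation behind Theorem~\ref{THM:para2}.
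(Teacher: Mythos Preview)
Your overall architecture is right and matches the paper: the $X/Y$ decomposition, Strichartz estimates for $X$, the two-derivative gain (Lemma~\ref{LEM:smooth}) for $Y$, the positivity of the kernel of $S(t)$ (Lemma~\ref{LEM:waveker}) to get $\be X\le 0$ and hence replace $e^{\be X}$ by the bounded Lipschitz $F(\be X)$, and the exponent optimization producing $\bw^2$. The continuous-dependence step is also described correctly.

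There is, however, one substantive gap. You invoke positivity of $\U$ only to secure $\be X\le 0$; in the nonlinear estimate you then appeal to a generic paraproduct bound ``$W^{\sigma,r}\cdot W^{-\al,p}\hookrightarrow W^{-\al,\tilde r}$'' (i.e.\ Lemma~\ref{LEM:prod1}\,(ii)). The paper uses positivity of $\U$ a \emph{second} time, and this is what actually makes the contraction close at the stated threshold: it applies Lemma~\ref{LEM:posprod}, which gives
\[
\big\|f\,\U\big\|_{W^{-\al,p}}\lesssim \|f\|_{L^\infty}\|\U\|_{W^{-\al,p}}
\]
for positive $\U$, needing only $f\in L^\infty$ (after checking the mild hypothesis (ii) of that lemma). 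This matters precisely in the difference estimate. After writing $F(\be X_1)-F(\be X_2)=\be(X_1-X_2)G(X_1,X_2)$ and peeling off $X_1-X_2\in L^{q_1}_TW^{\al,r_1}_x$ by Lemma~\ref{LEM:prod1}\,(ii), one must still control $\|e^{\be z}G(X_1,X_2)e^{\be Y_1}\U\|_{W^{-\al,p}}$. The factor $e^{\be z}Ge^{\be Y_1}$ is bounded in $L^\infty$, but since $G$ depends on $X_1,X_2$ its positive Sobolev regularity is only $W^{\al,r_1}$ with the Strichartz--interpolation exponent $r_1$; with the optimized parameters of~\eqref{exponent1} one has $r_1<2/\al$, which is \emph{not} enough integrability to apply Lemma~\ref{LEM:prod1}\,(ii) again and land back in $W^{-\al,p}$. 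Lemma~\ref{LEM:posprod} sidesteps this by using only the $L^\infty$ bound together with $\U\ge 0$. If you try to run the contraction with Lemma~\ref{LEM:prod1}\,(ii) alone, the extra loss forces a strict inequality in the constraint~\eqref{CC10} that is saturated in the optimization, and you do not reach $\bw^2=\frac{32-16\sqrt3}{5}\pi$. (Your continuous-dependence argument is fine precisely because the term carrying $\U-\U'$ has \emph{no} extra $X$-difference factor, so a single application of Lemma~\ref{LEM:prod1}\,(ii) suffices there, at the cost of the $\eps$-better regularity on $\U$.)
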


Here,  the spaces $\X^{s_1}_T$ and $\Y^{s_2}_T$
are defined by 
\begin{align}
\X^{s_1}_T& =C([0,T];H^{s_1}(\T^2))\cap C^1([0,T];H^{s_1-1}(\T^2))\cap L^q([0,T];L^r(\T^2)), 
\label{XX1}\\
\Y^{s_2}_T & =  C([0,T];H^{s_2}(\T^2))\cap C^1([0,T];H^{s_2-1}(\T^2)),
\label{YY1}
\end{align}

\noi
for some  suitable $s_1$-admissible pair $(q, r)$. 
See Subsection \ref{SUBSEC:waveker}.
Note that Theorem \ref{THM:3} directly follows 
from Theorem \ref{THM:wave}.
As for 
Theorem \ref{THM:4}, a small modification of the proof of Theorem~\ref{THM:wave}
yields the result.
See Section \ref{SEC:wave} for details.

We point out that this reduction of \eqref{v4} to the system \eqref{XY3}, 
involving the decomposition of the unknown
(in the Da Prato-Debussche argument) 
into a less regular but structured part and a smoother part, 
has some similarity to the paracontrolled approach
to the dynamical $\Phi^4_3$-model.\footnote{This is not to be confused
with the Da Prato-Debussche trick or its higher order variants,
where we decompose an unknown
into a sum of a less regular but {\it explicitly known} (random) distribution
and a smoother remainder.
The point of the decomposition \eqref{XY} is that both $X$ and $Y$ are unknown.}
Once we arrive at
the system \eqref{XY3}, 
we can apply the Strichartz estimates for the $X$-equation (Lemma~\ref{LEM:Stri})
and the extra smoothing for the $Y$-equation (Lemma \ref{LEM:smooth})
along with the positivity of $\U$ (Lemma \ref{LEM:posprod})
to construct a solution $(X, Y)$ by a standard contraction argument.

\medskip

We conclude this introduction by stating some remarks
and comments.

\begin{remark}\label{REM:Garban}
\rm

In \cite{Garban}, Garban studied the closely related massless stochastic
nonlinear heat equation with an exponential nonlinearity
on $(\R/\Z)^2$:
\begin{align}
\dt X - \frac 1{4\pi} \Dl  X   +   e^{\g X} = \wt \xi, 
\label{NHG}
\end{align}

\noi
where $\wt \xi$ is a space-time white noise on $\R_+ \times(\R/\Z)^2$.
By setting 
\[ u(t, x) =  \frac{1}{\sqrt{2\pi}} X \Big(\frac{t}{2\pi},\frac x{2\pi}\Big)
\qquad  \text{and} \qquad 
 \xi(t, x) = \frac{1}{(2\pi)^\frac{3}{2}}
\wt \xi  \Big(\frac{t}{2\pi},\frac x{2\pi}\Big), 
\]

\noi
we see that $\xi$ is a space-time white noise on $\R_+\times \T^2$
and
that $u$ satisfies the massless equation \eqref{NH2a} with coupling constant
\[\be = \sqrt {2\pi} \g.\]

\noi
This provides the conversion of the parameters $\g$ in \cite{Garban} and $\be$ in this paper.

\end{remark}

\begin{remark}\label{REM:massless}\rm
As mentioned before, the massive equation \eqref{NH1} (with $\ld > 0$) arises as the stochastic quantization of the so-called H\o egh-Krohn model \cite{HK, AH74} in Euclidean quantum field theory, while the massless model \eqref{NHG} treated in \cite{Garban} comes from the stochastic quantization of Liouville Conformal Field Theory (LCFT). In \cite{ORTW}, with N.~Tzvetkov, we extended the results of this paper on the stochastic nonlinear heat equation \eqref{NH2a}  on the torus $\T^2$ to the case of a massless stochastic nonlinear heat equation with ``punctures" on any closed Riemannian surface, thus addressing properly the stochastic quantization of LCFT. See Theorem 1.4 in~\cite{ORTW}. We point out that the corresponding problem in the hyperbolic case, i.e.~the massless analogue of Theorem~\ref{THM:wave} for the ``canonical" stochastic quantization of LCFT, was not treated in \cite{ORTW} and remains open. 
See also Remark 4.4 in \cite{ORTW}.
\end{remark}

\begin{remark}[stochastic quantization of the $\cosh(\Phi)_2$-model]\label{REM:cosh}\rm
The parabolic sinh-Gordon equation \eqref{NH1x} formally preserves 
(a renormalized version of) the 
Gibbs measure of the form:
\begin{align*}
\text{``}d \rho_\text{sinh} = 
Z^{-1} e^{- E_\text{sinh}(u)} (u) du\text{''}, 
\end{align*}

\noi
associated with the energy:
\begin{align*}
E_\text{sinh}(u) = \frac 12 \int_{\T^2} |\jb{\nb} u|^2 dx + 
 \int_{\T^2} \cosh (\be u) dx.
\end{align*}

In view of Proposition \ref{PROP:Ups}, 
we can proceed as in the proof of 
Proposition \ref{PROP:Gibbs} 
and construct the renormalized Gibbs measure $\rho_\text{sinh}$
as a limit of the truncated Gibbs measure:
\begin{align}
 d\rho_{\text{sinh}, N}
= Z_N^{-1} \exp\bigg( - C_N \int_{\T^2} \cosh (\be \Q_N u) \bigg)d\mu_1
\label{ZZ3}
\end{align}

\noi
for $0 < \be^2 < 4\pi$, 
where $\mu_1$ is the massive Gaussian free field defined in \eqref{Gauss1}
and $C_N$ is the renormalization constant 
defined in  
\eqref{C1} 
%
but with $\Q_N$ instead of $\P_N$.

As in the case of the truncated SNLH \eqref{NH4}, 
it is easy to see that the truncated Gibbs measure $\rho_{\text{sinh}, N}$
in \eqref{ZZ3} is invariant under the  following truncated
sinh-Gordon equation:\begin{align}
\dt u_N + \tfrac 12 (1- \Dl)  u_N   +  \tfrac{1}{2} \be C_N \Q_N \sinh (\be \Q_N u_N)  =  \xi. 
\label{NH1xx}
\end{align}

\noi
Since the equation \eqref{NH1xx}
does not enjoy any sign-definite structure, we can not apply 
(the proof of) Theorem \ref{THM:1.2}.
On the other hand, our proof of Theorem~\ref{THM:1} 
 is  applicable
 to study~\eqref{NH1xx}, 
yielding local well-posedness of~\eqref{NH1xx}
for the range 
$0 < \be^2 < \frac{8 \pi}{3 + 2 \sqrt 2}\simeq 1.37 \pi$.
The key point is that, unlike \cite[Theorem 1.11]{Garban}, 
this local well-posedness result
yields convergence of the solution $u_N$ of 
the truncated sinh-Gordon equation \eqref{NH1xx}
to some limit~$u$.
Combining this local well-posedness result
with Bourgain's invariant measure argument \cite{BO94, BO96}, 
we then obtain 
almost sure global well-posedness 
for the parabolic sinh-Gordon equation \eqref{NH1x}
and 
invariance of the renormalized Gibbs measure $\rho_\text{sinh}$
in the sense of 
Theorem \ref{THM:4}.

Note that these results 
for the sinh-Gordon equation hold only in the parabolic setting
since,  
when $\ld < 0$, 
 we do not know how to handle SdNLW \eqref{NW1}
 for any $\be^2 > 0$.

\end{remark}

\begin{remark}\label{REM:SW}\rm
(i)  In Theorem \ref{THM:1}, we treat initial data 
$u_{0, N}$ of the form \eqref{IV1}.
Due to the presence of the random part $\P_N w_0$ of the initial data, 
the variance $\sh$ in \eqref{sig} is time-independent, which results
in the time-independent renormalization constant $C_N$ in Theorem \ref{THM:1}.
It is, however, possible to treat deterministic initial data $u_{0, N} = v_0 \in L^\infty(\T^2)$.
In this case, the associated truncated stochastic convolution $ \wt  \Psi^\text{heat}_N$
is given by 
\begin{align*}
 \wt \Psi^\text{heat}_N (t) =  \int_0^t P(t-t') \P_N  dW(t') 
\end{align*}

\noi
whose variance $\wt  \s^\text{heat}_N$ is now time-dependent and given by 
\begin{align}
\begin{split}
\wt  \s^\text{heat}_N (t) &  = \E \big[ \wt \Psi^\text{heat}_N (t, x)^2\big]
 =  \frac{1}{4\pi^2}\sum_{n \in \Z^2} \chi_N^2(n) \frac{1- e^{-t\jb{n}^2}}{ \jb{n}^2}\\
 &\approx -\frac1{2\pi}\log N^{-1} + \frac1{2\pi}\log(\sqrt{t}\vee N^{-1})= \frac1{2\pi}\log\big(1\vee \sqrt{t}N\big),
\end{split}
\label{sig2}
\end{align}

\noi 
where $A\vee B = \max(A, B)$.
Here, the third step of \eqref{sig2} follows from Lemmas~\ref{LEM:Green} 
and \ref{LEM:GreenQ} below, by viewing $e^{t(\Dl-1)}$ as a regularization operator $\Q_N$ with 
a regularizing parameter $t^{-\frac12}$. By comparing \eqref{sig} and \eqref{sig2}, 
we see that 
$\wt \s^\text{heat}_N (t) <  \sh$, 
which allows us to establish an analogue
of Proposition \ref{PROP:Ups} in this case.
As a result, we obtain an analogue of Theorem~\ref{THM:1}
but with a time-dependent renormalization constant.
A similar comment applies to Theorem \ref{THM:3} in the wave case.

\smallskip
\noi
(ii) 
In \cite{ORSW}, the authors (with P.\,Sosoe)
studied 
the (undamped) stochastic hyperbolic sine-Gordon equation
on $\T^2$:
\begin{align}
\dt^2 u + (1- \Dl)  u   +  \ld \sin(\be u) = \xi.
\label{SSG}
\end{align}

\noi
Due to the undamped structure, 
the variance of the truncated stochastic convolution $\Psi_N(t, x)$
behaves like $\sim t \log N$; compare this with \eqref{sigW} and \eqref{sig2}.
This time dependence allows us to make
the variance as small as we like for any 
 $\be^2 > 0$
 by taking $t > 0$ sufficiently small.
As a result, we proved local well-posedness
of the renormalized version of \eqref{SSG}
for any $\be^2 > 0$, with a (random) time of existence $T\les  \be^{-2}$.

Similarly, 
if we consider the undamped stochastic nonlinear wave equation (SNLW)
with an exponential nonlinearity:
\begin{align}
\dt^2 u +   (1- \Dl)  u   +  \ld \be e^{\be u} = \sqrt{2} \xi, 
\label{NW7}
\end{align}

\noi
then we see that  Proposition~\ref{PROP:Ups} holds
with  the regularity $\al$ 
given by \eqref{al} with $\be^2$ replaced by $\be^2 T$.
Thus, given any $\be^2 > 0$, 
we can make $\al > 0$ arbitrarily small by taking $T\sim \be^{-2}> 0$ small.
See also Proposition 1.1 in \cite{ORSW}.
This allows us to prove local well-posedness
of SNLW \eqref{NW7}
for \textit{any} $\be^2 > 0$. 

Note that in view of \eqref{sig2}, due to the exponential convergence to equilibrium for the linear stochastic heat equation, we have 
$\wt \s^\text{heat}_N(t)\sim \s_N$ as soon as $t\ges N^{-2+ \ta}$
for some (small) $\ta > 0$, and thus the regularization effect as in the wave case can only be captured at time scales $t\ll N^{-2 + \ta}$, which prevents us from building a local solution 
 with deterministic initial data
for arbitrary $\be^2>0$ in the parabolic case.

\end{remark}

\begin{remark}\label{REM:elliptic}\rm
As we mentioned above, 
in the recent work \cite{ADG}, 
Albeverio, De Vecchi and Gubinelli
 investigated the elliptic analogue of \eqref{NH1} and \eqref{NW1}, namely 
the authors  studied the following singular elliptic SPDE:
\begin{align}
(1-\Dl_{x,z})\phi + \al :\!e^{\al \phi}\!:\,\, = \xi
\label{ADG1}
\end{align}

\noi
for $\phi:(x,z)\in\R^2\times\R^2 \mapsto\phi(x,z)\in\R$, 
where 
 $\xi$ is a space-time white noise on $\R^4$.
Here, due to scaling considerations, the coupling constant corresponds to $\al = 2\sqrt{\pi}\be$. The authors of \cite{ADG} then proved that 
\eqref{ADG1}  is well-posed in the regime $0<\al^2<\al_{\max}^2 =4(8-4\sqrt{3})\pi\cdot(4\pi)$;
see \cite[Theorem 25 and Proposition 36]{ADG}. In particular,  note that $\bh^*=\sqrt{4\pi}<\frac{\al_{\max}}{2\sqrt{\pi}}<\sqrt{8\pi}$. Their proof also relies 
on the Da Prato-Debussche trick, 
writing $\phi$ as  $\phi = (1-\Dl)^{-1}\xi + \cj\phi$ and solving the corresponding elliptic PDE for the nonlinear component $\cj\phi$. One of the benefits of the elliptic setting is that, due to the dimension being $d=4$, the $L^2$-regime corresponds to $0<\al^2 < 8\pi\cdot (4\pi)$, namely to the full sub-critical regime $0<\be^2<8\pi$ for the reduced coupling constant $\be = \frac{\al}{2\sqrt{\pi}}$. 
This  in particular  yields  an analogue of Proposition~\ref{PROP:Ups} for the (elliptic) Gaussian multiplicative chaos $\,:\!e^{\al (1-\Dl)^{-1}\xi}\!:\,$ in the entire range $0<\al^2<8\pi\cdot (4\pi)$ for which the construction of the $\exp(\Phi)_2$-measure holds, by just working in $L^2$-based Sobolev spaces. 
See \cite[Lemma 22]{ADG}.
Note that the same approach here only gives the convergence of $\U_N$ for $0<\be^2<4\pi$. 
The well-posedness of the elliptic SPDE \eqref{ADG1} then follows from an argument similar as that in Section~\ref{SEC:WP} adapted to the elliptic setting. 
Heuristically speaking, this should provide well-posedness in the whole range $0<\al^2 < 8\pi\cdot(4\pi)$.
However,   there seems to be  an issue similar to that discussed after \eqref{FP0}.
Namely,  $\cj\phi$ does not have sufficient regularity to use an analogue of the condition\,(i) in Lemma~\ref{LEM:posprod} for bounding the product of a distribution and a measure, 
which instead forces the use of an analogue of condition (ii) in Lemma~\ref{LEM:posprod}.
This in turn restricts the range of admissible $\al^2>0$.
\end{remark}

\begin{remark}\label{REM:Hoshino}\rm
(i) In \cite{HKK}, 
Hoshino, Kawabi, and Kusuoka
studied SNLH~\eqref{NH1} with $\ld = 1$
and independently established Theorem \ref{THM:1.2}
and Theorem \ref{THM:2}.
While the analytical part of the argument is analogous,\footnote{The sign-definite structure 
of the equation in the defocusing case also plays
an important role in \cite{HKK}.
See, for example,  the proof of Lemma 3.10 in \cite{HKK}.} 
 the approaches for studying the Gaussian multiplicative chaos $\U_N$
(\cite[Theorem 2.4]{HKK} and  Proposition \ref{PROP:Ups} above)
are quite different.
The proof in \cite{HKK} is based on the Fourier side approach as in \cite{MWX, GKO}, 
establishing only the second moment bound.
On the other hand, 
our argument  is based on the physical side approach as in our previous work \cite{ORSW, ORSW2} on the hyperbolic sine-Gordon model.
By employing the  Brascamp-Lieb inequality
(and Kahane's convexity inequality), 
we also obtain higher moment bounds on the Gaussian multiplicative chaos, 
which is a crucial ingredient 
to prove Theorem~\ref{THM:1} for SNLH~\eqref{NH1}
with general $\ld \in \R\setminus\{0\}$
and Theorem \ref{THM:3} on SdNLW \eqref{NW1}.

After the submission of this paper, the same authors proved  well-posedness and invariance of the Gibbs measure for the parabolic SPDE \eqref{NH1} in the full ``$L^1$" regime $0<\be^2 < 8\pi$;
see~\cite{HKK2}. This relies on arguments similar to those presented in Section~\ref{SEC:WP} but working in $L^p$-based spaces with $1<p<2$ instead of the $L^2$-based Sobolev spaces used in the proof of Theorem~\ref{THM:2}. In particular,  this requires extending the convergence part of Proposition~\ref{PROP:Ups} to the case $1<p<2$.

\smallskip

\noi
(ii) In a recent preprint \cite{Robert}, the second author
studied 
 the fractional nonlinear Schr\"odinger equation with an exponential nonlinearity
   on a $d$-dimensional compact Riemannian manifold:
 \begin{align*}
  i \dt u + (-\Dl)^\frac{\al}{2}  + \ld \be e^{\be|u|^2} u = 0
\end{align*}

\noi
  with the dispersion parameter $\al > d$.
In the defocusing case ($\ld > 0$), under some assumption, the author proved almost sure global well-posedness
and invariance of the associated Gibbs measure.
See \cite{Robert} for precise statements.
In the focusing case ($\ld < 0$), it was shown that the Gibbs measure is not normalizable
for any $\be > 0$.
See also Remark \ref{REM:focusing}. 
Our understanding of the Schr\"odinger problem, however, is 
far from being satisfactory at this point
and it is of interest to investigate further issues in this direction.

\end{remark}

This paper is organized as follows.
In Section \ref{SEC:toolbox}, we introduce notations and state various tools
from deterministic analysis.
In Section \ref{SEC:Gauss}, 
we study the regularity and convergence properties
of the Gaussian multiplicative chaos (Proposition \ref{PROP:Ups}).
In Section \ref{SEC:fpa}, 
we prove local well-posedness of SNLH \eqref{NH1}
for general $\ld \in \R\setminus\{0\}$
(Theorem~\ref{THM:1}).
In Section~\ref{SEC:WP}, we discuss the $\ld > 0$ case
for SNLH \eqref{NH1}
and present proofs of Theorems~\ref{THM:1.2} and~\ref{THM:2}.
Section~\ref{SEC:wave}
is devoted to the study of SdNLW \eqref{NW1}.
In 
Appendix \ref{SEC:A}, 
we present a simple contraction argument
to prove local well-posedness of  SNLH \eqref{NH5a}
 for any $\ld \in \R\setminus \{0\}$, 
in  the range 
$0 < \be^2 < \frac 43 \pi \simeq 1.33 \pi$
{\it without} using the positivity of the 
  Gaussian multiplicative chaos.
Lastly, in Appendix \ref{SEC:B}, 
we present a proof of
Lemma~\ref{LEM:GMCmom},
which is 
crucial in establishing moment bounds for the Gaussian multiplicative chaos.

\section{Deterministic toolbox}\label{SEC:toolbox}
In this section, 
we introduce some notations and go over preliminaries
from deterministic analysis.
In Subsections~\ref{SUBSEC:Green},~\ref{SUBSEC:heatker},  and~\ref{SUBSEC:waveker}, 
we recall key properties 
of  the kernels of elliptic, heat, and wave equations.
We also state the Schauder estimate (Lemma \ref{LEM:heatker})
and the Strichartz estimates (Lemma \ref{LEM:Stri}).
In Subsection~\ref{SUBSEC:nonlinear}, 
we state other useful lemmas from harmonic and functional analysis.

\subsection{Notations}

We first introduce some notations.
We set
\begin{align}
e_n(x) \stackrel{\textup{def}}{=} \frac1{2\pi}e^{in\cdot x}, 
\qquad  n\in\Z^2, 
\label{exp}
\end{align}
 
\noi 
for the orthonormal Fourier basis in $L^2(\T^2)$. 
Given $s \in \R$, we  define the  Sobolev space  $H^s (\T^2)$ by  the norm:
\[
\|f\|_{H^s(\T^2)} = \|\jb{n}^s\ft{f}(n)\|_{\l^2(\Z^2)},
\]

\noi
where $\ft{f}(n)$ is the Fourier coefficient of $f$ and  $\jb{\,\cdot\,} = (1+|\cdot|^2)^\frac{1}{2}$. 
We also set 
\begin{equation*}
\H^{s}(\T^2)  \stackrel{\textup{def}}{=} H^{s}(\T^2) \times H^{s-1}(\T^2).
\end{equation*}

\noi
Given $s \in \R$ and $p \geq 1$, 
we define
 the  $L^p$-based Sobolev space (Bessel potential space) 
 $W^{s, p}(\T^2)$
 by the norm:
\[\| f\|_{W^{s, p}} = \| \jb{\nb}^s f \|_{L^p} = \big\| \F^{-1}( \jb{n}^s \ft f(n))\big\|_{L^p}.\]

\noi
When $p = 2$, we have $H^s(\T^2) = W^{s, 2}(\T^2)$. 
When we work with space-time function spaces, 
we use short-hand notations such as
 $C_TH^s_x = C([0, T]; H^s(\T^2))$.

For $A, B > 0$, we use $A\lesssim B$ to mean that
there exists $C>0$ such that $A \leq CB$.
By $A\sim B$, we mean that $A\lesssim B$ and $B \lesssim A$.
We also use  a subscript to denote dependence
on an external parameter; for example,
 $A\les_{\al} B$ means $A\le C(\al) B$,
  where the constant $C(\al) > 0$ depends on a parameter $\al$. 
Given two functions $f$ and $g$ on $\T^2$, 
we write  
\begin{align*}
f\approx g
\end{align*}

\noi
 if there exist some constants $c_1,c_2\in\R$ such that $f(x)+c_1 \leq g(x) \leq f(x)+c_2$ for any $x\in\T^2\backslash \{0\}
 \cong [-\pi, \pi)^2 \setminus\{0\}$.
Given $A, B \geq 0$, we also set
 $A\vee B = \max(A, B)$
 and
 $A\wedge B = \min(A, B)$.

Given a random variable $X$, we use $\Law(X)$ to denote its distribution.

\subsection{Bessel potential and Green's function}\label{SUBSEC:Green}
In this subsection, we recall several facts about the Bessel potentials and the Green's function for $(1-\Dl)$ on $\T^2$. 
See also Section~2 in \cite{ORSW}.

For $\al>0$, the Bessel potential of order $\al$ on $\T^d$ is  
given by $\jb{\nabla}^{-\al} = (1-\Dl)^{-\frac{\al}2}$ viewed as a Fourier multiplier
operator. Its convolution kernel is given by
 \begin{align}
 J_{\al}(x)  \deff \lim_{N \to \infty}\frac{1}{2\pi} \sum_{n \in\Z^d} \frac{\chi_N(n)}{\jb{n}^{\al}} e_n(x),
 \label{Bessel}
 \end{align}
 where the limit is interpreted in the sense of distributions on $\T^d$. 
 We recall from \cite[Lemma~2.2]{ORSW}  the following local description of these kernels.

 \begin{lemma} \label{LEM:Bessel}
For any $0 < \alpha < d$, 
 the distribution $J_\al$ agrees with an integrable function, 
which is smooth away from the origin.
 Furthermore, there exist a constant $c_{\al, d} >0$ and a smooth function $R$ on $\T^d$ 
 such that 
\begin{align*}
 J_\al(x) = c_{\al, d} |x|^{\al - d} +R(x)
 \end{align*}

\noi
for all $x\in\T^d\setminus\{0\}
 \cong [-\pi, \pi)^d \setminus\{0\}$.

 \end{lemma}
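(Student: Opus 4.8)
The statement is the standard local description of the Bessel kernel $J_\al$ on the torus: away from the origin it coincides with a smooth function, and near the origin it has the same leading singularity $c_{\al,d}|x|^{\al-d}$ as the Bessel kernel on $\R^d$. The plan is to compare the periodic kernel with its Euclidean counterpart via the Poisson summation formula. First I would recall that on $\R^d$ the Bessel kernel $G_\al(x) = \mathcal{F}_{\R^d}^{-1}(\jb{\xi}^{-\al})(x)$ is an explicit function (a modified Bessel function of the second kind up to constants), which is smooth on $\R^d\setminus\{0\}$, decays exponentially as $|x|\to\infty$, and has the expansion $G_\al(x) = c_{\al,d}|x|^{\al-d} + (\text{less singular})$ as $x\to 0$, with the ``less singular'' part being $O(|x|^{\al-d+2})$ plus, in borderline cases, logarithmic corrections — but for $0<\al<d$ the leading term is genuinely $|x|^{\al-d}$ and the remainder near $0$ extends to a continuous (indeed smooth, if $\al-d+2>0$, i.e.\ always when $d\le 2$ here) function. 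I would cite a standard reference (e.g.\ Stein, \emph{Singular Integrals}, or Aronszajn--Smith) for these facts rather than reprove them.

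\textbf{Main step.} Next I would use Poisson summation. The periodic kernel is formally $J_\al(x) = \frac1{2\pi}\sum_{n\in\Z^d}\jb{n}^{-\al}e_n(x)$, and, interpreting the sum as a distribution, the Poisson summation formula gives
\begin{align*}
J_\al(x) = (2\pi)^{d/2 - 1}\sum_{m\in(2\pi\Z)^d} G_\al(x + m)
\end{align*}
(up to the normalization constants fixed by the conventions \eqref{exp}, which I would track carefully). The $m=0$ term is $(2\pi)^{d/2-1}G_\al(x)$, which supplies the claimed local singularity $c_{\al,d}|x|^{\al-d}$ together with a smooth remainder near the origin. The tail $\sum_{m\ne 0}G_\al(x+m)$ is a sum of translates of $G_\al$ by nonzero lattice vectors; since $G_\al$ is smooth away from $0$ and decays exponentially at infinity, this tail converges (uniformly with all derivatives) on any compact subset of $[-\pi,\pi)^d$ to a smooth function $R_0(x)$. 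Setting $R(x) = (2\pi)^{d/2-1}\big(G_\al(x) - c_{\al,d}|x|^{\al-d}\big) + (2\pi)^{d/2-1}R_0(x)$ then yields a smooth function on $\T^d$ with $J_\al(x) = c_{\al,d}|x|^{\al-d} + R(x)$ for $x\ne 0$. Integrability of $J_\al$ follows since $|x|^{\al-d}\in L^1_{\mathrm{loc}}$ for $\al<d$ and $R$ is bounded.

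\textbf{Alternative / main obstacle.} If one prefers to avoid Euclidean comparison, one can instead use the subordination identity $\jb{\nabla}^{-\al} = \frac1{\Gamma(\al/2)}\int_0^\infty t^{\al/2 - 1}e^{-t(1-\Dl)}\,dt$ and plug in the heat kernel bounds on $\T^d$ (which the paper anyway develops in Subsection~\ref{SUBSEC:heatker}); splitting the $t$-integral at, say, $t=|x|^2$ produces the $|x|^{\al-d}$ singularity from the small-$t$ regime and a smooth contribution from $t\gtrsim 1$. The main obstacle — really the only subtle point — is bookkeeping: one must verify that the ``remainder'' near the origin is genuinely smooth and not merely continuous or Hölder, i.e.\ that subtracting the single term $c_{\al,d}|x|^{\al-d}$ already removes \emph{all} the singular behavior up to $C^\infty$. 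For the range relevant here ($d=2$, so $\al-d+2 = \al > 0$) this is clean because the next term in the Euclidean expansion is already $O(|x|^\al)$ with all derivatives controlled, but in general dimensions one would need to be careful about possible intermediate terms $|x|^{\al-d+2j}$ and logarithms; since the lemma is quoted from \cite[Lemma 2.2]{ORSW} I would in practice just reference that proof and record the statement. I expect no genuine difficulty, only careful constant-chasing.
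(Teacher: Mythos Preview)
The paper does not give a proof of this lemma; it simply records the statement and cites \cite[Lemma~2.2]{ORSW} (and \cite{AS} for the positivity of $c_{\al,d}$). Your Poisson summation approach is the standard one and is essentially what that reference does, so there is nothing to compare.

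One caution on your ``main obstacle'': your claim that for $d=2$ the remainder is ``clean'' because the next term is $O(|x|^\al)$ ``with all derivatives controlled'' is not right. The Euclidean Bessel kernel has the expansion $G_\al(x) = \sum_{k\ge 0} a_k|x|^{\al-d+2k} + (\text{analytic in }|x|^2)$, so after subtracting only the leading term $c_{\al,d}|x|^{\al-d}$ the remainder still contains $a_1|x|^{\al}$, which for $0<\al<2$ is continuous but not $C^\infty$ at the origin. In other words, the subtlety you flag for ``general dimensions'' is already present when $d=2$. This does not affect the paper's applications --- in the proof of Lemma~\ref{LEM:posprod} only boundedness of $R$ (more precisely $\jb{\nabla}^s R\in L^1$) is used --- so one should either read ``smooth'' as ``smooth away from $0$ and bounded'', or record the full singular part $\sum_{k\ge 0}a_k|x|^{\al-d+2k}$ rather than just the leading term. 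Your overall strategy is sound; just do not oversell the regularity of $R$ at the origin.
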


An important remark is that the coefficient  $c_{\al, d}$ is positive;
see (4,2) in \cite{AS}.
This in particular means that the singular  part of the Bessel potential $J_{\al}$ is positive. 
  We will use this remark in Lemma~\ref{LEM:posprod} 
below to establish a refined product estimate involving positive distributions.

In the following, we focus on $d = 2$.
The borderline case $\al=d = 2$ corresponds 
to the Green's function $G$ for $1 - \Dl$.
On $\T^2$, $G$ is given by 
\begin{align}
G \deff (1-\Delta)^{-1} \dl_0 = \frac{1}{2\pi}\sum_{n\in\Z^2}\frac 1{\jb{n}^{2}}e_n.
\label{G1a}
\end{align}

\noi
It is well known that $G$ is an integrable function, smooth away from the origin, and that it satisfies the asymptotics
\begin{align}
G(x) = -\frac{1}{2\pi}\log|x| + R(x), \qquad x\in\T^2\setminus\{0\},
\label{G1b}
\end{align}
for some smooth function $R$ on $\T^2$. See (2.5) in \cite{ORSW}.

We also recall 
the following description of the truncated Green's function $\P_NG$, 
where $\P_N$ is the smooth frequency projector with the symbol $\chi_N$ in \eqref{chi}.
See Lemma 2.3 and Remark 2.4 in \cite{ORSW}. 

\begin{lemma}
\label{LEM:Green}
Let $N_2 \geq N_1 \geq 1$. 
Then, 
we have
\begin{align*}
\P_{N_1}\P_{N_2} G(x) \approx -\frac{1}{2\pi} \log\big(|x|+N_1^{-1}\big)
\end{align*}

\noi
for any $x\in \T^2 \setminus\{0\}$.
Similarly, we have
\begin{align*}
|\P_{N_j}^2 G(x) - \P_{N_1}\P_{N_2} G(x)| 
\les  \big(1 \vee - \log\big(|x|+N_j^{-1}\big)\big) \wedge \big(N_1^{-1}|x|^{-1} \big)
\end{align*}

\noi
for $j = 1, 2$ and any  $x\in \T^2 \setminus\{0\}$.
\end{lemma}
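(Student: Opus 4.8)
The plan is to prove Lemma~\ref{LEM:Green} by a direct computation of the relevant Fourier series, comparing it against the known asymptotics \eqref{G1b} of the untruncated Green's function $G$. Since $\P_{N_1}\P_{N_2}G$ has Fourier coefficients $\frac{1}{2\pi}\chi_{N_1}(n)\chi_{N_2}(n)\jb{n}^{-2}$, the first step is to write
\begin{align*}
\P_{N_1}\P_{N_2} G(x) - G(x)
= -\frac{1}{2\pi}\sum_{n\in\Z^2} \frac{1-\chi_{N_1}(n)\chi_{N_2}(n)}{\jb{n}^2} e_n(x),
\end{align*}
so that by \eqref{G1b} it suffices to show this difference is $\approx \frac{1}{2\pi}\log|x| - \frac{1}{2\pi}\log(|x|+N_1^{-1})$, i.e.\ that it is comparable (up to bounded additive errors) to $\frac{1}{2\pi}\log\frac{|x|}{|x|+N_1^{-1}}$, which is $O(1)$ for $|x|\gtrsim N_1^{-1}$ and behaves like $\frac{1}{2\pi}\log(|x|N_1)$ for $|x|\ll N_1^{-1}$. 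The natural route is to pass from the sum over $\Z^2$ to an integral over $\R^2$ via Poisson summation (or simply comparison of the sum with the integral, controlling the error by the smoothness and decay of the summand), reducing matters to estimating $\int_{\R^2} \frac{1-\chi_{N_1}(\eta)\chi_{N_2}(\eta)}{\jb{\eta}^2} e^{ix\cdot\eta}\,d\eta$.

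For that integral, I would split into dyadic shells $|\eta|\sim 2^j$. Because $N_2\ge N_1$ and $\chi_{N_i}(\eta) = \chi(N_i^{-1}\eta)$ with $\chi\equiv 1$ on $\{|\eta|\le\frac12\}$, the factor $1-\chi_{N_1}\chi_{N_2}$ vanishes for $|\eta|\lesssim N_1$ and equals $1$ for $|\eta|\gtrsim N_2$, with a smooth transition in between; in all regimes the dominant scale at which the cutoff turns on is $N_1$. On the region $|\eta|\lesssim N_1$ (where the symbol vanishes) there is no contribution; on $N_1 \lesssim |\eta|\lesssim |x|^{-1}$ the oscillation $e^{ix\cdot\eta}$ is essentially harmless and one gets $\int_{N_1}^{|x|^{-1}} \frac{dr}{r} \sim \log\frac{1}{|x|N_1}$ when $|x|\ll N_1^{-1}$ (and an empty or $O(1)$ range otherwise); on $|\eta|\gtrsim |x|^{-1}$ repeated integration by parts in $\eta$ (using that $\jb{\eta}^{-2}(1-\chi_{N_1}\chi_{N_2})$ and its derivatives decay appropriately, with derivative bounds uniform in $N_1,N_2$) gives a convergent, $O(1)$ contribution. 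Summing these gives exactly the claimed $-\frac{1}{2\pi}\log(|x|+N_1^{-1})$ profile up to bounded error, which together with \eqref{G1b} yields the first assertion. For the second assertion, I would apply the same analysis to the difference $\P_{N_j}^2 G - \P_{N_1}\P_{N_2}G$, whose Fourier symbol is $\frac{1}{2\pi}\jb{n}^{-2}\big(\chi_{N_j}(n)^2 - \chi_{N_1}(n)\chi_{N_2}(n)\big)$; this symbol is supported in the shell $|n|\sim N_1$ (for $j=1$ it is supported where $\chi_{N_1}^2\ne\chi_{N_1}\chi_{N_2}$, i.e.\ $|n|\gtrsim N_1$, and it is bounded; similarly for $j=2$), so the resulting sum is over a band of $\sim N_1^2$ frequencies of size $\sim N_1^{-2}$, giving the bound $\lesssim 1 \vee (-\log(|x|+N_j^{-1}))$ by the first-assertion analysis, and also $\lesssim N_1^{-1}|x|^{-1}$ by one integration by parts exploiting the $O(N_1)$ frequency support against the oscillation $e^{in\cdot x}$.

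The main obstacle I expect is making the passage from the lattice sum to the continuous integral fully rigorous with all error terms controlled uniformly in $N_1$ and $N_2$ — in particular verifying that the Schwartz tails of $\chi$ and the uniform-in-$N$ derivative bounds on the truncated symbol are strong enough that the Poisson-summation (or Euler–Maclaurin) remainders, and the non-stationary-phase integrations by parts, produce only the advertised $O(1)$ and $O(N_1^{-1}|x|^{-1})$ errors rather than something that degrades as $N_2/N_1\to\infty$. Once that bookkeeping is in place the estimates are routine. I note that since this is essentially \cite[Lemma~2.3 and Remark~2.4]{ORSW} recalled verbatim (with $\chi$ here the projector symbol), one may alternatively simply cite that reference and indicate that the present $\chi_N$ satisfies the same hypotheses; the argument above is the self-contained version.
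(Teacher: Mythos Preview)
The paper does not prove this lemma; it simply cites \cite[Lemma 2.3 and Remark 2.4]{ORSW}. The argument actually used in \cite{ORSW} (visible in the paper's proof of the analogous Lemma~\ref{LEM:GreenQ} for $\Q_N$) is more elementary than your Fourier-side dyadic/IBP scheme: one splits into the two regimes $|x|\lesssim N_1^{-1}$ and $|x|\gg N_1^{-1}$, comparing $\P_{N_1}\P_{N_2}G(x)$ in the first regime to its value at $0$ (a partial sum $\sim\frac1{2\pi}\log N_1$) via the mean value theorem, and in the second regime to $G(x)$ via the known asymptotics \eqref{G1b}. Your approach is viable and gives the same result, but it trades a simple physical-space comparison for more bookkeeping on the Fourier side.

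There is, however, a genuine error in your sketch for the second assertion when $j=2$. The symbol $\chi_{N_2}^2-\chi_{N_1}\chi_{N_2}=\chi_{N_2}(\chi_{N_2}-\chi_{N_1})$ is \emph{not} supported in a shell $|n|\sim N_1$: it is supported on the full annulus $\tfrac12 N_1\le |n|\le N_2$, which can be arbitrarily large. Your ``$\sim N_1^2$ frequencies'' count and your justification for the $N_1^{-1}|x|^{-1}$ bound (``exploiting the $O(N_1)$ frequency support'') therefore fail as stated. The bound is nonetheless correct, but the IBP has to be carried out honestly: after one integration by parts the integrand is $|x|^{-1}$ times $\nabla_\eta\big[(\chi_{N_2}^2-\chi_{N_1}\chi_{N_2})\jb{\eta}^{-2}\big]$, and the point is that the $L^1(\R^2)$ norm of this gradient is $O(N_1^{-1})$ uniformly in $N_2$. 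Indeed, the terms where $\nabla$ hits a cutoff $\chi_{N_i}$ are supported on $|\eta|\sim N_i$ with size $N_i^{-1}\jb{\eta}^{-2}$, integrating to $O(N_i^{-1})\le O(N_1^{-1})$; the term where $\nabla$ hits $\jb{\eta}^{-2}$ is $O(|\eta|^{-3})$ on $N_1\lesssim|\eta|\le N_2$, which integrates to $\int_{N_1}^{N_2}r^{-2}\,dr\sim N_1^{-1}$. With this correction your argument goes through.
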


In establishing invariance of the Gibbs measures
(Theorems \ref{THM:2} and \ref{THM:4}), 
we need to consider the truncated dynamics
\eqref{NH4} and \eqref{NW3a}
with the  truncated nonlinearity.
In order to preserve the sign-definite structure, 
it is crucial that we use  the smoothing operator $\Q_N$ defined in \eqref{Q}
with a non-negative kernel.
In particular, we need to construct the Gaussian multiplicative chaos $\U_N$
with the smoothing operator $\Q_N$
in place of $\P_N$.
For this purpose, we state an analogue of Lemma \ref{LEM:Green}
 for the truncation of the Green's function by $\Q_N$.

\begin{lemma}\label{LEM:GreenQ}
Let $N_2\ge N_1\ge 1$. Then we have
\begin{align}
\Q_{N_1}\Q_{N_2}G(x) \approx -\frac1{2\pi}\log\big(|x|+N_1^{-1}\big)
\label{A7b}
\end{align}

\noi
for any $x\in \T^2 \setminus\{0\}$.
Similarly, we have
\begin{align*}
\big|\Q_{N_j}^2G(x) - \Q_{N_1}\Q_{N_2}G(x)\big|\les \big(1 \vee - \log\big(|x|+N_j^{-1}\big)\big) \wedge \big(N_1^{-1}|x|^{-1} \big)
\end{align*}

\noi
for $j = 1, 2$ and any  $x\in \T^2 \setminus\{0\}$.

\end{lemma}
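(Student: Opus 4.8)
The plan is to mimic the proof of Lemma~\ref{LEM:Green} (from \cite{ORSW}), replacing the sharp-ish frequency cutoff $\chi_N$ by the Fourier symbol $m_N(n) := 2\pi\ft\rho_N(n) = \ft\rho(N^{-1}n)$ of the mollifier $\Q_N$ from \eqref{Q}--\eqref{Q2}. The key structural fact is that $\Q_{N_1}\Q_{N_2}G$ has convolution kernel $\rho_{N_1}*\rho_{N_2}*G$, and $G$ has the logarithmic singularity $-\tfrac1{2\pi}\log|x| + R(x)$ with $R$ smooth by \eqref{G1b}. Since $\rho$ is smooth, compactly supported, and integrates to $1$, convolving against $\rho_{N_1}*\rho_{N_2}$ is a regularization at scale $\sim N_1^{-1}$ (the coarser of the two). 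So I would first observe that $\rho_{N_1}*\rho_{N_2}*R$ is smooth and uniformly bounded (with all derivatives controlled uniformly in $N_1,N_2$, using $\|\rho_N\|_{L^1}=1$), hence contributes an $O(1)$ term, and reduce \eqref{A7b} to the estimate
\begin{align*}
\big(\rho_{N_1}*\rho_{N_2}*(-\tfrac1{2\pi}\log|\cdot|)\big)(x) \approx -\tfrac1{2\pi}\log\big(|x|+N_1^{-1}\big)
\end{align*}
on $\T^2\setminus\{0\}$, where $\log|\cdot|$ is taken on the fundamental domain $[-\pi,\pi)^2$ (the periodization corrections being smooth). This last bound is a standard computation: split the convolution integral into the region $|y|\lesssim N_1^{-1}$, where $\log|x-y|$ is comparable to $\log(|x|+N_1^{-1})$ when $|x|\gtrsim N_1^{-1}$ and the singularity of $\log$ is integrable so the whole contribution is $O(\log(|x|+N_1^{-1}))$ when $|x|\lesssim N_1^{-1}$; and the region $|y|\gtrsim N_1^{-1}$, where the tails of $\rho_{N_1}*\rho_{N_2}$ decay rapidly (here one uses $\rho\in C_c^\infty$, so $\rho_{N_1}*\rho_{N_2}$ and all its moments are well controlled, or alternatively that $\rho$ compactly supported forces $\rho_{N_1}*\rho_{N_2}$ to be supported in $|y|\lesssim N_1^{-1}$ outright, which makes the second region empty and simplifies matters).

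For the second estimate, I would write $\Q_{N_j}^2 G - \Q_{N_1}\Q_{N_2}G$ as convolution of $G$ against $\rho_{N_j}*\rho_{N_j} - \rho_{N_1}*\rho_{N_2}$, and again split into the smooth part $R$ (contributing $O(1)$, subsumed in the logarithmic bound) and the $-\tfrac1{2\pi}\log|\cdot|$ part. For the logarithmic part, the two bounds in the claimed minimum come from two different ways of estimating: on one hand, both $\rho_{N_j}^{*2}*\log|\cdot|$ and $\rho_{N_1}*\rho_{N_2}*\log|\cdot|$ are individually $\lesssim 1 \vee (-\log(|x|+N_j^{-1}))$ by the first part (using $N_j\ge N_1$), giving the first factor; on the other hand, the difference of the mollifiers is itself small in a suitable sense — writing $\rho_{N_j}^{*2}*\log - \rho_{N_1}*\rho_{N_2}*\log$ and using the mean-value/Lipschitz bound $|\nabla(\log|\cdot|*(\text{mollifier at scale }N^{-1}))|\lesssim (|x|+N^{-1})^{-1}$ together with the fact that the mollifiers agree to leading order, one extracts the gain $N_1^{-1}|x|^{-1}$. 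This is exactly the mechanism in Lemma~2.3 and Remark~2.4 of \cite{ORSW}; the only change is bookkeeping for the shape of $\rho$.

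The main obstacle, or rather the only point needing genuine care, is the interplay between the periodic ($\T^2$) and Euclidean ($\R^2$) pictures: $G$ on $\T^2$ is the periodization of (essentially) the Euclidean log kernel plus a smooth piece, while $\rho_N(x)=N^2\rho(Nx)$ is defined via the Euclidean dilation of a function supported in $[-\pi,\pi)^2$. I would handle this by working on the fundamental domain: for $N$ large, $\supp\rho_N\subset\{|x|\le \pi N^{-1}\}$ is a tiny neighborhood of the origin, so $\rho_{N_1}*\rho_{N_2}*G$ near a point $x$ only sees $G$ on a ball of radius $\lesssim N_1^{-1}$ around $x$, where the periodization is irrelevant and \eqref{G1b} applies verbatim; the bounded-$N$ case (say $N_1\le C$) is trivial since everything is then a fixed smooth-plus-log object. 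Assembling these pieces gives both displays. I expect the write-up to be short, essentially a remark that the proof of \cite[Lemma~2.3]{ORSW} goes through with $\chi_N$ replaced by $\ft\rho(N^{-1}\cdot)$, since both are bounded Fourier multipliers that are $\equiv 1$ near $0$ and whose inverse Fourier transforms are concentrated at scale $N^{-1}$.
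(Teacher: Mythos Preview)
Your proposal is correct and, for the regime $|x|\gg N_1^{-1}$ as well as for the second displayed estimate, it matches the paper's argument essentially verbatim: reduce to $G=-\tfrac1{2\pi}\log|\cdot|+R$ via \eqref{G1b}, use the compact support of $\rho$ so that $\rho_{N_1}*\rho_{N_2}$ is supported in $\{|y|\lesssim N_1^{-1}\}$, and conclude $|x-y|\sim|x|$ in the convolution integral. The paper also only writes out the case $N_1=N_2=N$ in detail and defers the rest to \cite{ORSW}, as you anticipated.

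The one genuine methodological difference is in the small-$|x|$ regime $|x|\lesssim N_1^{-1}$. You stay in physical space throughout, using the pointwise bound $\|\rho_{N_1}*\rho_{N_2}\|_{L^\infty}\lesssim N_1^2$ together with the local integrability of $\log$ after rescaling. The paper instead passes to the Fourier side: it shows $|\Q_N^2G(x)-\Q_N^2G(0)|\lesssim 1$ by the mean value theorem on $e_n(x)-e_n(0)$ and symbol decay of $\ft\rho_N$, and then compares $\Q_N^2G(0)$ directly with the partial sum $\tfrac1{4\pi^2}\sum_{|n|\le N}\jb{n}^{-2}\approx\tfrac1{2\pi}\log N$. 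Your route is arguably cleaner here since it avoids the Fourier detour, while the paper's route more visibly parallels the proof of Lemma~\ref{LEM:Green} in \cite{ORSW}. One small point to tighten in your write-up: you phrase the small-$|x|$ conclusion as ``$O(\log(|x|+N_1^{-1}))$'', but \eqref{A7b} requires the two-sided bound $\approx-\tfrac1{2\pi}\log(|x|+N_1^{-1})$. Your method does yield this---writing $\log|x-y|=\log N_1^{-1}+\log(N_1|x-y|)$ and using $\|\rho_{N_1}*\rho_{N_2}\|_{L^\infty}\lesssim N_1^2$ on a ball of radius $\sim N_1^{-1}$ shows the error term is $O(1)$ after rescaling---but the phrasing should reflect that.
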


\begin{proof}
We mainly follow the proof of Lemma 2.3 in \cite{ORSW}. 
We only show \eqref{A7b}  for $N_1=N_2=N$, 
since the other claims follow
from  a straightforward modification. 
Fix $x\in\T^2 \setminus\{0\}\cong [-\pi,\pi)^2\setminus\{0\}$.

\smallskip
\noi
$\bullet$
{\bf Case 1:} 
We first treat the case $|x|\lesssim N^{-1}$. 
Since $\rho\in C^{\infty}_c(\R^2)$, 
we have 
\begin{align}
|\partial_\xi ^{k}\ft \rho_N(\xi)|\les N^{-|k|}\jb{N^{-1}\xi}^{-\ell}
\label{decay1}
\end{align}
 for any $k\in (\Z_{\ge 0})^2$, $\ell\in\N$, 
 and $\xi \in \R^2$.
Then, by   \eqref{G1a}, the mean value theorem, 
and \eqref{decay1} with $|k| = 0$ and $\l = 2$, we have
\begin{align}
\begin{split}
\big|\Q_N^2G(x)-\Q_N^2G(0)\big| 
&= 2\pi \bigg|\sum_{n\in\Z^2}\frac{\ft\rho_N(n)^2}{\jb{n}^{2}}(e_n(x)-e_n(0))\bigg|\\
&\les \sum_{n\in\Z^2}\frac{\ft\rho_N(n)^2}{\jb{n}}|x|
\les \sum_{|n|\le N}\jb{n}^{-1}|x|+\sum_{|n|\ge N}N^2|n|^{-3}|x| \\
& \les N|x|\les 1.
\end{split}
\label{decay2}
\end{align}

\noi
Similarly, 
by  \eqref{G1a}, the mean value theorem with $\ft\rho_N(0) = \frac{1}{2\pi}$, 
and \eqref{decay1} with $\l = 1$, we have
\begin{align}
\begin{split}
\bigg|\Q_N^2G(0)-\frac1{4\pi^2}
\sum_{|n|\le N}\frac{1}{\jb{n}^{2}}\bigg| 
& \les \bigg|\sum_{|n|\le N}\frac{4\pi^2 \ft\rho_N(n)^2-1}{\jb{n}^2}\bigg| 
+  C \sum_{|n|\ge N}\frac{N}{\jb{n}^{2}|n|}\\
&\les \sum_{|n|\le N}\frac{|n|}{N\jb{n}^{2}} + 1\\
&\les 1.
\end{split}
\label{decay3}
\end{align}

\noi
 Hence, from \eqref{decay2} and \eqref{decay3}, 
  we conclude that
\begin{align*}
\Q_N^2G(x)\approx \frac{1}{4\pi^2}\sum_{|n|\le N}\frac{1}{\jb{n}^{2}}
 \approx \frac{1}{2\pi}\log N \approx -\frac1{2\pi}\log\big(|x|+N^{-1}\big),
\end{align*}

\noi
where we used Lemma 3.2 in \cite{HRW} at the second step.

\smallskip

\noi
$\bullet$
{\bf Case 2:} 
Next, we consider the case $|x|\gg N^{-1}$. Since $G$ is integrable and $\rho_N$ is non-negative and integrates to 1, we have
\begin{align}
\begin{split}
\big|\Q_N^2G(x)-G(x)\big|&= \Big|\int_{\T^2}\int_{\T^2}\rho_N(x-y)\rho_N(y-z)\big(G(z)-G(x)\big)dzdy\Big|\\
&\les \int_{\T^2}\int_{\T^2}\rho_N(x-y)\rho_N(y-z)\bigg|\log\bigg(\frac{|z|}{|x|}\bigg)\bigg|dzdy + 1,
\end{split}
\label{decay4}
\end{align}

\noi
where, at the second step,  
we used \eqref{G1b} and the fact that $R$ in \eqref{G1b} is smooth.
Since  $\rho_N$ is supported in a ball of radius $O(N^{-1})$ centered at 0, 
 we have $|x-z|\les |x-y|+|y-z| \les N^{-1}$
 in the above integrals, 
 which 
  implies that $|x|\sim |z|$ under the assumption $|x|\gg N^{-1}$. 
  Hence, 
   the $\log$ term in \eqref{decay4} is bounded
   and we obtain
\begin{align}
\big|\Q_N^2G(x)-G(x)\big|\les \int_{\T^2}\int_{\T^2}\rho_N(x-y)\rho_N(y-z)dzdy + 1 \sim 1.
\label{decay5} 
\end{align}

\noi
Therefore, from \eqref{G1b} and \eqref{decay5}, 
we have
\[\Q_N^2G(x) \approx G(x)\approx -\frac{1}{2\pi}\log|x|\approx -\frac{1}{2\pi}\log\big(|x|+N^{-1}\big).\]
This concludes the proof of Lemma \ref{LEM:GreenQ}.
\end{proof}

\subsection{On the heat kernel and the Schauder estimate}\label{SUBSEC:heatker}

In this subsection, we summarize the properties
of the linear heat propagator $P(t)$
defined in \eqref{heat1}.
We denote the  kernel of $P(t)$ by
\begin{align*}
P_t\deff \frac1{2\pi}\sum_{n\in\Z^2}e^{-\frac{t}2\jb{n}^2}e_n.
\end{align*}

\noi
Then, we have the following lemma
by passing the corresponding result on $\R^2$ to the periodic torus~$\T^2$
via the Poisson summation formula (see \cite[Theorem 3.2.8]{Gra1}).
See also (2.1) in~\cite{ORSW}.

\begin{lemma}
\label{LEM:heatker}
Let $t>0$.\\
\noi
\textup{(i)} $P_t$ is a positive smooth function.

\smallskip

\noi
\textup{(ii)} Let $\al \geq 0$ and $1\le p\le q\le \infty$.
Then,  we have
\begin{align}
\big\| P(t) f \big\|_{L^q (\T^2)} \les t^{-\frac{\al}2-(\frac1p-\frac1q)} \|\jb{\nabla}^{-\al}f\|_{L^p(\T^2)}
\label{decay6}
\end{align}

\noi
 for any $f\in L^p(\T^2)$.

\end{lemma}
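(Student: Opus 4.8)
The plan is to derive both statements by transferring the corresponding Euclidean heat-kernel estimates on $\R^2$ to the torus via the Poisson summation formula. First, recall that the kernel of $e^{t\Dl/2}$ on $\R^2$ is the Gaussian $(2\pi t)^{-1}e^{-|x|^2/(2t)}$, which is manifestly positive, smooth, and has unit mass; multiplying by the bounded factor $e^{-t/2}$ coming from the mass term in $\jb{n}^2 = 1+|n|^2$ preserves positivity and smoothness. The periodization $P_t(x) = e^{-t/2}\sum_{k\in\Z^2}(2\pi t)^{-1}e^{-|x+2\pi k|^2/(2t)}$ given by Poisson summation (see \cite[Theorem 3.2.8]{Gra1}) is then a convergent sum of positive smooth functions, hence positive and smooth. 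This proves (i). For the Fourier-series identity $P_t = \frac1{2\pi}\sum_n e^{-\frac t2\jb{n}^2}e_n$ one checks that both sides have the same Fourier coefficients, which is immediate from \eqref{heat1}; alternatively this is exactly the content of the Poisson summation formula applied to the Gaussian.

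For (ii), I would first reduce to the case $\al = 0$ by writing $P(t) = P(t/2)\circ P(t/2)$ and noting that $P(t/2)\jb{\nabla}^{\al}$ is a Fourier multiplier with symbol $e^{-\frac t4\jb{n}^2}\jb{n}^\al$; since $\sup_{r\ge 0} e^{-\frac t4(1+r^2)}(1+r^2)^{\al/2} \lesssim t^{-\al/2}$, this operator is bounded on every $L^p(\T^2)$ with norm $O(t^{-\al/2})$ — this follows, for instance, because its kernel is, up to the scaling factor, a Schwartz function with $L^1$ norm $O(t^{-\al/2})$ (again via Poisson summation, comparing with the $\R^2$ kernel $t^{-1}\cdot(t^{-\al/2})\cdot g(x/\sqrt t)$ for a fixed Schwartz $g$), so Young's inequality applies. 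It then remains to prove the pure smoothing bound $\|P(t)f\|_{L^q} \lesssim t^{-(\frac1p-\frac1q)}\|f\|_{L^p}$ for $1\le p\le q\le\infty$, which follows from Young's convolution inequality $\|P_t * f\|_{L^q} \le \|P_t\|_{L^r}\|f\|_{L^p}$ with $\frac1q = \frac1r + \frac1p - 1$, once we know $\|P_t\|_{L^r(\T^2)} \lesssim t^{-(1-\frac1r)} = t^{-(\frac1p-\frac1q)}$ uniformly for $0 < t \le 1$ (and for $t\ge 1$ the bound is trivial since $P_t$ is then smooth and exponentially decaying in $t$). The latter norm bound is obtained by dominating the periodized sum: for $|x|\lesssim 1$ the $k=0$ term dominates and contributes $\|t^{-1}e^{-|\cdot|^2/(2t)}\|_{L^r(|x|\lesssim 1)} \sim t^{-1}\cdot t^{1/r} = t^{-(1-1/r)}$ after rescaling $x \mapsto \sqrt t\, x$, while the tail $\sum_{k\ne 0}$ is bounded by a convergent series times $t^{-1}e^{-c/t}$, which is negligible.

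The only mildly delicate point is bookkeeping the two regimes $t\le 1$ and $t\ge 1$ and making sure the mass factor $e^{-t/2}$ is used (or harmlessly discarded) consistently; for $t$ large it in fact gives extra exponential decay, so the stated polynomial bound is comfortably satisfied, and for $t$ small it is just a bounded multiplicative constant. No genuinely new idea beyond the scaling of the Gaussian and Young's inequality is needed, so I expect no real obstacle here — the statement is a standard Schauder-type estimate recorded for later use, and the proof is a short transference argument. I would simply cite \cite[Theorem 3.2.8]{Gra1} for Poisson summation and \cite[(2.1)]{ORSW} for the precise on-torus heat kernel asymptotics, and carry out the Young's inequality computation above.
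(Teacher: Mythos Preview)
Your proposal is correct and follows essentially the same route as the paper: Poisson summation to periodize the positive Gaussian kernel for (i), and Young's inequality together with kernel bounds transferred from $\R^2$ via Poisson summation for (ii). The only cosmetic difference is that you handle the $\jb{\nabla}^{\al}$ factor by splitting $P(t) = P(t/2)\circ P(t/2)$ and bounding $P(t/2)\jb{\nabla}^{\al}$ on $L^p$, whereas the paper estimates the kernel of $\jb{\nabla}^{\al}P(t)$ directly in $L^r(\T^2)$ by passing to a weighted Lebesgue norm on $\R^2$; both amount to the same scaling computation.
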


\begin{proof}
By the Poisson summation formula, 
and the positivity of  the heat kernel on $\R^2$, we have
\begin{align*}
P_t =  \frac{1}{2\pi} e^{-\frac{t}{2}}\sum_{n\in\Z^2}e^{-\frac{t}2|n|^2}e_n
=   \frac{1}{2\pi} e^{-\frac{t}{2}}\sum_{n\in\Z^2}\F^{-1}_{\R^2} \big(e^{-\frac{t}2|\cdot |^2}\big)(x + 2\pi n)
>0, 
\end{align*}

\noi
where $\F^{-1}_{\R^2}$ denotes the inverse Fourier transform on $\R^2$.
This proves (i).

The Schauder estimate on $\R^2$ follows
from Young's inequality and 
estimating the kernel on $\R^2$ in some Sobolev norm.
As for the Schauder estimate \eqref{decay6} on $\T^2$, 
we apply Young's inequality
and then use the Poisson summation formula
to pass an estimate on (fractional derivatives of) the heat kernel on $\T^2$
to that  in a weighted Lebesgue space on $\R^2$.
This proves (ii).
\end{proof}

\subsection{On the kernel of the wave operator 
and the Strichartz estimates}
\label{SUBSEC:waveker}

Next, we turn our attention to the linear operators
for the (damped) wave equations.
Let $S(t)$ 
be the forward propagator for the standard wave equation
defined in \eqref{S1}.
We denote its kernel by $S_t$, which can be written as the following distribution:
\begin{align*}
S_t\deff \frac1{2\pi}\sum_{n\in\Z^2}\frac{\sin(t|n|)}{|n|}e_n,
\end{align*}

\noi
where we set $ \frac{\sin(t|0|)}{|0|} = t$  by convention.

We say that a distribution $T$ is positive  if its evaluation $T(\varphi)$ at any non-negative test function
$\varphi$ is non-negative.
 We  have the following positivity result for $S_t$.
\begin{lemma}\label{LEM:waveker}
For any $t\geq 0$, the distributional kernel $S_t$ 
on the two-dimensional torus $\T^2$ is positive.
\end{lemma}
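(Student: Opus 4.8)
The plan is to transfer the positivity of the wave propagator on $\R^2$ to the torus by the Poisson summation formula, exactly as was done for the heat kernel in Lemma \ref{LEM:heatker}. Recall that on $\R^2$ the forward fundamental solution of $\dt^2 u - \Dl u = 0$ with data $(0, \dl_0)$ is the classical formula
\begin{align*}
S^{\R^2}_t(x) = \frac{1}{2\pi} \frac{\ind_{\{|x| < t\}}}{\sqrt{t^2 - |x|^2}},
\end{align*}
which is manifestly a non-negative (locally integrable) function for each $t > 0$, supported in the closed disk of radius $t$. First I would recall this explicit expression (it is the standard descent-from-3D formula for the two-dimensional wave equation) and note that $S^{\R^2}_t$, viewed as a tempered distribution on $\R^2$, has Fourier transform $\frac{\sin(t|\xi|)}{|\xi|}$, so that $S^{\R^2}_t$ is precisely the convolution kernel of $S(t)=\frac{\sin(t|\nabla|)}{|\nabla|}$ on $\R^2$.

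Next I would invoke the Poisson summation formula in the form used for $P_t$ above: for a suitable Schwartz-class or compactly supported test setting, periodization of $S^{\R^2}_t$ produces the torus kernel,
\begin{align*}
S_t(x) = \frac{1}{2\pi}\sum_{n\in\Z^2}\frac{\sin(t|n|)}{|n|}e_n(x) = \sum_{m\in\Z^2} S^{\R^2}_t(x + 2\pi m)
\end{align*}
as an identity of distributions on $\T^2$. Since $S^{\R^2}_t \ge 0$ and, for fixed $t$, only finitely many translates $S^{\R^2}_t(\,\cdot + 2\pi m)$ have support meeting a fundamental domain (the support lies in $\{|x|\le t\}$), the periodized sum is a locally finite sum of non-negative integrable functions, hence a non-negative integrable function on $\T^2$. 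Therefore $S_t$ agrees with a non-negative $L^1(\T^2)$ function, and in particular $S_t(\varphi) = \int_{\T^2} S_t(x)\varphi(x)\,dx \ge 0$ for every non-negative test function $\varphi$, which is the asserted distributional positivity.

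The one technical point that needs a little care — and the main obstacle — is the justification of the Poisson summation identity, since $S^{\R^2}_t$ is only locally integrable with a (non-integrable-looking but actually integrable) singularity along the light cone $|x| = t$ and is not Schwartz. This is handled by the same device as in the proof of Lemma \ref{LEM:heatker}: one tests against a smooth function on $\T^2$, lifts the pairing to $\R^2$, and uses that $S^{\R^2}_t \in L^1_{\mathrm{loc}}(\R^2)$ with compact support (for fixed $t$) together with the fact that its Fourier transform $\frac{\sin(t|\xi|)}{|\xi|}$ is continuous and grows at most polynomially, so that both sides of the Poisson summation formula are well-defined distributions on $\T^2$ and agree on Fourier modes. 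Alternatively, if one prefers to avoid the explicit kernel, one can argue by density: approximate, regularize $S(t)$ by $S(t)\P_N$ or by mollification, note that $S^{\R^2}_t$ is a weak-$\ast$ limit of non-negative smooth functions (e.g.\ its mollifications, which are non-negative since $S^{\R^2}_t\ge 0$), periodize at the regularized level where Poisson summation is elementary, and pass to the limit; positivity is preserved under weak-$\ast$ limits of non-negative measures. Either route gives the claim, and I would present the explicit-kernel version as it is shortest.
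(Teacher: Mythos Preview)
Your proposal is correct and takes essentially the same approach as the paper: both transfer the positivity of the explicit 2D wave kernel on $\R^2$ to the torus via Poisson summation. The paper implements precisely the regularization alternative you mention at the end---it mollifies by $\rho_N$, applies Poisson summation at the smooth level, invokes the explicit Poisson integral formula $u_N(t,x)=\frac{1}{2\pi}\int_{B(x,t)}\frac{\rho_N(y)}{\sqrt{t^2-|x-y|^2}}\,dy\ge 0$ for the solution with data $(0,\rho_N)$, and then passes to the limit---rather than working directly with the singular fundamental solution $S^{\R^2}_t$ as in your primary route.
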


\begin{proof}
As a distribution, we have 
\[S_t = \frac{1}{2\pi}
\sum_{n\in\Z^2}\frac{\sin(t|n|)}{|n|}e_n = \lim_{N\rightarrow\infty}\sum_{n\in\Z^2}\ft \rho_N(n) \frac{\sin(t|n|)}{|n|}e_n,\]
where $\rho_N$ is as in \eqref{Q}. In particular, 
we can use the Poisson summation formula to write
\begin{align}
\begin{split}
S_t(x)  & =\lim_{N\rightarrow \infty}
\frac{1}{2\pi} \sum_{m\in\Z^2}
\int_{\R^2}\ft \rho_N(\xi)\frac{\sin(t|\xi|)}{|\xi|}e^{i(x+2\pi m)\cdot\xi}d\xi\\
& = \lim_{N\rightarrow \infty}\sum_{m\in\Z^2}
\frac{\sin(t|\nabla|)}{|\nabla|}\rho_N(x+2\pi m).
\end{split}
\label{decay7}
\end{align}

Let  $u_N$  be the solution to the following linear wave equation on $\R^2$:
\begin{align}
\begin{cases}
\dt^2u_N -\Dl u_N = 0,\\
(u_N,\dt u_N)\big|_{t=0}=(0,\rho_N).
\end{cases}
\label{LW}
\end{align}

\noi
It is well known (see, for example,  (27) on p.\,74 in \cite{Evans}) 
that in the two-dimensional case, 
 the solution $u_N$ to \eqref{LW} is given by the following Poisson's formula:
\begin{align*}
u_N(t,x)=\frac1{2\pi } 
\int_{B(x, t)}\frac{\rho_N(y)}{\sqrt{t^2-|x-y|^2}}dy\geq 0
\end{align*}

\noi
for any $x\in\R^2$ and $t\ge 0$, 
where $B(x, t) \subset \R^2$ is the ball of radius $t$ centered at $x$ in $\R^2$. 
Hence, from \eqref{decay7}, 
we conclude that 
\begin{align}
S_t(x)  
& = \lim_{N\rightarrow \infty}\sum_{m\in\Z^2}
u_N(x+2\pi m) \geq 0.
\label{decay8}
\end{align}

\noi
We point out that the sum in \eqref{decay8} (for fixed $N \in \N$)
is convergent thanks to the compact support of $\rho_N$
and the finite speed of propagation for the wave equation.
\end{proof}

The next lemma shows that the operators $\D(t)$ 
in \eqref{D} and $e^{-\frac{t}2}S(t)$ in \eqref{S1} are close in the sense that their difference 
provides an extra smoothing property.
 This extra smoothing  plays  a crucial role
 for estimating  $Y$ in \eqref{XY3}.
\begin{lemma}\label{LEM:smooth}
Let $t\geq 0$ and $s\in\R$.

\smallskip

\noi
\textup{(i)} The operator $\D(t)-e^{-\frac{t}{2}}S(t)$ is bounded from $H^s(\T^2)$ to $H^{s+2}(\T^2)$.

\smallskip

\noi
\textup{(ii)} The operator $\dt\big(\D(t)-e^{-\frac{t}{2}}S(t)\big)$ is bounded from $H^s(\T^2)$ to $H^{s+1}(\T^2)$.
\end{lemma}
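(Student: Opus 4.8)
The plan is to move to the Fourier side and compare the symbols of the two propagators, using that $\sqrt{\tfrac34+|n|^2}$ and $|n|$ differ by only $O(|n|^{-1})$.

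First I would record the relevant symbols. Set $a = a(n) = |n|$ and $b = b(n) = \sqrt{\tfrac34 + |n|^2}$, so that (with the convention $\tfrac{\sin(t\cdot 0)}{0} = t$) the operator $\D(t) - e^{-\frac t2}S(t)$ acts on the $n$-th Fourier coefficient by multiplication by
\[
m(t,n) = e^{-\frac t2}\Big( \tfrac{\sin(tb)}{b} - \tfrac{\sin(ta)}{a} \Big),
\]
while, differentiating \eqref{D} and \eqref{S1} in $t$, the operator $\dt\big(\D(t) - e^{-\frac t2}S(t)\big)$ has symbol
\[
\widetilde m(t,n) = e^{-\frac t2}\Big( \cos(tb) - \cos(ta) - \tfrac12\big( \tfrac{\sin(tb)}{b} - \tfrac{\sin(ta)}{a} \big) \Big).
\]
Since $b^2 - a^2 = \tfrac34$, one has $0 < b - a = \tfrac{3}{4(a+b)} \le \tfrac{3}{8}|n|^{-1}$ for every $n \ne 0$, and this is the only structural fact that will be needed.

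Next I would estimate these symbols pointwise, uniformly in $t \ge 0$. For $n \ne 0$, writing
\[
\tfrac{\sin(tb)}{b} - \tfrac{\sin(ta)}{a} = \tfrac{\sin(tb) - \sin(ta)}{b} + \sin(ta)\cdot\tfrac{a - b}{ab}
\]
and using $|\sin(tb) - \sin(ta)| \le t(b - a)$, $|\cos(tb) - \cos(ta)| \le t(b-a)$, together with $b \ge a = |n| \ge 1$, one gets
\[
\Big| \tfrac{\sin(tb)}{b} - \tfrac{\sin(ta)}{a} \Big| \les t\jb{n}^{-2} + \jb{n}^{-3}
\qquad\text{and}\qquad
|\cos(tb) - \cos(ta)| \les t\jb{n}^{-1}.
\]
Absorbing the powers of $t$ into the exponential via $\sup_{t \ge 0} t e^{-\frac t2} < \infty$, this yields $|m(t,n)| \les \jb{n}^{-2}$ and $|\widetilde m(t,n)| \les \jb{n}^{-1}$ for $n \ne 0$; for $n = 0$ both $m(t,0)$ and $\widetilde m(t,0)$ are bounded by an absolute constant, again thanks to the $e^{-\frac t2}$ factor. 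Hence $|m(t,n)| \les \jb{n}^{-2}$ and $|\widetilde m(t,n)| \les \jb{n}^{-1}$ for all $n \in \Z^2$, with implicit constants independent of $t \ge 0$.

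Finally, Plancherel's theorem turns these symbol bounds into the claimed mapping properties: $\|(\D(t) - e^{-\frac t2}S(t))f\|_{H^{s+2}}^2 = \sum_{n} \jb{n}^{2s+4} |m(t,n)|^2 |\ft f(n)|^2 \les \|f\|_{H^s}^2$, and likewise $\|\dt(\D(t) - e^{-\frac t2}S(t))f\|_{H^{s+1}} \les \|f\|_{H^s}$, both uniformly in $t \ge 0$. There is no genuine difficulty here; the only two points requiring mild care are the separate treatment of the zero mode (because of the $\sin(0)/0$ convention) and tracking the $t$-dependence so as to obtain bounds uniform on all of $[0,\infty)$, which is the form in which this lemma gets used inside the Duhamel integral for $Y$ in \eqref{XY3}.
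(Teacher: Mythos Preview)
Your proof is correct and follows essentially the same route as the paper: both work on the Fourier side, exploit $b-a=\sqrt{\tfrac34+|n|^2}-|n|=O(|n|^{-1})$, and bound the resulting sine/cosine differences via the mean value theorem. The only minor difference is that you retain the factor $e^{-t/2}$ and use $\sup_{t\ge 0} t e^{-t/2}<\infty$ to absorb the $t$-factors coming from the mean value theorem, thereby obtaining bounds uniform on all of $[0,\infty)$; the paper instead strips off $e^{-t/2}$ at the outset and produces bounds that are implicitly for fixed $t$ (sufficient since the lemma is only applied on $[0,T]$ with $T\le 1$).
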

\begin{proof}
(i)  It suffices to show that the symbol of
$\jb{\nabla}^2\big(e^\frac{t}{2}\D(t)-S(t)\big)$ is bounded. 
Since $\jb{n}\sim \sqrt{\tfrac34+|n|^2}$ for any $n\in\Z^2$, it suffices to bound, for $n\neq 0$, 
\begin{align*}
\big(\tfrac34 + |n|^2\big)
& \Bigg(\frac{\sin\big(t\sqrt{\tfrac34+|n|^2}\big)}{\sqrt{\tfrac34+|n|^2}}-\frac{\sin(t|n|)}{|n|}\Bigg)\\
&=\sqrt{\tfrac34+|n|^2}\bigg(\sin\Big(t\sqrt{\tfrac34+|n|^2}\Big)-\sin(t|n|)\bigg)\\
&\hphantom{X}
 + (\tfrac34+|n|^2)\sin(t|n|)\Bigg(\frac{1}{\sqrt{\tfrac34+|n|^2}}-\frac{1}{|n|}\Bigg)\\
&=: \1+\II.
\end{align*}

By the mean value theorem, we have
\[|\1|\les \jb{n}\Big|\sqrt{\tfrac34+|n|^2}-|n|\Big| 
\les \jb{n}\frac{1}{\sqrt{\tfrac34+|n|^2}+|n|}\les 1.\]

\noi
Similarly, we can bound the second term by
\[|\II| \les \jb{n}^2 \frac{1}{|n|\sqrt{\tfrac34+|n|^2}\big(|n|+\sqrt{\tfrac34+|n|^2}\big)}\les 1.\]

\noi
This proves (i).

\smallskip

\noi
(ii) In this case, we show the boundedness of the symbol
for 
\begin{align*}
\jb{\nb}&  \dt\big(\D(t)-e^{-\frac{t}{2}}S(t)\big)\\
&
= -\frac 12 \jb{\nabla}\big(\D(t) - e^{-\frac{t}{2}}S(t)\big) 
+ e^{-\frac{t}2}\jb{\nb}
\bigg( \cos\Big(t\sqrt{\tfrac34-\Dl}\Big)-\cos(t|\nabla|)\bigg)\\
&  =: \III + \IV.
\end{align*}

\noi
The symbol of $\III$ is clearly  bounded by the argument above.
As for the symbol of $\IV$, it follows from 
the mean value theorem that 
\begin{align*}
\jb{n}\bigg[\cos\Big(t\sqrt{\tfrac34+|n|^2}\Big)-\cos(t|n|)\bigg] \les \jb{n}\Big(\sqrt{\tfrac34+|n|^2}-|n|\Big) \les 1.
\end{align*}

\noi
This completes the proof of Lemma \ref{LEM:smooth}.
\end{proof}

Next, we state the Strichartz estimates
for the linear wave equation.

 \begin{definition}\label{DEF:pair}
Given $0<s<1$, we say that a pair $(q, r)$ of exponents \textup{(}and a pair $(\wt q,\wt r)$, respectively\textup{)}
 is  \emph{$s$-admissible} \textup{(}and \emph{dual $s$-admissible}, respectively\textup{)}, 
  if $1\leq \wt q\leq 2\leq q\leq\infty$ and $1<\wt r\leq 2 \leq r<\infty$ and if they satisfy the 
  following scaling and admissibility conditions:
\[\frac1q + \frac2r = 1-s = \frac1{\wt q}+\frac2{\wt r} -2, 
\qquad  
\frac2q+\frac1r\leq\frac12,
\qquad \text{and}\qquad  \frac2{\wt q}+\frac1{\wt r}\geq \frac52.\]
\end{definition}

Given $\frac 14 < s < \frac 34$, we fix  the following $s$-admissible and dual $s$-admissible pairs:
\begin{align}
(q,r)=\bigg(\frac3s,\frac6{3-4s}\bigg) \qquad\text{and}\qquad 
(\wt q,\wt r)=\bigg(\frac3{2+s},\frac6{7-4s}\bigg).
\label{pairs}
\end{align}

\noi
In Section \ref{SEC:wave}, 
we will only use these pairs.

Let  $0<T\leq 1$, $\frac 14<s<\frac{3}{4}$
and fix the $s$-admissible pair $(q,r)$ and the dual $s$-admissible pair $(\wt q,\wt r)$ given in \eqref{pairs}.
We then define the Strichartz space:
\begin{align}\label{StriX}
\X^s_T=C([0,T];H^s(\T^2))\cap C^1([0,T];H^{s-1}(\T^2))\cap L^q([0,T];L^r(\T^2))
\end{align}
and its ``dual'' space:
\begin{align}\label{StriN}
\NN^s_T = L^1([0,T];H^{s-1}(\T^2)) + L^{\wt q}([0,T];L^{\wt r}(\T^2)).
\end{align}

We now state the Strichartz estimates.
The Strichartz estimates on $\R^d$ are well-known;
see \cite{GV, LS, KeelTao}. 
Thanks to the finite speed of propagation, 
 the same estimates  also hold on~$\T^d$
 locally in time.

\begin{lemma}\label{LEM:Stri} 
The solution $u$ to the linear wave equation:
\[\begin{cases}
\dt^2u-\Dl u = F\\
(u,\dt u)|_{t=0}=(u_0,u_1)
\end{cases}\] 

\noi
satisfies the following Strichartz estimate:
\begin{align*}
\|u\|_{\X^s_T} \les \|(u_0,u_1)\|_{\H^s} + \|F\|_{\NN^s_T}, 
\end{align*}

\noi
uniformly in $0 < T \leq 1$.

\end{lemma}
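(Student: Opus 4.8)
The plan is to reduce the estimate on $\T^2$ to the classical Strichartz estimates on $\R^2$ and then transfer them to the torus by exploiting finite speed of propagation. First I would recall the estimates on $\R^2$: for $v$ solving $\dt^2 v - \Dl v = G$ on $\R_+\times\R^2$ with data $(v_0,v_1)$, one has
\[
\|v\|_{L^\infty_T \dot H^s_x\,\cap\, L^q_T L^r_x} + \|\dt v\|_{L^\infty_T \dot H^{s-1}_x}
\les \|(v_0,v_1)\|_{\dot H^s\times \dot H^{s-1}} + \|G\|_{L^1_T \dot H^{s-1}_x \,+\, L^{\wt q}_T L^{\wt r}_x}
\]
for $s$-admissible $(q,r)$ and dual $s$-admissible $(\wt q,\wt r)$; the homogeneous estimates are in \cite{GV, LS, KeelTao}, the energy part is Plancherel, and the inhomogeneous-in-time bound with the dual pair follows from the homogeneous one together with the Christ--Kiselev lemma (the pairs in \eqref{pairs} are chosen precisely so that the required separation of the time exponents holds). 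On $\T^2$ the zeroth Fourier mode of $u$ solves the ODE $\dt^2 \ft u(t,0) = \ft G(t,0)$ and is trivially controlled in $C^1([0,T])$ by $\|(u_0,u_1)\|_{\H^s}$ and $\|G\|_{L^1_T L^2_x}\le \|G\|_{\NN^s_T}$; for the remaining modes $|n|\sim\jb n$, so homogeneous and inhomogeneous Sobolev norms are equivalent (uniformly in $0<T\le 1$) and it suffices to prove the estimate using the inhomogeneous norms appearing in \eqref{StriX}--\eqref{StriN}. After a harmless Littlewood--Paley decomposition, the low-frequency part is handled by the energy estimate together with Bernstein's inequality (so that $L^r_x$ is controlled by $L^2_x$), and for high frequencies one may freely pass between $|\nabla|$ and $\jb\nabla$.

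Next I would carry out the transference. Fix a finite smooth partition of unity $1=\sum_j \psi_j$ on $\T^2$ with each $\psi_j$ supported in a ball $B_j$ of radius $<\tfrac14$, and identify a neighbourhood of $B_j$ with an open subset of $\R^2$ via the covering map. The function $\psi_j u$ solves
\[
\dt^2(\psi_j u) - \Dl(\psi_j u) = \psi_j F - [\Dl,\psi_j]u,
\qquad [\Dl,\psi_j]u = (\Dl\psi_j)u + 2\nb\psi_j\cdot\nb u,
\]
and $[\Dl,\psi_j]u$ lies in $L^1([0,T];H^{s-1}(\T^2))$ with norm $\les T\|u\|_{\X^s_T}$. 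Viewing this equation on $\R^2$ through the above identification (all terms being compactly supported in $B_j$, extended by zero), finite speed of propagation guarantees that for $0\le t\le \tfrac14$ the $\R^2$ evolution stays inside a ball of radius $<\tfrac12$, hence does not wrap around, and coincides with (the periodisation of) $\psi_j u$ on $[0,\tfrac14]\times\T^2$. Applying the $\R^2$ estimate to each piece, summing over $j$, and using $\sum_j\|\psi_j f\|_{H^\sigma}^2\sim\|f\|_{H^\sigma}^2$ together with boundedness of the localisations on $L^r_x$ and of the relevant Fourier multipliers, one obtains, on $[0,\tfrac14]$,
\[
\|u\|_{\X^s_{1/4}} \les \|(u_0,u_1)\|_{\H^s} + \|F\|_{\NN^s_{1/4}} + \tfrac14\,\|u\|_{\X^s_{1/4}},
\]
and the last term is absorbed into the left side. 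To upgrade this to an estimate uniform on all of $(0,T]$ with $T\le 1$, I would iterate the short-time bound a bounded number of times (four steps cover $[0,1]$), at each step using the already-controlled $(u(t_k),\dt u(t_k))\in\H^s$ as new data; the Strichartz norms over consecutive subintervals add, and since the number of steps is bounded the resulting constant is independent of $T\in(0,1]$.

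The genuinely analytic inputs here — the $\R^2$ Strichartz estimates, the Christ--Kiselev argument, mapping properties of the cutoffs $\psi_j$ and of Littlewood--Paley projections, and the energy/Bernstein treatment of low frequencies — are all standard. I expect the main obstacle to be purely organisational: verifying carefully that finite speed of propagation really identifies the torus and the $\R^2$ evolutions on the relevant space-time region, absorbing the commutator terms $[\Dl,\psi_j]u$ by working on a short enough time interval, and then bookkeeping the iteration so that all constants remain uniform in $T\in(0,1]$. This is precisely the routine (if somewhat tedious) transference scheme that justifies invoking Lemma~\ref{LEM:Stri} as standard.
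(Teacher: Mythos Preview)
Your proposal is correct and follows exactly the approach indicated in the paper: the paper simply cites the classical $\R^d$ Strichartz estimates \cite{GV, LS, KeelTao} and remarks that finite speed of propagation transfers them to $\T^d$ locally in time, and your sketch spells out precisely this standard transference scheme (partition of unity, commutator absorption, short-time iteration).
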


We also recall 
from \cite{GKO}
the following interpolation result 
for 
$\X^s_T$ and $\NN^s_T$.
See  (3.22) and~(3.23) in \cite{GKO}
for the proof.

\begin{lemma}\label{LEM:interpol}
The following continuous embeddings hold:

\smallskip

\noi
\textup{(i)} 
Let $0\le\al\le s$ and $2\leq q_1,r_1 \leq \infty$ satisfy the scaling condition\textup{:} 
\[\frac{1}{q_1} =\frac{1-\al/s}q + \frac{\al/s}\infty \qquad\textup{ and }\qquad\frac{1}{r_1}=\frac{1-\al/s}r+\frac{\al/s}{2}.\]

\noi
Then, we have 
\[\|u\|_{L^{q_1}_TW^{\al,r_1}_x}\les \|u\|_{\X^s_T}.\]

\smallskip

\noi
\textup{(ii)}
Let  $0\le \al \le 1-s$ and $1\leq \wt q_1,\wt r_1 \leq 2$ satisfy
the scaling condition\textup{:}  
\[\frac{1}{\wt q_1} =\frac{1-\al/(1-s)}{\wt q}+\frac{\al/(1-s)}1 \qquad\textup{ and }\qquad\frac{1}{\wt r_1}=\frac{1-\al/(1-s)}{\wt r}+\frac{\al/(1-s)}{2}.\]

\noi
Then, we have 
\[\|u\|_{\NN^s_T}\les \|u\|_{L^{\wt q_1}_TW^{-\al,\wt r_1}_x}.\]
\end{lemma}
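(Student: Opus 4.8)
The plan is to obtain both embeddings from standard interpolation, handling the time and space variables one at a time; this is essentially the content of (3.22)--(3.23) in \cite{GKO}, and I would present it in that spirit.

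For part (i), I would start from the two bounds built into the definition \eqref{StriX} of $\X^s_T$, namely $\|u\|_{L^\infty_T H^s_x}\le\|u\|_{\X^s_T}$ and $\|u\|_{L^q_T L^r_x}\le\|u\|_{\X^s_T}$, and interpolate them. Since $2\le r<\infty$ (by Definition~\ref{DEF:pair}) and $0\le\al\le s$, the Bessel potential space $W^{\al,r_1}(\T^2)$, with $r_1$ fixed by the stated scaling relation, is the complex interpolation space $[L^r(\T^2),H^s(\T^2)]_{\al/s}$, which yields the pointwise-in-time interpolation inequality $\|u(t)\|_{W^{\al,r_1}_x}\les\|u(t)\|_{L^r_x}^{1-\al/s}\|u(t)\|_{H^s_x}^{\al/s}$. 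I would then raise this to the power $q_1$, integrate over $[0,T]$, and apply H\"older's inequality in $t$, placing the $H^s_x$-factor into $L^\infty_t$ and observing that the $L^r_x$-factor lies in $L^{q_1}_t$ precisely because the scaling hypothesis forces $(1-\al/s)q_1=q$ (equivalently $\tfrac1{q_1}=\tfrac{1-\al/s}{q}+\tfrac{\al/s}{\infty}$). This produces $\|u\|_{L^{q_1}_T W^{\al,r_1}_x}\les\|u\|_{L^q_T L^r_x}^{1-\al/s}\|u\|_{L^\infty_T H^s_x}^{\al/s}\le\|u\|_{\X^s_T}$.

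For part (ii), the key point is that $\NN^s_T$ in \eqref{StriN} is by definition the \emph{sum} $L^1_T H^{s-1}_x+L^{\wt q}_T L^{\wt r}_x$, so the inclusions $L^1_T H^{s-1}_x\embeds\NN^s_T$ and $L^{\wt q}_T L^{\wt r}_x\embeds\NN^s_T$ hold trivially with norm at most $1$. I would interpolate these two inclusion maps, viewed on the compatible couple $(L^1_T H^{s-1}_x,\,L^{\wt q}_T L^{\wt r}_x)$ and the constant couple $(\NN^s_T,\NN^s_T)$, at the parameter $\ta=1-\tfrac{\al}{1-s}\in[0,1]$. The target side gives back $\NN^s_T$, while on the source side, complex interpolation of mixed Lebesgue--Sobolev spaces on $\T^2$ (legitimate since $H^{s-1}$ is Hilbert and $1<\wt r\le2$, with trigonometric polynomials dense in all spaces involved) identifies $[L^1_T H^{s-1}_x,L^{\wt q}_T L^{\wt r}_x]_\ta$ with $L^{\wt q_1}_T W^{-\al,\wt r_1}_x$, the differentiability index being $(1-\ta)(s-1)=-\al$ and the Lebesgue exponents being exactly those in the statement. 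Hence $L^{\wt q_1}_T W^{-\al,\wt r_1}_x\embeds\NN^s_T$.

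I do not anticipate a genuine obstacle here: the argument is bookkeeping built on classical interpolation. The only points deserving care --- which I would cross-check against (3.22)--(3.23) in \cite{GKO} --- are the validity of the complex interpolation identities for mixed-norm Lebesgue--Sobolev spaces at the (near-)endpoint exponents occurring here (in particular the $L^1_t$ and $L^\infty_t$ endpoints in time, together with the conditions $1<r,\wt r<\infty$ and $1<r_1,\wt r_1<\infty$ that follow from the admissibility constraints in Definition~\ref{DEF:pair}), and verifying in each step that the interpolated exponents match the stated scaling relations.
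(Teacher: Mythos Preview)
Your proposal is correct and matches the paper's approach: the paper does not supply its own proof but simply refers to (3.22)--(3.23) in \cite{GKO}, and what you have written is precisely the standard interpolation argument underlying those estimates. The only remark is that your cross-checks (density of trigonometric polynomials, validity of complex interpolation for mixed-norm Bessel potential spaces at the $L^1_t$/$L^\infty_t$ time endpoints with $1<r,\wt r<\infty$) are indeed the points that need verifying, but they are routine and covered by standard references.
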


\subsection{Some useful results from nonlinear analysis}\label{SUBSEC:nonlinear}
We conclude  this section by presenting some further results from harmonic and functional analysis.

We first state the Brascamp-Lieb inequality \cite{BL}. 
This inequality 
plays an important role in the proof of Proposition \ref{PROP:Ups}.
In particular, it allows us to  
establish a good bound on the $p$th moment
of the Gaussian multiplicative chaos $\U_N$  when $p>2$. 
The version we present here is due to \cite{BCCT}.

\begin{definition}\rm 
We say that 
a pair  $(\B,\cc)$ is 
a \emph{Brascamp-Lieb datum}, 
if, for some  $m \in \N \cup \{0\}$ and  $d, d_1, \dots, d_m \in \N$, 
 $\B = (B_1,...,B_m)$ is a collection of 
 linear maps from $\R^d$ to $\R^{d_j }$, $j = 1, \dots, m$, 
 and  $\cc = (q_1, \dots, q_m) \in \R_+^m$.
\end{definition}

We now state the $m$-linear Brascamp-Lieb inequality.

\begin{lemma}[Theorem 1.15 in \cite{BCCT}]
\label{LEM:BL}
Let $(\B,\cc)$ be a Brascamp-Lieb datum.
Suppose that  the following conditions hold\textup{:}
\begin{itemize}
\item
Scaling condition\textup{:}
\begin{align}
\sum_{j=1}^m q_j d_j = d.
\label{scaling}
\end{align}

\item
Dimension condition\textup{:} for all subspace $V \subset\R^d$, there holds
\begin{align}
\dim (V) \le \sum_{j=1}^m q_j \dim (B_j V).
\label{dimension}
\end{align}
\end{itemize}

\noi
Then,  there exists a positive constant $\textup{BL}(\B,\cc)<\infty$ such that 
\begin{align*}
\int_{\R^d} \prod_{j=1}^m f_j (B_j x)^{q_j}  dx 
\le \textup{BL}({\bf B}, \cc)
 \prod_{j=1}^m \bigg( \int_{\R^{d_j} } f_j (y) \, dy \bigg)^{q_j}
\end{align*}

\noi
for any non-negative functions $f_j\in L^1(\R^{d_j})$, $j=1,...,m$.

\end{lemma}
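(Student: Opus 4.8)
The plan is to recognize that this statement is a verbatim restatement of Theorem~1.15 in \cite{BCCT}, so strictly speaking nothing new needs to be proved; I would simply cite that reference and, for the reader's orientation, recall the structure of the argument. The first (and conceptually cleanest) route is the Gaussian saturation approach going back to Lieb \cite{Lieb}: one shows that the best constant $\textup{BL}(\B,\cc)$ is computed by testing the inequality only on centered Gaussian inputs $f_j(y)=e^{-\langle A_j y,y\rangle}$ with $A_j$ symmetric positive definite. For such $f_j$ both sides are explicit Gaussian integrals, and the inequality reduces to the finite-dimensional determinantal bound
\[
\det\Big(\sum_{j=1}^m q_j B_j^{\ast}A_j B_j\Big)\ \ge\ \textup{BL}(\B,\cc)^{-2}\prod_{j=1}^m(\det A_j)^{q_j}.
\]
One then checks that the supremum over admissible $(A_j)_{j=1}^m$ of the reciprocal of the ratio of the two sides is finite \emph{precisely} when \eqref{scaling} and \eqref{dimension} hold: the scaling condition \eqref{scaling} makes the ratio invariant under the simultaneous rescalings $A_j\mapsto t_j A_j$, reducing the problem to a compact set, while the dimension condition \eqref{dimension} is exactly the Hall/matroid-type inequality that prevents the quadratic form $\sum_j q_j B_j^{\ast}A_j B_j$ from degenerating faster than $\prod_j(\det A_j)^{q_j}$ along any subspace $V\subset\R^d$ (the degeneracy along $V$ is controlled by restricting everything to $V$ and invoking \eqref{dimension} for that $V$).

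The second, more self-contained route is the heat-flow monotonicity method of Bennett--Carbery--Christ--Tao. After a harmless reduction to smooth, strictly positive, rapidly decaying inputs, one replaces each $f_j$ by its heat evolution $u_j(t,\cdot)=e^{t\Delta}f_j$ on $\R^{d_j}$ and studies
\[
Q(t)=\int_{\R^d}\prod_{j=1}^m u_j\big(t,B_j x\big)^{q_j}\,dx .
\]
The crux is that $Q$ is nondecreasing in $t$: differentiating in $t$ and integrating by parts in $x$ produces an integrand which, after completing the square in the gradients $\nabla u_j$, is pointwise nonnegative as a consequence of \eqref{dimension} applied fibrewise. Letting $t\to\infty$, the heat flow drives each $u_j$ toward a Gaussian profile in suitably rescaled variables, so $Q(\infty)/\prod_j\|f_j\|_{L^1}^{q_j}$ equals the Gaussian constant from the first route, whereas $Q(0)$ is the left-hand side of the desired inequality; comparing the two endpoints gives the bound with the finite constant $\textup{BL}(\B,\cc)$.

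I expect the main obstacle to be the verification of the monotonicity of $Q(t)$ (equivalently, of the nonnegativity of the resulting quadratic form): it is not a genuinely pointwise estimate but requires simultaneously diagonalizing the family $\{B_j\}_{j=1}^m$ against a well-chosen inner product on $\R^d$ and then applying \eqref{dimension} to each subspace that arises in that diagonalization. This is precisely where the full strength of the Bennett--Carbery--Christ--Tao analysis is used. Since all of this is carried out in complete detail in \cite{BCCT} (see also \cite{BL,Lieb}), for the purposes of the present paper we content ourselves with citing that reference.
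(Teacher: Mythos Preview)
Your proposal is correct and matches the paper's approach: the paper does not prove this lemma at all but simply states it as a citation of \cite[Theorem~1.15]{BCCT}, adding only the remark that \eqref{scaling} and \eqref{dimension} imply non-degeneracy of the datum (see \cite[Remarks~1.16]{BCCT}). Your additional sketch of the Gaussian-saturation and heat-flow arguments goes beyond what the paper provides, but is accurate and consistent with the cited reference.
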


We point out that the conditions \eqref{scaling} and \eqref{dimension}
guarantee that the Brascamp-Lieb data is non-degenerate, i.e.~the maps
$B_j$, $j = 1, \dots, m$, are surjective
and their common kernel is trivial.  See \cite[Remarks~1.16]{BCCT}.

For our purpose, we only need the following special version
of Lemma \ref{LEM:BL}.

\begin{corollary}
\label{COR:BL}
Let $p \in \N$.
Then, we have
\begin{align}
\begin{split}
 \int_{(\T^{2})^{2p}}   
 \prod_{1\le j < k \le 2p}   
&  |f_{j,k}(\pi_{j, k} (x))|^{\frac1{2p-1}}
dx \\
 & \les   \prod_{1\le j < k \le 2p}   
 \bigg( \int_{(\T^{2})^2}  
  |f_{j,k}(x_j,x_k)|  dx_j dx_k \bigg)^{\frac1{2p-1}}
\end{split}
\label{BL0}
\end{align}

\noi
for any $f_{j,k} \in L^1 (\T^2 \times \T^2)$.
Here, $\pi_{j, k}$
denotes the projection defined by 
$\pi_{j, k}(x) = \pi_{j, k}(x_1, \dots, x_{2p})
= (x_j, x_k) $
for $x = (x_1, \dots, x_{2p})\in (\T^2)^{2p}$.

\end{corollary}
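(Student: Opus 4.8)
The plan is to deduce \eqref{BL0} from the $m$-linear Brascamp--Lieb inequality (Lemma \ref{LEM:BL}) applied on the Euclidean space $\R^d$ with $d = 2\cdot 2p = 4p$, and then transfer the resulting Euclidean inequality to the torus $\T^2$. The natural choice of Brascamp--Lieb datum is the following: index the $m = \binom{2p}{2}$ maps by pairs $1 \le j < k \le 2p$; for each such pair let $B_{j,k} \colon (\R^2)^{2p} \to (\R^2)^2 = \R^4$ be the projection $\pi_{j,k}(x_1,\dots,x_{2p}) = (x_j, x_k)$, so that $d_{j,k} = 4$; and assign the common weight $q_{j,k} = \frac{1}{2p-1}$. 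First I would verify the two hypotheses of Lemma \ref{LEM:BL}. The scaling condition \eqref{scaling} reads $\sum_{j<k} q_{j,k} d_{j,k} = \binom{2p}{2}\cdot \frac{1}{2p-1}\cdot 4 = \frac{2p(2p-1)}{2}\cdot\frac{4}{2p-1} = 4p = d$, which holds. For the dimension condition \eqref{dimension}, I would use the product structure: it suffices to check it for subspaces of the form $V = V_1 \times \cdots \times V_{2p}$ with each $V_i \subset \R^2$ (one reduces to this case by a standard argument, or one invokes that the extremal subspaces for such a ``graph/clique'' datum are coordinate subspaces — this is the point to be careful about). For such $V$ one has $\dim(B_{j,k}V) = \dim V_j + \dim V_k$, so the right-hand side of \eqref{dimension} equals $\frac{1}{2p-1}\sum_{j<k}(\dim V_j + \dim V_k) = \frac{1}{2p-1}\cdot (2p-1)\sum_{i=1}^{2p}\dim V_i = \sum_i \dim V_i = \dim V$, giving equality. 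Hence the datum is valid (and in fact critical), and Lemma \ref{LEM:BL} yields a finite constant $\mathrm{BL}(\B,\cc)$ with
\begin{align}
\int_{\R^{4p}} \prod_{1\le j<k\le 2p} f_{j,k}(\pi_{j,k}(x))^{\frac1{2p-1}}\, dx
\le \mathrm{BL}(\B,\cc) \prod_{1\le j<k\le 2p}\bigg(\int_{\R^4} f_{j,k}(y)\,dy\bigg)^{\frac1{2p-1}}
\label{BLeuc}
\end{align}
for all non-negative $f_{j,k} \in L^1(\R^4)$.

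Next I would transfer \eqref{BLeuc} from $\R^2$ to $\T^2 \simeq [-\pi,\pi)^2$. Given $f_{j,k} \in L^1(\T^2\times\T^2)$, non-negative, extend each by $0$ outside $[-\pi,\pi)^4$ to a function $\widetilde f_{j,k} \in L^1(\R^4)$ with the same integral. The torus integral on the left of \eqref{BL0} is precisely the integral of $\prod \widetilde f_{j,k}(\pi_{j,k}(x))^{1/(2p-1)}$ over the cube $[-\pi,\pi)^{4p} \subset \R^{4p}$, which is bounded by the integral over all of $\R^{4p}$ appearing on the left of \eqref{BLeuc}. Applying \eqref{BLeuc} and noting $\int_{\R^4}\widetilde f_{j,k} = \int_{\T^2\times\T^2} f_{j,k}$ gives exactly \eqref{BL0} with the implicit constant $\mathrm{BL}(\B,\cc)$, which is finite and depends only on $p$. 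The reduction to non-negative $f_{j,k}$ in the statement of \eqref{BL0} is immediate since the left-hand side only involves $|f_{j,k}|$; so one may as well assume $f_{j,k} \ge 0$ from the start.

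The main obstacle is the verification of the dimension condition \eqref{dimension} for \emph{all} subspaces $V \subset \R^{4p}$, not merely the coordinate (product) ones. For this particular ``clique'' datum the reduction to product subspaces is a known fact (it follows, e.g., from the structure theory of Brascamp--Lieb extremisers, or can be seen directly because each $B_{j,k}$ is a coordinate projection and the collection is symmetric under the $S_{2p}$-action permuting the $2p$ blocks), but it should be stated carefully rather than asserted. One clean way to avoid this subtlety altogether is to cite the known finiteness of the Brascamp--Lieb constant for the ``geometric'' / clique configuration directly, or — as the paper indicates via its reference to Kahane's approach — to note that \eqref{BL0} can alternatively be obtained by iterating the two-function case; either route bypasses a general-subspace computation. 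I would present the product-subspace reduction with a one-line justification and then the scaling/dimension checks as above, keeping the transfer-to-torus step brief.
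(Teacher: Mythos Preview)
Your approach is essentially identical to the paper's: apply Lemma~\ref{LEM:BL} on $\R^{4p}$ with the datum $\big((\pi_{j,k})_{1\le j<k\le 2p},\ (\tfrac{1}{2p-1},\dots,\tfrac{1}{2p-1})\big)$, check the scaling and dimension conditions, and then pass to $\T^2$ by extending each $f_{j,k}$ by zero outside $[-\pi,\pi)^4\subset\R^4$. For the dimension condition the paper does not reduce to product subspaces but argues directly for an arbitrary $V\subset\R^{4p}$ via the elementary inequality $\dim V\le\sum_{j=1}^{2p}\dim(\pi_j V)$; in any case, both you and the paper identify this as precisely the geometric Brascamp--Lieb datum of \cite[Example~1.6]{BCCT}, which is the cleanest way to dispose of the point you flag.
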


This is precisely the geometric Brascamp-Lieb inequality
stated in \cite[Example 1.6]{BCCT}.
For readers' convenience, we include its reduction to Lemma \ref{LEM:BL}.

\begin{proof}
Write $(\R^2)^{2p} = \prod_{\l=1}^{2p} \R^2_{\l}$ and define projections 
$\pi_{\l}: (\R^2)^{2p} \to \R^2_{\l}$ and $\pi_{j,k}: (\R^2)^{2p} \to \R^2_j \times \R^2_k$ for $j\neq k$
in the usual way.
Now, we set $\B = (\pi_{j, k}: 1 \leq j < k \leq 2p)$
and 
\[\cc = \bigg(\frac1{2p-1},...,\frac1{2p-1}\bigg) \in \R_+^{p(2p-1)}.\]

\noi
It is also easy to check that the scaling condition \eqref{scaling} holds 
since $d_{j, k} = 4$, $1 \leq j < k \leq 2p$
and   $m= p(2p-1)$ and $q_{j, k} = \frac1{2p-1}$, 
while 
the total dimension is $d = 4p$.

As for the dimension condition \eqref{dimension}, 
first note that 
\begin{align*}
\dim(\pi_{j,k} V)  =  \dim(\pi_{j} V) + \dim(\pi_{k} V)
\end{align*}

\noi
for $j \ne k$.
Then, we have
\[ \dim(V) \leq \sum_{j = 1}^{2p} \dim(\pi_j V)
= \frac{1}{2p-1}\sum_{1 \leq j < k \leq 2p} 
\dim(\pi_{j,k} V),\]

\noi
verifying \eqref{dimension}.

The desired estimate \eqref{BL0} follows from
extending $ f_{j ,k}$ on $(\T^2)^2$
as a compactly supported measurable function on $\R^4$ by extending it by 0 outside of $(\T^2)^2 \simeq [-\pi,\pi)^4$
and applying  
 Lemma \ref{LEM:BL}.
\end{proof}

We now recall several product estimates.
See Lemma 3.4 in \cite{GKO} for the proofs.

\begin{lemma}\label{LEM:prod1}
Let $0 \leq s \le 1$.
\\
\noi \textup{(i)}
Suppose that  $1<p_j,q_j,r<\infty$,  $\frac1{p_j}+\frac1{q_j}=\frac1r$, $j=1,2$.
Then,  we have
\begin{align*}
\big\|\jb{\nabla}^s(fg)\big\|_{L^r(\T^d)} \les \big\|\jb{\nabla}^sf\big\|_{L^{p_1}(\T^d)}\|g\|_{L^{q_1}(\T^d)} + \|f\|_{L^{p_2}(\T^d)}\big\|\jb{\nabla}^sg\big\|_{L^{q_2}(\T^d)}.
\end{align*}

\smallskip

\noi
\textup{(ii)}
Suppose that $1<p,q,r < \infty$ satisfy 
$\frac1p+\frac1q\leq\frac1r + \frac{s}d $.
Then, we have
\begin{align}
\big\| \jb{\nb}^{-s} (fg) \big\|_{L^r(\T^d)} \les \big\| \jb{\nb}^{s} f \big\|_{L^q(\T^d) } 
\big\| \jb{\nb}^{-s} g \big\|_{L^p(\T^d)}. 
\label{prod2}
\end{align}

\end{lemma}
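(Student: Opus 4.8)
The plan is to establish part (i) by a Littlewood--Paley paraproduct argument and to deduce part (ii) from part (i) by duality; both estimates are classical (this is a fractional Leibniz rule together with its negative-regularity counterpart), so we only sketch the proof and refer to \cite[Lemma 3.4]{GKO} for details.

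For part (i), fix an inhomogeneous dyadic Littlewood--Paley decomposition $\Id = \sum_{j\ge0}P_j$ on $\T^d$ and decompose the product $fg$ into the high-low, low-high, and resonant paraproducts $\Pi_{\gtrsim}(f,g) = \sum_j P_jf\cdot P_{<j-2}g$, $\Pi_{\lesssim}(f,g) = \sum_j P_{<j-2}f\cdot P_jg$, and $\Pi_{\sim}(f,g) = \sum_j P_jf\cdot P_{\sim j}g$. In the high-low piece each summand has Fourier support in $\{|\xi|\sim 2^j\}$, so $\jb{\nabla}^s$ acts there as a bounded multiple of $2^{sj}$; applying the Littlewood--Paley square function estimate on $L^r$ (valid for $1<r<\infty$), the pointwise bound $|\jb{\nabla}^s(P_jf\cdot P_{<j-2}g)|\les (Mg)\cdot|\widetilde P_j\jb{\nabla}^sf|$ with $M$ the Hardy--Littlewood maximal function, and H\"older with $\frac1{p_1}+\frac1{q_1}=\frac1r$ (using the boundedness of $M$ on $L^{q_1}$ and of the square function on $L^{p_1}$, which holds since $p_1,q_1>1$) gives $\|\jb{\nabla}^s\Pi_{\gtrsim}(f,g)\|_{L^r}\les\|\jb{\nabla}^sf\|_{L^{p_1}}\|g\|_{L^{q_1}}$. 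The low-high piece is symmetric and produces the second term on the right-hand side of (i). For the resonant piece, the output is only localized to $\{|\xi|\les 2^j\}$, so one first performs a further Littlewood--Paley decomposition of the output, reorders the sums, and then applies the square function estimate and H\"older; here the assumption $s\ge0$ is used to sum the resulting geometric series in the frequency gap.

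For part (ii), by duality it suffices to bound $|\langle\jb{\nabla}^{-s}(fg),\psi\rangle|$ uniformly over $\|\psi\|_{L^{r'}}\le1$. Since $\jb{\nabla}^{\pm s}$ are self-adjoint Fourier multipliers, we have $\langle\jb{\nabla}^{-s}(fg),\psi\rangle = \langle g,\, f\,\jb{\nabla}^{-s}\psi\rangle = \langle\jb{\nabla}^{-s}g,\, \jb{\nabla}^s(f\,\jb{\nabla}^{-s}\psi)\rangle$, whence H\"older's inequality gives $|\langle\jb{\nabla}^{-s}(fg),\psi\rangle|\le\|\jb{\nabla}^{-s}g\|_{L^p}\,\|\jb{\nabla}^s(f\,\jb{\nabla}^{-s}\psi)\|_{L^{p'}}$. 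Applying part (i) to the last factor, with exponent pairs $\frac1{a_i}+\frac1{b_i}=\frac1{p'}$, and then bounding $\|\jb{\nabla}^sf\|_{L^{a_1}}$ and $\|f\|_{L^{a_2}}$ by $\|\jb{\nabla}^sf\|_{L^q}$ (the latter through the Sobolev embedding $W^{s,q}(\T^d)\hookrightarrow L^{a_2}(\T^d)$) and $\|\jb{\nabla}^{-s}\psi\|_{L^{b_1}}$, $\|\psi\|_{L^{b_2}}$ by $\|\psi\|_{L^{r'}}$ (using $\jb{\nabla}^{-s}\colon L^{r'}(\T^d)\to L^{b_1}(\T^d)$ and $L^{r'}(\T^d)\hookrightarrow L^{b_2}(\T^d)$), one checks that admissible choices of $a_i,b_i\in(1,\infty)$ exist exactly when $\frac1p+\frac1q\le\frac1r+\frac sd$; this closes the estimate.

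The main technical obstacle is the resonant (high $\times$ high) interaction in part (i): since the output frequency is not confined to a single dyadic annulus, the square function estimate does not apply directly and one must instead sum a geometric series, which is where the sign condition $s\ge0$ enters. A secondary point is the exponent bookkeeping in part (ii), where one uses the full room afforded by $\frac1p+\frac1q\le\frac1r+\frac sd$; at the boundary one simply takes the intermediate exponents at the endpoints permitted by part (i). We refer to \cite[Lemma 3.4]{GKO} for the complete argument.
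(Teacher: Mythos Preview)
Your proposal is correct and aligns with the paper's treatment: the paper does not give a proof but simply refers to \cite[Lemma~3.4]{GKO}, noting additionally that the inequality case $\frac1p+\frac1q\le\frac1r+\frac sd$ in (ii) follows from the equality case via the inclusion $L^{r_1}(\T^d)\subset L^{r_2}(\T^d)$ for $r_1\ge r_2$. Your sketch (paraproduct decomposition for (i), duality for (ii)) is the standard argument behind that reference, so the approaches are essentially the same; the only minor difference is that you handle the strict inequality in (ii) through Sobolev embedding inside the duality step, whereas the paper obtains it a posteriori from $L^p$ nesting on the torus.
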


Note that
while  Lemma \ref{LEM:prod1} (ii) 
was shown only for 
$\frac1p+\frac1q= \frac1r + \frac{s}d $
in \cite{GKO}, 
the general case
$\frac1p+\frac1q\leq \frac1r + \frac{s}d $
follows from the inclusion $L^{r_1}(\T^d) \subset  L^{r_2}(\T^d)$ for $r_1 \geq  r_2$.

The next lemma shows that 
an improvement over \eqref{prod2}
in Lemma~\ref{LEM:prod1}\,(ii) is possible
 if~$g$ 
 happens to be  a  positive distribution.

\begin{lemma}\label{LEM:posprod}
Let $0\le s \le 1$ and $1 < p < \infty$.
Then, we have
\begin{align}
\|\jb{\nabla}^{-s}(fg)\big\|_{L^p(\T^d)}\les \|f\|_{L^{\infty}(\T^d)}\|\jb{\nabla}^{-s}g\|_{L^p(\T^d)}
\label{posprod1}
\end{align}

\noi
for any $f \in L^\infty(\T^d)$ and any  positive distribution $g\in W^{-s,p}(\T^d)$, 
 satisfying one of the following two conditions:
\textup{(i)} 
 $f\in C(\T^d)$ 
  or 
 \textup{(ii)}  $f\in W^{s,q}(\T^d)$
 for some  $1< q<\infty$ satisfying $\frac1p+\frac1q < 1 +\frac{s}d$.

 \end{lemma}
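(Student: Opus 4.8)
The plan is to exploit two facts: that a positive distribution on the compact manifold $\T^d$ is automatically a finite positive Radon measure, and that the singular part of the Bessel kernel is \emph{positive} (Lemma~\ref{LEM:Bessel}, with $c_{\al,d}>0$). We may assume $0<s\le 1$, the case $s=0$ being trivial. \textbf{Step 1 (the estimate for continuous $f$).} Suppose first $f\in C(\T^d)$. Then $fg$ is the finite signed measure $f\,dg$, and since the Fourier coefficients of the Bessel kernel $J_s\in L^1(\T^d)$ are $\jb{n}^{-s}$, we have $\jb{\nabla}^{-s}(fg)=c_d\,J_s*(fg)$ for a positive dimensional constant $c_d$, represented a.e.\ by the absolutely convergent integral $c_d\int_{\T^d}J_s(x-y)f(y)\,dg(y)$. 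Using $g\ge 0$ and $|f|\le \|f\|_{L^\infty}$ pointwise, we get
\begin{equation*}
\big|\jb{\nabla}^{-s}(fg)(x)\big|\le c_d\|f\|_{L^\infty}\,(|J_s|*g)(x)\qquad\text{for a.e.\ }x\in\T^d.
\end{equation*}
By Lemma~\ref{LEM:Bessel} we write $J_s(x)=c_{s,d}|x|^{s-d}+R(x)$ with $c_{s,d}>0$ and $R$ smooth on $\T^d$; since $c_{s,d}|x|^{s-d}\ge0$, we have $-J_s\le -R\le \|R\|_{L^\infty}$, hence $|J_s|=J_s+2(J_s)_-\le J_s+2\|R\|_{L^\infty}$. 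Convolving with the non-negative measure $g$ (and using $1*g\equiv g(\T^d)$) yields $|J_s|*g\le J_s*g+2\|R\|_{L^\infty}g(\T^d)=c_d^{-1}\jb{\nabla}^{-s}g+2\|R\|_{L^\infty}g(\T^d)$ pointwise. Taking $L^p$ norms and bounding $J_s*g\le |J_s*g|$,
\begin{equation*}
\big\|\jb{\nabla}^{-s}(fg)\big\|_{L^p}\les \|f\|_{L^\infty}\Big(\big\|\jb{\nabla}^{-s}g\big\|_{L^p}+g(\T^d)\Big).
\end{equation*}
Finally, since the zeroth Fourier coefficient of $\jb{\nabla}^{-s}g$ equals that of $g$ and $g\ge0$, we have $g(\T^d)\sim |\ft g(0)|=|\ft{(\jb{\nabla}^{-s}g)}(0)|\les \|\jb{\nabla}^{-s}g\|_{L^1}\les \|\jb{\nabla}^{-s}g\|_{L^p}$ on the finite-measure space $\T^d$. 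This proves \eqref{posprod1} under hypothesis (i).

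\textbf{Step 2 (reduction of case (ii) to continuous $f$).} Now let $f\in W^{s,q}(\T^d)\cap L^\infty(\T^d)$ with $\frac1p+\frac1q<1+\frac{s}{d}$, so that $g$ need no longer be tested against a continuous function. I would mollify: let $f_\eps=\rho_\eps*f\in C^\infty(\T^d)$, so that $\|f_\eps\|_{L^\infty}\le\|f\|_{L^\infty}$ and $f_\eps\to f$ in $W^{s,q}(\T^d)$ (here $q<\infty$ is used). The strict inequality $\frac1p+\frac1q<1+\frac{s}{d}$ lets us pick an exponent $r>1$ for which Lemma~\ref{LEM:prod1}\,(ii) applies, giving $\|\jb{\nabla}^{-s}\big((f_\eps-f_{\eps'})g\big)\|_{L^r}\les \|f_\eps-f_{\eps'}\|_{W^{s,q}}\|\jb{\nabla}^{-s}g\|_{L^p}$; thus $\{f_\eps g\}$ is Cauchy in $W^{-s,r}(\T^d)$, converging to a distribution we call $fg$, and $\jb{\nabla}^{-s}(f_\eps g)\to \jb{\nabla}^{-s}(fg)$ in $L^r(\T^d)$, in particular in $\D'(\T^d)$. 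On the other hand, by Step 1 applied to the continuous functions $f_\eps$, the sequence $\{\jb{\nabla}^{-s}(f_\eps g)\}$ is bounded in $L^p(\T^d)$ by $C\|f\|_{L^\infty}\|\jb{\nabla}^{-s}g\|_{L^p}$. Since $1<p<\infty$, $L^p(\T^d)$ is reflexive, so a subsequence converges weakly in $L^p$; its limit must coincide with $\jb{\nabla}^{-s}(fg)$ by the $\D'$-convergence, and weak lower semicontinuity of the $L^p$ norm gives \eqref{posprod1} in case (ii).

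The routine points are the kernel splitting and the elementary bound on $g(\T^d)$; the only genuine subtlety is \textbf{Step 2}, namely giving meaning to the product $fg$ when $f$ is not continuous (so that $g$ may charge Lebesgue-null sets) and transferring the positivity-based bound from the mollified data to the limit. This is precisely the reason the quantitative condition $\frac1p+\frac1q<1+\frac{s}{d}$ — which guarantees a workable exponent $r>1$ in Lemma~\ref{LEM:prod1}\,(ii) — is imposed in hypothesis (ii). Note also that \eqref{posprod1} indeed improves on \eqref{prod2}, since the right-hand side involves only $\|f\|_{L^\infty}$ rather than the stronger norm $\|\jb{\nabla}^s f\|_{L^q}$.
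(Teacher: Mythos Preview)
Your proof is correct and follows essentially the same strategy as the paper: exploit that the singular part of the Bessel kernel $J_s$ is positive (Lemma~\ref{LEM:Bessel} with $c_{s,d}>0$) to pull out $\|f\|_{L^\infty}$ in case~(i), and reduce case~(ii) to case~(i) via mollification and the product estimate Lemma~\ref{LEM:prod1}\,(ii).

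The execution differs slightly. In Step~1 the paper mollifies both $f$ and $g$ (using Fatou's lemma and the continuity of the product), reduces to smooth $f_Ng_M$, and then estimates $J_s*(f_Ng_M)$ by splitting $J_s\sim c_{s,d}|\cdot|^{s-d}+R$, pulling out $\|f_N\|_{L^\infty}$ from the non-negative singular part, and reassembling. Your approach is cleaner: you work directly with the finite measure $g$ and use the pointwise inequality $|J_s|\le J_s+2\|R\|_{L^\infty}$, avoiding any mollification in case~(i). In Step~2 both arguments mollify $f$ and invoke Lemma~\ref{LEM:prod1}\,(ii) to make sense of the product; the paper is terse about passing the uniform $W^{-s,p}$ bound through the limit, whereas you spell out the weak-compactness\,/\,lower-semicontinuity argument explicitly. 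Your additional observation $g(\T^d)=|\ft g(0)|\les\|\jb{\nabla}^{-s}g\|_{L^p}$ handles the constant term neatly (the paper instead absorbs the smooth remainder via $\jb{\nabla}^sR\in L^1$). Both routes yield the same range of exponents.
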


 This lemma plays an  important role in estimating a product
 involving the non-negative Gaussian multiplicative chaos $\U_N$.
 In studying continuity in the noise, 
 we need to estimate the difference of 
 the  Gaussian multiplicative chaoses.
 In this case, there is no positivity to exploit and hence we instead apply Lemma \ref{LEM:prod1}\,(ii).

\begin{proof}
 We consider $0<s\le 1$ since the $s = 0$ case corresponds to H\"older's inequality. 
  Since $g$ is a positive distribution, it can be identified with a positive Radon measure on $\T^2$;
  see for example~\cite[Theorem 7.2]{Folland}.
If $f\in C(\T^d)$, then the product $fg$ is a well-defined function in $L^1(\T^d)$. 
  With $\rho_N$ as in \eqref{Q2}, we have
   $f_Ng \deff(\rho_N*f)g \rightarrow fg$ in $L^1(\T^d)$, in particular 
   in the distributional sense.
   Hence, from Fatou's lemma, we have
\begin{align}
\|\jb{\nabla}^{-s}(fg)\|_{L^p}\le \liminf_{N\to\infty}\|\jb{\nabla}^{-s}(f_Ng)\|_{L^p}.
\label{pos1}
\end{align}

Since $\rho_M$ is non-negative, we see that 
 $g_M  = \rho_M*g$ is a well-defined smooth, 
 positive distribution which converges to $g$ in $W^{-s,p}(\T^d)$. 
 Then,  it follows from  Lemma~\ref{LEM:prod1}\,(ii)
 that, for each fixed $N \in \N$, 
   $f_Ng_M$ converges to $f_Ng$ in $W^{-s,p}(\T^d)$ as $M\rightarrow \infty$. 
   Hence,  it suffices to prove \eqref{posprod1} for $f_Ng_M$, $N, M\in\N$.
Indeed, if \eqref{posprod1} holds  for $f_Ng_M$, $N, M\in\N$,
then by~\eqref{pos1}, \eqref{posprod1} for $f_Ng_M$, 
the convergence of $\jb{\nb}^{-s} g_M
 = \rho_M *(\jb{\nb}^{-s} g)$ to $\jb{\nb}^{-s} g$ in $L^p(\T^d)$, 
and Young's inequality with $\|\rho_N\|_{L^1} = 1$, 
we obtain
\begin{align*}
\|\jb{\nabla}^{-s}(fg)\|_{L^p}
& \le 
\liminf_{N\to\infty} \lim_{M\to \infty}\|\jb{\nabla}^{-s}(f_Ng_M)\|_{L^p}\\
& \les 
\liminf_{N\to\infty} \lim_{M\to \infty}
 \|f_N\|_{L^{\infty}}\|\jb{\nabla}^{-s}g_M\|_{L^p}\\
& \le
\liminf_{N\to\infty}
 \|f_N\|_{L^{\infty}}\|\jb{\nabla}^{-s}g\|_{L^p}\\
& \le
 \|f\|_{L^{\infty}}\|\jb{\nabla}^{-s}g\|_{L^p}.
\end{align*}

It remains to prove \eqref{posprod1} for $f_N g_M$.
By Lemma~\ref{LEM:Bessel},  we have
\begin{align*}
\|\jb{\nabla}^{-s}(f_Ng_M)\|_{L^p} 
&= \|J_s*(f_Ng_M)\|_{L^p}\\ 
& \les \bigg\|\int_{\T^d}|x-y|^{s-d}|f_N(y)g_M(y)|dy\bigg\|_{L^p}
\intertext{Since $g_m$ is non-negative,}
&\les \|f_N\|_{L^{\infty}}\Big\||\cdot|^{s-d}*g_M\Big\|_{L^p}
\intertext{Using Lemma \ref{LEM:Bessel} again,}
&\sim \|f_N\|_{L^{\infty}}\Big\|(J_s-R)*g_M\Big\|_{L^p}\\
&\le \|f_N\|_{L^{\infty}}\Big(\|\jb{\nabla}^{-s}g_M\|_{L^p} 
+ \big\|\big(\jb{\nabla}^s R\big)*\big(\jb{\nabla}^{-s}g_M\big)\big\|_{L^p}\Big)\\
&\les \|f_N\|_{L^{\infty}}\|\jb{\nabla}^{-s}g_M\|_{L^p},
\end{align*}

\noi
where in the last step we used the fact that $R$ is smooth. This shows \eqref{posprod1}
for $f_Mg_M$ 
and hence for $f \in C(\T^d)$ and a positive distribution $g \in W^{-s, p}(\T^d)$.

In view of Lemma~\ref{LEM:prod1}\,(ii), 
 the condition (ii)
guarantees that the product operation
$(f, g) \in W^{s, q}(\T^d) \times W^{-s, p}(\T^d)\mapsto 
fg \in W^{-s, 1+\eps}(\T^d)$ for some small $\eps > 0$
is a continuous bilinear map.
Namely, it suffices to prove \eqref{posprod1}
for $f_N g_M = (\rho_N *f)(\rho_M*g)$, 
which we already did above.
This completes the proof of Lemma \ref{LEM:posprod}.
\end{proof}

Next, we recall the following fractional  chain rule from \cite{Gatto}.
The fractional chain rule on $\R^d$ was essentially proved in \cite{CW}.\footnote{As pointed out in \cite{Staffilani}, 
the proof in \cite{CW} needs
a small correction, which  yields the fractional chain  rule in a 
less general context.
See \cite{Kato, Staffilani, Taylor}.}
As for the estimates  on $\T^d$, see~\cite{Gatto}.

\begin{lemma}
\label{LEM:FC}
Let $0 < s < 1$.

\smallskip

\noi
\textup{(i)} Suppose that 
$F$ is a Lipschitz function with Lipschitz constant $K>0$.
Then, for any $1<p<\infty$, we have
\begin{align*}
\big\||\nabla|^sF(u)\big\|_{L^p(\T^d)}\les K\big\||\nabla|^su\big\|_{L^p(\T^d)}.
\end{align*}

\noi
\textup{(ii)}
Suppose that  $F\in C^1(\R)$  satisfies
\[\big|F'\big(\tau x + (1-\tau) y\big)\big|\le c(\tau)\big(|F'(x)|+|F'(y)|\big)\]
for every $\tau\in[0,1]$ and $x,y\in\R$, where $c\in L^1([0,1])$.
Then for $1<p,q,r<\infty$ with $\frac1p+\frac1q=\frac1r$, we have
\begin{align*}
\big\||\nabla|^sF(u)\big\|_{L^r(\T^d)}\les \big\|F'(u)\big\|_{L^p(\T^d)}\big\||\nabla|^su\big\|_{L^q(\T^d)}.
\end{align*}
\end{lemma}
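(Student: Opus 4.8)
The plan is to establish this standard fractional chain rule by adapting the argument of Christ and Weinstein \cite{CW} on $\R^d$, incorporating the correction pointed out by Staffilani \cite{Staffilani} (see also \cite{Kato, Taylor}), and then passing to the torus $\T^d$ as in \cite{Gatto}, either via a transference argument or by repeating the proof with the periodic analogues of the relevant square functions (using that the Bessel kernels on $\T^d$ coincide with the Euclidean ones up to a smooth remainder, Lemma \ref{LEM:Bessel}). The central tool is the classical square-function characterization of the fractional seminorm: for $0<s<1$ and $1<p<\infty$, the seminorm $\big\||\nabla|^sg\big\|_{L^p}$ is comparable to $\|\mathcal{Q}_sg\|_{L^p}$, where
\begin{align*}
\mathcal{Q}_sg(x) := \Bigg(\int_0^\infty\bigg(\fint_{|y|<t}|g(x+y)-g(x)|\,dy\bigg)^2\frac{dt}{t^{1+2s}}\Bigg)^{\frac12};
\end{align*}
moreover the same remains true with the inner $L^1$ average replaced by an $L^2$ average, by Jensen's inequality in one direction and Strichartz's theorem in the other. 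The key feature is that $\mathcal{Q}_s$ depends on $g$ only through its increments $g(x+y)-g(x)$, which is exactly the structure that survives composition with $F$.

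For part (i), since $F$ is $K$-Lipschitz we have the pointwise bound $|F(u(x+y))-F(u(x))|\le K|u(x+y)-u(x)|$; averaging over $|y|<t$ and forming the square function gives $\mathcal{Q}_s(F(u))(x)\le K\,\mathcal{Q}_su(x)$ for almost every $x$, and taking $L^p$ norms (using the characterization in both directions) yields $\big\||\nabla|^sF(u)\big\|_{L^p}\les K\big\||\nabla|^su\big\|_{L^p}$. For part (ii), I would start from
\begin{align*}
F(u(x+y))-F(u(x)) = \big(u(x+y)-u(x)\big)\int_0^1F'\big(\tau u(x+y)+(1-\tau)u(x)\big)\,d\tau,
\end{align*}
and use the hypothesis $|F'(\tau a+(1-\tau)b)|\le c(\tau)(|F'(a)|+|F'(b)|)$ together with $\|c\|_{L^1([0,1])}<\infty$ to deduce $|F(u(x+y))-F(u(x))|\les|u(x+y)-u(x)|\,\big(|F'(u(x))|+|F'(u(x+y))|\big)$.

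Substituting this bound into the average defining $\mathcal{Q}_s(F(u))(x)$ and splitting the factor $|F'(u(x))|+|F'(u(x+y))|$ produces a term $\les|F'(u(x))|\,\mathcal{Q}_su(x)$ and a term built from $\fint_{|y|<t}|u(x+y)-u(x)|\,|F'(u(x+y))|\,dy$. To the latter one applies the Cauchy--Schwarz inequality in $y$ inside each average, pulling out the factor $\big(\fint_{|y|<t}|F'(u(x+y))|^2\,dy\big)^{1/2}\le\big(\M(|F'(u)|^2)(x)\big)^{1/2}$ (with $\M$ the Hardy--Littlewood maximal operator) and leaving behind an $L^2$-averaged square function of $u$, still comparable to $\big\||\nabla|^su\big\|$ in $L^q$. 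Altogether one obtains the pointwise estimate
\begin{align*}
\mathcal{Q}_s(F(u))(x)\les\Big(|F'(u(x))|+\big(\M(|F'(u)|^2)(x)\big)^{\frac12}\Big)\,\mathcal{Q}_su(x),
\end{align*}
and then H\"older's inequality with $\frac1p+\frac1q=\frac1r$ together with the $L^{p/2}$-boundedness of $\M$ gives $\big\||\nabla|^sF(u)\big\|_{L^r}\les\|F'(u)\|_{L^p}\big\||\nabla|^su\big\|_{L^q}$.

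The step I expect to be the main obstacle is exactly this maximal-function term in part (ii): the clean argument above requires $p>2$, so that $\M$ is bounded on $L^{p/2}$. For the full range $1<p<\infty$ one cannot argue so crudely, and a more careful Littlewood--Paley/paraproduct decomposition of $F(u)$ is needed — this is precisely the subtle point flagged in \cite{Staffilani} and carried out on the torus in \cite{Gatto}. It is also why the hypotheses enter in exactly the stated forms: the integrable modulus $c$ controlling $F'$ is what makes the pointwise bound on $|F(u(x+y))-F(u(x))|$ available, and the relation $\frac1p+\frac1q=\frac1r$ is dictated by the final H\"older splitting. The reduction from $\R^d$ to $\T^d$, by contrast, is routine.
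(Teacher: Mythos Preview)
The paper does not give its own proof of this lemma: it is stated as a known result and simply attributed to \cite{Gatto} for the torus version, with \cite{CW} for the original $\R^d$ argument and \cite{Kato, Staffilani, Taylor} for the correction needed in the full range of exponents. Your sketch is therefore not competing with a proof in the paper but rather reconstructing what those references contain, and it does so accurately: the square-function characterization, the pointwise increment bound via the hypothesis on $F'$, the H\"older splitting, and the maximal-function step are exactly the Christ--Weinstein mechanism, and you correctly flag that the crude bound $\big(\M(|F'(u)|^2)\big)^{1/2}$ forces $p>2$, which is precisely the gap that \cite{Staffilani} identified and that the more careful arguments in \cite{Kato, Taylor, Gatto} repair. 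Since the paper only cites the result, your write-up is already more detailed than what appears there; nothing is missing beyond pointing the reader to those references for the full-range argument, which you do.
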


\medskip

Lastly, we state a tool from functional analysis.
The following classical Aubin-Lions lemma \cite{AL}
provides a criterion for  compactness.
See also \cite[Corollary 4 on p.\,85]{Simon}.

\begin{lemma}\label{LEM:AL}
Let $\X_{-1},\X_0,\X_1$ be Banach spaces satisfying the continuous embeddings $\X_{1}\subset \X_0\subset \X_{-1}$ such that the embedding $\X_1\subset \X_0$ is compact. 
Suppose that  $B$ is bounded in $ L^p([0,T];\X_1)$ 
such that $\{\dt u: u\in B\}$ is bounded in $L^q([0,T];\X_{-1})$
for some $T>0$ and finite $p,q\geq 1$.
Then,  $B$ is relatively compact in 
$L^p([0,T];\X_0)$. Moreover,  if $B$ is bounded in $ L^\infty([0,T];\X_1)$
and 
$\{\dt u: u\in B\}$ is bounded in $L^q([0,T];\X_{-1})$ for some $ q> 1$, 
then $B$ is relatively compact in $C([0,T];\X_0)$.
\end{lemma}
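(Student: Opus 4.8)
The plan is to deduce the first assertion from the Fréchet--Kolmogorov--Riesz compactness criterion for vector-valued $L^p$-spaces (as in \cite{AL, Simon}), the essential extra ingredient being the interpolation inequality supplied by the compact embedding $\X_1 \subset \X_0$. First I would record this Ehrling-type lemma: for every $\eta > 0$ there exists $C_\eta > 0$ with
\[
\|w\|_{\X_0} \le \eta\|w\|_{\X_1} + C_\eta\|w\|_{\X_{-1}} \qquad \text{for all } w \in \X_1 .
\]
Indeed, were this false there would be $\eta_0 > 0$ and $w_n \in \X_1$ with $\|w_n\|_{\X_0} = 1$, $\|w_n\|_{\X_1} \le \eta_0^{-1}$, and $\|w_n\|_{\X_{-1}} \to 0$; by compactness of $\X_1 \subset \X_0$ a subsequence converges in $\X_0$ to some $w$ with $\|w\|_{\X_0} = 1$, while the continuous embedding $\X_0 \subset \X_{-1}$ forces $w = 0$, a contradiction.

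\textbf{Time translations and local averages.} Let $M$ bound $B$ in $L^p([0,T];\X_1)$ and $M_1$ bound $\{\dt u : u \in B\}$ in $L^q([0,T];\X_{-1})$. Writing $\tau_h u(t) = u(t+h)$, the elementary bound $\|u(t+h) - u(t)\|_{\X_{-1}} \le \int_t^{t+h}\|\dt u(s)\|_{\X_{-1}}\,ds$, together with Hölder's inequality in $s$ and Minkowski's integral inequality in $t$, gives $\|\tau_h u - u\|_{L^p([0,T-h];\X_{-1})} \le \omega(M_1, h)$ with $\omega(M_1, h) \to 0$ as $h \to 0$; since this depends on $u$ only through $M_1$ it is uniform over $B$. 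Applying the Ehrling lemma at each $t$ and integrating in $t$,
\[
\|\tau_h u - u\|_{L^p([0,T-h];\X_0)} \le 2\eta M + C_\eta\, \omega(M_1, h) ,
\]
which tends to $0$ as $h \to 0$, uniformly in $u \in B$. Moreover, for $0 \le t_1 < t_2 \le T$, Hölder's inequality yields $\bigl\| \int_{t_1}^{t_2} u(t)\,dt \bigr\|_{\X_1} \le (t_2 - t_1)^{1 - \frac1p} M$, so $\{\int_{t_1}^{t_2} u\,dt : u \in B\}$ is bounded in $\X_1$, hence relatively compact in $\X_0$ because $\X_1 \subset \X_0$ is compact. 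These two facts are precisely the hypotheses of the compactness criterion in \cite{Simon}, so $B$ is relatively compact in $L^p([0,T];\X_0)$.

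\textbf{The $C([0,T];\X_0)$ statement.} Now assume $B$ is bounded in $L^\infty([0,T];\X_1)$, say by $M_\infty$, and $\{\dt u : u \in B\}$ is bounded in $L^q([0,T];\X_{-1})$ with $q > 1$. Then Hölder's inequality gives $\|u(t) - u(s)\|_{\X_{-1}} \le |t - s|^{1 - \frac1q} M_1$, so $B$ is equicontinuous into $\X_{-1}$; feeding this and the (pointwise, see below) bound $\|u(t)\|_{\X_1} \le M_\infty$ into the Ehrling lemma upgrades the equicontinuity to $C([0,T];\X_0)$:
\[
\|u(t) - u(s)\|_{\X_0} \le 2\eta M_\infty + C_\eta |t - s|^{1 - \frac1q} M_1 .
\]
For each fixed $t$, $\{u(t) : u \in B\}$ is bounded in $\X_1$, hence relatively compact in $\X_0$ by the compact embedding, and the Arzelà--Ascoli theorem then shows $B$ is relatively compact in $C([0,T];\X_0)$.

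\textbf{Main obstacle.} The only genuinely delicate point is the passage, in the last step, from the bound in $L^\infty([0,T];\X_1)$ (valid only almost everywhere) to a pointwise bound $\|u(t)\|_{\X_1} \le M_\infty$ valid for \emph{every} $t$, which Arzelà--Ascoli requires; this is handled as in \cite{Simon} by first observing that $u \in C([0,T];\X_{-1})$ (from the $\dt u$-bound and $q > 1$), then for arbitrary $t_0$ picking good times $t_n \to t_0$ with $\|u(t_n)\|_{\X_1} \le M_\infty$ and passing to a weak-$\X_1$ limit, which must equal $u(t_0)$ by uniqueness of limits in $\X_{-1}$. The exponent bookkeeping in the time-translation estimate, when $p$ and $q$ are unrelated, is routine once Hölder's inequality is applied to the $s$-integral before integrating in $t$.
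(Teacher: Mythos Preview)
The paper does not give a proof of this lemma; it simply states it as the classical Aubin--Lions lemma and refers the reader to \cite{AL} and \cite[Corollary 4 on p.\,85]{Simon}. Your proposal is a correct outline of the standard proof along the lines of Simon's paper, so there is nothing to compare against.

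One small technical point in your last paragraph: passing to a ``weak-$\X_1$ limit'' of the sequence $u(t_n)$ tacitly assumes that bounded sequences in $\X_1$ have weakly convergent subsequences, i.e.\ that $\X_1$ is reflexive, which is not part of the hypotheses. This is easily repaired without adding any assumption: since $\X_1\subset\X_0$ is compact, the bounded sequence $(u(t_n))_n$ in $\X_1$ has a subsequence converging in $\X_0$ to some $v$, and the continuity of $u$ into $\X_{-1}$ forces $v=u(t_0)$. Thus $u(t_0)$ lies in the $\X_0$-closure of the $\X_1$-ball of radius $M_\infty$, which is a compact subset of $\X_0$; this is exactly the pointwise relative compactness needed for Arzel\`a--Ascoli, and the conclusion $\|u(t_0)\|_{\X_1}\le M_\infty$ is not actually required.
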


\section{Gaussian multiplicative chaos}
\label{SEC:Gauss}
In this section, we establish 
the regularity and convergence properties of the Gaussian multiplicative chaos 
$\U_N = \, :\!e^{\be\Psi_N}\!\!:$ 
claimed in   Proposition \ref{PROP:Ups}, 
where $\Psi_N$ denotes the truncated stochastic convolution
for either the heat equation or the wave equation.
These properties are of central importance for  the study of 
the truncated SNLH \eqref{NH3}
and the truncated SdNLW~\eqref{NW3}.
As in the case of the sine-Gordon model studied in  \cite{HS,ORSW}, 
the main difficulty comes from the fact that  the processes $\U_N$ 
 do not belong to any Wiener chaos of finite order. 
There is, however, a major difference 
from  
the analysis on  the imaginary Gaussian multiplicative chaos 
 $\, :\!e^{i \be\Psi_N}\!:$ studied for the sine-Gordon model in \cite{HS, ORSW}.
 As for  the imaginary Gaussian multiplicative chaos, the regularity depends
 only on the values of $\be^2$. 
 On the other hand, 
 the regularity of $\U_N$
 depends not only on the values of $\be^2$
 but also on the integrability index (either for moments or space-time integrability).
 In particular, for higher moments, the regularity gets
 worse.
 This phenomenon
 is  referred to as intermittency in \cite{Garban}. See Remark \ref{rk:interm} below.

\subsection{Preliminaries}
Since  the definition \eqref{re-non} of $\U_N$ involves polynomials of arbitrarily high degrees,
it seems more convenient to study  $\U_N$ on the physical space, 
as in the case of the sine-Gordon equation \cite{ORSW}, 
rather than in the frequency space as in \cite{GKO}. 
For  this purpose, we first recall the main property of the covariance function: 
\begin{align*}
\G_{N_1, N_2}(t,x-y) \deff \E\big[\Psi_{N_1}(t,x) \Psi_{N_2}(t,y)\big]
\end{align*}

\noi
for 
the truncated stochastic convolution 
$\Psi_{N_j} = \Psi_{N_j}^\text{heat}$  or $\Psi_{N_j}^\text{wave}$, 
where the truncation may be given 
by the smooth frequency projector $\P_N$
or the smoothing operator $\Q_N$ with a positive kernel defined in~\eqref{Q}.
When $N = N_1 = N_2$, we set 
\[ \G_N = \G_{N, N}.\]

\noi
As stated in Subsection \ref{SUBSEC:1.2}, 
the results in this section hold for both $\P_N$ and $\Q_N$.

The next lemma follows
as a corollary to Lemmas \ref{LEM:Green}
and \ref{LEM:GreenQ}.
See 
Lemma 2.7 in  \cite{ORSW}
for the proof.

\begin{lemma}
\label{LEM:covar}
Let $N_2\ge N_1\ge 1$. Then we have
\begin{align*}
\G_{N_1, N_2} (t,x-y) \approx - \frac{1}{2\pi} \log \big(|x-y| + N_1^{-1}\big)
\end{align*}

\noi
 for any $t \geq  0$. 
Similarly, we have
\begin{align}
\big|\G_{N_j}(t, x-y) - \G_{N_1, N_2}(t, x- y)\big|\les \big(1 \vee - \log\big(|x-y|+N_j^{-1}\big)\big) \wedge \big(N_1^{-1}|x-y|^{-1} \big)
\label{bd-GN2}
\end{align}

\noi
for $j = 1, 2$
and $t \geq 0$.

\end{lemma}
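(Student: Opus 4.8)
The plan is to reduce the statement to the Green's function estimates already established in Lemmas~\ref{LEM:Green} and~\ref{LEM:GreenQ}, by first observing that the covariance function is in fact \emph{independent of time}; namely, that for all $t\geq 0$ and $N_2\geq N_1$,
\[
\G_{N_1,N_2}(t,x-y) = \P_{N_1}\P_{N_2}G(x-y)
\]
in the $\P_N$-truncated case, and $\G_{N_1,N_2}(t,x-y) = \Q_{N_1}\Q_{N_2}G(x-y)$ in the $\Q_N$-truncated case. Once this identity is in place, the first bound follows from the asymptotics of $\P_{N_1}\P_{N_2}G$ (resp. $\Q_{N_1}\Q_{N_2}G$), and the second bound, after noting $\G_{N_j}(t,\cdot) = \G_{N_j,N_j}(t,\cdot) = \P_{N_j}^2 G$ (resp. $\Q_{N_j}^2 G$), is exactly the difference estimate in Lemma~\ref{LEM:Green} (resp. Lemma~\ref{LEM:GreenQ}).

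For the heat case $\Psi_{N_j} = \Ph$, I would use the representation \eqref{conv}, the mutual independence of $w_0$ and the cylindrical Wiener process $W$, and $\E[|\ft w_0(n)|^2] = \jb{n}^{-2}$, to compute on the Fourier side
\[
\G_{N_1,N_2}(t,x-y) = \frac{1}{4\pi^2}\sum_{n\in\Z^2}\chi_{N_1}(n)\chi_{N_2}(n)\bigg(\frac{e^{-t\jb{n}^2}}{\jb{n}^2} + \int_0^t e^{-(t-t')\jb{n}^2}\,dt'\bigg)e^{in\cdot(x-y)}.
\]
Since the two time-dependent terms sum to $\jb{n}^{-2}$ for every $n$, the right-hand side equals $\P_{N_1}\P_{N_2}G(x-y)$, independently of $t$.

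For the wave case $\Psi_{N_j} = \Pw$, let $d_n(t)$ denote the Fourier multiplier of $\D(t)$ at frequency $n$ (see~\eqref{W3}); then $d_n$ solves $\ddot d_n + \dot d_n + \jb{n}^2 d_n = 0$ with $d_n(0) = 0$ and $\dot d_n(0) = 1$. From~\eqref{W2}, the independence of $w_0$, $w_1$, $W$, and $\E[|\ft w_0(n)|^2] = \jb{n}^{-2}$, $\E[|\ft w_1(n)|^2] = 1$, one gets
\[
\G_{N_1,N_2}(t,x-y) = \frac{1}{4\pi^2}\sum_{n\in\Z^2}\chi_{N_1}(n)\chi_{N_2}(n)\,\Phi_n(t)\,e^{in\cdot(x-y)},
\]
where $\Phi_n(t) = \jb{n}^{-2}\big(\dot d_n(t) + d_n(t)\big)^2 + d_n(t)^2 + 2\int_0^t d_n(t')^2\,dt'$. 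Differentiating $\Phi_n$ and substituting $\ddot d_n + \dot d_n = -\jb{n}^2 d_n$ gives $\Phi_n'\equiv 0$, so $\Phi_n(t) = \Phi_n(0) = \jb{n}^{-2}$ for all $t\geq 0$; equivalently, the Ornstein--Uhlenbeck dynamics for the damped wave equation is stationary when started from $\mu_1\otimes\mu_0$. Hence $\G_{N_1,N_2}(t,\cdot) = \P_{N_1}\P_{N_2}G$, again independent of $t$. This verification of $\Phi_n'\equiv 0$ is the only step that is not pure bookkeeping.

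Finally, since all of the above identities only use that $\P_N$ is the Fourier multiplier with symbol $\chi_N$, they hold verbatim with $\P_N$ replaced by the smoothing operator $\Q_N$ of~\eqref{Q} (symbol $2\pi\ft\rho_N$), giving $\G_{N_1,N_2}(t,\cdot) = \Q_{N_1}\Q_{N_2}G$. Invoking Lemma~\ref{LEM:Green} in the $\P_N$ case and Lemma~\ref{LEM:GreenQ} in the $\Q_N$ case then yields both displayed bounds of the lemma, exactly as in \cite[Lemma 2.7]{ORSW}.
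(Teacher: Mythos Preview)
Your proposal is correct and follows the same route as the paper: reduce to Lemmas~\ref{LEM:Green} and~\ref{LEM:GreenQ} via the time-independence of the covariance, which the paper records in \eqref{sig} and \eqref{sigW} and otherwise defers to \cite[Lemma~2.7]{ORSW}. Your explicit verification that $\Phi_n'\equiv 0$ in the wave case is a welcome detail that the paper leaves implicit.
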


\subsection{Estimates on the even moments}

In this subsection,  we prove the following proposition
for  the uniform control on the even moments of 
 the random variables $\U_N(t,x)$ for any fixed $(t, x) \in\R_+\times \T^2$ and $N \in \N$.

\begin{proposition}\label{PROP:var}
Let $0 < \be^2<8\pi$. 
Then, the following statements hold.

\smallskip

\noi
\textup{(i)} For any $t\ge 0$, $x\in\T^2$,  and $N\in\N$, 
we have $\E \big[ | \U_N (t,x)| \big] = 1$,

\smallskip

\noi
\textup{(ii)} Let  $p\ge 2$ be even.
Let  $0<\al<2$ and $(p -1) \frac{\be^2}{4\pi} < \min(1, \al)$.
Then,  for any $T>0$, we have
 \begin{align*}
 \sup_{t\in[0,T],x\in\T^2,N\in\N}\E \Big[ \left| \jb{\nabla}^{-\alpha} \U_N (t,x) \right|^{p} \Big] \le C(T).
 \end{align*}

\smallskip

\noi
\textup{(iii)} 
Let  $0<\al<2$ and $\frac{\be^2}{4\pi}<\min(1, \al)$.
Then,  there exists small $\eps>0$  such that  
\begin{align*}
\sup_{t\in[0,T],x\in\T^2}\E \Big[ \big| \jb{\nabla}^{-\al}\big(\U_{N_1}(t,x)-\U_{N_2}(t,x)\big) \big|^{2} \Big]\le 
C(T) N_1^{-\eps}
\end{align*}

\noi
for any $T> 0$ and any $N_2\geq N_1\geq 1$.

\end{proposition}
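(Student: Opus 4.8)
The plan is to follow the remark made after Proposition~\ref{PROP:Ups} and reduce every bound to the fixed space-time point $(t,x)\in\R_+\times\T^2$, relying on the explicit Gaussian structure of $\U_N$. Part~(i) is immediate: since $\U_N(t,x)=e^{-\frac{\be^2}{2}\s_N}e^{\be\Psi_N(t,x)}>0$ and $\Psi_N(t,x)\sim\NN_\R(0,\s_N)$, the Gaussian moment generating function gives $\E[|\U_N(t,x)|]=\E[\U_N(t,x)]=e^{-\frac{\be^2}{2}\s_N}\E[e^{\be\Psi_N(t,x)}]=e^{-\frac{\be^2}{2}\s_N}e^{\frac{\be^2}{2}\s_N}=1$. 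The computational backbone for (ii) and (iii) is the identity
\[
\E\Big[\prod_{j=1}^m \U_{M_j}(t,y_j)\Big]=\exp\Big(\be^2\sum_{1\le j<k\le m}\G_{M_j,M_k}(t,y_j-y_k)\Big),
\]
valid for arbitrary truncation parameters $M_1,\dots,M_m$, which follows by writing the product as $\big(\prod_j e^{-\frac{\be^2}{2}\s_{M_j}}\big)e^{\be\sum_j\Psi_{M_j}(t,y_j)}$, using $\E[e^{\be\sum_j\Psi_{M_j}(t,y_j)}]=\exp\big(\frac{\be^2}{2}\sum_{j,k}\G_{M_j,M_k}(t,y_j-y_k)\big)$ and $\G_{M,M}(t,0)=\s_M$. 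Together with Lemma~\ref{LEM:covar}, which gives $\G_{M_j,M_k}(t,w)\le-\frac1{2\pi}\log|w|+C$ uniformly in $t$, this bounds each such expectation by $C^m\prod_{j<k}|y_j-y_k|^{-\frac{\be^2}{2\pi}}$.

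For~(ii), since $p$ is even, Tonelli's theorem, the above bound, and Lemma~\ref{LEM:Bessel} (which gives $|J_\al(w)|\les|w|^{\al-2}$ on $\T^2$, where $J_\al$ is the kernel of $\jb{\nabla}^{-\al}$) yield
\begin{align*}
\E\big[|\jb{\nabla}^{-\al}\U_N(t,x)|^{p}\big]
&\le\int_{(\T^2)^{p}}\prod_{j=1}^{p}|J_\al(x-y_j)|\;\E\Big[\prod_{j=1}^{p}\U_N(t,y_j)\Big]\,dy\\
&\les\int_{(\T^2)^{p}}\prod_{j=1}^{p}|x-y_j|^{\al-2}\prod_{j<k}|y_j-y_k|^{-\frac{\be^2}{2\pi}}\,dy.
\end{align*}
Writing $|x-y_j|^{\al-2}=\prod_{k\ne j}|x-y_j|^{\frac{\al-2}{p-1}}$ and regrouping over pairs exhibits the integrand as $\prod_{j<k}|f_{j,k}(y_j,y_k)|^{\frac1{p-1}}$ with $f_{j,k}(y_j,y_k)=|x-y_j|^{\al-2}|x-y_k|^{\al-2}|y_j-y_k|^{-\frac{(p-1)\be^2}{2\pi}}$, so Corollary~\ref{COR:BL} (with the parameter $p$ of that corollary replaced by $\frac p2$, so the number of integration variables becomes $p$ and the exponent becomes $\frac1{p-1}$) reduces matters to the finiteness of $\int_{(\T^2)^2}|f_{j,k}|$. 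That two-variable integral is finite exactly under the stated hypotheses: integrability at $y_j=x$ (and at $y_k=x$) requires $\al>0$; integrability at $y_j=y_k$ requires $(p-1)\frac{\be^2}{4\pi}<1$; and integrability as $(y_j,y_k)\to(x,x)$, which by the usual power count needs $2(2-\al)+\frac{(p-1)\be^2}{2\pi}<4$, requires $(p-1)\frac{\be^2}{4\pi}<\al$. The resulting bound is independent of $N$, of $x$ (translation invariance), and of $t$ (Lemma~\ref{LEM:covar} is uniform in $t$), which is (ii). The same argument applies verbatim with $\Q_N$ in place of $\P_N$, invoking Lemma~\ref{LEM:GreenQ}.

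For~(iii), $p=2$ so no Brascamp--Lieb is needed: expanding the square and using the moment identity,
\[
\E\big[|\jb{\nabla}^{-\al}(\U_{N_1}-\U_{N_2})(t,x)|^{2}\big]=\int_{(\T^2)^2}J_\al(x-y)J_\al(x-z)\,F_{N_1,N_2}(t,y-z)\,dy\,dz,
\]
where $F_{N_1,N_2}(t,w)=e^{\be^2\G_{N_1}(t,w)}-2e^{\be^2\G_{N_1,N_2}(t,w)}+e^{\be^2\G_{N_2}(t,w)}$ is a second difference that vanishes in the limit $N_1,N_2\to\infty$. The key input is the refined estimate~\eqref{bd-GN2}: $|\G_{N_j}(t,w)-\G_{N_1,N_2}(t,w)|\les(1\vee-\log(|w|+N_j^{-1}))\wedge(N_1^{-1}|w|^{-1})$. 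On $|w|\ges N_1^{-1}$ the minimum is $\les N_1^{-1}|w|^{-1}\le1$, so factoring out $e^{\be^2\G_{N_1,N_2}(t,w)}\sim(|w|+N_1^{-1})^{-\frac{\be^2}{2\pi}}$ and linearizing the remaining exponentials gives $|F_{N_1,N_2}(t,w)|\les N_1^{-1}|w|^{-\frac{\be^2}{2\pi}-1}$; on $|w|\les N_1^{-1}$, a region of measure $\sim N_1^{-2}$, a crude bound on each of the three terms gives $|F_{N_1,N_2}(t,w)|\les N_1^{\frac{\be^2}{2\pi}}+(|w|+N_2^{-1})^{-\frac{\be^2}{2\pi}}$. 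Substituting $u=x-y$, $v=x-z$, performing the $u$-integral $\int_{\T^2}|u|^{\al-2}|u+w|^{\al-2}\,du$ (which is $\les1$ for $\al\ge1$ and $\les|w|^{2\al-2}$ for $\al<1$), and evaluating the resulting radial integrals in $|w|$, one finds that in each sub-case the exponents balance so that a strictly positive power of $N_1^{-1}$ remains; the surplus is $\eps=\min\big(2\min(1,\al)-\tfrac{\be^2}{2\pi},\,2-\tfrac{\be^2}{2\pi}\big)$, which is positive precisely because $\frac{\be^2}{4\pi}<\min(1,\al)$ (and $\be^2<4\pi$). This yields $\E[|\jb{\nabla}^{-\al}(\U_{N_1}-\U_{N_2})(t,x)|^{2}]\les N_1^{-\eps}$, uniformly in $t$ and $x$.

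The step I expect to be the main obstacle is the bookkeeping in part~(iii) on the small region $|w|\les N_1^{-1}$: there the individual correlation factors $e^{\be^2\G(t,w)}$ are large, of size $\sim N^{\frac{\be^2}{2\pi}}$, so all decay must be extracted from the smallness of the region together with the integrability of the Bessel kernels $|x-y|^{\al-2}$, $|x-z|^{\al-2}$; one must check that a genuinely negative power of $N_1$ survives in every sub-case, and it is here that the hypothesis $\frac{\be^2}{4\pi}<\min(1,\al)$ is used sharply. Everything else --- the Gaussian moment identity, the covariance bounds of Lemma~\ref{LEM:covar}, and the Brascamp--Lieb factorization of the $p$-fold integral in~(ii) --- is routine once this scheme is in place.
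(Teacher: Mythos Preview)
Your proposal is correct and follows essentially the same approach as the paper: the Gaussian moment identity, Lemma~\ref{LEM:covar} for the covariance bounds, the geometric Brascamp--Lieb inequality (Corollary~\ref{COR:BL}) for part~(ii), and the second-difference expansion with~\eqref{bd-GN2} for part~(iii). The only difference is cosmetic: the paper outsources the two-variable integral bound in~(ii) and the entire analysis of~(iii) to \cite[Proposition~1.1]{ORSW}, whereas you spell out the region splitting and power counting explicitly, which is fine and arrives at the same conditions.
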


\begin{proof}

For fixed $(t, x) \in \R_+ \times \T^2$, 
$\Psi_N(t, x)$ is a mean-zero Gaussian random variable with variance $\s_N$.
Hence, 
from the positivity of $\U_N$ and \eqref{re-non}, we have
\[ \E\big[ |\U_N(t, x) |\big]
= e^{-\frac{\be^2}{2} \s_N} \E [e^{\be \Psi_N(t, x)}]
= 1.\]

\noi
This proves (i).

Next, we consider (ii).
Let  $p = 2m$,   $m\in \N$. 
Fix $(t, x) \in [0,T]\times \T^2$.
Recalling $\jb{\nb}^{-\al}f = J_\al * f$, where $J_\al$ is as in \eqref{Bessel}, 
we have 
 \begin{align}
 \begin{split}
 \E \Big[  & \big| \jb{\nabla}^{-\alpha} \U_N (t,x) \big|^{2m} \Big]\\
 & = e^{-m\be^2 \sigma_N }\E \Bigg[ \bigg| \int_{\T^2} J_{\al} (x-y) e^{\be \Psi_N (t,y)} dy \bigg|^{2m} \Bigg] \\
  & = e^{-m\be^2 \sigma_N }  \int_{(\T^{2})^{2m}} 
  \E \bigg[ e^{\be \sum_{j=1}^{2m}  \Psi_N (t,y_j) } \bigg]  \bigg(\prod_{j=1}^{2m}  J_{\al} (x-y_j)\bigg)  d\vec y  \\
     & =  e^{-m\be^2 \sigma_N } \int_{(\T^{2})^{2m}}  \exp\bigg(\frac{\be^2}2 
     \E \bigg[\Big| \sum_{j =1}^{2m}  \Psi_N (t,y_j)  \Big|^2\bigg]\bigg)  \bigg(  \prod_{j=1}^{2m}  J_{\al} (x-y_j)\bigg)  d\vec y.
     \end{split}
     \label{E1}
     \end{align}

  \noi
where $d \vec y = dy_1 \cdots dy_{2m}$
and  we used the fact that 
 $ \sum_{j =1}^{2m}  \Psi_N (t,y_j) $ is a Gaussian random variable
 at the last step.
From the definition \eqref{sig} of $\s_N$
and 
 Lemma \ref{LEM:covar}, we have 
  \begin{align} 
  \begin{split}
 \exp\bigg(\frac{\be^2}2 \E \bigg[\Big| \sum_{j =1}^{2m}  \Psi_N (t,y_j)  \Big|^2\bigg]\bigg) 
 & = e^{m \be^2 \sigma_N} \exp\bigg( \be^2 \sum_{1 \leq  j < k \leq 2m} 
 \E \big[ \Psi_N (t,y_j)  \Psi_N (t,y_k)  \big]\bigg)\\
 & \les
e^{m \be^2 \sigma_N}
\prod_{1\leq j < k \leq 2m}
 \big(|y_j - y_k| + N^{-1}\big)^{- \frac{\be^2 }{2\pi}} .
\end{split}
\label{E2}
 \end{align}

 \noi
Hence, from  \eqref{E1} and \eqref{E2}, we obtain
 \begin{align}
 \begin{split}
 \E \Big[ &  \big| \jb{\nabla}^{-\alpha} \U_N (t,x) \big|^{2m} \Big]  \\
 & \les   \int_{(\T^{2})^{2m}}   
 \bigg(\prod_{1\le j < k \le 2m}   \big(|y_j - y_k| + N^{-1}\big)^{ - \frac{\be^2 }{2\pi}}\bigg)
\bigg(  \prod_{j=1}^{2m}  |J_{\al} (x-y_j)|\bigg)  d\vec y \\
 & =    \int_{(\T^{2})^{2m}}   
 \prod_{1\le j  < k  \le 2m}   
 \frac{|J_\al (x-y_j) 
 J_\al (x-y_k)|^{\frac{1}{2m-1}}}{ \big(|y_j - y_k| + N^{-1}\big)^{ \frac{\be^2 }{2\pi}}}
d \vec y.
\end{split}
 \label{E4}
 \end{align}

\noi
By applying the geometric Brascamp-Lieb inequality (Corollary \ref{COR:BL})
and proceeding as in the proof of Proposition 1.1 in \cite{ORSW}
to bound the resulting integral, 
we then obtain
 \begin{align*}
\text{RHS of }\eqref{E4}
 & \les    \prod_{1\le j  < k \le 2m}  
 \Bigg( \int_{(\T^{2})^2}   
\frac{|J_{\al} (x-y_j) J_{\al} (x-y_k)|}{ \big(|y_j - y_k| + N^{-1}\big)^{(2m-1) \frac{\be^2 }{2\pi}}} dy_j dy_k 
\Bigg)^{\frac1{2m-1}} \notag\\
 & =    \Bigg( \int_{(\T^{2})^2}   
\frac{|J_{\al} (x-y) J_{\al} (x-z)|}{ \big(|y- z| + N^{-1}\big)^{(2m-1) \frac{\be^2}{2\pi}}} dy dz \Bigg)^{m}  \notag\\
 & \les 1 ,
 \end{align*}  

\noi
uniformly in $t\in[0,T]$, $x\in\T^2$, and $N\in\N$, provided $(2m -1) \be^2 < 4\pi\min(1, \al) $ and $0 < \al < 2$.

Lastly, Part (iii) 
for  the case $p=2$ 
follows from the last part of the proof of Proposition~1.1 in  \cite{ORSW}
(with $t = 2$), 
provided that 
$\be^2 < 4\pi\min(1, \al) $ and $0 < \al < 2$.
The second estimate~\eqref{bd-GN2} in Lemma \ref{LEM:covar} is needed here.
This completes the proof of Proposition~\ref{PROP:var}.
\end{proof}

\begin{remark}\label{rk:interm}
\rm
When $p = 2$, 
the proof of Proposition \ref{PROP:var}
 is identical to that in \cite[Proposition 1.1]{ORSW}.
  For $p > 2$, however, 
the bounds are quite different.
In computing higher moments
for 
the imaginary Gaussian multiplicative chaos $\,:\!e^{i \be\Psi_N}\!:\,$,
it was crucial to exploit certain  cancellation property
\cite{HS, ORSW}.
Namely,  
in the ``multipole picture'' for the imaginary Gaussian multiplicative chaos (and more generally log-correlated Gaussian fields \cite{LRV}), there is a ``charge cancellation'' 
 in estimating higher moments of $\,:\!e^{i\be \Psi_N}\!:\,$ due to its complex nature. 

In the current setting, 
i.e.~without the ``$i$'' in the exponent, 
there is no such cancellation taking place;
the charges accumulate and contribute to worse estimates in the sense that
the higher moment estimates require more smoothing. 
This is the source of the so-called intermittency phenomenon \cite{Garban}, which is quantified by the dependence on $p$ for the choice of $\al$ in Proposition \ref{PROP:var} (ii) above.

\end{remark}

\subsection{Kahane's approach}
Proposition \ref{PROP:var} in the previous subsection allows to get part of the result claimed in Proposition~\ref{PROP:Ups}. Indeed, using Fubini's theorem and arguing as in the proof of Proposition 1.1 in \cite{ORSW}, interpolating between (ii) and (iii) in Proposition \ref{PROP:var} above implies the convergence of $\{\U_N\}_{N\in\N}$ in $L^p(\O;L^p([0,T];W^{-\al,p}(\T^2)))$ in the case of even $p\ge 2$, for all $\al=\al(p)$ as in \eqref{al}.

Note, however, that 
when  $p\in(1,2)$ or $p>2$ is not even, we only get a weaker result than Proposition~\ref{PROP:Ups}. Indeed, when $p>2$ is not even,  if $2m<p<2m+2$ for some $m\in\N$, Proposition~\ref{PROP:var} provides convergence in both $L^{2m}(\O;L^{2m}([0,T];W^{-\al,2m}(\T^2)))$ and $L^{2m+2}(\O;L^{2m+2}([0,T];W^{-\al,2m+2}(\T^2)))$, which by interpolation provides convergence in $L^{p}(\O;L^{p}([0,T];W^{-\al,p}(\T^2)))$ for $\al=\al(p)$ as in \eqref{al}. 
Such an  argument then imposes  the condition 
\begin{align}
0<\be^2<\frac{4\pi}{(2m+2)-1},
\label{BB1}
\end{align} 
which gives a  smaller range than the natural one $0<\be^2<\frac{4\pi}{p-1}$. 
The condition \eqref{BB1}
comes  from the requirement that $\{\U_N\}_{N\in\N}$  be uniformly bounded in $L^{2m+2}(\O;L^{2m+2}([0,T];W^{-\al,2m+2}(\T^2)))$, 
 On the other hand, 
 in the case $p\in (1,2)$, interpolating between (i) and (iii) of Proposition~\ref{PROP:var}  provides the convergence of $\{\U_N\}_{N\in\N}$ in $L^p(\O;L^p([0,T];W^{-\al,p}(\T^2)))$ 
 only for in a more restricted range 
 $\al> \frac{(p-1)\be^2}{2\pi p} (>\al(p))$.

The argument presented in the previous subsection still has the advantage of being applicable to a large class of processes. Namely, whenever the $k$-points correlation functions can be expressed as a product, the use of the Brascamp-Lieb inequality (Corollary~\ref{COR:BL}) allows to decouple them into a product of 2-points correlation functions. As pointed out above, however, 
this only works for even $p\ge 2$, which restricts the range of admissible $\be^2>0$ 
in studying  \eqref{NH1} or \eqref{NW1}.

\medskip

In this subsection, we instead follow the classical approach of Kahane \cite{Ka85} which relies on the following comparison inequality for the renormalized exponential of Gaussian random variables. 
See, for example, \cite[Theorem 2.1]{RhV} and \cite[Corollary A.2]{RoV}.

\begin{lemma}[Kahane's convexity inequality]\label{LEM:Kahane}
Given $n\in\N$, 
let   $\{X_j\}_{j=1}^n$ and $\{Y_j\}_{j=1}^n$ be two centered Gaussian vectors satisfying 
\[\E\big[X_jX_k\big]\le \E\big[Y_jY_k\big]\]

\noi
 for all $j,k=1,...,n$. Then,  for any sequence  $\{p_j\}_{j=1}^n$
 of non-negative  numbers  and any convex function $F : [0,\infty)\to\R$ with at most polynomial growth at infinity, it holds
\begin{align*}
\E\bigg[F\Big(\sum_{j=1}^n p_je^{X_j-\frac12\E[X_j^2]}\Big)\bigg]
\le \E\bigg[F\Big(\sum_{j=1}^n p_je^{Y_j-\frac12\E[Y_j^2]}\Big)\bigg].
\end{align*}
\end{lemma}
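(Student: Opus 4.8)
The plan is to prove this by the classical Gaussian interpolation argument of Kahane. Since the two inequalities in the statement depend only on the laws of the two Gaussian vectors, I would first enlarge the probability space so that $\{X_j\}_{j=1}^n$ and $\{Y_j\}_{j=1}^n$ are \emph{independent}. Next, by a standard mollification argument I would reduce to the case $F\in C^2([0,\infty))$ with $F''\geq 0$ and with $F,F',F''$ of at most polynomial growth: convolving a convex function with a smooth probability kernel preserves convexity, and the polynomial growth hypothesis together with dominated convergence (all the log-normal random variables below have finite moments of every order) allows passing to the limit. Then, for $t\in[0,1]$, I would set
\[ Z_j(t) = \sqrt{1-t}\,X_j + \sqrt{t}\,Y_j, \qquad W_j(t) = \exp\Big(Z_j(t) - \tfrac12\E\big[Z_j(t)^2\big]\Big), \qquad S(t) = \sum_{j=1}^n p_j W_j(t), \]
so that $\E[Z_j(t)^2] = (1-t)\E[X_j^2] + t\,\E[Y_j^2]$ by independence. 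The function $\varphi(t) = \E\big[F(S(t))\big]$ then satisfies $\varphi(0) = \E\big[F\big(\sum_j p_j e^{X_j - \frac12\E[X_j^2]}\big)\big]$ and $\varphi(1) = \E\big[F\big(\sum_j p_j e^{Y_j - \frac12\E[Y_j^2]}\big)\big]$, so it suffices to show $\varphi(1)\geq \varphi(0)$.

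For the core computation, $\varphi$ is continuous on $[0,1]$ and $C^1$ on $(0,1)$ (differentiating under the expectation using the polynomial growth of $F'$ and finiteness of all moments of the $W_j(t)$, uniformly for $t$ in compact subsets of $(0,1)$), with
\[ \varphi'(t) = \E\bigg[F'(S(t))\sum_{j=1}^n p_j W_j(t)\Big(\tfrac{Y_j}{2\sqrt t} - \tfrac{X_j}{2\sqrt{1-t}} - \tfrac12\big(\E[Y_j^2] - \E[X_j^2]\big)\Big)\bigg]. \]
I would then apply Gaussian integration by parts (Stein's identity) separately in the $Y$-variables and in the $X$-variables; independence of $X$ and $Y$ kills all cross terms, and one uses $\partial_{Y_k} Z_j(t) = \sqrt t\,\delta_{jk}$ and $\partial_{X_k} Z_j(t) = \sqrt{1-t}\,\delta_{jk}$. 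The contributions carrying a single factor $W_j(t)$ cancel exactly against the explicit drift $-\frac12(\E[Y_j^2]-\E[X_j^2])$, leaving
\[ \varphi'(t) = \frac12\sum_{j,k=1}^n \big(\E[Y_jY_k] - \E[X_jX_k]\big)\,p_j p_k\,\E\big[F''(S(t))\,W_j(t)W_k(t)\big]. \]
Since $F''\geq 0$, $W_j(t), W_k(t) > 0$, $p_j, p_k\geq 0$, and $\E[Y_jY_k] - \E[X_jX_k]\geq 0$ by hypothesis, every summand is nonnegative, hence $\varphi'(t)\geq 0$ on $(0,1)$. Integrating from $0$ to $1$ yields $\varphi(1) - \varphi(0)\geq 0$; undoing the mollification of $F$ then gives the claim.

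The main obstacle is not conceptual but technical: justifying differentiation under the expectation and the validity of the multivariate Gaussian integration by parts for functions of polynomial growth (both relying on each $W_j(t)$ being log-normal with all moments finite, locally uniformly in $t$), and controlling the mild singularities $\tfrac1{2\sqrt t}$ and $\tfrac1{2\sqrt{1-t}}$ of the individual terms in the first formula for $\varphi'(t)$ as $t\to 0^+$ or $t\to 1^-$. Since these singular pieces cancel in the final identity, it suffices to establish $\varphi\in C([0,1])\cap C^1((0,1))$, but this requires a bit of care in the estimates near the endpoints.
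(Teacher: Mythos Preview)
Your proposal is correct and is precisely the classical Gaussian interpolation argument due to Kahane. Note that the paper does not actually supply its own proof of this lemma: it only states the result and points to \cite[Theorem 2.1]{RhV} and \cite[Corollary A.2]{RoV}, whose proofs are essentially the one you outline. There is therefore nothing to compare against; your argument is the standard one and the technical caveats you flag (mollification of $F$, justification of differentiation under the expectation via log-normal moment bounds, and the harmless endpoint singularities that cancel in the final identity for $\varphi'(t)$) are exactly the routine points one must check.
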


As an application of Lemma~\ref{LEM:Kahane}, one has the following bound on the moments of the random measure\footnote{In the literature, this random measure is also referred
to as a multiplicative chaos.  See  \cite{RoV}.} $\M_N(t, \cdot )$, $t \geq 0$ defined by 
\begin{align}
\M_N(t, A) =  \int_{A}\U_N(t,x)dx
\label{MN}
\end{align}

\noi
for 
$ A\in\BB(\T^2)$, 
where $\BB(\T^2)$ is the Borel $\s$-algebra of $\T^2$.

\begin{lemma}\label{LEM:GMCmom}
For any $0<\be^2<8\pi$ and $1\leq p< \frac{8\pi}{\be^2}$, we have
\begin{align*}
\sup_{t\in\R_+,A\in\BB(\T^2),N\in\N}\E\Big[\M_N(t,A)^p\Big]<\infty.
\end{align*}
\end{lemma}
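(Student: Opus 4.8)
The plan is to combine the pointwise positivity of $\U_N$ with Kahane's convexity inequality (Lemma~\ref{LEM:Kahane}) and the covariance bound of Lemma~\ref{LEM:covar}, so as to dominate $\M_N(t,\T^2)$, in the sense of $p$-th moments, by the total mass of the Gaussian multiplicative chaos of a mollified log-correlated field, for which finiteness of moments is classical. First, since $\U_N(t,x)\ge 0$ (see \eqref{re-non}), one has $\M_N(t,A)\le\M_N(t,\T^2)$ for every $A\in\BB(\T^2)$, so it suffices to bound $\E\big[\M_N(t,\T^2)^p\big]$ uniformly in $t\ge0$ and $N\in\N$; as the argument uses only the covariance estimate of Lemma~\ref{LEM:covar}, it is identical for $\Psi_N=\Ph$ and $\Psi_N=\Pw$. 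By Lemma~\ref{LEM:covar} (with $N_1=N_2=N$) there are a constant $C_0>0$ and a centered Gaussian field $Y_N$ on $\T^2$ — obtained, say, by mollifying at scale $N^{-1}$ a fixed log-correlated (for instance $\star$-scale invariant) field and adding an independent $\mathcal N(0,C_0)$ — whose covariance $K_N(x,y)=\E[Y_N(x)Y_N(y)]$ satisfies
\[
\G_N(t,x-y)\ \le\ K_N(x,y)\ \les\ -\tfrac1{2\pi}\log\big(|x-y|+N^{-1}\big)+1
\]
for all $x,y\in\T^2$, $t\ge0$, $N\in\N$; here the first inequality uses $\G_N(t,0)=\s_N\le\tfrac1{2\pi}\log N+O(1)$ on the diagonal and the logarithmic bound off the diagonal, both uniform in $t$.

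Next I would discretize and apply Kahane's inequality. Choosing a partition $\{Q_j\}_j$ of $\T^2$ into cells of small diameter with sample points $x_j\in Q_j$, the Riemann sums $\sum_j|Q_j|\,\U_N(t,x_j)=\sum_j|Q_j|\,e^{\be\Psi_N(t,x_j)-\frac{\be^2}2\s_N}$ converge in $L^p(\O)$ to $\M_N(t,\T^2)$, and likewise $\sum_j|Q_j|\,e^{\be Y_N(x_j)-\frac{\be^2}2 K_N(x_j,x_j)}$ converges to $\wt\M_N(t,\T^2):=\int_{\T^2}e^{\be Y_N(x)-\frac{\be^2}2 K_N(x,x)}dx$, both fields being a.s.\ continuous. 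Applying Lemma~\ref{LEM:Kahane} with $F(x)=x^p$ (convex with polynomial growth, as $p\ge1$), the centered Gaussian vectors $X_j=\be\Psi_N(t,x_j)$ and $Y_j'=\be Y_N(x_j)$ — which satisfy $\E[X_jX_k]=\be^2\G_N(t,x_j-x_k)\le\be^2K_N(x_j,x_k)=\E[Y_j'Y_k']$ by the display above — and weights $p_j=|Q_j|$, then passing to the limit, yields
\[
\E\big[\M_N(t,\T^2)^p\big]\ \le\ \E\big[\wt\M_N(t,\T^2)^p\big].
\]
Writing $\be Y_N=\tfrac{\be}{\sqrt{2\pi}}Z_N$, the right-hand side is the $p$-th moment of the total mass of the chaos of a standard (log-coefficient-one) log-correlated field $Z_N$ on $\T^2\subset\R^2$ at parameter $\g=\be/\sqrt{2\pi}$, with $\g^2=\be^2/(2\pi)<4=2d$ by hypothesis.

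It then remains to invoke (or re-prove) the classical moment bound: for $\g^2<2d$ the mass of the chaos of a mollified log-correlated field has finite $p$-th moment, uniformly in the regularization scale, for every $1\le p<2d/\g^2$; with $d=2$ and $\g=\be/\sqrt{2\pi}$ this is exactly $1\le p<8\pi/\be^2$. For a self-contained appendix I would establish this by a second use of Lemma~\ref{LEM:Kahane}, dominating $Z_N$ by a dyadic multiplicative cascade whose total mass $M$ satisfies the recursive distributional identity $M\overset{d}{=}\sum_{i=1}^{2^d}2^{-d}e^{\g\Omega_i-\frac{\g^2}2\E[\Omega_i^2]}M^{(i)}$, with $\{M^{(i)}\}$ i.i.d.\ copies of $M$ independent of the jointly Gaussian weights $\{\Omega_i\}$, and then feeding this into the triangle inequality in $L^p(\O)$ while controlling the independent lognormal factors by Hölder's inequality, which produces a self-improving estimate that closes precisely when $p<2d/\g^2$.

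The main obstacle is this last recursive estimate — equivalently, extracting the sharp threshold $8\pi/\be^2$: the crude bound $\|M\|_p\le 2^d\cdot 2^{-d}\,\|e^{\g\Omega-\frac{\g^2}2\E\Omega^2}\|_p\,\|M\|_p$ is an identity at the critical exponent, so one must first secure a priori finiteness of truncated moments and then exploit the cascade structure more carefully (truncating the recursion at finite depth and summing the resulting geometric series). It is worth noting that the Kahane route, valid for arbitrary real $p\ge1$, is genuinely needed here: a direct computation only gives, for even integers $p=2m$,
\begin{align*}
\E\big[\M_N(t,\T^2)^{2m}\big]
&=\int_{(\T^2)^{2m}}\exp\Big(\be^2\!\!\!\sum_{1\le j<k\le2m}\!\!\!\G_N(t,y_j-y_k)\Big)d\vec y\\
&\les\int_{(\T^2)^{2m}}\prod_{1\le j<k\le2m}\big(|y_j-y_k|+N^{-1}\big)^{-\frac{\be^2}{2\pi}}d\vec y,
\end{align*}
which is bounded uniformly in $N$ exactly when $2m<8\pi/\be^2$; because of the intermittency phenomenon (cf.\ Remark~\ref{rk:interm}), this cannot be upgraded to all real $p<8\pi/\be^2$ by interpolation, since the next even moment typically diverges.
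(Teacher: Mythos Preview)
Your proposal is correct in spirit but takes a genuinely different route from the paper. You first transfer, via Kahane's inequality, from $\Psi_N$ to a ``standard'' mollified log-correlated field $Y_N$, and then propose a second Kahane comparison to a multiplicative cascade to extract the sharp moment threshold. The paper instead works directly on $\Psi_N$: it partitions $\T^2$ into $K^2$ cubes grouped in four families by parity, observes that the covariance between points in \emph{distinct} cubes of the same family is at most $\tfrac1{2\pi}\log K+C$, and uses Kahane's inequality once to dominate $\M_N^{(1)}(t,\T^2)$ by $K^{-2}e^{\be h_K-\frac{\be^2}{2}\E h_K^2}$ times a sum of \emph{independent} copies of $\M_N(t,\T^2)$. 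Expanding the $p$-th moment of this sum and separating the diagonal from the off-diagonal yields the closed recursion
\[
\E\big[\M_N(t,\T^2)^p\big]\ \le\ CK^{(p-1)(\frac{\be^2}{4\pi}p-2)}\,\E\big[\M_N(t,\T^2)^p\big]\ +\ C_{K,m}\,\E\big[\M_N(t,\T^2)^{m-1}\big]^{\frac{p}{m-1}},
\]
where $m-1<p\le m$; since $p<8\pi/\be^2$ the exponent of $K$ is negative, so taking $K$ large absorbs the first term and one concludes by induction on $m$ (the base case $m=2$ being Proposition~\ref{PROP:var}\,(i)). This is precisely the ``cascade structure exploited more carefully'' that you flag as the main obstacle, carried out without ever leaving the field $\Psi_N$; in particular the paper avoids both the construction of the auxiliary field $Y_N$ (whose covariance must be shown to dominate $\G_N$ \emph{and} be positive definite, which you sketch but do not verify) and the passage to an exact cascade. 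Your route would also work, but it trades one self-contained recursion for two comparison steps plus an appeal to cascade theory.
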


Lemma~\ref{LEM:GMCmom} is a classical result in the theory of Gaussian multiplicative chaoses. See for example Proposition 3.5 in \cite{RoV}. We present a self-contained proof in Appendix~\ref{SEC:B} below.

With the bounds of Lemmas~\ref{LEM:Kahane} and~\ref{LEM:GMCmom}, we can prove the following uniform estimate on $\{\U_N\}_{N\in\N}$.
\begin{proposition}\label{PROP:var2}
Let $0 < \be^2<8\pi$, $1\leq p<\frac{8\pi}{\be^2}$, and $0<\al<2$ such that $\al>(p -1) \frac{\be^2}{4\pi}$.
Then, we have for any $T>0$
 \begin{align}
 \sup_{t\in[0,T],x\in\T^2,N\in\N}\E \Big[ \left| \jb{\nabla}^{-\alpha} \U_N (t,x) \right|^{p} \Big] \le C(T).
 \label{moment2}
 \end{align}
\end{proposition}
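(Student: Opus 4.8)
The plan is to exploit the positivity of $\U_N$, the local description of the Bessel kernel from Lemma~\ref{LEM:Bessel}, and the moment bound of Lemma~\ref{LEM:GMCmom}, the latter upgraded to a self-similar scaling estimate via Kahane's convexity inequality (Lemma~\ref{LEM:Kahane}). Since $\jb{\nabla}^{-\al} = J_\al *$ with $|J_\al(z)| \les |z|^{\al-2}$ on $\T^2$ (using $0<\al<2$ and Lemma~\ref{LEM:Bessel}) and $\U_N \ge 0$, we first bound
\[
\big|\jb{\nabla}^{-\al}\U_N(t,x)\big| \le \int_{\T^2} |J_\al(x-y)|\,\U_N(t,y)\,dy \les \int_{\T^2} |x-y|^{\al-2}\,\U_N(t,y)\,dy .
\]
Decomposing $\T^2\setminus\{x\}$ into dyadic shells $A_j = \{\,y : 2^{-j-1} < |x-y| \le 2^{-j}\,\}$ (with $j$ bounded below since $\T^2$ has bounded diameter), on which $|x-y|^{\al-2}\sim 2^{j(2-\al)}$, and writing $\int_{A_j}\U_N(t,y)\,dy = \M_N(t, A_j) \le \M_N(t, B(x, 2^{-j}))$ for the random measure $\M_N$ in \eqref{MN}, Minkowski's inequality in $L^p(\O)$ (valid as $p\ge1$) gives
\[
\big\|\jb{\nabla}^{-\al}\U_N(t,x)\big\|_{L^p(\O)} \les \sum_{j} 2^{j(2-\al)}\,\big\|\M_N(t, B(x,2^{-j}))\big\|_{L^p(\O)} .
\]

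The key input is then the scaling bound $\|\M_N(t,B(x,r))\|_{L^p(\O)} \les r^{\,2 - \frac{(p-1)\be^2}{4\pi}}$ for $0<r\le1$, uniformly in $t\ge0$, $x$, and $N$. To obtain it I would apply Kahane's inequality (Lemma~\ref{LEM:Kahane}, with $F(s)=s^p$ convex) to compare the Gaussian field $\{\Psi_N(t,y)\}_{y\in B(x,r)}$ with a field $\Omega + \wt\Psi_M\big((\cdot-x)/r\big)$, where $\Omega$ is a centered Gaussian of variance $\tfrac1{2\pi}\log\tfrac1r + O(1)$, independent of $\wt\Psi_M$, and $\wt\Psi_M$ is an independent copy of the unit-scale truncated convolution with parameter $M = \lceil rN\rceil \in \N$. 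This is legitimate because Lemma~\ref{LEM:covar} writes $\G_N(t,y-y') = -\tfrac1{2\pi}\log(|y-y'| + N^{-1})$ up to a bounded error, and the substitution $y-y' = r(u-u')$ splits the logarithm into the space-independent term $\tfrac1{2\pi}\log\tfrac1r$ and the unit-scale term $-\tfrac1{2\pi}\log(|u-u'| + (rN)^{-1})$; the bounded error (and the additive ambiguity in ``$\approx$'') is absorbed into the variance of $\Omega$, which — taken large enough — also guarantees the diagonal inequality $\s_N = \E[\Psi_N(t,y)^2] \le \E[(\Omega+\wt\Psi_M)^2]$ required by Kahane's inequality, uniformly in $N$. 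After the comparison the $\Omega$-factor separates and the substitution $y=x+ru$ contributes a Jacobian $r^2$, leaving $\E[\M_N(t,B(x,r))^p] \les r^{2p}\,\E[e^{p\be\Omega - \frac{p\be^2}{2}\E[\Omega^2]}]\,\E[\M_M(t,B(0,1))^p]$, where the middle factor equals $e^{\frac{p(p-1)\be^2}{2}\E[\Omega^2]} = O\big(r^{-p(p-1)\be^2/4\pi}\big)$ by a direct Gaussian computation and the last factor is bounded by Lemma~\ref{LEM:GMCmom} (here $p<\tfrac{8\pi}{\be^2}$ is used, along with the uniformity of that lemma in the truncation parameter). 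The degenerate regime $rN\les1$ is simpler still: there $\Psi_N$ has $O(\log N)=O(\log\tfrac1r)$ covariance on $B(x,r)$, so the whole field can be absorbed into $\Omega$.

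Plugging the scaling bound into the previous display gives
\[
\big\|\jb{\nabla}^{-\al}\U_N(t,x)\big\|_{L^p(\O)} \les \sum_{j} 2^{j(2-\al)}\,2^{-j(2 - \frac{(p-1)\be^2}{4\pi})} = \sum_j 2^{-j(\al - \frac{(p-1)\be^2}{4\pi})} < \infty ,
\]
the series converging exactly because of the hypothesis $\al > \tfrac{(p-1)\be^2}{4\pi}$, with the bound uniform in $t$, $x$, and $N$; raising to the $p$-th power yields \eqref{moment2}. The main obstacle is the Kahane scaling step: one must set up the comparison field so that $\E[X_jX_k] \le \E[Y_jY_k]$ holds for every pair $(j,k)$ — the case $j=k$ forcing $\Var(\Omega)$ to dominate $\s_N$ uniformly in $N$ — while keeping all resulting constants independent of $t$, $x$, and $N$; the remaining steps (the pointwise kernel bound, the dyadic splitting, Minkowski) are routine once the exponent condition is in place. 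Alternatively, this scaling estimate for $\M_N$ is essentially contained in the proof of Lemma~\ref{LEM:GMCmom} carried out in Appendix~\ref{SEC:B}, in which case the present argument reduces to its first and last paragraphs.
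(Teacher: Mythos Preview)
Your proposal is correct and follows essentially the same route as the paper: both arguments first establish the multifractal scaling bound $\E[\M_N(t,B(x,r))^p]\les r^{(2+\be^2/4\pi)p-(\be^2/4\pi)p^2}$ via Kahane's convexity inequality and Lemma~\ref{LEM:GMCmom}, then combine the pointwise Bessel-kernel bound $|J_\al(z)|\les|z|^{\al-2}$ with a dyadic shell decomposition and Minkowski's inequality to sum. The only cosmetic difference is that the paper compares with $\Psi_N(t,\cdot)+h_r$ (same truncation $N$, using $(rN)^{-1}\ge N^{-1}$) rather than your $\wt\Psi_{\lceil rN\rceil}+\Omega$; either choice works since Lemma~\ref{LEM:GMCmom} is uniform in the truncation parameter.
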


Note that  in Proposition~\ref{PROP:var2}, 
 we do not need to assume that $p$ is even. 
The uniform bound in Proposition~\ref{PROP:Ups}\,(i)  follows  from \eqref{moment2}, while the convergence part of Proposition~\ref{PROP:Ups} follows from interpolating \eqref{moment2} in Proposition~\ref{PROP:var2} and Proposition~\ref{PROP:var}\,(iii) and using the same argument as in the proof of Proposition 1.1 in \cite{ORSW}. 
When  $1<p<2$, 
the use  of  Proposition~\ref{PROP:var}\,(iii) imposes the condition $0<\be^2<4\pi$, 
which yields the restriction on the range of $\be^2$
in Proposition~\ref{PROP:Ups}\,(ii).

\begin{proof}[Proof of Proposition~\ref{PROP:var2}]
We split the proof into two steps.

\medskip

\noi 
$\bullet$ \textbf{Step 1: multifractal spectrum.} We first establish the following bound on the moments of  
 the random measure $\M_N(t)$ over small balls:
\begin{align}\label{moment3}
\sup_{t\in[0,T],x_0\in\T^2,N\in\N}\E\Big[\M_N(t,B(x_0,r))^p\Big] \les r^{(2+\frac{\be^2}{4\pi})p-\frac{\be^2}{4\pi}p^2}
\end{align}
for any $r\in(0,1)$.

By a change of variables, 
 the positivity of $\U_N$, and
 a Riemann sum approximation, 
 we have 
\begin{align}
\begin{split}
\E\Big[\M_N(t,B(x_0,r))^p\Big] 
&= r^{2p}\E\bigg[\Big(\int_{B(0,1)} \U_N(t,x_0+ry)dy\Big)^p\bigg]\\
&\leq r^{2p}\E\bigg[\Big(\int_{\T^2} \U_N(t,x_0+ry)dy\Big)^p\bigg]\\
&=r^{2p}\lim_{J\to\infty}\E\bigg[\Big(\sum_{j,k=1}^{J}\frac{4\pi^2}{J^{2}}e^{\be \Psi_N(t,x_0+ry_{j,k})-\frac{\be^2}2\s_N}\Big)^p\bigg],
\end{split}
\label{BB2}
\end{align}

\noi
where $y_{j,k}$, $j, k = 1, \dots, J$, 
is given by  $y_{j,k} = \big(-\pi + \frac{2\pi}{J}(j-1),-\pi + \frac{2\pi}{J}(k-1)\big)\in\T^2\simeq [-\pi,\pi)^2$. 
From  
 Lemma~\ref{LEM:covar},  we can bound the covariance function as
\begin{align}
\begin{split}
&\E\big[\Psi_N(t,x_0+ry_{j_1,k_1})\Psi_N(t,x_0+ry_{j_2,k_2})\big]\\
&\qquad\le -\frac1{2\pi}\log\big(r|y_{j_1,k_1}-y_{j_2,k_2}|+N^{-1}\big)+C\\
&\qquad\le -\frac1{2\pi}\log\big(|y_{j_1,k_1}-y_{j_2,k_2}|+(rN)^{-1}\big)-\frac1{2\pi}\log r +C\\
&\qquad\le -\frac1{2\pi}\log\big(|y_{j_1,k_1}-y_{j_2,k_2}|+N^{-1}\big)-\frac1{2\pi}\log r +C\\
&\qquad\le \E\Big[\big(\Psi_{N}(t,y_{j_1,k_1})+h_r\big)\big(\Psi_{N}(t,y_{j_2,k_2})+h_r\big)\Big]
\end{split}
\label{BB4}
\end{align}

\noi
for any  $0 < r < 1$ and $j_1, j_2, k_1, k_2 = 1,\dots, J$, 
where $h_r$ is a mean-zero  Gaussian random variable with variance $- \frac1{2\pi}\log r + C$, independent from $\Psi_{N}$. 
Then, by applying Kahane's convexity inequality (Lemma~\ref{LEM:Kahane}) with the convex function $x\mapsto x^p$, a Riemann sum approximation, and the independence
of $h_r$ from $\Psi_N$, it follows from \eqref{BB2} that 
\begin{align*}
\E\Big[\M_N(t,B(x_0,r))^p\Big] 
&\le r^{2p}\lim_{J\to\infty}\E\bigg[\Big(\sum_{j,k=1}^{J}\frac{4\pi^2}{J^{2}}e^{\be (\Psi_{N}(t,y_{j,k})+h_r)-\frac{\be^2}2\E[(\Psi_{N}(t,y_{j,k})+h_r)^2]}\Big)^p\bigg]\\
&=r^{2p}\E\bigg[\Big(\int_{\T^2}e^{\be \Psi_{N}(t,y)-\frac{\be^2}2\s_N}
e^{\be h_r-\frac{\be^2}2(- \frac1{2\pi}\log r + C)}dy\Big)^p\bigg]\\
&=r^{2p}\E\bigg[e^{p\be h_r-p\frac{\be^2}2\big(\frac1{2\pi}\log\frac1r + C\big)}\bigg]
\E\bigg[\Big(\int_{\T^2}e^{\be \Psi_{N}(t,y)-\frac{\be^2}2\s_N}dy\Big)^p\bigg]\\
&=r^{2p}e^{(p^2-p)\frac{\be^2}2(-\frac1{2\pi}\log r + C)}
\E\bigg[\Big(\int_{\T^2}e^{\be \Psi_{N}(t,y)-\frac{\be^2}2\s_N}dy\Big)^p\bigg]\\
&\les r^{(2+\frac{\be^2}{4\pi})p-\frac{\be^2}{4\pi}p^2}\E\Big[\M_{N}(t,\T^2)^p\Big].
\end{align*}

\noi
Hence, 
the bound \eqref{moment3} follows  Lemma~\ref{LEM:GMCmom}.

\smallskip
\noi
$\bullet$ \textbf{Step 2:}
 From \eqref{Bessel}, 
 Lemma \ref{LEM:Bessel}, 
 where the remainder $R$ is bounded on $\T^2$, 
 and Minkowski's integral inequality, 
 we have 
\begin{align*}
 \E \Big[ \big| \jb{\nabla}^{-\alpha} \U_N (t,x) \big|^{p} \Big]
 &= \E \bigg[ \Big| \int_{\T^2}J_\al(x-y)\U_N (t,y)dy \Big|^{p} \bigg]\\
 &\les \E \bigg[ \Big( \int_{\T^2}|x-y|^{\al-2}\U_N (t,y)dy \Big)^{p} \bigg]\\
 &\les \Bigg\{\sum_{\ell\ge 0}2^{-(\al-2)\ell}\E \bigg[ \Big( \int_{|x-y|\sim 2^{-\ell}}\U_N (t,y)dy \Big)^{p} \bigg]^{\frac1p}\Bigg\}^p\\
 &\les \sup_{\ell\ge 0}2^{-(\al-2-\eps)p\ell}\E \Big[ \M_N\big(t,B(x,2^{-\ell})\big)^{p} \Big]
 \end{align*}
 
 \noi
for $\eps > 0$, 
 uniformly in $t \in \R_+$, $x \in \T^2$, and $N\in\N$, 
Then, 
using  \eqref{moment3}, we obtain
 \begin{align*}
 \E \Big[ \big| \jb{\nabla}^{-\alpha} \U_N (t,x) \big|^{p} \Big] &\les \sup_{\ell\ge 0}2^{-(\al-2-\eps)p\ell}2^{\frac{\be^2}{4\pi}p^2\ell -(2+\frac{\be^2}{4\pi})p\ell} \les 1
\end{align*}

\noi
by choosing $\eps > 0$ sufficiently small, 
provided that 
 $\al >(p-1)\frac{\be^2}{4\pi}$.
  This  proves \eqref{moment2}.
\end{proof}

\section{Parabolic Liouville equation I: general case}
\label{SEC:fpa}
In this section, 
we present a proof of   Theorem \ref{THM:para1}.
Namely, we prove   local well-posedness 
of the truncated SNLH \eqref{NH5} 
for $v_N = u_N - z - \Psi_N$  in the Da Prato-Debussche formulation
in the range:
\begin{align*}
0 < \be^2 < \bh^2 \deff \frac{8\pi}{3+2\sqrt{2}}
\end{align*}

\noi
{\it without} assuming the positivity of $\ld$.
Here, 
$z$ denotes the deterministic linear solution defined in \eqref{linear}
and $\Psi_N$ denotes the truncated stochastic convolution defined in 
 \eqref{conv}.

Writing  \eqref{NH5} in the  Duhamel formulation, we have
\begin{align}
v_N =  -\frac12\ld\be \int_{0}^t P(t-t')\big(e^{\be z } e^{\be v_N} \U_N\big)(t')  dt'.
\label{Duhamel}
\end{align}

\noi
Given $v_0 \in L^\infty(\T^2)$
and a space-time distribution $\U$, 
we define a map $\Phi$ by 
\begin{align}
\Phi ( v) =  \Phi_{v_0, \U}(v) \deff-\frac{1}{2}\ld\be \int_{0}^t P(t-t') 
\big(e^{\be P(t) v_0 } e^{\be v} \U\big)(t') dt'.
\label{D2}
\end{align}

\noi
Then, \eqref{Duhamel} can be written as 
the following fixed point problem:
\[v_N = \Phi_{v_0, \U_N}(v_N).\]

In the following, we fix 
$0 < \al, s < 1$ and  $p\ge 2$ such that
\begin{align}
 p'\frac{\al +s}2 < 1
 \qquad\text{and}\qquad  
sp >2.
\label{cons1}
\end{align}

\noi
See \eqref{cons2} below for a concrete choice of these parameters.
 Then,  we have the following deterministic well-posedness result
 for the fixed point problem:
 \begin{align}
  v = \Phi_{v_0, \U}(v).
\label{D2a}
 \end{align}

\begin{proposition}
\label{PROP:v1}
Let $\al,s,p$ be as above.
Then,  given any $v_0\in L^{\infty}(\T^2)$ and $R>0$, 
there exists $T=T(\|v_0\|_{L^{\infty}}, R)>0$ 
such that given any positive distribution $\U\in L^p([0,T];W^{-\al,p}(\T^2))$ 
satisfying  
\begin{align}
\|\U\|_{L^p_{T}W^{-\al,p}_x}\le R,
\label{Q2x}
\end{align}

\noi
there exists a unique solution 
 $v\in C([0,T];W^{s,p}(\T^2))$
to \eqref{D2a},
depending continuously on the initial data $v_0$.
\end{proposition}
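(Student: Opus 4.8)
The plan is to set up a contraction mapping argument for the map $\Phi = \Phi_{v_0,\U}$ in the Banach space $E_T = C([0,T];W^{s,p}(\T^2))$, using the Schauder estimate (Lemma~\ref{LEM:heatker}) to absorb the negative regularity of $\U$, the algebra/product structure of $W^{s,p}$ when $sp>2$ to control $e^{\be z}e^{\be v}$, and the positivity of $\U$ together with Lemma~\ref{LEM:posprod} to estimate the product $(e^{\be z}e^{\be v})\cdot \U$. First I would record the a priori bounds on the building blocks: since $z = P(t)v_0$ and $P(t)$ is a contraction-type semigroup on $L^\infty$ (Lemma~\ref{LEM:heatker}(ii) with $p=q=\infty$, $\al=0$), we have $\|z(t)\|_{L^\infty}\le \|v_0\|_{L^\infty}$ uniformly in $t$, hence $\|e^{\be z(t)}\|_{L^\infty}\le e^{|\be|\|v_0\|_{L^\infty}}$; moreover $z\in C([0,T];W^{s,p})$ with a bound depending on $\|v_0\|_{L^\infty}$ and $T$, so $e^{\be z}\in C([0,T];W^{s,p})$ with a bound of the same form (using the fractional chain rule, Lemma~\ref{LEM:FC}, applied to the smooth function $x\mapsto e^{\be x}$ on the bounded range of $z$, after truncating it to a globally Lipschitz function). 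Since $W^{s,p}(\T^2)\hookrightarrow L^\infty(\T^2)$ for $sp>2$, for $v$ in the ball $B_R=\{\|v\|_{E_T}\le r_0\}$ we likewise get $e^{\be v}\in C([0,T];W^{s,p})$ with $\|e^{\be v}\|_{E_T}\lesssim C(r_0)$, and the product $e^{\be z}e^{\be v}$ lies in $C([0,T];W^{s,p})\cap C([0,T];L^\infty)$ by Lemma~\ref{LEM:prod1}(i).

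Next I would estimate $\Phi(v)$ in $E_T$. For fixed $t$, by the Schauder estimate (Lemma~\ref{LEM:heatker}(ii)) applied with a loss of $s+\al$ derivatives and an $L^{p}\to L^{p}$ gain in integrability,
\begin{align*}
\|\Phi(v)(t)\|_{W^{s,p}}
&\lesssim |\ld\be|\int_0^t (t-t')^{-\frac{s+\al}{2}}\big\|\jb{\nb}^{-\al}\big((e^{\be z}e^{\be v})(t')\,\U(t')\big)\big\|_{L^p}\,dt'.
\end{align*}
To bound the integrand I invoke Lemma~\ref{LEM:posprod}: $\U(t')$ is a positive distribution in $W^{-\al,p}$, and $(e^{\be z}e^{\be v})(t')\in L^\infty\cap W^{s,q}$ for a suitable $q$; condition~(ii) of that lemma holds for the relevant choice of $(s,p,q)$ (this is where the constraint on $\be^2$ will eventually enter, via the admissible range of $(\al,p)$ dictated by Proposition~\ref{PROP:Ups}), giving
$\|\jb{\nb}^{-\al}((e^{\be z}e^{\be v})\U)(t')\|_{L^p}\lesssim \|(e^{\be z}e^{\be v})(t')\|_{L^\infty\cap W^{s,q}}\,\|\U(t')\|_{W^{-\al,p}}$. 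Then Hölder in $t'$ (using $\frac{s+\al}{2}p'<1$ from~\eqref{cons1} so that $(t-t')^{-\frac{s+\al}{2}}\in L^{p'}_{t'}$) together with~\eqref{Q2x} yields
$\|\Phi(v)\|_{E_T}\lesssim |\ld\be|\,T^{\theta}\,C(\|v_0\|_{L^\infty},r_0)\,R$
for some $\theta>0$. Choosing $r_0$ comparable to (say twice) the value at $v=0$ and then $T=T(\|v_0\|_{L^\infty},R)$ small makes $\Phi$ map $B_{r_0}$ into itself.

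For the contraction estimate I would write, for $v_1,v_2\in B_{r_0}$,
\begin{align*}
\Phi(v_1)-\Phi(v_2) = -\tfrac12\ld\be\int_0^t P(t-t')\big(e^{\be z}(e^{\be v_1}-e^{\be v_2})\,\U\big)(t')\,dt',
\end{align*}
and use $|e^{\be v_1}-e^{\be v_2}|\le |\be|\,e^{|\be|(\|v_1\|_{L^\infty}\vee\|v_2\|_{L^\infty})}|v_1-v_2|$ pointwise, with the analogous $W^{s,p}$-difference bound coming from the fractional chain rule / mean value theorem (writing $e^{\be v_1}-e^{\be v_2}=\be(v_1-v_2)\int_0^1 e^{\be(\tau v_1+(1-\tau)v_2)}d\tau$ and estimating each factor in $W^{s,p}\cap L^\infty$ via Lemma~\ref{LEM:prod1}(i)); then repeating the Schauder$+$Lemma~\ref{LEM:posprod}$+$Hölder chain gives $\|\Phi(v_1)-\Phi(v_2)\|_{E_T}\le \tfrac12\|v_1-v_2\|_{E_T}$ after shrinking $T$. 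Banach's fixed point theorem then produces the unique $v\in B_{r_0}\subset C([0,T];W^{s,p})$; uniqueness in the full space $C([0,T];W^{s,p})$ follows by the standard argument of comparing two solutions on a short interval and iterating. Continuous dependence on $v_0$ (for fixed $\U$) follows by the same difference estimates, now also tracking the dependence of $z$ and $e^{\be z}$ on $v_0$ in $L^\infty$ and $W^{s,p}$. The main obstacle is the product estimate for $(e^{\be z}e^{\be v})\cdot\U$: since $\U$ is only a distribution of negative regularity, a naive paraproduct bound would require $e^{\be v}\in W^{\al,\cdot}$ with enough room, and it is precisely the positivity of $\U$ (Lemma~\ref{LEM:posprod}) that lets us trade this for an $L^\infty$-type bound on $e^{\be z}e^{\be v}$ — verifying that the exponents $(\al,s,p,q)$ can be chosen to satisfy simultaneously~\eqref{cons1}, the hypothesis $\tfrac1p+\tfrac1q<1+\tfrac{s}{d}$ of Lemma~\ref{LEM:posprod}(ii), the embedding $sp>2$, and the admissibility range of $(\al,p)$ from Proposition~\ref{PROP:Ups} (which is what fixes the threshold $\be^2<\bh^2$) is the delicate bookkeeping step.
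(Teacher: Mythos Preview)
Your proposal is essentially correct and follows the same contraction-mapping strategy as the paper, but it is over-engineered in one place and contains a small imprecision.

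The paper applies Lemma~\ref{LEM:posprod} via its hypothesis (i), not (ii): since $sp>2$ gives $v(t)\in C(\T^2)$ and heat smoothing gives $z=P(t)v_0\in C((0,T];C(\T^2))$ for $v_0\in L^\infty$, the product $e^{\be z}e^{\be v}(t)$ is continuous in $x$ for every $t>0$, and Lemma~\ref{LEM:posprod} then yields directly
\[
\big\|\jb{\nb}^{-\al}\big(e^{\be z}e^{\be v}\,\U\big)(t')\big\|_{L^p_x}
\lesssim \big\|e^{\be z}e^{\be v}(t')\big\|_{L^\infty_x}\,\|\U(t')\|_{W^{-\al,p}_x}.
\]
No auxiliary exponent $q$ or $W^{\al,q}$-bound on $e^{\be z}e^{\be v}$ is needed; the $W^{s,q}$ hypothesis in condition (ii) is only there to make the product well defined, and it never enters the estimate itself. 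This also means your claim that $z\in C([0,T];W^{s,p})$ is both unnecessary and not quite right: for $v_0$ merely in $L^\infty$, one only has $z\in C((0,T];C^\infty)$ with $\|z(t)\|_{W^{s,p}}\lesssim t^{-s/2}\|v_0\|_{L^\infty}$ blowing up at $t=0$. What you actually need, and what holds uniformly, is $\|e^{\be z}\|_{L^\infty_{T,x}}\le e^{|\be|\|v_0\|_{L^\infty}}$.

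Finally, note that the proposition is purely deterministic: $\al,s,p$ are given satisfying \eqref{cons1}, and $\be^2$ does not appear in the statement or the proof. The constraint $\be^2<\bh^2$ only enters afterward, when one chooses $(\al,p)$ to lie in the range allowed by Proposition~\ref{PROP:Ups}; it has nothing to do with verifying hypothesis (ii) of Lemma~\ref{LEM:posprod}.
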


Note that we do not claim any continuity of the solution 
$v$ in $\U$ for Proposition \ref{PROP:v1}.

\begin{proof}
Fix $R>0$.
We prove that  there exists $T = T(\|v_0\|_{L^\infty}, R)>0$ such that 
$\Phi_{v_0, \U}$ is a contraction on the ball $B\subset C([0,T];W^{s,p}(\T^2))$ 
of radius $O(1)$ centered at the origin.

Let $v \in B$.
Then, by Sobolev's embedding theorem (with $sp>2$),  we have  $v\in C([0,T]; C(\T^2))$. 
For $v_0\in L^{\infty}(\T^2)$, 
we also have  $z\in C((0,T];C(\T^2))$.
In particular, $e^{\be z} e^{\be v}(t)$ is continuous in $x \in \T^2$
for any $t \in (0, T]$.
Then, by  the Schauder estimate (Lemma~\ref{LEM:heatker}~(ii)), 
 Lemma~\ref{LEM:posprod},  and  Young's  inequality with~\eqref{cons1}, we have 
\begin{align}
\begin{split}
\|\Phi (v)\|_{C_{T}W^{s,p}_x} 
& \les \bigg\| \int_0^t (t-t')^{-\frac{(s+\al)}2}
\big\|\jb{\nabla}^{-\al}  \big( e^{\be z } e^{\be v}\U\big)(t')\big\|_{L^p_x}  dt' 
\bigg\|_{L^\infty_t ([0, T])}\\
& \les  \|e^{\be z} e^{\be v} \|_{L^{\infty}_{T,x}}
\bigg\|
\big(\ind_{[0, T]}
|\cdot|^{-\frac{(s+\al) }2}\big)*\| \ind_{[0, T]}\U \|_{W^{-\al,p}_x}  \bigg\|_{L^{\infty}_t}\\
& \les 
T^\ta  e^{C \|v_0\|_{L^\infty} } e^{ C \|v \|_{L^{\infty}_{T}W^{s,p}_x} }   \| \U \|_{L^p_T W^{-\al,p}_x} \\
&  \les T^\ta  R\,  e^{C \|v_0\|_{L^\infty} }   
\les 1
\end{split}
\label{Gv1}
\end{align}

\noi
for $v \in B$ and a positive distribution $\U$ satisfying \eqref{Q2x},
by choosing 
 $T = T(\|v_0\|_{L^\infty}, R)>0$ sufficiently small.

By the fundamental theorem of calculus, we have 
\begin{align}
 e^{\be v_1} -e^{\be v_2} = \be (v_1-v_2)\int_0^1e^{\be\tau v_1+\be(1-\tau)v_2}d\tau.
\label{Q3}
 \end{align}

\noi
Then, proceeding as in \eqref{Gv1} with \eqref{Q3}, 
we have 
\begin{align}
\begin{split}
\|\Phi (v_1)   -\Phi (v_2) \|_{C_TW^{s,p}_x} 
& \les  T^\ta \Big\| e^{\be z}\big(e^{\be v_1} -e^{\be v_2} \big) \Big\|_{L^{\infty}_{T,x}}
 \| \U \|_{L^p_T W^{-\al , p}_x} \\
& \les T^\ta  Re^{C \|v_0\|_{L^{\infty}}}
e^{C (\|v_1\|_{L^{\infty}_{T, x}}+ \|v_2\|_{L^{\infty}_{T, x}} ) }
  \|v_1 -v_2\|_{L^{\infty}_{T,x}} \\
& \les T^\ta R e^{C \|v_0\|_{L^\infty} }  \|v_1 -v_2\|_{C_TW^{s,p}_x}
\end{split}
\label{Gv2}
\end{align}

\noi
for $v_1, v_2 \in B$ and a positive distribution $\U$ satisfying \eqref{Q2x}.

Hence, from \eqref{Gv1} and \eqref{Gv2}, 
we see that $\Phi$ is a contraction on $B$ by taking 
$T = T(\|v_0\|_{L^\infty}, R)>0$ sufficiently small.
The continuity of the solution $v$ in initial data follows
from a standard argument and hence we omit details.
\end{proof}

\begin{remark}\label{REM:uniq}\rm
In the proof of Proposition \ref{PROP:v1}, 
a contraction argument shows
the uniqueness of the solution $v$ only in the ball $
B\subset C([0,T];W^{s,p}(\T^2))$. 
By a standard continuity argument, 
we can upgrade the uniqueness statement to hold
in the entire $C([0,T];W^{s,p}(\T^2))$.
Since such an  argument is standard, we omit details. 
\end{remark}

Now, let $\U_N$ be the Gaussian multiplicative chaos
in \eqref{re-non}.
In view of Proposition \ref{PROP:Ups}, 
 in order to determine  the largest admissible range for ${\be^2}$, 
 we aim to maximize
\begin{align*}
\be^2  & < \frac{4\pi \al}{p-1} < \frac{p-2}{p(p-1)}8\pi  =: h(p),
\end{align*}

\noi
where we used both of the inequalities in \eqref{cons1}.
A direct computation shows  that $h$ has a unique maximum in $[2,\infty)$ reached at 
$p = p_* = 2+ \sqrt 2$, for which 
we have 
\[
 h(p_*) =   \max_{p\ge 2}h(p) = \frac{8\pi} {3+2\sqrt{2}}
= \bh^2.
\]

\noi
Therefore, for  
$\be^2 <  \bh^2$,
we see that 
the constraints \eqref{cons1} are satisfied by taking
\begin{align}
\begin{split}
& p= 2+\sqrt{2},\qquad s= 2- \sqrt{2}+\eps, \qquad \text{and}
\qquad \\
& \al=(p-1)\frac{\bh^2}{4\pi}-2\eps=2(\sqrt{2}-1)-2\eps
\end{split}
\label{cons2}
\end{align}

\noi
for sufficiently small  $\eps>0$ 
such that  $\al>(p-1)\frac{\be^2}{4\pi}$. 
With this choice of the parameters, 
Proposition \ref{PROP:v1}
with Proposition \ref{PROP:Ups}
establishes local well-posedness
of \eqref{Duhamel}.

In the remaining part of this section, 
we fix the parameters $\al, s$, and $p$ as in \eqref{cons2}
and proceed with a proof of Theorem \ref{THM:para1}.

\begin{proof}[Proof of Theorem \ref{THM:para1}]

Given 
$v_0\in L^{\infty}(\T^2)$ and $\U_N$ in \eqref{re-non}, 
let 
 $v_N=\Phi_{v_0, \U_N}(v_N)$ be the solution to \eqref{Duhamel}
  given by Proposition \ref{PROP:v1}. 
 Proceeding 
  as in the proof of Theorem 1.2 in \cite{ORSW}, it suffices to prove the continuity of the 
  solution map $\Phi = \Phi_{v_0, \U}$ constructed in Proposition~\ref{PROP:v1} with respect to $\U$. 
 
In the proof of Proposition \ref{PROP:v1}, 
 the positivity of the distribution $\U$ played an important role, allowing us to 
apply  Lemma~\ref{LEM:posprod}.
 In studying the difference 
  $\U_N-\U$, 
  we lose such positivity and can no longer apply Lemma \ref{LEM:posprod}.
 This prevents us from showing 
  convergence of $v_N$ in $C([0,T];W^{s,p}(\T^2))$ directly. 
  We instead  use a compactness argument.

Let us  take a sequence of positive distributions
$\U_N$ converging to some limit  $\U$ in $L^p([0,T];W^{-\al,p}(\T^2))\cap L^r([0,T];W^{-s+\eps,r}(\T^2))$, where $r$ is defined by
\begin{align}
r=\frac{4\pi (s-\eps)}{\bh^2 }+1 = 2+\frac{\sqrt{2}}{2}
\label{r}
\end{align}

\noi
with $s$ as in \eqref{cons2}.
Note that the pair $ (s-\eps,r)$ (in place of $(\al, p)$) satisfies \eqref{al} for any $\be^2<\bh^2$.

Let us then denote by $v_N$ and $v$ the corresponding solutions to \eqref{NH5} 
and \eqref{NH5a}, respectively, constructed in Proposition \ref{PROP:v1}. 
We first show an extra regularity for these solutions:
 \begin{align*}
 \dt v_N\in L^p([0,T];W^{s-2,p}(\T^2)).
 \end{align*}
 
 \noi
Indeed, using the equation  \eqref{NH5}
with $p < \infty$ and $s-2<-\al$, we have
\begin{align*}
\|\dt v_N\|_{L^p_TW^{s-2,p}_x}&=\Big\|\tfrac12(\Dl-1)v - \tfrac12\ld\be  e^{\be z}e^{\be v_N}\U_N\Big\|_{L^p_TW^{s-2,p}_x}\\
&\les \|v_N\|_{L^{\infty}_TW^{s,p}_x} + \big\| e^{\be z}e^{\be v}\U_N\big\|_{L^p_TW^{-\al,p}_x}.
\end{align*} 

\noi
Note that both of the terms on the right-hand side are already bounded in the proof of Proposition \ref{PROP:v1}
(by switching the order of  Lemma \ref{LEM:posprod} 
and Young's inequality in \eqref{Gv1}).

Next, observe that by taking $\wt s> s$, 
sufficiently close to $s$, 
we can repeat the proof of Proposition~\ref{PROP:v1} without changing the range of $\be^2<\bh^2$.  
This shows that 
$\{v_N\}_{N \in \N}$  is bounded in $C([0,T];W^{\wt s,p}(\T^2))$. 
Then,  by Rellich's lemma and the Aubin-Lions lemma (Lemma~\ref{LEM:AL}), 
we see that the embedding: 
\[A_T \deff C([0,T];W^{\wt s,p}(\T^2))\cap\big\{\dt v\in L^p([0,T];W^{ s-2,p}(\T^2))\big\}\subset C([0,T];W^{s,p}(\T^2))\] 

\noi
is compact.
Since $\{v_N\}_{N \in \N}$ is bounded in $ A_T$, 
given any subsequence of  $\{v_N\}_{N\in \N}$, 
we can extract a further  subsequence $\{v_{N_k}\}_{k \in \N}$ 
such that  $v_{N_k}$ converges to some limit $\wt v$ in $C([0,T];W^{s,p}(\T^2))$. 
In the following, we show that 
$\wt v = v$.
This implies that the limit is independent of the choice of subsequences
and hence the entire sequence $\{v_N\}_{N \in \N}$
converges to $v$  in $C([0,T];W^{s,p}(\T^2))$.

It remains to prove $\wt v = v$.
In the following, 
we first show that $v_{N_k}=\Phi_{v_0, \U_{N_k}}(v_{N_k})$ converges to 
$\Phi_{v_0, \U}(\wt v)$ in $L^1([0,T];W^{s',p}(\T^2))$ for some $s'\leq - s$.
From  \eqref{D2}, we have
\begin{align}
\begin{split}
\|\Phi_{v_0, \U_{N_k}}(v_{N_k})& -\Phi_{v_0, \U}(\wt v)\|_{L^1_TW^{s',p}_x} \\
&\les \bigg\|\int_0^tP(t-t')\big(e^{\be z}e^{\be \wt v}(\U_{N_k}-\U) \big)(t')
dt'\bigg\|_{L^1_TW^{s',p}_x}
\\
&\hphantom{XX}+\bigg\|\int_0^tP(t-t')\big(e^{\be z}(e^{\be v_{N_k}} - e^{\be \wt v})\U_{N_k}\big)(t')dt'\bigg\|_{L^1_TW^{s',p}_x}\\
&=: \1+\II.
\end{split}
\label{Q4a}
\end{align}

By the Schauder estimate (Lemma~\ref{LEM:heatker}),  Young's  inequality, 
Lemma \ref{LEM:prod1}\,(ii)
with  $\frac1r+\frac1p <\frac1r + \frac{s}2$
(which is guaranteed by $sp > 2$), 
we have
\begin{align}
\begin{split}
\1 &\les \Big\|
|\cdot|^{-(\frac1r-\frac1p)}*\big\|e^{\be z}e^{\be \wt v}(\U_{N_k}-\U)\big\|_{W^{-s+\eps,r}_x}\Big\|_{L^{1}_T}\\
&\les \big\|e^{\be z}e^{\be \wt v}(\U_{N_k}-\U)\big\|_{L^1_TW^{-s+\eps,r}_x}\\
&\les \big\|e^{\be (z+\wt v)}\big\|_{L^{r'}_TW^{s-\eps,p}}\|\U_{N_k}-\U\|_{L^r_TW^{-s+\eps,r}_x}.
\end{split}
\label{Q5}
\end{align}

\noi
By 
Sobolev's  inequality and 
the fractional chain rule (Lemma \ref{LEM:FC}\,(ii)), we have
 \begin{align*}
\big\| |\nb|^{s-\eps} e^{\be (z+\wt v)}(t)\big\|_{L^p_x}
\les \big\||\nb|^s e^{\be (z+\wt v)}(t)\big\|_{L^\frac{p}{1 + \eps p/2}_x}
&\les \big\|e^{\be (z+\wt v)}(t)\big\|_{L^{\frac{2}{\eps}}_{x}}
\big\||\nb|^s (z+\wt v)(t)\big\|_{L^p_x}.
\end{align*}

 \noi
 This yields
\begin{align}
\begin{split}
\big\|e^{\be (z+\wt v)}\big\|_{L^{r'}_TW^{s-\eps,p}}
&\les 
\big\|e^{\be (z+\wt v)}\big\|_{L^{\infty}_{T,x}}
\Big(1 + \|z+\wt v\|_{L^{r'}_TW^{s,p}_x}\Big)\\
& \les 
e^{C\|v_0\|_{L^{\infty}}}e^{C \|\wt v\|_{L^{\infty}_TW^{s,p}_x}}
\Big( 1 + \|v_0\|_{L^{\infty}}+\|\wt v\|_{L^{\infty}_TW^{s,p}_x}\Big).
\end{split}
\label{Q6}
\end{align}

\noi
In the last step,  we used the following bound
which follows from the Schauder estimate (Lemma \ref{LEM:heatker}):
\begin{align*}
\|z\|_{L^{r'}_TW^{s,p}_x} &\les \big\|t^{-\frac{s}{2}}\|v_0\|_{L^p}\big\|_{L^{r'}_T}
\les \|v_0\|_{L^{\infty}}
\end{align*} 

\noi
since $\frac{s}{2}r'<1$ in view of \eqref{cons2} and \eqref{r}.
Therefore, from \eqref{Q5} and \eqref{Q6}, we obtain
\begin{align}
\1&\les 
e^{C\|v_0\|_{L^{\infty}}}e^{C \|\wt v\|_{L^{\infty}_TW^{s,p}_x}}
\Big( 1 + \|v_0\|_{L^{\infty}}+\|\wt v\|_{L^{\infty}_TW^{s,p}_x}\Big)
\|\U_{N_k}-\U\|_{L^r_TW^{-s+\eps,r}_x}.
\label{Gv4}
\end{align}

As for the second term $\II$ on the right-hand side of \eqref{Q4a}, 
we can use the positivity of $\U_{N_k}$ and proceed as in 
 \eqref{Gv2}:
\begin{align}
\II&\les T^\ta  e^{C \big(\|v_0\|_{L^{\infty}}+\|v_{N_k}\|_{L^{\infty}_TW^{s,p}_x}+\|\wt v\|_{L^{\infty}_TW^{s,p}_x}\big)} \|v_{N_k} -\wt v\|_{L^{\infty}_TW^{s,p}_x}\|\U_{N_k}\|_{L^p_TW^{-\al,p}_x} .
\label{Gv5}
\end{align}

Since $v_{N_k}\rightarrow \wt v$ in $C([0,T];W^{s,p}(\T^2))$
and $\U_N \to  \U$ in
 $L^p([0,T];W^{-\al,p}(\T^2))\cap L^r([0,T];W^{-s+\eps,r}(\T^2))$, 
it follows from \eqref{Q4a}, \eqref{Gv4}, and \eqref{Gv5}
that 
 $v_{N_k} = \Phi_{v_0, \U_{N_k}}(v_{N_k})$
 converges to $\Phi_{v_0, \U}(\wt v)$ in $L^1([0,T];W^{s',p}(\T^2))$. 
 By the uniqueness of the distributional limit, 
 we conclude that 
 \begin{align}
 \wt v = \Phi_{v_0, \U}(\wt v).
 \label{Q7}
 \end{align}
 
 \noi
 Since $\wt v$ belongs to  $C([0,T];W^{s,p}(\T^2))$,
 we conclude from the uniqueness
 of the solution to \eqref{Q7}
 that $v = \wt v$, 
 where $v$ denotes the unique fixed point to \eqref{Q7}
 in the class 
  $C([0,T];W^{s,p}(\T^2))$ constructed in Proposition \ref{PROP:v1}.
  See also Remark \ref{REM:uniq}.
\end{proof}

\begin{remark} \label{REM:uniq2} \rm
While the argument above shows
the continuity of the solution map in $\U$, 
its dependence is rather weak.
For the range $0 < \be^2 < \frac{4}{3}\pi$, 
we can strengthen this result
by proving local well-posedness and convergence
{\it without} the positivity of $\U$.
This argument shows that, 
for the range $0 < \be^2 < \frac{4}{3}\pi$,  
the solution map is also Lipschitz with respect to $\U$, as in the hyperbolic case presented in Section~\ref{SEC:wave} below. See Appendix \ref{SEC:A}.
\end{remark}

\section{Parabolic Liouville equation II: using the sign-definite structure}
\label{SEC:WP}

In this section, we study SNLH \eqref{NH1} 
in the defocusing case ($\ld > 0$)
and present a proof of Theorem \ref{THM:1.2} and Theorem \ref{THM:2}.
As we will see below, the particular structure of the equation makes the exponential nonlinearity 
behave as a smooth bounded  function. 
This allows us to  treat the full range $0 < \be^2<4\pi$ in this case.

\subsection{Global well-posedness}
\label{SUBSEC:para2}

In this subsection, we focus on the equation:
\begin{align}
\begin{cases}
\dt v +\frac12(1-\Dl)v + \frac12\ld\be e^{\be z} e^{\be v} \U  = 0\\
v|_{t=0}=0,
\end{cases}
\label{v1}
\end{align}

\noi
where  $z  = P(t) v_0 $ for some $v_0 \in L^\infty(\T^2)$,
$\U$ is a given {\it deterministic} positive space-time distribution,
and $\ld > 0$.
In this case, as explained in Subsection \ref{SUBSEC:1.3}, 
the equation \eqref{v1}
can be written as 
\begin{align}
\begin{cases}
\dt v +\frac12(1-\Dl)v + \frac12\ld\be e^{\be z} F(\be v)\U  = 0\\
v|_{t=0}=0,
\end{cases} 
\label{v2}
\end{align}

\noi
where $F$ is a smooth bounded and Lipschitz function defined in \eqref{F1}.
Indeed, by writing~\eqref{v2} in the Duhamel formulation:
\begin{align}
v(t)=-\frac12\ld\be\int_0^t P(t-t') \big(e^{\be z} F(\be v) \U \big)(t')dt',
\label{v3}
\end{align}

\noi
it follows from 
the non-negativity of  $\ld$, $\U$, and $F$ 
along with Lemma \ref{LEM:heatker}\,(i)
that  $\be v\leq 0$.
 This means that the Cauchy problems \eqref{v1} and \eqref{v2} are equivalent.

Given $N \in \N$, 
consider the following equation:
 \begin{align}
\label{v3a}
\begin{cases}
\partial_t v_N + \frac 12 (1- \Delta) v_N +  \frac 12 \ld \be
e^{\be z }  F(\be v_N) \U_N=0\\
v_N|_{t = 0} = 0
\end{cases}
\end{align}

\noi
for some given 
smooth space-time non-negative function $\U_N$.
Then, since $\U_N$ is smooth
and $F$ is bounded and Lipschitz, 
we can apply a standard contraction argument
to prove local well-posedness of \eqref{v3a}
in the class $C([0, \tau]; L^2(\T^2))$
for some small $\tau = \tau_N>0$.
Thanks to the boundedness of $F$, 
we can also establish an a priori bound on the $L^2$-norm 
of the solution $v_N$ 
on any time interval $[0, T]$;  see \eqref{R2} below.
This  shows  global existence of $v_N$.

 Our main goal in this subsection is to prove global well-posedness of \eqref{v2}.
\begin{proposition}\label{PROP:flow}
Let 
 ${v_0}\in L^{\infty}(\T^2)$
and  $\U\in L^2([0,T];H^{-1+\eps}(\T^2))$
be a positive distribution
for some $\eps > 0$.
Given $T > 0$, 
suppose that a sequence $\{\U_N\}_{N \in \N}$ 
of smooth non-negative functions
converges to  
$\U$ in $L^2([0,T];H^{-1+\eps}(\T^2))$.
Then, 
the corresponding solution $v_N$ to \eqref{v3a}
converges to 
a limit $v$ 
in the energy space $\ZZ_T$
 defined in~\eqref{energyZ}.
 Furthermore, 
 the limit $v$ is the unique solution to~\eqref{v2}
 in the energy class $\ZZ_T$.

\end{proposition}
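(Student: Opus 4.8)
The plan is to prove Proposition~\ref{PROP:flow} in two stages: first establishing uniform (in $N$) a priori bounds in the energy space $\ZZ_T$, then promoting these to convergence via a compactness argument and an energy estimate for the difference of two solutions. Since $\U_N$ is smooth and $F$ is bounded and Lipschitz, each $v_N$ exists globally in $C([0,T];L^2(\T^2))$ by a standard contraction plus the $L^2$ a priori bound mentioned in the text. The real work is to upgrade this to uniform control in $\ZZ_T = C([0,T];L^2(\T^2))\cap L^2([0,T];H^1(\T^2))$.

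\textbf{Step 1: uniform energy bound.} I would test the equation \eqref{v3a} against $v_N$ and integrate over $\T^2$. The dissipative term $\frac12(1-\Dl)v_N$ produces $\frac12\|v_N\|_{H^1}^2$, so
\begin{align*}
\frac12\frac{d}{dt}\|v_N(t)\|_{L^2}^2 + \frac12\|v_N(t)\|_{H^1}^2
= -\frac12\ld\be\int_{\T^2} e^{\be z} F(\be v_N)\, \U_N\, v_N\, dx.
\end{align*}
The key point is that $F$ is bounded, so $e^{\be z}F(\be v_N)$ is controlled in $L^\infty$ by $e^{C\|v_0\|_{L^\infty}}\|F\|_{L^\infty}$ (using $\|z(t)\|_{L^\infty}\lesssim \|v_0\|_{L^\infty}$ from the Schauder estimate, Lemma~\ref{LEM:heatker}). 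Then I would bound the right-hand side by pairing $\U_N\in H^{-1+\eps}$ against the product $e^{\be z}F(\be v_N)v_N$, using the product estimate Lemma~\ref{LEM:prod1} together with the positivity of $\U_N$ via Lemma~\ref{LEM:posprod} where needed, to land the product in $H^{1-\eps}$. This costs a factor of $\|v_N\|_{H^1}$, which is absorbed by the good term $\frac12\|v_N\|_{H^1}^2$ after a Young's inequality, leaving a bound of the form $\frac{d}{dt}\|v_N\|_{L^2}^2 \lesssim C(v_0)\|\U_N\|_{H^{-1+\eps}}^2(1+\|v_N\|_{L^2}^2)$. Grönwall's inequality and the uniform bound on $\|\U_N\|_{L^2_TH^{-1+\eps}_x}$ then give a uniform bound for $\{v_N\}$ in $\ZZ_T$. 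From the equation, $\dt v_N$ is then uniformly bounded in $L^2([0,T];H^{-1}(\T^2))$ (or a slightly worse space).

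\textbf{Step 2: compactness and passage to the limit.} With $\{v_N\}$ bounded in $L^2([0,T];H^1)$ and $\{\dt v_N\}$ bounded in $L^2([0,T];H^{-1})$, the Aubin-Lions lemma (Lemma~\ref{LEM:AL} with $\X_1 = H^1$, $\X_0 = L^2$, $\X_{-1} = H^{-1}$) gives a subsequence converging strongly in $L^2([0,T];L^2(\T^2))$ to some $v$, and weakly in $L^2([0,T];H^1)$. Strong $L^2_{t,x}$ convergence (along a further subsequence, a.e.) plus the Lipschitz continuity of $F$ and the boundedness of $e^{\be z}$ lets me pass to the limit in the nonlinear term: $e^{\be z}F(\be v_N)\to e^{\be z}F(\be v)$ strongly in, say, $L^2_{t,x}$, hence $e^{\be z}F(\be v_N)\U_N \to e^{\be z}F(\be v)\U$ in $L^1([0,T];H^{-1-\eps}(\T^2))$ using the product estimate and $\U_N\to\U$ in $L^2_TH^{-1+\eps}_x$. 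Thus $v$ solves \eqref{v2} (equivalently \eqref{v1}) in the distributional sense, and $v\in\ZZ_T$.

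\textbf{Step 3: uniqueness and full convergence.} For two solutions $v^{(1)}, v^{(2)}\in\ZZ_T$ of \eqref{v2} with the same $\U$, set $w = v^{(1)}-v^{(2)}$, test the difference equation against $w$, and use $|F(\be v^{(1)})-F(\be v^{(2)})|\lesssim |\be||w|$ together with the $L^\infty$ bound on $e^{\be z}$ and the pairing of $\U$ against $e^{\be z}(F(\be v^{(1)})-F(\be v^{(2)}))w$ in $H^{1-\eps}$; after absorbing a factor $\|w\|_{H^1}$ into the dissipation, Grönwall gives $w\equiv 0$. Uniqueness then forces the whole sequence $\{v_N\}$ (not just a subsequence) to converge to $v$ in $\ZZ_T$: every subsequence has a further subsequence converging in $\ZZ_T$ to a solution of \eqref{v2}, which must be $v$ by uniqueness. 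Finally, to get strong convergence in $L^2([0,T];H^1)$ rather than just weak, I would run the energy identity for $v_N - v$ directly (or use that $\|v_N\|_{L^2_TH^1}\to\|v\|_{L^2_TH^1}$ by testing against $v_N$ and passing to the limit, combined with weak convergence) to upgrade weak to strong convergence in the Hilbert space $L^2_TH^1_x$.

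\textbf{Main obstacle.} The delicate point is Step~1 and the parallel estimate in Step~3: making sense of and bounding the product $e^{\be z}F(\be v_N)\,\U_N$ when $\U_N$ only converges in $H^{-1+\eps}$ and $v_N$ is only in $H^1$. One must check that $e^{\be z}F(\be v_N)$ (and its difference for two solutions) lies in a space like $H^{s}$ for $s$ close to $1$ so that the product with $\U_N\in H^{-1+\eps}$ makes sense and is estimable in a negative Sobolev space compatible with pairing against $v_N\in H^1$; this requires the fractional chain rule (Lemma~\ref{LEM:FC}) for the bounded Lipschitz function $F$ and careful bookkeeping of exponents, using $sp>2$-type conditions so that the product estimate Lemma~\ref{LEM:prod1}\,(ii) applies, and invoking the positivity of $\U_N$ through Lemma~\ref{LEM:posprod} exactly where the naive Sobolev product fails. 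The boundedness of $F$ (a consequence of the sign-definite structure $\be v_N\le 0$) is what makes all of this work in the full range $0<\be^2<4\pi$ rather than just $\be^2<\bh^2$.
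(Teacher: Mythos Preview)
Your Steps 1 and 2 are in the right spirit, though the paper obtains the a priori bounds differently: rather than testing the equation against $v_N$, it applies the Schauder estimate directly to the Duhamel formula together with Lemma~\ref{LEM:posprod} (positivity of $\U_N$ and boundedness of $e^{\be z}F(\be v_N)$), obtaining the stronger bound $v_N\in L^2_TH^{1+2\dl}_x\cap L^\infty_TH^{2\dl}_x$. This extra $2\dl$ of regularity then gives compactness in $\ZZ_T^\dl$ (not merely strong $L^2_{t,x}$ plus weak $L^2_TH^1_x$), which already yields strong convergence in $\ZZ_T$ without the separate upgrade you sketch at the end of Step~3.

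The genuine gap is in your uniqueness argument. After testing the difference equation against $w=v^{(1)}-v^{(2)}$ you arrive at
\[
\tfrac12\tfrac{d}{dt}\|w\|_{L^2}^2+\tfrac12\|w\|_{H^1}^2
=-\tfrac{\ld\be^2}{2}\int_{\T^2}e^{\be z}\,G(v^{(1)},v^{(2)})\,w^2\,\U\,dx,
\]
with $G$ bounded. To run Gr\"onwall you would need to bound the right-hand side by $\eta\|w\|_{H^1}^2+C(t)\|w\|_{L^2}^2$ with $C\in L^1_t$. But in Step~1 the crucial input to Lemma~\ref{LEM:posprod} was that $e^{\be z}F(\be v_N)\in L^\infty_x$; here the factor multiplying $\U$ contains $w^2$, and $w\in H^1(\T^2)$ is \emph{not} in $L^\infty(\T^2)$. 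If instead you estimate via duality and Lemma~\ref{LEM:prod1}, placing $e^{\be z}Gw^2$ in $H^{1-\eps}$ costs $\|w\|_{L^2}^{\eps}\|w\|_{H^1}^{2-\eps}$, and Young's inequality then produces a Gr\"onwall coefficient $\|\U(t)\|_{H^{-1+\eps}}^{2/\eps}$, which is \emph{not} in $L^1_t$ since you only have $\U\in L^2_TH^{-1+\eps}_x$. The same obstruction blocks your proposed upgrade from weak to strong $L^2_TH^1_x$ convergence.

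The paper sidesteps this entirely by observing that no estimate is needed: since $\be v^{(j)}\le 0$, one has $G(v^{(1)},v^{(2)})=\int_0^1\exp(\tau\be v^{(1)}+(1-\tau)\be v^{(2)})\,d\tau\ge 0$, and together with $\ld>0$, $w^2\ge 0$, $e^{\be z}\ge 0$, and the positivity of $\U$, the entire right-hand side is $\le 0$. Hence the energy functional $\En(t)=\|w(t)\|_{L^2}^2+\tfrac12\int_0^t\|w\|_{H^1}^2$ is nonincreasing with $\En(0)=0$, giving $w\equiv 0$ directly. This sign argument---not Gr\"onwall---is the essential mechanism for uniqueness here, and it is exactly where the defocusing hypothesis $\ld>0$ and the positivity of $\U$ are used a second time.
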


In view of Proposition \ref{PROP:Ups} with $p = 2$, 
given $0 < \be^2 < 4\pi $, 
we can choose $\eps > 0$ sufficiently small such that 
$\frac{\be^2}{4\pi}<1-\eps$, 
which guarantees that 
the Gaussian multiplicative chaos $\U_N$ in~\eqref{re-non}
belongs to $ L^2([0,T];H^{-1+\eps}(\T^2))$ for any $T>0$, almost surely.
Moreover, $\U_N$ converges in probability to $\U$ in \eqref{U2}
in the same class.
Then, Theorem \ref{THM:para2}
follows from Proposition~\ref{PROP:flow} above.

\begin{proof}[Proof of Proposition \ref{PROP:flow}]

With a slight abuse of notation,
we set 
\[  \Phi = \Phi_{v_0, \U}
\qquad \text{and} \qquad 
\Phi_N = \Phi_{v_0, \U_N},\]

\noi
where $\Phi_{v_0, \U}$ is defined in \eqref{D2}.
In particular, we have
\begin{align}
\begin{split}
v_N & = \Phi_N(v_N) = 
\Phi_{v_0, \U_N}(v_N) \\
&  =  -\frac12\ld\be\int_0^t P(t-t') \big(e^{\be z} F(\be v_N)\U_N \big)(t')dt'.
\end{split}
\label{Phi}
\end{align}

 Fix $T>0$. 
Given $v_0 \in L^\infty(\T^2)$,  
we see that $z = P(t) v_0$ and $ v_N$ belong to $ C((0, T];  C(\T^2))$
in view of the Schauder estimate (Lemma~\ref{LEM:heatker})
and \eqref{Phi} with smooth $\U_N$.
Hence, we can apply  Lemma~\ref{LEM:posprod}
to estimate the product 
$e^{\be z} F(\be v_N)\U_N$ thanks to the positivity of $\U_N$.

Fix small $\dl > 0$. 
Then,  by the Schauder estimate (Lemma~\ref{LEM:heatker}), Lemma~\ref{LEM:posprod}, 
and Young's inequality, we have
\begin{align}
\begin{split}
\|v_N\|_{L^2_TH^{1+2\dl}_x} 
&\les \bigg\|\int_0^t (t-t')^{-\frac{2+2\dl-\eps}{2}}
\big\|\jb{\nabla}^{-1+\eps}\big(e^{\be {z}}F(\be v_N)\U_N \big)(t')\big\|_{L^2_x}dt'\bigg\|_{L^{2}_T}\\
&\les \big\|e^{\be z}F(\be v_N)\big\|_{L^{\infty}_{T,x}}
\bigg\|\int_0^t (t-t')^{-\frac{2+2\dl-\eps}{2}}\|\U_N(t')\|_{H^{-1+\eps}_x}dt'\bigg\|_{L^2_T}\\
 & \les e^{C\| {v_0}\|_{L^{\infty}}} \|\U_N\|_{L^2_T H^{-1+\eps}_x}, 
\end{split}
\label{R1}
\end{align}

\noi
uniformly in $N \in \N$, 
provided that $2\dl < \eps$.
Here, we crucially used the boundedness of $F$.
Similarly, we have 
\begin{align}
\begin{split}
\|v_N\|_{L^{\infty}_TH^{2\dl}_x} 
&\les e^{C \|{v_0}\|_{L^{\infty}}}
\bigg\|\int_0^t (t-t')^{-\frac{1+2\dl-\eps}{2}}\|\U_N(t')\|_{H^{-1+\eps}_x}dt'\bigg\|_{L^\infty_T}\\
& \les e^{C\|{v_0}\|_{L^{\infty}}}\|\U_N\|_{L^2_TH^{-1+\eps}_x}
\end{split}
\label{R2}
\end{align}

\noi
and
\begin{align}
\begin{split}
\|\dt v_N \|_{L^2_TH^{-1+2\dl}_x} &= \Big\|\tfrac12(\Dl-1)v_N 
-\tfrac{1}{2}\ld \be  e^{\be{z}}F(\be v_N)\U_N \Big\|_{L^2_TH^{-1+2\dl}_x}\\
&\les \|v_N\|_{L^2_TH^{1+2\dl}_x}+\big\| e^{\be{z}}F(\be v_N)\U_N\big\|_{L^2_TH^{-1+\eps}_x}\\
& \les 
e^{C\| {v_0}\|_{L^{\infty}}} \big\| \U_N\big\|_{L^2_TH^{-1+\eps}_x}, 
\end{split}
\label{R3}
\end{align}

\noi
uniformly in $N \in \N$.

Given $s \in \R$, define
$\ZZ^s_T$ and  $\wt\ZZ^{s}_T$ by 
\begin{align*}
\begin{split}
\ZZ_T^s & =C([0,T];H^s (\T^2))\cap L^2([0,T];H^{1+s}(\T^2)),\\
\wt\ZZ^{s}_T
& = 
 \big\{v \in 
 \ZZ^s_T : \dt v\in L^2([0,T];H^{-1+s}(\T^2))\big\}.
\end{split}
\end{align*} 

\noi
Then, it follows from  Rellich's lemma and the Aubin-Lions lemma (Lemma~\ref{LEM:AL})
that 
the embedding of $\wt\ZZ^{2\dl}_T\subset \ZZ^\dl_T$ is compact.
Then, 
from \eqref{R1}, \eqref{R2}, and \eqref{R3}
along with the convergence of $\U_N$
to $\U$ in $L^2([0,T];H^{-1+\eps}(\T^2))$, 
we see that $\{v_N \}_{N \in \N}$
is bounded in $\wt\ZZ^{2\dl}_T$
and thus is precompact in $\ZZ_T^\dl$.
Hence, 
there exists a subsequence $\{v_{N_k}\}_{k \in \N}$
converging to some limit $ v$
in $\ZZ_T^\dl$.

Next, we show that the limit $ v$ satisfies the 
Duhamel formulation  \eqref{v3}.
In particular, we prove that 
 $\Phi_{N_k}(v_{N_k})$ converges to $\Phi( v)$ in $L^1([0,T];H^{-1 + \eps}(\T^2))$.
Write 
\begin{align}
\|\Phi_{N_k}(v_{N_k})-\Phi( v)\|_{L^1_TH^{-1 + \eps}_x}
& \les \bigg\|
\int_0^tP(t-t')\big(e^{\be z}F(\be v_{N_k})(\U_{N_k}-\U)\big)(t')dt'\bigg\|_{L^1_{T}H^{-1+ \eps}_x}
\notag\\
&\hphantom{X} +\bigg\|\int_0^tP(t-t')\big(e^{\be z}(F(\be v_{N_k}) - F(\be v))
\U \big)(t') dt'\bigg\|_{L^1_{T}H^{-1 + \eps}_x}\notag\\
&=: \1+\II.
\label{Dv0}
\end{align}

\noi
By the 
 Schauder estimate (Lemma \ref{LEM:heatker}), 
 Young's inequality, 
 and 
 Lemma \ref{LEM:prod1}\,(ii), we have 
 \begin{align}
 \begin{split}
\1 &\les 
\big\|e^{\be z}F(\be v_{N_k})(\U_{N_k}-\U)\big\|_{L^1_T W^{-1+\eps,1}_x}\\
& \les \big\|e^{\be z}F(\be v_{N_k})\big\|_{L^2_TW^{1-\eps,\frac1{1-\eps}}_x}\|\U_{N_k}-\U\|_{L^2_TH^{-1+\eps}_x}
\end{split}
\label{R4}
\end{align}

\noi
for sufficiently small $\eps>0$.

By  the fractional Leibniz rule (Lemma \ref{LEM:prod1}\,(i)), we have 
\begin{align}
\begin{split}
\big\|e^{\be z} & F(\be v_{N_k})  \big\|_{L^2_TW^{1-\eps,\frac1{1-\eps}}_x}\\
&\les \big\|e^{\be z}\|_{L^2_TH^{1-\eps}_x}
\big\|F(\be v_{N_k})\big\|_{L^{\infty}_{T,x}} 
+ \big\|e^{\be z}\big\|_{L^{\infty}_{T,x}}\big\|F(\be v_{N_k})\big\|_{L^2_TH^{1-\eps}_x}.
\end{split}
\label{R5}
\end{align}

\noi
By the fractional chain rule  (Lemma \ref{LEM:FC}\,(ii)), we have
\begin{align}
\begin{split}
\big\|e^{\be z}\|_{L^2_TH^{1-\eps}_x}
 &\sim \big\|e^{\be z}\|_{L^2_{T,x}}+\big\||\nabla|^{1-\eps}e^{\be z}\big\|_{L^2_{T,x}}\\
&\les T^{\frac12} e^{C\|z \|_{L^{\infty}_{T,x}}} 
+ \big\| e^{\be z}\big\|_{L^{\infty}_{T}L^4_x}
\big\||\nabla|^{1-\eps}z\big\|_{L^2_TL^{4}_x}\\
&\le C(T) e^{C\|v_0\|_{L^{\infty}}}\big(1+\|z\|_{L^2_TW^{1-\eps, 4}_x}\big)\\
&\le C(T) e^{C\|v_0\|_{L^{\infty}}}\big(1+\|v_0\|_{L^\infty}\big),
\end{split}
\label{R6}
\end{align}

\noi
where we used the Schauder estimate (Lemma \ref{LEM:heatker}) in the last step.
Similarly, by  the fractional chain rule 
(Lemma \ref{LEM:FC}\,(i))
along  with the boundedness of $F$, we have
\begin{align}
\begin{split}
\big\|F(\be v_{N_k})\big\|_{L^2_TH^{1-\eps}_x}
&\sim \big\|F(\be v_{N_k})\big\|_{L^2_{T,x}}+\big\||\nabla|^{1-\eps}F(\be v_{N_k})\big\|_{L^2_{T,x}}\\
&\les T^{\frac12}+\big\| |\nabla|^{1-\eps} v_{N_k}\big\|_{L^2_{T,x}}\\
&\le C(T) \big(1+\|v_{N_k}\|_{\ZZ_T^\dl}\big).
\end{split}
\label{R7}
\end{align}

Hence, putting \eqref{R4}, \eqref{R5}, \eqref{R6}, and \eqref{R7} together, 
we obtain
\begin{align}
\1 \les e^{C \|v_0\|_{L^{\infty}}}\Big(1 + \|v_0\|_{L^{\infty}}+\|v_{N_k}\|_{\ZZ_T^\dl}\Big)\|\U_{N_k}-\U\|_{L^2_TH^{-1+\eps}_x}.
\label{Dv1}
\end{align}

As for the second term $\II$ in \eqref{Dv0}, 
we use the fundamental theorem of calculus and write 
\begin{align}
F(\be v_{N_k})-F(\be  v) = \be (v_{N_k} -  v) G(v_{N_k}, v), 
\label{F2}
\end{align}

\noi
where
\begin{align}
G(v_1,v_2) = \int_0^1F'\big(\tau \be v_1+(1-\tau)\be v_2\big)d\tau.
\label{F3}
\end{align}

\noi
Since $F$ is Lipschitz,  we see that  $G$ is bounded.
Since $v_{N_k},  v \in \ZZ_T^\dl$, 
we have $v_{N_k}(t),  v(t) \in C(\T^2)$
for almost every $t\in [0, T]$.
Then, by the  Schauder estimate (Lemma~\ref{LEM:heatker}), 
 Lemma~\ref{LEM:posprod}, 
 and H\"older's inequality,  we have
\begin{align}
\begin{split}
\II 
&\les \big\| e^{\be z}(v_{N_k}- v)G(v_{N_k}, v)\U\big\|_{L^1_TH^{-1+\eps}_x}
\\
&\les \big\|e^{\be z}(v_{N_k}- v)G(v_{N_k}, v)\big\|_{L^2_TL^{\infty}_x}
\|\U\|_{L^2_TH^{-1+\eps}_x}
\\
& \les
 e^{C\|z\|_{L^{\infty}_{T,x}}}\|v_{N_k}- v\|_{L^2_TL^{\infty}_x}\|G(v_{N_k}, v)\|_{L^{\infty}_{T,x}}
\|\U\|_{L^2_TH^{-1+\eps}_x}\\
&\les e^{C\|v_0\|_{L^{\infty}}}\|v_{N_k}- v\|_{\ZZ_T^\dl}\|\U\|_{L^2_TH^{-1+\eps}_x}.
\end{split}
\label{Dv2}
\end{align}

From \eqref{Dv0}, \eqref{Dv1}, and \eqref{Dv2}
along with the convergence of $v_{N_k}$ to $ v$ in $\ZZ_T^\dl$
and $\U_{N_k}$ to $\U$
in $L^2([0,T];H^{-1+\eps}(\T^2))$, 
we conclude that 
$\Phi_{N_k}(v_{N_k})$ converges to $\Phi( v)$ in $L^1([0,T];H^{-1 + \eps}(\T^2))$.
Since $v_{N_k}  = \Phi_N(v_{N_k})$, 
this shows that 
\[  v = \lim_{k \to \infty}v_{N_k}
=\lim_{k \to \infty} \Phi_{N_k}(v_{N_k}) = \Phi( v) 
\]

\noi
as distributions and hence as elements in $\ZZ_T^\dl$ since $v \in \ZZ_T^\dl$.
This proves existence of a solution
to \eqref{v3}
in $\ZZ_T^\dl \subset \ZZ_T$.

Lastly, we prove uniqueness of solutions to \eqref{v3} in the energy space $\ZZ_T$.
Let $v_1, v_2 \in \ZZ_T$ be two solutions to \eqref{v3}.
 Then, by setting  $w=v_1-v_2$, the difference  $w$ satisfies
\begin{align}
\dt w +\tfrac12(1-\Dl)w + \tfrac12\ld\be  e^{\be{z}}\big(F(\be v_1)-F(\be v_2)\big) \U = 0.
\label{NHX}
\end{align}

\noi
Since $\be v_j\leq 0$, $j = 1, 2$, it follows from \eqref{F1}
and \eqref{F3} that 
\[G(v_1,v_2) = \int_0^1\exp\big(\tau \be v_1+(1-\tau)\be v_2\big)d\tau\geq 0.\]

\noi
Now, define an energy functional:
\[\En(t) \deff \|w(t)\|_{L^2_x}^2 + \frac12\int_0^t\|w(t')\|_{H^1_x}^2dt'\geq 0.\]

\noi
Since $w \in \ZZ_T$, the energy functional $\En(t)$
is a well-defined differentiable function.
Moreover, with \eqref{NHX} and \eqref{F2}, we have 
\begin{align*}
\frac{d}{dt}\En(t)& = \int_{\T^2}w(t)
\big(2\dt w(t)+(1-\Dl)w(t)\big)dx \\
&= -\ld\be^2\int_{\T^2}w^2(t)  e^{\be{z}}G(v_1,v_2)\U(t)dx\\& \leq 0
\end{align*}

\noi
thanks to the positivity  of $G$ and $\U$ and the assumption that $\ld>0$.
Since $w(0) = 0$, 
we conclude that $\En(t) = 0$ for any $t \geq 0$
and $v_1 \equiv v_2$.
This proves uniqueness in the energy space $\ZZ_T$.

The solution $v \in \ZZ_T^\dl$ constructed in the existence part
depends a priori on a choice of a subsequence $v_{N_k}$.
The uniqueness in $\ZZ_T \supset \ZZ_T^\dl$, however,  shows that 
the limit $v$ is independent of the choice of a subsequence
and hence the entire sequence $\{v_N\}_{N \in \N}$
converges to $v$ in $\ZZ_T^\dl \subset \ZZ_T$.
This completes the proof of Proposition \ref{PROP:flow}.
\end{proof}

\subsection{On  invariance of the Gibbs measure}
\label{SUBSEC:Gibbs1}
In this subsection, 
 we briefly go over the proof of Theorem \ref{THM:2}. 
 Given $N\in\N$,  we consider the truncated SNLH
  \eqref{NH4}
 with initial data given by $u_N|_{t = 0} = w_0$, 
 where $w_0$ is as in \eqref{IV2} distributed
 by the massive Gaussian free field $\mu_1$.
 For this problem,  there 
 is no deterministic linear solution $z$
 and hence write $u_N$
 as   $u_N = v_N + \Psi^\text{heat}$. 
 Then, the residual term $v_N$ satisfies
 \begin{align}
v_N(t)=-\frac12\ld\be\int_0^tP(t-t')\Q_N\big(e^{\be \Q_Nv_N}\U_N\big)(t')dt',
\label{vQ}
\end{align}

\noi
where $\U_N$ is the Gaussian multiplicative chaos 
defined in terms of $\Q_N$.
Since the smoothing operator $\Q_N$ in \eqref{Q}
is equipped with a non-negative kernel, 
the equation \eqref{vQ} enjoys the sign-definite structure:
\begin{align*}
\be\Q_Nv_N(t)= -\frac12\ld\be^2\int_0^tP(t-t')\Q_N^2\big(e^{\be \Q_Nv_N}\U_N\big)(t')dt' \le 0.
\end{align*}

\noi
Namely, we can rewrite \eqref{vQ}
as 
 \begin{align}
v_N(t)=-\frac12\ld\be\int_0^tP(t-t')\Q_N\big(F(\be \Q_Nv_N)\U_N\big)(t')dt', 
\label{vQ1}
\end{align}

\noi
where $F$ is as in \eqref{F1}.

In view of the uniform (in $N$) boundedness of $\Q_N$ on $L^p(\T^2)$, $1\leq p \leq \infty$, 
we  can argue as in Subsection~\ref{SUBSEC:para2}
to prove 
local well-posedness
of \eqref{vQ1}
and establish an a priori bound on $\{v_N\}_{N\in \N}$ in $\wt \ZZ_T^{2\dl} \subset \ZZ_T^\dl$.
Then, by the Aubin-Lions lemma (Lemma \ref{LEM:AL}), 
we see that there exists a subsequence $\{ v_{N_k}\}_{k \in \N}$
converging to some limit $ v$ in $\ZZ_T^\dl$.
Moreover, the uniqueness argument 
for solutions to the limiting equation \eqref{v3} remains true.
 Therefore, in view of the argument in Subsection \ref{SUBSEC:para2}, 
 it suffices
 to show that the limit $ v$
 satisfies the equation~\eqref{v3}.

With a slight abuse of notation, 
 let $\Phi_{N_k}$ denotes the right-hand side of \eqref{vQ1}:
 \begin{align}
  \Phi_{N_k} (v_{N_k})(t)\deff-\frac12\ld\be\int_0^tP(t-t')\Q_{N_k}\big(F(\be \Q_{N_k}v_{N_k})\U_{N_k}\big)(t')dt'.
\label{V0a}
  \end{align} 
 
 \noi
Then, it suffices to 
 show that 
 $\Phi_{N_k}(v_{N_k})$ converges to $\Phi( v)$ in $L^1([0,T];H^{-1 }(\T^2))$, 
 where $\Phi = \Phi_{v_0, \U}$ is as in \eqref{D2}
 (with $v_0 = 0$).
From \eqref{D2} and \eqref{V0a}, we have 
\begin{align}
\|\Phi_{N_k}(v_{N_k}) & -\Phi( v)\|_{L^1_TH^{-1 }_x}\notag \\
& \les \bigg\|
\int_0^tP(t-t')\big(F(\be \Q_{N_k} v_{N_k})(\U_{N_k}-\U)\big)(t')dt'\bigg\|_{L^1_{T}H^{-1}_x}
\notag\\
&\hphantom{X} +
\bigg\|\int_0^tP(t-t')
\big((F(\be \Q_{N_k} v_{N_k}) - F(\be v))
\U \big)(t') dt'\bigg\|_{L^1_{T}H^{-1 }_x}\notag\\
&\hphantom{X} +
\bigg\|\int_0^tP(t-t')(\Q_{N_k}-\Id)\big(F(\be \Q_{N_k}v_{N_k})\U_{N_k}\big)(t')dt'\bigg\|_{L^1_TH^{-1}_x} \notag \\
&=: \1+\II + \III.
\label{V0}
\end{align}

\noi
The terms $\1$ and $\II$
can be handled exactly as in Subsection \ref{SUBSEC:para2}
and, hence, it remains
to treat  the extra term $\III$.

When viewed as a Fourier multiplier operator, 
the symbol  for $\Q_N$ is given by $2\pi \ft \rho_N$; see~\eqref{Q}.
Note that,  for $0<s_1-s<1$,  the symbol
\begin{align}
m_N(\xi) \deff N^{s_1-s}\jb{\xi}^{s-s_1}\big(2\pi\ft\rho_N(\xi)-1\big)
\label{V0b}
\end{align}

\noi
satisfies the bound
\begin{align}
|\dd_\xi^{k}m_N(\xi)|\les \jb{\xi}^{-|k|}
\label{V1}
\end{align}

\noi
for any $k\in (\Z_{\geq 0})^2$. Indeed, when no derivatives hits $2\pi\ft\rho_N -1$, 
we can use the mean value theorem (as $2\pi\ft\rho(0)=1$) to get the bound 
\begin{align*}
\big|N^{s_1-s}\dd_\xi^{k}(\jb{\xi}^{s-s_1})\cdot\big(2\pi\ft\rho_N(\xi)-1\big)\big|
& \les N^{s_1-s}\jb{\xi}^{s-s_1-|k|}\big(1\wedge N^{-1}|\xi|\big)\\
& \le\jb{\xi}^{-|k|}, 
\end{align*}

\noi
whereas when at least one derivative hits $2\pi \ft\rho_N-1$,
we gain a negative power of $N$ from $\ft \rho_N(\xi) = \ft \rho(N^{-1} \xi)$
and we use the fast decay of $\ft\rho$ and its derivatives;
with $|\al| + |\be| = |k|$, we have
\begin{align*}
\big|N^{s_1-s}\dd_\xi^{\al}(\jb{\xi}^{s-s_1})\cdot
\dd_\xi^\be \big(2\pi\ft\rho_N(\xi)-1\big)\big|
& \les N^{s_1-s - |\be|}\jb{\xi}^{s-s_1-|\al|}\cdot \big(N|\xi |^{-1}\big)^{s - s_1 + |\be|}\\
& \les\jb{\xi}^{-|k|}, 
\end{align*}

\noi
verifying \eqref{V1}.

Hence,  by the transference principle 
(\cite[Theorem 4.3.7]{Gra1})
 and the Mihlin-H\"ormander multiplier theorem (\cite[Theorem 6.2.7]{Gra1}), 
 the Fourier multiplier operator $N^{s_1-s}\jb{\nabla}^{s-s_1}\big(\Q_N-\Id\big)$ 
 with the symbol $m_N$ in \eqref{V0b}
 is bounded from $L^p(\T^2)$ to $L^p(\T^2)$ for any $1<p<\infty$ with norm independent of $N$. This implies that the following estimate holds:
\begin{align}
\|(\Q_N-\Id)f\|_{W^{s,p}(\T^2)}\les N^{s-s_1}\|f\|_{W^{s_1,p}(\T^2)}
\label{QN-1}
\end{align}

\noi
for any $0<s_1-s<1$ and $1<p<\infty$. 
Then, applying  \eqref{QN-1} and Lemma \ref{LEM:posprod} again, we can bound $\III$ in \eqref{V0} by
\begin{align}
\begin{split}
\big\|\big(\Q_{N_k}-\Id & \big)\big( F(\be  \Q_{N_k}v_{N_k})\U_{N_k}\big)\big\|_{L^1_TH^{-1}_x} \\
&\les N_k^{-\eps}
\|F(\be \Q_{N_k}v_{N_k})\|_{L^2_T L^\infty_x}
\|\U_{N_k}\|_{L^2_TH^{-1+\eps}_x}\\
&\les {N_k}^{-\eps}
\|\U_{N_k}\|_{L^2_TH^{-1+\eps}_x}.
\end{split}
\label{Dv4}
\end{align}

\noi
Hence, from \eqref{V0}, the convergence of $\1$ and  $\II$ to $0$
as shown in Subsection \ref{SUBSEC:para2}, 
and \eqref{Dv4}, 
we conclude that 
 $\Phi_{N_k}(v_{N_k})$ in \eqref{V0a}
 converges to $\Phi( v)$ in $L^1([0,T];H^{-1 }(\T^2))$.
 Combined with the uniqueness
 of the solution to \eqref{v3} in $\ZZ_T$, 
 this shows that the solution $v_N$ to the truncated SNLH \eqref{vQ1}
converges to the solution $v$ to SNLH \eqref{v3}
(with $z = 0$).

\medskip

Lastly, we establish  invariance of the Gibbs measure $\rh$ 
constructed in Proposition~\ref{PROP:Gibbs} under the dynamics of SNLH \eqref{NH1}. In the following, we write $\Phi_N(t)$ and $\Phi(t)$ for the flow maps of the truncated SNLH \eqref{NH4} and SNLH \eqref{NH1}, respectively, constructed above. 
Note that $\Phi(t)(u_0)$ is  interpreted as $\Phi(t)(u_0)=\Psi + v$, 
where  $\Psi$ is the stochastic convolution defined in \eqref{conv} (with $w_0 = u_0$) and $v$ is the solution to \eqref{v1} (with $z\equiv 0$).
In the remaining part of this section, we take 
  the space-time white noise $\xi = \xi^\om$
in the equation to be on 
a probability space $(\O_1,\PP)$ 
and use  $\om$ to denote the randomness coming from  the space-time white noise.
Moreover, we use $\E_\om$ to denote an expectation with respect to the noise, 
namely, integration with respect to the probability measure $\PP$.
In the following, 
we write $\Phi^\om(t)(u_0)$, when we emphasize the dependence of the solution 
on the noise.
A similar comment applies to  $\Phi_N(t)$. 
Given $N \in \N$, 
we use $\Pc^N_t$ to denote the Markov semigroup associated with 
the truncated dynamics $\Phi_N^\om(t)$:
\[ \Pc^N_t(F)(u_0) = \E_\om \big[F(\Phi_N^\om(t) (u_0))\big]
= \int_{\O_1} F(\Phi_N^\om(t) (u_0)) d\PP(\om).\]

We  first  show invariance of the truncated Gibbs measure $\rhN$ 
in \eqref{Gibbs1a}
under the truncated dynamics \eqref{NH4}. 
\begin{lemma}\label{LEM:inv}
Let  $N \in \N$ and $\eps>0$.
Then, for  any  continuous and bounded function $F : H^{-\eps}(\T^2)\to\R$, we have 
\begin{align*}
\int   \Pc^N_t(F)(u_0)d\rhN(u_0) = \int F(u_0)d\rhN(u_0).
\end{align*}
\end{lemma}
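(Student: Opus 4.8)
The plan is to establish invariance of the truncated Gibbs measure $\rhN$ under the truncated dynamics \eqref{NH4} by decomposing the finite-dimensional low-frequency flow and the trivial high-frequency flow, exactly as in the standard finite-dimensional argument for such SPDEs. First I would observe that the truncated equation \eqref{NH4} (equivalently \eqref{NH3} with $\P_N$ replaced by $\Q_N$), when written in terms of the full unknown $u_N$, only involves the nonlinearity through $\Q_N e^{\be \Q_N u_N}$, which depends on $u_N$ only via its frequencies $|n| \lesssim N$ because $\Q_N$ is a smooth frequency cutoff. Hence the Fourier modes of $u_N$ split into two groups: the low modes $\ft u_N(n)$, $|n| \le N$ (or rather the modes in the support of $\ft\rho_N$), satisfy a finite-dimensional system of SDEs with drift coming from $\frac12(1-\Dl)$ and the nonlinear term, driven by the corresponding finite collection of Brownian motions $\{B_n\}$; the high modes satisfy the linear (Ornstein--Uhlenbeck) equation $\dt \ft u_N(n) + \frac12\jb{n}^2 \ft u_N(n) = \ft\xi(n)$ independently, since here the cutoff $\Q_N$ kills the nonlinear contribution. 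Actually, with $\Q_N$ being a smooth (not sharp) projector, it is cleanest to note the nonlinearity only depends on $\P_{2N} u_N$ say, so I would simply work with a large-but-finite block of modes for the "interacting" part and the complementary infinite block for the "free" part.

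Next I would invoke invariance of the Gaussian measure $\mu_1$ restricted to the high-frequency block under the linear OU flow (this is classical: $\mu_1$ restricted to each mode is the stationary distribution of the associated one-dimensional OU process $dX = -\frac12\jb{n}^2 X\,dt + dB_n$), together with invariance of the measure $d\nu_N = Z_N^{-1} \exp(-\ld C_N \int_{\T^2} e^{\be \Q_N u}\,dx)\,d(\text{low-frequency part of }\mu_1)$ under the finite-dimensional drift-diffusion system for the low modes. For the latter, the key point is that the finite-dimensional system has the form $dX = (-\tfrac12 A X - \tfrac12\nabla V(X))\,dt + dB$ where $A$ is the positive matrix $\mathrm{diag}(\jb{n}^2)$ and $V(X) = 2\ld C_N \int_{\T^2} e^{\be \Q_N u}\,dx$ expressed in the low modes, so its invariant measure is $\propto \exp(-\langle AX,X\rangle/2 - V(X)/2)\cdot$ — wait, one must be careful with the factor $\tfrac12$; the stationary Fokker--Planck equation for $dX = -\tfrac12 \nabla W(X)\,dt + dB$ (with our normalization $\mathrm{Var}(B_n(t))=t$) has invariant density $\propto e^{-W(X)}$, and here $W(X) = \tfrac12\langle AX,X\rangle + \ld C_N \int e^{\be\Q_N u}\,dx$ is precisely (minus the log of) the finite-dimensional marginal of $\rhN$. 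I would verify the drift is indeed $-\tfrac12\nabla W$ by differentiating: $\partial_{\ft u(n)}\big(\ld C_N \int_{\T^2} e^{\be\Q_N u}\big) = \ld C_N \be (2\pi\ft\rho_N(n)) \ft{(e^{\be\Q_N u})}(-n)$, which matches the $n$-th Fourier coefficient of $\tfrac12\ld\be C_N\Q_N e^{\be\Q_N u_N}$ up to the normalization conventions for $e_n$. Then the product measure structure $d\rhN = d\nu_N^{\text{low}}\otimes d\mu_1^{\text{high}}$ combined with the product structure of the flow (the two blocks evolve independently and each preserves its marginal) yields invariance of $\rhN$. Finally, invariance of $\rhN$ as a measure translates to the stated identity $\int \Pc^N_t(F)(u_0)\,d\rhN(u_0) = \int F(u_0)\,d\rhN(u_0)$ for bounded continuous $F$ by the very definition of the Markov semigroup $\Pc^N_t$ and Fubini.

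The main obstacle, and the step requiring the most care, is the rigorous justification of the finite-dimensional invariant-measure computation in the presence of the exponential nonlinearity: one needs that the drift $\nabla W$ is locally Lipschitz (which it is, since $e^{\be\Q_N u}$ is a smooth function of finitely many Fourier modes) and that the SDE is globally well-posed with non-explosive solutions so that the Fokker--Planck/Echeverría--Weiss-type argument applies — this is where the a priori $L^2$-bound established earlier (see the discussion around \eqref{R2}, giving global existence for the truncated equation) is used. One must also confirm that $\nu_N$ has finite mass, i.e. $Z_N < \infty$, which in the defocusing case $\ld>0$ is immediate since the density $\exp(-\ld C_N\int e^{\be\Q_N u})$ is bounded by $1$. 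Since all of this is by now standard — and indeed the excerpt explicitly points to Section 4 of \cite{GKOT} for the analogous wave computation — I would present it concisely: state the mode decomposition, cite the finite-dimensional invariance (e.g. via the standard stationary Fokker--Planck argument, or by reference), note the independence and the product structure of both the flow and the measure, and conclude. The verification that the drift is a gradient with the correct constant is the one genuinely computational point, and I would carry it out explicitly on the Fourier side.
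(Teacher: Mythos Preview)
Your proposal contains a genuine gap rooted in a confusion about the operator $\Q_N$. You write that ``the nonlinearity only depends on $\P_{2N} u_N$ say'' because ``$\Q_N$ is a smooth frequency cutoff''. But $\Q_N$ is \emph{not} a frequency cutoff: by definition \eqref{Q}--\eqref{Q2}, $\Q_N f = \rho_N * f$ where $\rho_N$ is a smooth bump compactly supported in \emph{physical} space. Its Fourier multiplier $2\pi\ft\rho_N(n) = 2\pi\ft\rho(N^{-1}n)$ is the Fourier transform of a $C^\infty_c$ function, hence (by Paley--Wiener) real-analytic and in particular \emph{not} compactly supported in frequency. Consequently $\Q_N u_N$ depends on \emph{all} Fourier modes of $u_N$, the nonlinear term $\Q_N e^{\be \Q_N u_N}$ couples every mode, and the measure $\rhN$ in \eqref{Gibbs1a} does not factor as a tensor product of a finite-dimensional piece and a Gaussian tail. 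Your proposed low/high splitting therefore does not decouple the dynamics, and the finite-dimensional Fokker--Planck argument cannot be applied directly.

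The paper handles exactly this obstruction by introducing a \emph{second} layer of approximation: one composes $\Q_N$ with the genuinely compactly-supported frequency projector $\P_M$, defines an auxiliary measure $\rho_{\text{heat},N,M}$ with density $\exp(-\ld C_{N,M}\int e^{\be\P_M\Q_N u})$, and an auxiliary flow \eqref{vQ2}. For this doubly truncated problem the nonlinearity really does live on finitely many modes (those with $|n|\le 2M$, since $\Pi_{2M}\P_M = \P_M$), so your intended decomposition into a finite-dimensional interacting block plus an independent OU tail goes through and yields invariance of $\rho_{\text{heat},N,M}$. One then passes to the limit $M\to\infty$ using (i) convergence $\rho_{\text{heat},N,M}\to\rhN$ in total variation and (ii) convergence in probability of the flows $\Phi_{N,M}\to\Phi_N$, both of which require separate arguments. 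The gradient computation you sketch is essentially correct and appears in the paper (see \eqref{XC4}--\eqref{XC6}), but it is carried out for the $(N,M)$-system, not for the $N$-system directly.
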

\begin{proof}
Since the truncated Gibbs measure $\rhN$ in \eqref{Gibbs1a} truncated by $\Q_N$ does not have a finite Fourier support, we first approximate it by 
\begin{align}
d \rho_{\text{heat}, N, M} 
& = Z_{N,M}^{-1}\exp\bigg(-\ld C_{N,M}\int_{\T^2}e^{\be \P_M \Q_N u}dx\bigg)d\mu_1,
\label{XB1}
\end{align}

\noi
where $\P_M$ is the Fourier multiplier with a compactly supported symbol 
$\chi_N$ in \eqref{chi} and 
\begin{align*}
C_{N,M} = e^{-\frac{\be^2}2\s_{N,M}}=e^{-\frac{\be^2}2\E[ (\P_M \Q_N \Psi^\text{heat}(t, x))^2]}
\longrightarrow C_N, 
\end{align*}
 as $M\to\infty$. 
 Here, $\Psi^\text{heat}$
is as in  \eqref{Ph1}.

 Let 
  \begin{align*}
 \U_N = e^{-\frac{\be^2}2 \s_N}e^{\be\Q_N \Psi^\text{heat}}
 \qquad \text{and}\qquad
 \U_{N, M} = e^{-\frac{\be^2}2 \s_N}e^{\be\P_N \Q_N \Psi^\text{heat}}.
 \end{align*} 

\noi
Then,  a slight modification of the proof of Proposition \ref{PROP:Ups}
 shows that 
 $\ft \U_{N, M}(0, 0)$  converges to 
  $\ft \U_{N}(0, 0)$ in $L^p(\O)$
  for $1\le p < \frac{8\pi}{\be^2}$.
 Namely, we have 
\begin{align} - \ld C_{N,M}\int_{\T^2}e^{\be \Q_N\P_M u}dx \too
-  \ld  C_{N}\int_{\T^2}e^{\be \Q_N u}dx
\label{XB1c}
\end{align}

\noi
in $L^p(\mu_1)$ 
  for $1\le p < \frac{8\pi}{\be^2}$
  and also in probability. 
Let $R_N$ be as in \eqref{W7}
and define $R_{N, M}$ by 
\begin{align}
R_{N, M} =   \exp\bigg(- \ld C_{N, M} \int_{\T^2}e^{\be \, \P_N \Q_N u} dx \bigg).
\label{XB1d}
\end{align}

\noi
Then, it follows from \eqref{XB1c}
that $R_{N, M}$ converges to $R_N$ in probability as $M \to \infty$.
Moreover, by the positivity of $\U_N$, $\U_{N, M}$, 
and $\ld$, 
the densities $R_N$ and $R_{N, M}$ are uniformly bounded by 1.
As in the proof of  Proposition \ref{PROP:Gibbs}, 
this implies 
 the $L^p(\mu_1)$-convergence of the density 
$R_{N, M}$ to $R_N$ as $M \to \infty$, 
 which in turn shows  convergence in total variation $\rho_{\text{heat}, N, M}\to\rhN$ as $M\to\infty$.

\medskip

Next, 
consider the truncated dynamics 
\eqref{NH4} with the Gaussian initial data $\Law (u_N(0)) = \mu_1$.
Then, proceeding 
 as in the proof of  Theorem~\ref{THM:1.2},  
 we see that 
  the flow $\Phi_N$ of \eqref{NH4}  is a limit in probability 
  (with respect to $\PP\otimes \mu_1(d\om, du_0)$) in $C([0,T];H^{-\eps}(\T^2))$, $\eps>0$,
 of the flow $\Phi_{N,M}$ for the following truncated  dynamics: 
 \begin{align}
\begin{cases}\dt u_{N, M} + \tfrac 12 (1- \Dl)  u_{N, M}   +  \tfrac 12 \ld \be C_{N, M} \P_M \Q_N  e^{\be \, \P_M \Q_N 
u_{N, M}} = \xi\\
u_{N,M}|_{t=0}=u_0 \quad \text{with } \Law (u_0) = \mu_1.
\end{cases}
\label{vQ2}
\end{align}

Let us now discuss 
 invariance of $\rho_{\text{heat}, N, M}$ under \eqref{vQ2}.
Let 
 $\Pi_M$ be the sharp Fourier truncation on frequencies $\{|n|\le M\}$.
 Then, 
from the  definition \eqref{chi} of $\P_M$, 
we have  $\Pi_{2M}\P_M = \P_M$ for any $M\in\N$.
In particular, with $\Pi_{2M}^\perp = \Id-\Pi_{2M}$, 
we have $\Pi_{2M}^\perp\P_M = 0$. 
Then, it follows 
from \eqref{XB1} that 
the pushforward measure 
 $(\Pi_{2M}^\perp)_\#\rho_{\text{heat}, N, M} $ is Gaussian:
 \[(\Pi_{2M}^\perp)_\#\rho_{\text{heat}, N, M} = (\Pi_{2M}^\perp)_\#\mu_1.\]
 
\noi
Hence, we have the following decomposition:
\begin{align*}
\rho_{\text{heat}, N, M} = (\Pi_{2M})_\#\rho_{\text{heat}, N, M}\otimes (\Pi_{2M}^\perp)_\#\mu_1.
\end{align*}

By writing
 \begin{align*}
 u_{N, M} = \Pi_{2M} u_{N, M}
+ \Pi_{2M}^\perp  u_{N, M}
= : u^{(1)} + u^{(2)},  
 \end{align*}

\noi
where, for simplicity,  we dropped the subscripts on the right-hand side, 
we see that the high frequency part $u^{(2)}$ satisfies the linear stochastic heat equation:
 \begin{align}
\dt u^{(2)} + \tfrac 12 (1- \Dl)  u^{(2)}    = \Pi_{2M}^\perp\xi.
\label{XB2}
\end{align}

\noi
Since this is a linear equation where spatial frequencies are decoupled,\footnote{In particular, 
by writing \eqref{XB2} on the Fourier side, we see that 
$\ft{u^{(2)}}(n)$ is 
 the (independent) Ornstein-Uhlenbeck process for each frequency
 whose invariant measure is Gaussian.} 
it is easy to check that the Gaussian measure
$ (\Pi_{2M}^\perp)_\#\mu_1$ is invariant under \eqref{XB2}.

The low frequency part $u^{(1)}$ satisfies the following equation:
 \begin{align}
\dt u^{(1)} + \tfrac 12 (1- \Dl)  u^{(1)}   + \NN(u^{(1)}) = \Pi_{2M}\xi, 
\label{XB3}
\end{align}

\noi
where the nonlinearity $\NN = \NN_{N, M}$ is given by 
\begin{align}
\NN(u) = \NN_{N, M}(u) =  \tfrac 12 \ld \be C_{N, M} \P_M \Q_N  e^{\be \, \P_M \Q_N 
u}. 
\label{XB4}
\end{align}

\noi
On the Fourier side, \eqref{XB3} is a finite-dimensional system
of SDEs.
As such, one can easily check by hand that 
$(\Pi_{2M})_\#\rho_{\text{heat}, N, M}$ is invariant under \eqref{XB3}.
In the following, we review this argument.

In the current real-valued setting, we have 
$\ft{u^{(1)}}(-n) = \cj{\ft{u^{(1)}}(n)}$.
Then, 
by writing $\ft{u^{(1)}}(n) = a_n + i b_n$ for $a_n, b_n \in \R$, 
we have
\begin{align}
a_{-n} = a_n\qquad \text{and}\qquad b_{-n} = - b_n.
\label{XB4a} 
\end{align}

\noi
Defining the index sets $\Ld = \Ld(2M) \subset \Z^2$ and $\Ld_0 = \Ld_0(2M) \subset \Z^2$, $M \in \N$:
 \[\Ld = \big\{(\N \times \{0\}) \cup (\Z\times \N) \big\} \cap \{n \in \Z^2: |n|\leq 2M\} \qquad\text{and}\qquad
\Ld_0 = \Ld \cup \{(0, 0)\}, \]

\noi
we can write \eqref{XB3} as 
\begin{align}
\begin{split}
d a_n & = \Big(-\tfrac 12 \jb{n}^2 a_n - \Re \ft {\NN(u^{(1)})}(n) \Big) dt + d (\Re B_n)\\
d b_n & = \Big(-\tfrac 12 \jb{n}^2 b_n - \Im \ft {\NN(u^{(1)})}(n) \Big) dt + d (\Im B_n)\\
\end{split}
\label{XB5}
\end{align}

\noi
for $n \in \Ld$ and 
\begin{align}
d a_0 & = \Big(-\tfrac 12  a_0 - \ft {\NN(u^{(1)})}(0)\Big) dt  + d  B_0.
\label{XB6}
\end{align}

\noi
Here,  $\{B_n\}_{n \in \Ld_0}$ is 
a family of mutually independent
complex-valued Brownian motions  as in \eqref{Wiener}.
Note that $\Var(\Re B_n(t)) = \Var(\Im B_n(t)) = \frac t2$
for $n \in \Ld$, while 
$\Var( B_0(t)) = t$.

Let $F$ be a continuous and bounded  function on $(\bar a, \bar b) = (a_m, b_n)_{m \in \Ld_0, n \in \Ld} 
\in \R^{2|\Ld| + 1}$.
Then, by Ito's lemma, the generator $\L = \L_{N, M}$
of the Markov semigroup associated with~\eqref{XB5} and \eqref{XB6}
is given by 
\begin{align}
\begin{split}
\L F(\bar a, \bar b)
& =  \sum_{n \in \Ld_0} \bigg[ \Big(- \frac 12 \jb{n}^2 a_n 
- \Re  \ft {\NN(u^{(1)})}(n)\Big)
\dd_{a_n}F(\bar a, \bar b)
+ \frac 14 \dd^2_{a_n}F(\bar a, \bar b)\bigg]\\
& \quad
+   \sum_{n \in \Ld} \bigg[ \Big(- \frac 12 \jb{n}^2 b_n 
- \Im  \ft {\NN(u^{(1)})}(n)\Big)
\dd_{b_n}F(\bar a, \bar b)
+ \frac 14 \dd^2_{b_n}F(\bar a, \bar b)\bigg]\\
& \quad 
+  \frac 14 \dd^2_{a_0}F(\bar a, \bar b).
\end{split}
\label{XB7}
\end{align}

\noi
The last term  takes into account  the 
different forcing in  \eqref{XB6}.
In order to prove invariance of 
$\rho_{\text{heat}, N, M}^\text{low}
 \deff (\Pi_{2M})_\#\rho_{\text{heat}, N, M}$
 under the low-frequency dynamics \eqref{XB3}, 
 it suffices to prove  
\[(\L)^*\rho_{\text{heat}, N, M}^\text{low}
= 0.\]

\noi
By viewing
$\rho_{\text{heat}, N, M}^\text{low}$ as a measure on $(\bar a, \bar b)$
with a slight abuse of notation, 
 this is equivalent to proving 
\begin{align}
\int \L F(\bar a, \bar b) \, d\rho_{\text{heat}, N, M}^\text{low} (\bar a, \bar b)
= 
\int \L F(\bar a, \bar b) e^{-\M(u^{(1)})} d\bar ad\bar b
= 0, 
\label{XB7a}
\end{align}

\noi
where  $\M(u^{(1)}) $ is given by 
\begin{align}
\M(u^{(1)}) 
= 
\ld C_{N,M}\int_{\T^2}e^{\be \P_M \Q_N u^{(1)}} dx
+ \sum_{n \in \Ld} \jb{n}^2 (a_n^2 + b_n^2)
+ \frac 12 a_0^2.
\label{XC3}
\end{align}

A direct computation with \eqref{XB4a} shows
\begin{align}
\begin{split}
2\pi \dd_{a_n}\F_x\big[(\P_M \Q_N u^{(1)})^k\big](0)
& = 2k \Re
\F_x\Big[  \P_M \Q_N \big((\P_M \Q_N u^{(1)})^{k-1}\big)\Big](n), \\
2\pi \dd_{b_n}\F_x\big[(\P_M \Q_N u^{(1)})^k\big](0)
& =  2k \Im
\F_x\Big[  \P_M \Q_N \big((\P_M \Q_N u^{(1)})^{k-1}\big)\Big](n)
\end{split}
\label{XC1}
\end{align}

\noi
for $n \in \Ld$
and 
\begin{align*}
2\pi  \dd_{a_0}\F_x\big[(\P_M \Q_N u^{(1)})^k\big](0)
& = k 
\F_x\Big[  \P_M \Q_N \big((\P_M \Q_N u^{(1)})^{k-1}\big)\Big](0).
\end{align*}

\noi
By the Taylor expansion with  \eqref{XC1}  and \eqref{XB4}, we have 
\begin{align}
\begin{split}
\dd_{a_n} \bigg(\ld C_{N,M} & \int_{\T^2}e^{\be \P_M \Q_N u^{(1)}} dx\bigg)
 = \ld C_{N,M}\cdot 2\pi \dd_{a_n} \F_x[ e^{\be \P_M \Q_N u^{(1)}}](0)\\
& = \ld C_{N,M}\cdot  2\pi \sum_{k = 0}^\infty \frac{\be^k \dd_{a_n} \F_x\big[(\P_M \Q_N u^{(1)})^k\big](0)}{k!}\\
& = 4 \Re \ft{\NN(u^{(1)})}(n) 
\end{split}
\label{XC4}
\end{align}

\noi
for $n \in \Ld$.
By a similar computation, we have 
\begin{align}
\begin{split}
\dd_{b_n} \bigg( \ld C_{N,M}\int_{\T^2}e^{\be \P_M \Q_N u^{(1)}} dx\bigg)
& = 4 \Im \ft{\NN(u^{(1)})}(n) 
\end{split}
\label{XC5}
\end{align}

\noi
for $n \in \Ld$, 
and
\begin{align}
\begin{split}
\dd_{a_0} \bigg( \ld C_{N,M}\int_{\T^2}e^{\be \P_M \Q_N u^{(1)}} dx\bigg)
& = 2 \ft{\NN(u^{(1)})}(0) .
\end{split}
\label{XC6}
\end{align}

\noi
Then, using
\eqref{XB7}, \eqref{XC3}, \eqref{XC4}, \eqref{XC5}, and \eqref{XC6}, 
we can rewrite the generator $\L$ as
\begin{align}
\begin{split}
\L F(\bar a, \bar b)
& =  \sum_{n \in \Ld} \bigg[ 
- \frac 14 \dd_{a_n} \M(u^{(1)}) 
\dd_{a_n}F(\bar a, \bar b)
+ \frac 14 \dd^2_{a_n}F(\bar a, \bar b)\\
& \hphantom{XXXX}- \frac 14 \dd_{b_n} \M(u^{(1)}) 
\dd_{b_n}F(\bar a, \bar b)
+ \frac 14 \dd^2_{b_n}F(\bar a, \bar b)
\bigg]\\
& \quad 
- \frac 12 \dd_{a_0} \M(u^{(1)})  
\dd_{a_0}F(\bar a, \bar b)
 + \frac 12 \dd^2_{a_0}F(\bar a, \bar b).
\end{split}
\label{XC7}
\end{align}

\noi
Then, with \eqref{XC3} and \eqref{XC7}, 
integration by parts  yields
\begin{align*}
\int &  \L F(\bar a, \bar b)  e^{-\M(u^{(1)})} d\bar ad\bar b\\
& = \frac 14 \sum_{n \in \Ld} \int
\Big( \dd_{a_n} e^{-\M(u^{(1)})}\cdot 
\dd_{a_n}  F(\bar a, \bar b)
 +  \dd^2_{a_n} F(\bar a, \bar b)\cdot 
 e^{-\M(u^{(1)})}\Big) 
d\bar ad\bar b\\
& \quad +  \frac 14 \sum_{n \in \Ld} \int
\Big( \dd_{b_n} e^{-\M(u^{(1)})} \cdot 
\dd_{b_n}  F(\bar a, \bar b)
 +  \dd^2_{b_n} F(\bar a, \bar b)\cdot 
 e^{-\M(u^{(1)})}\Big) 
d\bar ad\bar b\\
& \quad +  \frac 12 \int
\Big( \dd_{a_0} e^{-\M(u^{(1)})}\cdot 
\dd_{a_0}  F(\bar a, \bar b)
 +  \dd^2_{a_0} F(\bar a, \bar b)\cdot 
 e^{-\M(u^{(1)})}\Big) 
d\bar ad\bar b\\
& = 0.
\end{align*}

\noi
This proves \eqref{XB7a}
and hence invariance of 
$\rho_{\text{heat}, N, M}^\text{low}
=  (\Pi_{2M})_\#\rho_{\text{heat}, N, M}$
 under the low-frequency dynamics \eqref{XB3}.

\medskip

We are now ready to prove invariance of $\rhN$ under $\Phi_N(t)$.
This follows from 

\begin{itemize}
\item[(i)] 
the convergence of $\rho_{\text{heat}, N, M}$ to $ \rhN$ in total variation, 

\smallskip
\item[(ii)]
the convergence
of  $\Phi^\om_{N,M}(t)(u_0)$ to $\Phi_N^\om(t)(u_0)$ in probability with respect to $\PP\otimes \mu_1(\om, u_0)$, and

\smallskip
\item[(iii)]
the invariance of $\rho_{\text{heat}, N, M} $ under $\Phi_{N,M}(t)$.

\end{itemize}

\noi
Indeed, for any $F : H^{-\eps}(\T^2)\to \R$,  continuous and bounded, and any $t\ge 0$, we have
\begin{align}
\bigg|\int &  \E_\om\big[F(\Phi_N^\om(t)(u_0))\big]d\rhN(u_0) 
- \int \E_\om\big[F(\Phi_{N,M}^\om(t)(u_0))\big]d\rho_{\text{heat}, N, M}(u_0)\bigg| \notag \\
&\le Z_{N,M}^{-1}\int \Big|
\E_\om\big[F(\Phi_N^\om(t)(u_0))\big]
- \E_\om\big[F(\Phi_{N, M}^\om(t)(u_0))\big]\Big|
R_{N, M}(u_0) d\mu_1(u_0) \label{XD1}\\
&\quad +
\bigg|\int \E_\om\big[F(\Phi_N^\om(t)(u_0))\big]d\rhN(u_0) 
- \int \E_\om\big[F(\Phi_N^\om(t)(u_0))\big]d\rho_{\text{heat}, N, M}(u_0)\bigg|,\notag
\end{align}

\noi
where $R_{N, M}$ is as in \eqref{XB1d}.
The second term on the right-hand side 
tends to 0 as $M\to \infty$ 
since  $\rho_{\text{heat}, N, M}$ converges to $\rhN$ in total variation. 
As for the first term, 
by the uniform bound $R_{N, M}\leq 1$, 
 we have 
\begin{align*}
& \int \Big|
\E_\om\big[F(\Phi_N^\om(t)(u_0))\big]
- \E_\om\big[F(\Phi_{N, M}^\om(t)(u_0))\big]\Big|
R_{N, M}(u_0) d\mu_1(u_0) \\
&\leq \int \big|
F(\Phi_N^\om(t)(u_0))
- F(\Phi_{N, M}^\om(t)(u_0))\big|
d(\PP\otimes \mu_1)(\om, u_0)\\
&\leq \dl + 2\|F\|_{L^\infty}
\cdot \PP\otimes \mu_1 \Big(
 \big|
F(\Phi_N^\om(t)(u_0))
- F(\Phi_{N, M}^\om(t)(u_0))\big| >\dl\Big)
\end{align*}

\noi
for any $\dl > 0$.
In view of the convergence
of  $\Phi^\om_{N,M}(t)(u_0)$ to $\Phi_N^\om(t)(u_0)$ in probability with respect to $\PP\otimes \mu_1(\om, u_0)$ as $M \to \infty$, we then obtain
\begin{align*}
 \lim_{M\to\infty} \int \Big|
\E_\om\big[F(\Phi_N^\om(t)(u_0))\big]
- \E_\om\big[F(\Phi_{N, M}^\om(t)(u_0))\big]\Big|
R_{N, M}(u_0) d\mu_1(u_0)
\leq \dl 
\end{align*}

\noi
Since the choice of $\dl > 0$ was arbitrary, 
we conclude that 
\begin{align}
 \lim_{M\to\infty} \int \Big|
\E_\om\big[F(\Phi_N^\om(t)(u_0))\big]
- \E_\om\big[F(\Phi_{N, M}^\om(t)(u_0))\big]\Big|
R_{N, M}(u_0) d\mu_1(u_0)
= 0.
\label{XD2}
\end{align}

\noi
Hence, from \eqref{XD1}, \eqref{XD2}, 
and $Z_{N, M}\to Z_N$
together with the invariance of 
$\rho_{\text{heat}, N, M} $ under $\Phi_{N, M}(t)$, we obtain
\begin{align*}
\int \E_\om\big[F(\Phi_N^\om(t) & (u_0))\big]d\rhN(u_0) 
=\lim_{M\to\infty}\int
\E_\om\big[F(\Phi_{N, M}^\om(t)(u_0))\big]d\rho_{\text{heat}, N, M}(u_0)\\
&=\lim_{M\to\infty}\int F(u_0)d\rho_{\text{heat}, N, M}(u_0) = \int F(u_0)d\rhN(u_0).
\end{align*}

\noi
 This concludes the proof of Lemma~\ref{LEM:inv}.
\end{proof}

With Lemma~\ref{LEM:inv}, we can finally prove invariance of the Gibbs measure 
 $\rh$ in Theorem~\ref{THM:2}. Indeed, 
proceeding as in  the proof of Lemma~\ref{LEM:inv} above, 
we can easily deduce
 invariance of the Gibbs measure 
 $\rh$ 
 from 
\begin{itemize}
\item[(i)]
  the convergence of the truncated Gibbs measures $\rhN$ to the Gibbs measure $\rh$ 
  in total variation (Proposition \ref{PROP:Gibbs}),

\smallskip

\item[(ii)]
 the convergence in probability (with respect to $\PP\otimes \mu_1$) of the truncated dynamics \eqref{vQ1} 
 to the full dynamics \eqref{v3} (with $z = 0$)
(Theorem \ref{THM:1.2}), and 

\smallskip

\item[(iii)]
the invariance of the truncated Gibbs measure $\rhN$ in \eqref{Gibbs1a} under the truncated 
 SNLH~\eqref{NH4} (Lemma~\ref{LEM:inv}).

\end{itemize}

\noi
(We also use the  absolute continuity 
of  the truncated Gibbs measure $\rhN$ 
 with respect to  the massive Gaussian free field $\mu_1$, 
 with the uniformly (in $N$) bounded density $R_N \leq 1$.)
This concludes the proof of Theorem~\ref{THM:2}.

\section{Hyperbolic Liouville equation}\label{SEC:wave}
In this  section, 
we study the  stochastic damped nonlinear wave equation
 \eqref{NW1} with the exponential nonlinearity.
We restrict our attention to the defocusing  case ($\ld > 0$).

\subsection{Local well-posedness of SdNLW}\label{SUBSEC:wave1}
In this subsection, we present a proof of Theorem~\ref{THM:wave}
on local well-posedness of the system \eqref{XY3}:
\begin{align}
\begin{split}
X(t)&= \Phi_1(X, Y) \\
& \deff-\ld\be\int_0^te^{-\frac{(t-t')}{2}} S(t-t')\big(e^{\be z}F(\be X)e^{\be Y}\U\big)(t')dt',\\
Y(t)&= \Phi_2(X, Y)\\ & \deff
-\ld\be\int_0^t\big(\D(t-t')-e^{-\frac{(t-t')}{2}}S(t-t')\big)
\big(e^{\be z}F(\be X)e^{\be Y}\U\big)(t')dt', 
\end{split}
\label{XY3a}
\end{align}

\noi
where $F$ is as in \eqref{F1}
and $\U$ is a positive distribution in $L^p([0, 1]; W^{-\al, p}(\T^2))$
with $\al$ and
  $1<  p < \frac{8\pi}{\be^2}$  satisfying \eqref{al}.
Here,
$\D(t)$
and 
$S(t)$ are the linear propagators defined in  \eqref{D} 
and~\eqref{S1}
and $z$ denotes the linear solution 
in \eqref{zW} with initial data 
 $(v_0,v_1)\in\H^{s}(\T^2)$ for some $s>1$.
 
 We prove local well-posedness of \eqref{XY3a}
 by a contraction argument
 for  $(X, Y) \in \X^{s_1}_T\times\Y^{s_2}_T$, 
 where the Strichartz type spaces
$  \X^{s_1}_T$ and $\Y^{s_2}_T$
are defined in \eqref{XX1} and \eqref{YY1}
  for some $\frac 14<s_1<\frac 34$ and $1<s_2<2$ (to be chosen later).
See also \eqref{StriX}.
In the following, we fix 
 the following $s_1$-admissible pair $(q, r)$ and dual $s_1$-admissible pair $(\wt q, \wt r)$ 
 (see  Definition~\ref{DEF:pair} for (dual) admissible pairs):
\begin{align}
(q,r) = \bigg(\frac3{s_1},\frac6{3-4s_1}\bigg) 
\qquad\text{and}\qquad (\wt q,\wt r)=\bigg(\frac3{2+s_1},\frac6{7-4s_1}\bigg).
\label{pair1}
\end{align}

\noi
We also fix  $p\ge 2$, $0<\al\le \min(s_1,1-s_1) < 1$, 
$1\le \wt q \le \wt q_1 \le 2 \le q \le q_1 \le \infty$,  and $1 \le \wt r \le \wt r_1 \le 2 \le r_1 \le r < \infty$,  satisfying the following constraints:

\smallskip

\begin{itemize}
\item[(i)] For  the interpolation lemma (Lemma \ref{LEM:interpol}):
\begin{align}
\begin{split}
& \frac{1}{q_1} =\frac{1-\al/s_1}q + \frac{\al/s_1}\infty,\qquad  
\frac{1}{r_1}=\frac{1-\al/s_1}r+\frac{\al/s_1}{2},\\
& \frac{1}{\wt q_1} =\frac{1-\al/(1-s_1)}{\wt q}+\frac{\al/(1-s_1)}1,
  \qquad\frac{1}{\wt r_1}=\frac{1-\al/(1-s_1)}{\wt r}+\frac{\al/(1-s_1)}{2},
\end{split}
\label{CC8}
\end{align}

\smallskip
\item[(ii)]
For 
Lemmas \ref{LEM:prod1}\,(ii) and \ref{LEM:posprod}:
\begin{align}
 & \frac1{r_1}+\frac1p \leq  \frac1{\wt r_1}+\frac{\al}2, 
\label{CC10}
\end{align}

\smallskip
\item[(iii)] For H\"older's inequality in time
$\| fg \|_{L^{\wt q_1}_T} \les T^\ta \| f\|_{L^{q_1}_T} \| g\|_{L^p_T}$ 
for some $\ta > 0$:
\begin{align}
& \frac1{q_1}+\frac1p < \frac1{\wt q_1} \label{CC9}, 
\end{align}

\smallskip
\item[(iv)]
For Sobolev's inequality
$  W^{-\al, \wt r_1}(\T^2)\subset  H^{s_2 - 2} (\T^2)$:
\begin{align}
\frac{2-s_2 - \al}{2}\geq  \frac1{\wt r_1} - \frac 12 .
\label{CC10a}
\end{align}

\end{itemize}

\noi
The constraints (i) - (iv) allow us to prove local well-posedness
of the system \eqref{XY3a}.

We aim to obtain the best possible range $0 < \be^2 < \bw^2$
under the constraint \eqref{al} from Proposition \ref{PROP:Ups}: 
\begin{align}
& \al\ge(p-1)\frac{\bw^2}{4\pi} .
\label{CC10b}
\end{align}

\noi
First, note that from \eqref{CC8} with \eqref{pair1}, 
$\frac{1}{r_1} - \frac{1}{\wt r_1}$ depends only on $\al$, not on $s_1$.
Then, by 
saturating
 \eqref{CC10} in the constraint (ii) above
and substituting $\frac{1}{r_1} - \frac{1}{\wt r_1} = \frac{4}{3}\al - \frac 23$, we obtain
$\al$ in terms of $p$, 
which reduces \eqref{CC10b}
to 
\[\bw^2 \leq  \frac{2p-3}{5p(p-1)} 8 \pi .\]

\noi
The right-hand side is maximized 
when  $p=\frac{3+\sqrt{3}}{2}\simeq 2.37$, 
giving 
 \[\bw^2 = \frac{32-16\sqrt{3}}{5}\pi\simeq 0.86 \pi.\]

 \noi
 This in turn  implies  $\al = (p-1)\frac{\bw^2}{4\pi}=\frac{2\sqrt{3}-2}{5}$. 
  As for the other parameters, we have freedom  to take any $s_1\in [\al,1-\al]$
   which determines the values of $q,r,q_1,r_1,\wt q,\wt r,\wt q_1,\wt r_1$. 
In the following, 
we set  $s_1=1-\al$ (which gives the best regularity for $X$).
For the sake of concreteness, we 
choose the following parameters:
\begin{align}
\bw^2 &= \frac{32-16\sqrt{3}}{5}\pi,& p &= \frac{3+\sqrt{3}}{2}, & \al &= \frac{2\sqrt{3}-2}{5},\notag \\
  s_1 &= 1-\al, & s_2&= s_1+1,  \notag \\
q &= \frac{15}{7-2\sqrt{3}}, 
&  q_1 &= \frac{15}{9-4\sqrt{3}}, 
& \wt q_1 &= 1, \label{exponent1}\\
  r &= \frac{30}{8\sqrt{3}-13}, &  r_1 &= \frac{30}{16\sqrt{3}-21},
  & \wt r_1 &= 2. \notag 
\end{align}

\noi
We point out that the constraints
\eqref{CC9} and 
\eqref{CC10a} are satisfied
with this choice of parameters.

\begin{proof}[Proof of Theorem~\ref{THM:wave}]

Let  $ 0 < T < 1$
and $B \subset \X^{s_1}_T\times \Y^{s_2}_T$ denotes the ball of radius $O(1)$
centered at the origin.
We set  
\[ K = \| (v_0, v_1)\|_{\H^s}
\qquad \text{and}\qquad  R = \|\U\|_{L^{p}([0, 1]; W^{-\al,p}_x)}\]

\noi
for $(v_0,v_1)\in\H^s(\T^2)$ for some $s> 1$
and 
 a positive distribution $\U\in L^p([0, 1]; W^{-\al, p}(\T^2))$.

\smallskip

\noi
$\bullet$ {\bf Step 1:}
Let 
$(X,Y)\in B\subset \X^{s_1}_T\times \Y^{s_2}_T$.
By the  Strichartz estimate (Lemma~\ref{LEM:Stri}) with the definitions \eqref{StriX} and \eqref{StriN} of the Strichartz  space $\X_T^{s_1}$ and the dual space $\NN_T^{s_1}$, Lemma~\ref{LEM:interpol}, and H\"older's inequality
(with $\wt r_1, \wt q_1\le 2 < p$ in view of \eqref{exponent1}), we have 
\begin{align}
\begin{split}
\|\Phi_1(X,Y)\|_{\X^{s_1}_T} &\les \big\|e^{\be z} F(\be X)e^{\be Y}\U\big\|_{\NN^{s_1}_T}\\
&\les \big\|e^{\be z}F(\be X)e^{\be Y}\U\big\|_{L^{\wt q_1}_TW^{-\al,\wt r_1}_x}\\
&\les T^\ta \big\|e^{\be z}F(\be X)e^{\be Y}\U\big\|_{L^{p}_TW^{-\al,p}_x}
\end{split}
\label{FP0}
\end{align}

\noi
for some $\ta > 0$.

As in the parabolic case, 
we would like to exploit the positivity of $\U$
and apply  Lemma~\ref{LEM:posprod} at this point.
Unlike the parabolic case, however, 
the function $X$ does not have sufficient regularity in order to apply Lemma~\ref{LEM:posprod}~(i).
Namely, we do not know if $X(t)$ is continuous (in $x$)
for almost every $t \in [0, T]$.
We instead rely
 on the hypothesis (ii) in Lemma \ref{LEM:posprod}.
 
In the following discussion, 
we only discuss spatial regularities
holding for almost every $t \in [0, T]$.
For simplicity, we suppress the time dependence.
If we have 
 \begin{align}
 e^{\be z}F(\be X)e^{\be Y}\in W^{\al, r_0}(\T^2) 
\label{CC11}
 \end{align}
 
 \noi
 for  some $r_0 < r_1$ sufficiently close to $r_1$,
 then the 
condition  \eqref{CC10}
guarantees the hypothesis (ii) in Lemma~\ref{LEM:posprod}:
\begin{align}
\frac1{r_0}+\frac1p \le \bigg(\frac1{\wt r_1}+\frac{\al}2\bigg) + \eps
< 1 + \frac{\al}{2}
\label{CC17}
\end{align}

\noi
for some small $\eps > 0$, 
since $\wt r_1 >1 $.
We now verify \eqref{CC11}.
 The fractional Leibniz rule (Lemma~\ref{LEM:prod1}\,(i))
with $\frac{1}{r_0} = \frac{1}{r_1} + \frac{1}{r_2}$ for some large but finite $r_2$
yields 
\begin{align}
\begin{split}
\big\|e^{\be z}F(\be X)e^{\be Y}\big\|_{W^{\al,r_0}_x} 
&\les \|F(\be X)\|_{W^{\al, r_1}_x}\big\|e^{\be(z+Y)}\big\|_{L^{r_2}_x}\\
&\qquad+\|F(\be X)\|_{L^{r_2}_x}\big\|e^{\be(z+Y)}
\big\|_{W^{\al,r_1}_x}.
\end{split}
\label{CC12}
\end{align}

\noi
Recall that  $F$ is Lipschitz.
Hence, 
by  the fractional chain rules (Lemma \ref{LEM:FC}\,(i)), we have
\begin{align}
\begin{split}
\|F(\be X)\|_{W^{\al,r_1}_x} 
&\sim \|F(\be X)\|_{L^{r_1}_x}
 + \big\||\nabla|^{\al}F(\be X)\big\|_{L^{r_1}_x}\\
&\les 1+\|X\|_{W^{\al,r_1}_x} < \infty, 
\end{split}
\label{CC13}
\end{align}

\noi
since 
Lemma \ref{LEM:interpol} (i)  ensures that $X  \in W^{\al,r_1}(\T^2)$.
 Similarly, 
 by  the fractional chain rule (Lemma \ref{LEM:FC}\,(ii)), we have
\begin{align}
\begin{split}
\big\|e^{\be(z+Y)}\big\|_{W^{\al, r_1}_x} 
&\sim \big\|e^{\be(z+Y)}\big\|_{L^{r_1}_x}
+\big\||\nabla|^{\al}e^{\be(z+Y)}\big\|_{L^{r_1}_x}\\
&\les  e^{C\|z + Y\|_{L^{\infty}_x}}
+\big\|e^{\be(z+Y)}\big\|_{L^{r_3}_x}
\big\||\nabla|^{\al}(z+Y)\big\|_{L^{r_1 + \eps}_x}\\
&\les e^{C\|z + Y\|_{H^{1+\eps}_x}}\big(1+\|z+Y\|_{H^1_x}\big)
< \infty
\end{split}
\label{CC14}
\end{align}

\noi
for some large but finite $r_3 $ and small $\eps > 0$,
since $z \in H^s(\T^2) $ and $Y\in H^{s_2}(\T^2)$ with $s,s_2>1$.
In the last step, we used Sobolev's inequality 
$ \frac{1-\al}{2} \geq \frac 12 - \frac 1{r_1 + \eps} $,
which is guaranteed from \eqref{exponent1}:
 \begin{align}
  \frac{\al}2<\frac1{r_1}
\label{XE1}
\end{align}
    and choosing $\eps > 0$ sufficiently small.
Putting \eqref{CC12}, \eqref{CC13}, and \eqref{CC14}, 
we see that \eqref{CC11} is satisfied
for almost every $t \in [0, T]$.

By applying Lemma~\ref{LEM:posprod}
to \eqref{FP0}, 
we have  
\begin{align}
\begin{split}
\|\Phi_1(X,Y)\|_{\X^{s_1}_T}
&\les T^\ta \big\|e^{\be z}F(\be X)e^{\be  Y}\big\|_{L^{\infty}_{T,x}}\|\U\|_{L^{p}_TW^{-\al,p}_x}\\
&\les T^\ta e^{C\|z+Y\|_{L^{\infty}_{T,x}}}\|\U\|_{L^{p}_TW^{-\al,p}_x}\\
&\les T^{\ta}e^{CK}R.
\end{split}
\label{FP1}
\end{align}

\noi

 Next, by applying  Lemma~\ref{LEM:smooth}, 
Sobolev's inequality with \eqref{CC10a}
and $p > \wt r_1$, 
and 
  proceeding as in~\eqref{FP1}, we have
\begin{align}
\begin{split}
\|\Phi_2(X,Y)\|_{\Y^{s_2}_T} &\les\big\|e^{\be z}F(\be X)e^{\be Y}\U \big\|_{L^1_TH^{s_2-2}_x}\\
&\les T^\ta \big\|e^{\be z}F(\be X)e^{\be Y}\U \big\|_{L^p_TW^{-\al,p }_x}\\
&\les T^\ta e^{C K}R.
\end{split}
\label{FP2}
\end{align}

By choosing $T = T(K, R) > 0$ sufficiently small, 
the estimates \eqref{FP1} and \eqref{FP2} 
show boundedness of $\Phi = (\Phi_1, \Phi_2)$
on the ball
$B \subset \X^{s_1}_T\times \Y^{s_2}_T$.

\smallskip

\noi
$\bullet$ {\bf Step 2:}
Next, we establish difference estimates.
Let  $(X_1,Y_1),(X_2,Y_2)
\in B \subset \X^{s_1}_T\times \Y^{s_2}_T$.
Write
\begin{align}
\begin{split}
\|\Phi(X_1, & Y_1)-\Phi(X_2,Y_2)\|_{\X^{s_1}_T\times \Y^{s_2}_T}\\
&
\le \|\Phi_1(X_1,Y_1)-\Phi_1(X_2,Y_1)\|_{\X^{s_1}_T} + \|\Phi_2(X_1,Y_1)-\Phi_2(X_2,Y_1)\|_{\Y^{s_2}_T}\\
&\hphantom{X}
+\|\Phi_1(X_2,Y_1)-\Phi_1(X_2,Y_2)\|_{\X^{s_1}_T}+\|\Phi_2(X_2,Y_1)-\Phi_2(X_2,Y_2)\|_{\Y^{s_2}_T}\\
&=: \1_1+\1_2+\II_1+\II_2.
\end{split}
\label{XE2}
\end{align}

Recall from 
\eqref{F2} and \eqref{F3} that 
\[ F(\be X_1)-F(\be X_2) = \be (X_1-X_2)G(X_1,X_2).\]

\noi
Then, by the Strichartz estimate (Lemma \ref{LEM:Stri}), 
Lemma~\ref{LEM:interpol}\,(ii), 
and  Lemma~\ref{LEM:prod1}\,(ii), we have
\begin{align*}
\1_1&\les \big\|e^{\be z}\big(F(\be X_1)-F(\be X_2)\big)e^{\be Y_1}\U\big\|_{\NN^{s_1}_T}\\
&\les \big\|e^{\be z}(X_1-X_2)G(X_1,X_2)e^{\be Y_1}\U\big\|_{L^{\wt q_1}_TW^{-\al,\wt r_1}_x}\\
& \les T^{\ta}\big\|X_1-X_2\big\|_{L^{q_1}_TW^{\al,r_1}_x}\big\|e^{\be z}G(X_1,X_2)e^{\be Y_1}\U\big\|_{L^p_TW^{-\al,p}_x}, 
\end{align*}
 provided that 
 \begin{align}
 \ta = \frac1{\wt q_1} -\frac1{q_1}-\frac1p >0\qquad\text{ and }\qquad\frac1{r_1}+\frac1p\le \frac1{\wt r_1}+\frac{\al}2,
 \label{CC16}
 \end{align}
 
 \noi
 which are precisely the constraints \eqref{CC9} and \eqref{CC10}.
 Then, applying 
 Lemma \ref{LEM:posprod} as in \eqref{FP1} along with the boundedness of $G$, 
 we obtain 
 \begin{align}
 \1_1\les T^{\ta} e^{C K} R\|X_1-X_2\|_{\X^{s_1}_T}, 
  \label{FP3}
 \end{align}

 \noi
 where we also used Lemma \ref{LEM:interpol}\,(i) to estimate the  norm of $X_1-X_2$.
 As for $\1_2$, 
  Lemma~\ref{LEM:smooth} 
  and Sobolev's inequality with \eqref{CC10a} yield
\begin{align*}
\1_2&\les \big\|e^{\be z}(X_1-X_2)G(X_1,X_2)e^{\be Y_1}\U\big\|_{L^1_TH^{s_2-2}_x}\\
&\les\big\|e^{\be z}(X_1-X_2)G(X_1,X_2)e^{\be Y_1}\U\big\|_{L^1_TW^{-\al,\wt r_1}_x}.
\end{align*}

\noi
Then, proceeding as above, we obtain
 \begin{align}
 \1_2\les T^{\ta} e^{C K} R\|X_1-X_2\|_{\X^{s_1}_T}.
  \label{FP4}
 \end{align}

As for $\II_1$, 
by Lemmas \ref{LEM:Stri} and \ref{LEM:interpol}\,(ii), 
  the fundamental theorem of calculus (as in~\eqref{F2} and \eqref{F3}),
 Lemma \ref{LEM:prod1}\,(ii) with \eqref{CC16}, 
 and then proceeding as in \eqref{FP1} with Lemma~\ref{LEM:posprod},  
  we have 
\begin{align}\label{FP5}
\II_1
&\les \big\|e^{\be z}F(\be X_2)\big(e^{\be Y_1}-e^{\be Y_2}\big)\U \big\|_{L^{\wt q_1}_TW^{-\al,\wt r_1}_x}
\notag \\
&\les \bigg\|e^{\be z}F(\be X_2)
(Y_1 - Y_2)\int_0^1\exp\big(\tau \be Y_1+(1-\tau) \be Y_2\big)d\tau \cdot \U \bigg\|_{L^{\wt q_1}_TW^{-\al,\wt r_1}_x}\\
&\les T\ta \big\|Y_1-Y_2\big\|_{L^{q_1}_TW^{\al, r_1}_x}\bigg\|e^{\be z} F(\be X_2)
\int_0^1\exp\big(\tau \be Y_1+(1-\tau) \be Y_2\big)d\tau\cdot \U\bigg\|_{L^{p}_TW^{-\al,p}_x}\notag \\
&\les T^{\ta }e^{C  K}R
 \|Y_1-Y_2\|_{\Y^{s_2}_T}.\notag 
\end{align}

\noi
In the last step, we use the embedding $H^{s_2}(\T^2) \subset W^{\al, r_1}(\T^2)$,
which is guaranteed by~\eqref{XE1} and $s_2 > 1$.
Similarly, 
by  applying  Lemma~\ref{LEM:smooth}
and Sobolev's inequality with \eqref{CC10a}
  and proceeding as in~\eqref{FP5}, 
we have
\begin{align}\label{FP6}
\begin{split}
\II_2 & \les \big\|e^{\be z}F(\be X_2)\big(e^{\be Y_1}-e^{\be Y_2}\big)\U \big\|_{L^{1}_TW^{-\al,\wt r_1}_x}
\\
&\les T^{\ta }e^{C  K}R
 \|Y_1-Y_2\|_{\Y^{s_2}_T}.
\end{split}
\end{align}

\noi
From Step 1, \eqref{XE2}, \eqref{FP3}, \eqref{FP4}, \eqref{FP5}, and \eqref{FP6}, 
we conclude that $\Phi = (\Phi_1, \Phi_2)$ is a contraction
on the ball $ B\subset \X^{s_1}_T\times \Y^{s_2}_T$,
thus establishing local well-posedness of \eqref{XY3a}.

\smallskip

\noi
$\bullet$ {\bf Step 3:}
Continuous
dependence of the solution $(X, Y)$
on initial data $(v_0, v_1)$ easily follows from the argument in Step 2.
Hence, it remains to prove
 continuous
dependence of the solution $(X, Y)$ on the ``noise'' term $\U$.

Let $(X_j, Y_j)\in B \subset \X^{s_1}_T\times \Y^{s_2}_T$
be solutions to \eqref{XY3a}
with a noise term $\U_j$, $j = 1, 2$.
In estimating the difference, 
we can apply the argument in Step 2
to handle all the terms except
for the following two terms:
\begin{align*}
&\bigg\|\int_0^te^{-\frac{(t-t')}{2}} S(t-t')\big(e^{\be z}F(\be X_1)e^{\be Y_1}(\U_1-\U_2)\big)(t')dt'\bigg\|_{\X^{s_1}_T}\\
&\hphantom{X} +  
\bigg\|\int_0^t\big(\D(t-t')-e^{-\frac{(t-t')}{2}} S(t-t')\big)\big(e^{\be z}F(\be X_1)e^{\be Y_1}(\U_1-\U_2)
\big)(t')dt'\bigg\|_{\Y^{s_2}_T}\\
& =: \III_1+\III_2.
\end{align*}

\noi
The main point is that the difference $\U_1 - \U_2$
does not enjoy positivity and hence we can not apply Lemma \ref{LEM:posprod}.

Let  $r_0 < r_1$ sufficiently close to $r_1$,
satisfying \eqref{CC17}:
\begin{align}
\frac1{r_0}+\frac{1- \eps p}p \le\frac1{\wt r_1}+\frac{\al}2.
\label{CC17x}
\end{align}

\noi
By  the Strichartz estimate (Lemma~\ref{LEM:Stri}),  Lemma \ref{LEM:interpol} (ii), 
and Lemma \ref{LEM:prod1}\,(ii) with \eqref{CC17x}, 
we have
\begin{align*}
\III_1& \les \big\|e^{\be z}F(\be X_1)e^{\be Y_1}(\U_1-\U_2)\big\|_{L^{\wt q_1}_TW^{-\al,\wt r_1}_x}\\
&\les T^{\ta}\big\|e^{\be z} F(\be X_1)e^{\be Y_1}
\big\|_{L^{ q_1}_TW^{\al, r_0}_x}\|\U_1-\U_2\|_{L^{p}_TW^{ -\al,  \frac{p}{1- \eps p}}_x}.
\end{align*}

\noi
Then, 
applying 
\eqref{CC12}, \eqref{CC13}, and \eqref{CC14}
along with  H\"older's inequality in time
and Sobolev's inequality, 
we obtain 
\begin{align*}
\III_1
&\les
 T^{\ta}e^{CK} (1 + K) \|\U_1-\U_2\|_{L^{p}_TW^{ -\al + 2\eps , p}_x}.
\end{align*}

\noi
Thanks to 
Lemma \ref{LEM:smooth}
and the embedding
$ L^{\wt q_1}([0, T]; W^{-\al, \wt r_1}(\T^2))
\subset L^{1}([0, T]; H^{s_2 - 2} (\T^2)) $ (see \eqref{CC10a}), 
the second term $\III_2$
can be handled in an analogous manner.

Let $0 < \be^2 < \bw^2$.
Then, the pair $(\al, p)$ in \eqref{exponent1}
satisfies the condition \eqref{al}.
Then, by taking $\eps > 0$ sufficiently small, 
we see that the pair $(\al-2\eps, p)$
also satisfies the condition~\eqref{al}.
Hence, as $\U_2$ tends to $\U_1$ in 
 $L^p([0, 1]; W^{-\al + 2\eps, p}(\T^2))$, 
 we conclude that $\III_1 + \III_2 \to 0$, 
 establishing the continuity of the solution map
 $(v_0, \U) \mapsto (X, Y)$.
This completes the proof of Theorem~\ref{THM:wave}.
\end{proof}

\subsection{Almost sure global well-posedness and invariance of the Gibbs measure}
In this  subsection, we briefly discuss a proof of Theorem \ref{THM:4}. 
As mentioned in Section \ref{SEC:1}, 
 the well-posedness result of Theorem \ref{THM:wave} proved  in the previous subsection 
 is only local in time
 and hence we need to apply Bourgain's invariant measure argument \cite{BO94, BO96} 
 to extend the dynamics  globally in time almost surely with respect 
 to the Gibbs measure $\rw$ and then show invariance of the Gibbs measure $\rw$.

Given $N\in\N$,  we consider the following truncated SdNLW:
\begin{align}\label{NW6}
\begin{cases}
\dt^2 u_N +\dt u_N +(1-\Dl)u_N + \ld\be C_N\Q_Ne^{\be \Q_Nu_N}=\sqrt{2}\xi\\
(u_N,\dt u_N)|_{t=0}=(\Q_Nw_0,\Q_Nw_1),
\end{cases}
\end{align}

\noi
where $\Q_N$ is as in \eqref{Q} and 
$(w_0,w_1)$ is as in \eqref{IV2}.
Namely, $\Law(w_0, w_1) = 
\mu_1\otimes\mu_0$.\footnote
{In view of the equivalence of  $\mu_1\otimes\mu_0$  and the Gibbs measure $\rw$ in \eqref{Gibbs2},
it suffices to study \eqref{NW6} with the initial data distributed by $\mu_1\otimes \mu_0$.}

By writing  $u_N =  X_N+Y_N + \Psi $, 
where  $\Psi = \Psi^\text{wave}$ is as in \eqref{W6}, we have 
\begin{align*}
\begin{split}
X_N(t)&=-\ld\be\int_0^te^{-\frac{(t-t')}{2}}S(t-t')\Q_N
\big(e^{\be \Q_N X_N}e^{\be \Q_N Y_N}\U_N \big)(t')dt', \\
Y_N(t) &= -\ld\be\int_0^t\big(\D(t-t')-e^{-\frac{(t-t')}{2}}S(t-t')\big)\Q_N
\big(e^{\be \Q_N X_N}e^{\be \Q_N Y_N}\U_N \big)(t')dt'.
\end{split}
\end{align*}

\noi
By the positivity of the smoothing operator $\Q_N$, 
$X_N$ enjoys the sign-definite structure:
\begin{align*}
\be \Q_NX_N = -\ld\be^2\int_0^te^{-\frac{(t-t')}{2}}S(t-t')\Q_N^2
\big(e^{\be \Q_N X_N}e^{\be \Q_N Y_N}\U_N \big)(t')dt' \le 0,
\end{align*}

\noi
thanks to $\ld > 0$ and the positivity of the linear wave propagator $S(t)$
(Lemma \ref{LEM:waveker}).
Hence,  it is enough to consider
\begin{align}
X_N(t)&=-\ld\be\int_0^te^{-\frac{(t-t')}{2}}S(t-t')\Q_N\big(F(\be \Q_N X_N)e^{\be \Q_N Y_N}\U_N \big)(t')dt', 
\label{XY5}
\\
Y_N(t) &= -\ld\be\int_0^t\big(\D(t-t')-e^{-\frac{(t-t')}{2}}S(t-t')\big)\Q_N\big(F(\be \Q_N X_N)e^{\be \Q_N Y_N}\U_N \big)(t')dt', 
\notag
\end{align}

\noi
where  $F$ is as in \eqref{F1}.

In view of the uniform (in $N$) boundedness of $\Q_N$ on $L^p(\T^2)$, $1 \leq p \leq \infty$, 
 we can argue as in Subsection~\ref{SUBSEC:wave1}
to prove local well-posedness of 
the system \eqref{XY5} in a uniform manner for any $N \in \N$.
In order to prove  convergence of the solution $\big((X_N,\dt X_N),(Y_N,\dt Y_N)\big)$ to \eqref{XY5} towards the solution $\big((X,\dt X),(Y,\dt Y)\big)$ of the untruncated dynamics \eqref{XY3a}, 
we can repeat the argument in Step 3
of the previous subsection 
to estimate the difference between 
$\big((X_N,\dt X_N),(Y_N,\dt Y_N)\big)$ and $\big((X,\dt X),(Y,\dt Y)\big)$. 
As in Subsection \ref{SUBSEC:Gibbs1}, 
we need to estimate the terms with 
 $\Q_N - \Id$:
\begin{align*}
&  \bigg\|\int_0^te^{-\frac{(t-t')}{2}}S(t-t')\big(\Q_N-\Id\big)\big(F(\be \Q_N X_N)e^{\be \Q_N Y_N}\U_N \big)(t')dt'\bigg\|_{\X^{s_1}_T}\\
&\hphantom{X} 
+\bigg\|\int_0^t\big(\D(t-t')-e^{-\frac{(t-t')}{2}}S(t-t')\big)\big(\Q_N-\Id\big)\big(F(\be \Q_N X_N)e^{\be \Q_N Y_N}\U_N \big)(t')dt'\bigg\|_{\Y^{s_2}_T}\\
& =:\IV_1 + \IV_2.
\end{align*}

The property  \eqref{QN-1} of $\Q_N$ allows
us to gain a negative power of $N$ at a slight expense of regularity.
By a slight modification of  the argument from the previous subsection
(see \eqref{FP1}), 
we have
\begin{align}
\begin{split}
\IV_1 &\les \Big\|\big(\Q_N-\Id\big)
\big(F(\be \Q_N X_N)e^{\be \Q_N Y_N}\U_N \big)\Big\|_{L^{\wt q_1}_TW^{-\al,\wt r_1}_x}\\
&\les N^{-\eps}\big\|F(\be \Q_N X_N)e^{\be \Q_N Y_N}\U_N \big\|_{L^{\wt q_1}_TW^{-\al+\eps,\wt r_1}_x}\\
 &\les T^{\ta}N^{-\eps}\exp\big(C\|Y_N\|_{L^{\infty}_TH^{s_2}_x}\big)\big\|\U_N\big\|_{L^p_TW^{-\al+\eps,p}_x}.
 \end{split}
\label{FP9}
\end{align}

\noi
Note that by choosing $\eps > 0$ sufficiently small, 
the range  $0 < \be^2<\bw^2$ does not change
even when we replace $- \al$ in \eqref{FP1} by $-\al + \eps$ in \eqref{FP9}.
Similarly, we have
\begin{align}
\IV_2&\les T^{\ta}N^{-\eps}\exp\big(C\|Y_N\|_{L^{\infty}_TH^{s_2}_x}\big)\big\|\U\big\|_{L^p_TW^{-\al+\eps,p}_x}.
\label{FP10}
\end{align}

\noi
The estimates \eqref{FP9} and \eqref{FP10}
combined with the argument in the previous subsection
allows us 
to prove the desired convergence
of 
$\big((X_N,\dt X_N),(Y_N,\dt Y_N)\big)$ to $\big((X,\dt X),(Y,\dt Y)\big)$. 
The rest of the argument follows
from  applying  Bourgain's invariant measure argument \cite{BO94, BO96}.
Since it is standard,  we omit details.
See,  for example, \cite{ORTz, STz, GKOT, OOTol, Bring2, OOTol2} for details.


\appendix

\section{On local well-posedness of SNLH without using the positivity}
\label{SEC:A}
In this appendix, 
we revisit the fixed point problem \eqref{D2a} for SNLH:
\begin{align}
v = \Phi_{v_0, \U}(v),
\label{A1}
\end{align}

\noi
 where the map $\Phi = \Phi_{v_0, \U}$ is defined in \eqref{D2}.
In Sections \ref{SEC:fpa} and \ref{SEC:WP}, we studied this problem
by exploiting the positivity of $\U$ and furthermore the sign-definite structure of the equation when $\ld > 0$.
In the following, we study \eqref{D2a}
for general $\ld \in \R\setminus \{0\}$
and present a simple contraction argument
{\it without} using any positivity of $\U$
for the range $0<\be^2<\frac{4}{3}\pi \simeq  1.33\pi$.
This simple argument provides Lipschitz dependence
of a solution on initial data $v_0$ and noise $\U$.

Let $0 < \al < 1$ and  $p\ge 2$ such that
\begin{align}
\label{cons3}
\displaystyle p'\bigg(\frac{\al}2+\frac1p\bigg) < 1
\qquad \text{and}\qquad 
0<\al\le\frac2p.
\end{align}

\begin{theorem}\label{THM:A}
Let $\al, p$ be as above.
Then,  given any $v_0\in H^{1+\eps}(\T^2)$
and $\U\in L^p([0,1];W^{-\al,p}(\T^2))$ for some small $\eps > 0$, 
there exist $T=T\big(\|v_0\|_{L^{\infty}}, \| \U \|_{L^p([0, 1]; W^{-\al,p}_x)}\big)>0$ 
and  a unique solution 
 $v\in C([0,T];W^{\al + \eps ,\frac 2\al}(\T^2))$
to \eqref{A1},
depending continuously on the initial data $v_0$
and the noise $\U$.

\end{theorem}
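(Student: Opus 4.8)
\textbf{Proof proposal for Theorem \ref{THM:A}.} The plan is to run a standard Picard iteration on the Duhamel map $\Phi_{v_0,\U}$ in the space $C([0,T];W^{\al+\eps,2/\al}(\T^2))$, crucially \emph{without} invoking the positivity of $\U$ (so Lemma \ref{LEM:posprod} is replaced by the generic product estimate Lemma \ref{LEM:prod1}\,(ii)). First I would record that, with $q \deff \tfrac2\al \ge 2$ and by \eqref{cons3}, Sobolev's embedding gives $W^{\al+\eps,q}(\T^2) \hookrightarrow C(\T^2)$ (since $(\al+\eps)q = 2 + \eps q > 2$), so that any $v$ in the iteration ball is bounded and continuous, and $e^{\be v}$ makes sense with $\|e^{\be v}\|_{L^\infty_x} \le e^{C\|v\|_{W^{\al+\eps,q}_x}}$; likewise $z = P(t)v_0 \in C((0,T];C(\T^2))$ for $v_0 \in H^{1+\eps}(\T^2) \subset L^\infty(\T^2)$, and by the Schauder estimate (Lemma \ref{LEM:heatker}) the deterministic linear part $z$ contributes only harmlessly. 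The core nonlinear estimate is: apply the Schauder estimate $\|\jb{\nabla}^{\al+\eps}P(t-t')g\|_{L^q_x} \lesssim (t-t')^{-\frac{\al+\eps}{2}-(\frac1p-\frac1q)}\|\jb{\nabla}^{-\al}g\|_{L^p_x}$ from Lemma \ref{LEM:heatker}\,(ii) to $g = e^{\be z}e^{\be v}\U$, then estimate $\|\jb{\nabla}^{-\al}(e^{\be z}e^{\be v}\U)\|_{L^p_x}$ by Lemma \ref{LEM:prod1}\,(ii) as $\lesssim \|\jb{\nabla}^{\al}(e^{\be z}e^{\be v})\|_{L^{p_1}_x}\|\jb{\nabla}^{-\al}\U\|_{L^{p_2}_x}$ for suitable exponents, and control $\|\jb{\nabla}^{\al}(e^{\be z}e^{\be v})\|_{L^{p_1}_x}$ by the fractional chain rule (Lemma \ref{LEM:FC}) in terms of $\|e^{\be(z+v)}\|_{L^\infty_x}$ and $\|z+v\|_{W^{\al+\eps,q}_x}$. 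Finally Young's inequality in time, using the first inequality in \eqref{cons3} to guarantee the time singularity $(t-t')^{-(\al/2 + 1/p)+\,\text{(Sobolev loss)}}$ is $L^{p'}_t$-integrable, yields a bound $\lesssim T^{\ta}\,e^{C(\|v_0\|_{L^\infty}+\|v\|_{X_T})}\|\U\|_{L^p_TW^{-\al,p}_x}$ with $\ta>0$, closing the ball for $T$ small depending only on $\|v_0\|_{L^\infty}$ and $\|\U\|_{L^p_{[0,1]}W^{-\al,p}_x}$.

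For the contraction and for continuous dependence on $(v_0,\U)$, I would use the identity \eqref{Q3}, $e^{\be v_1}-e^{\be v_2} = \be(v_1-v_2)\int_0^1 e^{\be\tau v_1 + \be(1-\tau)v_2}\,d\tau$, together with the same chain of estimates (Lemma \ref{LEM:FC} to handle the $W^{\al+\eps,q}$-norm of the product $e^{\be z}(v_1-v_2)\int_0^1 e^{\cdots}d\tau$, Lemma \ref{LEM:prod1}\,(ii) to pair against $\jb{\nabla}^{-\al}\U$, Schauder plus Young in time) to obtain $\|\Phi(v_1)-\Phi(v_2)\|_{X_T} \le T^{\ta}C(\|v_0\|_{L^\infty},R)\|v_1-v_2\|_{X_T}$; shrinking $T$ makes $\Phi$ a contraction, hence a unique fixed point. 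Since the difference estimate never used positivity of $\U$, the \emph{same} argument applied to two noises $\U_1,\U_2$ (writing the difference of the two Duhamel integrals, splitting into a term with $\U_1-\U_2$ and a term with $v^{(1)}-v^{(2)}$) gives $\|v^{(1)}-v^{(2)}\|_{X_T} \lesssim C(\|v_0\|_{L^\infty},R)\big(\|\U_1-\U_2\|_{L^p_TW^{-\al,p}_x} + T^\ta\|v^{(1)}-v^{(2)}\|_{X_T}\big)$, and absorbing the last term yields Lipschitz dependence on $\U$ in $L^p([0,T];W^{-\al,p}(\T^2))$; Lipschitz dependence on $v_0$ follows from the extra input $\|P(t)(v_0^{(1)}-v_0^{(2)})\|_{X_T} \lesssim \|v_0^{(1)}-v_0^{(2)}\|_{L^\infty}$ via Schauder. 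Uniqueness in the full space $C([0,T];W^{\al+\eps,q}(\T^2))$ (not just in the ball) follows by a standard continuity-in-time argument, as in Remark \ref{REM:uniq}.

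The main obstacle is purely a bookkeeping one: one must verify that exponents $p_1,p_2$ (and the auxiliary Lebesgue exponents appearing in the fractional chain rule of Lemma \ref{LEM:FC}, which require an $L^\infty_x$-factor from $e^{\be(z+v)}$ times an $L^{p_1}_x$-factor from the fractional derivative of $z+v$) can be chosen consistently so that all of: (a) the H\"older relation $\tfrac1p = \tfrac1{p_1}+\tfrac1{p_2}$ or its $\le$ version feeding Lemma \ref{LEM:prod1}\,(ii) with loss $\le \tfrac\al2$; (b) $\jb{\nabla}^{-\al}\U \in L^{p_2}_x$ with $p_2 = p$ (so no extra integrability is demanded of $\U$); (c) the Schauder time exponent $-\tfrac{\al+\eps}{2} - (\tfrac1p - \tfrac1q) > -\tfrac1{p'}$; and (d) $\al \le \tfrac2p$ for the Sobolev embedding into $C(\T^2)$ — are simultaneously satisfiable, which is exactly what the two constraints in \eqref{cons3} encode once one takes $\eps>0$ small. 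Finally, optimizing $\be^2 < \tfrac{4\pi\al}{p-1}$ (the range in Proposition \ref{PROP:Ups} for $\U \in L^p_TW^{-\al,p}_x$) over $(\al,p)$ subject to \eqref{cons3} — saturating $\al = \tfrac2p$ gives $\be^2 < \tfrac{8\pi}{p(p-1)}$, maximized as $p\to 2$ yielding $\be^2 < \tfrac43\pi$ — accounts for the stated threshold; I would remark that this is slightly below the $\tfrac{8\pi}{3+2\sqrt2}$ of Theorem \ref{THM:1}, the loss being the price of not exploiting positivity.
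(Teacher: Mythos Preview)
Your approach is essentially identical to the paper's: Schauder estimate to trade time for regularity, Lemma~\ref{LEM:prod1}\,(ii) (in place of the positivity-based Lemma~\ref{LEM:posprod}) with the choice $p_1=\tfrac2\al$, $p_2=p$, then the fractional chain rule (Lemma~\ref{LEM:FC}\,(ii)) together with the embeddings $H^{1+\eps}\subset W^{\al+\eps,2/\al}\cap L^\infty$ to control $\|e^{\be(z+v)}\|_{W^{\al,2/\al}_x}$, and finally Young in time; the difference and $\U$-continuity estimates go exactly as you outline. Two small slips to fix: (1) the Schauder exponent should read $(t-t')^{-\frac{2\al+\eps}{2}-(\frac1p-\frac1q)}$ (you need to gain $2\al+\eps$ derivatives, not $\al+\eps$), which with $q=\tfrac2\al$ collapses to $(t-t')^{-\frac{\al+\eps}{2}-\frac1p}$ and gives precisely the $L^{p'}_t$-integrability condition \eqref{cons3}; (2) in the optimization you cannot saturate $\al=\tfrac2p$ for $p$ near $2$, since the first constraint in \eqref{cons3} forces $\al<\tfrac{2(p-2)}{p}$, which is more restrictive for $p<3$ --- the two constraints meet at $p=3$, where $\tfrac{8\pi}{p(p-1)}=\tfrac{4\pi}{3}$.
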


In view of Proposition \ref{PROP:Ups}
on the regularity of the Gaussian multiplicative chaos $\U_N$, 
we see that Theorem \ref{THM:A}
provides local well-posedness
of SNLH~\eqref{NH5}
for the range:
\begin{align*}
0 < \be^2 & < \frac{4\pi \al}{p-1} <  8\pi \frac{\min\big(\frac 1p,1-\frac2p\big)}{p-1}, 
\end{align*}

\noi
where we used both of the inequalities in \eqref{cons3}.
 Hence,  optimizing 
 \[\min\Bigg(\max_{p\ge 3}\frac1{p(p-1)},\max_{2\le p\le 3}\frac{p-2}{p(p-1)}\Bigg),\] 
 we find that the maximum is attained at  $p=3$, 
which  gives the range $0 < \be^2 < \frac{4}{3}\pi $.
With $p = 3$, we can take
$\al=\frac23 - \eps $
 for some small $\eps > 0$
 such that \eqref{cons3} is satisfied.
We point out that our argument requires 
the initial data $v_0$ to belong to a smaller space 
$H^{1+\eps}(\T^2)\subset L^\infty(\T^2)$.

\begin{proof}[Proof of Theorem \ref{THM:A}]
Fix small $\eps > 0$
such that 
\begin{align}
p'\bigg(\frac{\al+\eps}2+\frac1p\bigg) < 1.
\label{cons4}
\end{align}

\noi
Given   $v_0\in H^{1+\eps}(\T^2)$
and  $\U\in L^p([0,1];W^{-\al,p}(\T^2))$, 
we consider the map $\Phi = \Phi_{v_0, \U}$ defined in \eqref{D2}
and set 
 $z=P(t)v_0$ as in \eqref{linear}.
Let $B\subset C([0,T];W^{\al+\eps,\frac2{\al}}(\T^2))$ be the ball of radius $O(1)$
centered at the origin
and set 
\[ K = \|v_0\|_{H^{1+\eps}} \qquad \text{and}\qquad 
R = \|\U\|_{L^p([0,1];W^{-\al,p})}.\]

Let $0 < T \leq 1$.
By the  Schauder estimate (Lemma \ref{LEM:heatker})
with  $\frac2\al \ge p$ (as guaranteed in \eqref{cons3}), 
Lemma~\ref{LEM:prod1}\,(ii) with $\frac1p + \frac1{2/\al} = \frac1p + \frac\al2$,
and H\"older's inequality in time with \eqref{cons4}, 
we have 
\begin{align}
\begin{split}
\|\Phi (v)\|_{C_TW^{\al+\eps,\frac2\al}_x} 
& \les \bigg\| \int_0^t (t-t')^{-\frac{2\al + \eps}2 - (\frac{1}p-\frac{\al}2)}
\big\|e^{\be z } e^{\be v}\U(t') \big\|_{W^{-\al,p}_x}  dt' \bigg\|_{L^{\infty}_T}\\
& \les \big\| e^{\be (z+ v)} \big\|_{L_T^\infty W^{\al,\frac2\al}_x}
\int_0^T (t-t')^{-\frac{\al + \eps}2 - \frac{1}p}
\|\U (t')\|_{W^{-\al,p}_x }   dt'\\
 &\les T^\ta \big\| e^{\be (z+ v)} \big\|_{L_T^\infty W^{\al,\frac2\al}_x}   \|\U \|_{L_T^p W^{-\al,p}_x }
\end{split}
\label{A3}
\end{align}

\noi
for some $\ta > 0$.
By the fractional chain rule (Lemma \ref{LEM:FC}\,(ii))
and the Sobolev embeddings:
\begin{align}
\begin{split}
H^{1+\eps}(\T^2) & \subset 
W^{\al+\eps,\frac2\al}(\T^2) \cap L^{\infty}(\T^2),\\
W^{\al+\eps,\frac2\al}(\T^2) & \subset W^{\al,\frac2{\al-\eps}}(\T^2)\cap L^{\infty}(\T^2),
\end{split}
\label{A3a}
\end{align}
 we have
\begin{align}
\begin{split}
\big\| e^{\be (z+ v)} \big\|_{L_T^\infty W^{\al,\frac2\al}_x}
&\sim \big\| e^{\be (z+ v)} \big\|_{L_T^\infty L^{\frac2\al}_x} + \big\| |\nabla|^{\al}e^{\be (z+ v)} \big\|_{L_T^\infty L^{\frac2\al}_x}\\
&\les e^{C\|z+v\|_{L^{\infty}_{T,x}}}+\big\|e^{\be (z+v)}\big\|_{L^{\infty}_TL^{\frac2\eps}_x}\big\||\nabla|^{\al}(z+v)\big\|_{L^{\infty}_TL^{\frac2{\al-\eps}}_x}\\
&\les \exp\Big(C\big(\|v_0 \|_{H^{1+\eps}}+\|v\|_{L^{\infty}_TW^{\al+\eps,\frac2\al}_x}\big)\Big)\\
& \hphantom{X}
\times
\Big(1+\|v_0\|_{H^{1+\eps}} +\| v\|_{L^{\infty}_TW^{\al+\eps,\frac2\al}_x}\Big).
\end{split}
\label{A4}
\end{align}

\noi
Hence, from \eqref{A3} and \eqref{A4}, we have
\begin{align}
\big\|\Phi(v)\big\|_{C_TW^{\al+\eps,\frac2\al}_x} 
&\les T^\ta  e^{CK} (1+K) R
\label{Z1}
\end{align}

\noi
for any $v \in B$.

Proceeding as in \eqref{A3}, we have
\begin{align}
\|\Phi (v_1) -\Phi (v_2) \|_{C_T W^{\al + \eps, \frac2\al}_x} 
\les T^\ta   \big\| e^{\be z} (e^{\be v_1} -e^{\be v_2} ) \big\|_{L^\infty_T W^{\al, \frac2\al}_x}\| \U\|_{L^p_T W^{-\al , p}_x}.
\label{A5}
\end{align}

\noi
By \eqref{Q3}, 
 the fractional Leibniz rule (Lemma \ref{LEM:prod1}\,(i)),  
followed by  the fractional chain rule as in \eqref{A4}, 
we have
\begin{align}
\begin{split}
 \big\| e^{\be z}  & (e^{\be v_1} -e^{\be v_2} ) \big\|_{L^{\infty}_TW^{\al, \frac2\al}_x}\\
& \les \big\|e^{\be z}\big\|_{L^{\infty}_TW^{\al,\frac{2}{\al-\eps/2}}_x}\|v_1-v_2\|_{L^{\infty}_{T,x}}e^{C(\|v_1\|_{L^{\infty}_{T,x}}+\|v_2\|_{L^{\infty}_{T,x}})}\\
 &\hphantom{X}
 +e^{C\|z\|_{L^{\infty}_{T,x}}}\Bigg\{\|v_1-v_2\|_{L^{\infty}_TW^{\al,\frac{2}{\al-\eps}}_x}e^{C(\|v_1\|_{L^{\infty}_{T,x}}+\|v_2\|_{L^{\infty}_{T,x}})} \\
&\hphantom{X}
+ \|v_1-v_2\|_{L^{\infty}_{T,x}}\bigg\|\int_0^1\exp\big(\tau\be v_1+(1-\tau)\be v_2\big)d\tau\bigg\|_{L^{\infty}_TW^{\al,\frac{2}{\al-\eps/2}}_x}\Bigg\}\\
 &
 \les e^{CK}(1+K)\Big( \|v_1-v_2\|_{L^{\infty}_{T,x}}
+  \|v_1-v_2\|_{L^{\infty}_TW^{\al,\frac{2}{\al-\eps}}_x}\Big) 
\end{split}
\label{A6}
\end{align}

\noi
 for any $v_1,v_2\in B$.
\noi
Hence, from \eqref{A5} and \eqref{A6} with \eqref{A3a},
 we have
\begin{align}
\|\Phi (v_1) -\Phi (v_2) \|_{C_T W^{\al + \eps, \frac2\al}_x}
 \les T^\ta e^{CK}(1+K)R
  \|v_1-v_2\|_{L^{\infty}_T
W^{\al + \eps, \frac2\al}_x}
\label{Z2}
\end{align}

\noi
 for any $v_1,v_2\in B$.

From  \eqref{Z1} and \eqref{Z2}, 
a contraction argument yields a solution map:
\[(v_0, \U)\in H^{1+\eps}(\T^2) \times L^p([0,1];W^{-\al,p}(\T^2))
 \longmapsto v\in C([0,T];W^{\al+\eps,\frac2\al}(\T^2))\] 
 
 \noi
 for some $T=T\big(\|v_0\|_{H^{1+\eps}}, \|\U\|_{L^p([0,1];W^{-\al,p}_x)}\big)\in (0,1]$, 
  where $v$ is the unique fixed point of $\Phi_{v_0, \U}$ in 
  the ball $B\subset 
   C([0,T];W^{\al+\eps,\frac2\al}(\T^2))$. 
 As for the Lipschitz dependence of the solution map
 on $\U$, if we take $\U_1,\U_2\in L^p([0,1];W^{-\al,p}(\T^2))$, then in estimating the difference 
 $\Phi_{v_0, \U_1}(v_1)-\Phi_{v_0, \U_2}(v_2)$ for $v_1,v_2\in B \subset C([0,T];W^{\al+\eps,\frac2\al}(\T^2))$,
there is one additional term of the form:
\[ \int_0^t  P(t-t')\big(e^{\be z}  e^{\be v_1}(\U_1-\U_2)\big)(t')dt'.\]

\noi
By proceeding as in \eqref{A3} and \eqref{A4}, we can bound this additional term as 
\begin{align*}
\bigg\|\int_0^t  P(t-t')\big(e^{\be z} & e^{\be v_1}(\U_1-\U_2)\big)(t')dt'\bigg\|_{C_TW^{\al+\eps,\frac2\al}_x}
\\
& \les T^\ta e^{CK} (1 + K)\|\U_1-\U_2\|_{L^p_TW^{-\al,p}_x}.
\end{align*}

\noi
This completes the proof of Theorem \ref{THM:A}.
\end{proof}

\section{Moment bounds for the Gaussian multiplicative chaos}\label{SEC:B}
In this last section, we give a proof of Lemma~\ref{LEM:GMCmom} on the uniform boundedness of the moments of the random measure $\M_N(t)$ in \eqref{MN}. We mainly follow the arguments in \cite{RoV,BM}.

First of all, in view of the  positivity of $\U_N(t)$, 
it suffices to prove Lemma~\ref{LEM:GMCmom} with $A=\T^2$. 
Moreover, the bound for $p=1$ being a consequence of Proposition~\ref{PROP:var}\,(i), 
we may assume  $p>1$. 
We start by fixing some large number $K\gg 1$, independent of $N\in\N$, and we partition $\T^2\simeq [-\pi,\pi)^2$ into cubes $C_{k,\ell}=x_{k,\ell}^K+[-\frac{\pi}K,\frac{\pi}K)^2$, $k,\ell=1,...,K$ of 
side length $2\pi K^{-1}$ centered at $x_{k,\ell}^K =\big(-\pi+\frac{2\pi}{K}(k-1),-\pi+\frac{2\pi}K(\ell-1)\big)\in\T^2$. We then group these into four families of cubes: 
\begin{align*}
\M_N(t,\T^2)&= \sum_{\substack{k,\ell=1\\k\text{ even, } \ell\text{ even}}}^K
\int_{C_{k,\ell}}\U_N(t, x) 
dx+\sum_{\substack{k,\ell=1\\k\text{ even, } \ell\text{ odd}}}^K\int_{C_{k,\ell}}
\U_N(t, x)dx\\
&\quad+\sum_{\substack{k,\ell=1\\k\text{ odd, } \ell\text{ even}}}^K\int_{C_{k,\ell}}
\U_N(t, x)dx+\sum_{\substack{k,\ell=1\\k\text{ odd, } \ell\text{ odd}}}^K\int_{C_{k,\ell}}
\U_N(t, x)
dx\\
&=: \M_N^{(1)}(t)+\M_N^{(2)}(t)+\M_N^{(3)}(t)+\M_N^{(4)}(t).
\end{align*}

\noi
It follows from the (spatial) translation invariance of the law of $\Psi_N(t, \cdot)$
that  
  $\M_N^{(j)}(t)$, $j = 1, \dots, 4$,  have the same law. 
  Hence, by Minkowski's inequality,  we have
\begin{align*}
\E\Big[\M_N(t,\T^2)^p\Big] \le C_p \E\Big[\M_N^{(1)}(t, \T)^p\Big].
\end{align*}

\noi
In order to estimate the last expectation, we 
proceed as in Step 1 of the proof of Proposition~\ref{PROP:var2}.
Namely, by a change of variables
 and a Riemann sum approximation, 
we have
\begin{align*}
\begin{split}
\E\Big[\M_N^{(1)}(t, \T^2)^p\Big]
&=\E\bigg[\Big(\sum_{\substack{k,\ell=1\\k\text{ even, } \ell\text{ even}}}^KK^{-2}
\int_{\T^2}\U_N(t,x_{k,\ell}^K+K^{-1}y)dy\Big)^p\bigg]\\
&=\lim_{J\to\infty}\E\bigg[\Big(\sum_{i,j=1}^{J}\frac{4\pi^2}{J^{2}}\sum_{\substack{k,\ell=1\\k\text{ even, } \ell\text{ even}}}^KK^{-2}e^{\be\Psi_N(t,x_{k,\ell}^K+K^{-1}x_{i,j}^J)-\frac{\be^2}2\s_N}\Big)^p\bigg].
\end{split}
\end{align*}

\noi
Using Lemma~\ref{LEM:covar}, we can bound the covariance function by
\begin{align}
\begin{split}
\E & \Big[\Psi_N(t,x_{k_1,\ell_1}^K+K^{-1}x_{i_1,j_1}^J)\Psi_N(t,x_{k_2,\ell_2}^K+K^{-1}x_{i_2,j_2}^J)\Big]\\
&= \G_N\big(t,x_{k_1,\ell_1}^K-x_{k_2,\ell_2}^K+K^{-1}(x_{i_1,j_1}^J-x_{i_2,j_2}^J)\big)\\
&\le -\frac1{2\pi}\log\Big(\big|x_{k_1,\ell_1}^K-x_{k_2,\ell_2}^K+K^{-1}(x_{i_1,j_1}^J-x_{i_2,j_2}^J)\big|+N^{-1}\Big)+C
\end{split}
\label{BB5}
\end{align}
for some constant $C>0$ independent of $J$, $K$, and $N$. 
When  $(k_1,\ell_1)=(k_2,\ell_2)$, we thus have the bound
\begin{align}
\begin{split}
\E & \Big[\Psi_N(t,x_{k_1,\ell_1}^K+K^{-1}x_{i_1,j_1}^J)\Psi_N(t,x_{k_2,\ell_2}^K+K^{-1}x_{i_2,j_2}^J)\Big]\\
&\le -\frac1{2\pi}\log\big(|x_{i_1,j_1}^J-x_{i_2,j_2}^J|+(K^{-1}N)^{-1}\big) +\frac1{2\pi}\log K+C\\
&\le -\frac1{2\pi}\log\big(|x_{i_1,j_1}^J-x_{i_2,j_2}^J|+N^{-1}\big) +\frac1{2\pi}\log K+C.
\end{split}
\label{BB6}
\end{align}

\noi
See also \eqref{BB4}.
In the case $(k_1,\ell_1)\neq(k_2,\ell_2)$, 
we first note that  $|x_{k_1,\ell_1}^K-x_{k_2,\ell_2}^K|\ge 2\cdot\frac{2\pi}K$ since $k_1,k_2,\ell_1,\ell_2$ are all even.
Then, with the trivial bound $|x_{i_1,j_1}^J-x_{i_2,j_2}^J|\le \sqrt{2}\cdot 2\pi$, 
we have 
\begin{align}
\big|x_{k_1,\ell_1}^K-x_{k_2,\ell_2}^K+K^{-1}(x_{i_1,j_1}^J-x_{i_2,j_2}^J)\big|\ge (2-\sqrt{2})\frac{2\pi}K.
\label{BB7}
\end{align}

\noi
Thus, from \eqref{BB5} and \eqref{BB7}, we have
\begin{align}
\E \Big[\Psi_N(t,x_{k_1,\ell_1}^K+K^{-1}x_{i_1,j_1}^J)\Psi_N(t,x_{k_2,\ell_2}^K+K^{-1}x_{i_2,j_2}^J)\Big]
\le \frac1{2\pi}\log K + C.
\label{BB8}
\end{align}

\noi
Hence, from \eqref{BB6} and \eqref{BB8}, we obtain 
\begin{align*}
&\E\Big[\Psi_N(t,x_{k_1,\ell_1}^K+K^{-1}x_{i_1,j_1}^J)\Psi_N(t,x_{k_2,\ell_2}^K+K^{-1}x_{i_2,j_2}^J)\Big]\\
&\qquad \le \E\Big[\big( \psi_{N,k_1,\ell_1}(t,x_{i_1,j_1}^J)+h_K\big)
\big( \psi_{N,k_2,\ell_2}(t,x_{i_2,j_2}^J)+h_K\big)\Big], 
\end{align*}

\noi
where  $\psi_{N,k,\ell}$ are some independent\footnote{In particular, we have 
$ \E[ \psi_{N,k_1,\ell_1}(t,x_{i_1,j_1}^J)
 \psi_{N,k_2,\ell_2}(t,x_{i_2,j_2}^J)] = 0$ when 
 $(k_1,\ell_1)\neq(k_2,\ell_2)$.} copies of $\Psi_N$
and $h_K$ is a mean-zero Gaussian random variable with variance $\frac1{2\pi}\log K+C$
 independent from  $\psi_{N,k,\ell}$.

By applying  Kahane's convexity inequality (Lemma~\ref{LEM:Kahane})
and using the independence of $h_K$ from 
$\psi_{N,k,\ell}$ with $\E [h_K^2] = \frac 1{2\pi} \log K + C$,
we have 
\begin{align}\label{moment5}
\begin{split}
\E& \Big[\M_N^{(1)}(t, \T^2)^p\Big]\\
&\le \lim_{J\to\infty}\E\bigg[\Big(\sum_{i,j=1}^{J}4\pi^2 J^{-2}\sum_{\substack{k,\ell=1\\k\text{ even, } \ell\text{ even}}}^KK^{-2}e^{\be(\psi_{N,k,\ell}(t,x_{i,j}^J)+h_K)-\frac{\be^2}2(\s_{N}+\E [h_K^2])}\Big)^p\bigg]\\
&=\E\bigg[\Big(\sum_{\substack{k,\ell=1\\k\text{ even, } \ell\text{ even}}}^KK^{-2}e^{\be h_K-\frac{\be^2}2
\E [h_K^2]}\int_{\T^2}e^{\be\psi_{N,k,\ell}(t,y)-\frac{\be^2}2\s_{N}}dy\Big)^p\bigg]\\
&\le CK^{(p^2-p)\frac{\be^2}{4\pi}}\E\bigg[\Big(\sum_{\substack{k,\ell=1\\k\text{ even, } \ell\text{ even}}}^KK^{-2}\int_{\T^2}e^{\be\psi_{N,k,\ell}(t,y)-\frac{\be^2}2\s_{N}}dy\Big)^p\bigg]
\end{split}
\end{align}

\noi
for some constant $C>0$ independent of $K$ and $N$.

It then remains to bound the expectation in \eqref{moment5}. Let $m\ge 2$ be an integer such that $m-1<p\le m$. 
Then, by the embedding $\l^\frac{p}{m} \subset \l^1$, we have
\begin{align}
\begin{split}
&\E\bigg[\Big(\sum_{\substack{k,\ell=1\\k\text{ even, } \ell\text{ even}}}^KK^{-2}\int_{\T^2}e^{\be\psi_{N,k,\ell}(t,y)-\frac{\be^2}2\s_{N}}dy\Big)^p\bigg]\\ 
&\quad\le \E\bigg[\Big\{\sum_{\substack{k,\ell=1\\k\text{ even, } \ell\text{ even}}}^KK^{-2\frac{p}m}\Big(\int_{\T^2}e^{\be\psi_{N,k,\ell}(t,y)-\frac{\be^2}2\s_{N}}dy\Big)^{\frac{p}m}\Big\}^m\bigg]\\
& \quad =: \E[A_K].
\end{split}
\label{moment5a}
\end{align}

\noi
We divide $A_K$ into two pieces:
\begin{align}\label{moment6}
\begin{split}
A_K & = \sum_{\substack{k,\ell=1\\k\text{ even, } \ell\text{ even}}}^KK^{-2p}\E\bigg[\Big(\int_{\T^2}e^{\be\psi_{N,k,\ell}(t,y)-\frac{\be^2}2\s_{N}}dy\Big)^{p}\bigg]\\
&\quad+\sum_{(\pmb{k}, \pmb{\ell})\in \Lambda_m}K^{-2p}
\E\bigg[\prod_{j=1}^m\Big(\int_{\T^2}e^{\be\psi_{N,k_j,\ell_j}(t,y)-\frac{\be^2}2\s_{N}}dy\Big)^{\frac{p}m}\bigg]
\\
& =: A_K^{(1)} + A_K^{(2)}, 
\end{split}
\end{align}

\noi
where the index set $\Ld_m$ is given by 
\begin{align*}
\Lambda_m= \big\{
(\pmb{k}, \pmb{\l}) =  (k_1,...,k_m,\ell_1,...,\ell_m)\in \{1,...,K\}^{2m}:
k_j,\ell_j\text{ even}, (k_j,\ell_j)\text{ not all equal}\big\}.
\end{align*}

Since  $\psi_{N,k,\ell}$ are identically distributed, we  can bound the diagonal term by
\begin{align}\label{moment7}
\E\big[A_K^{(1)}\big]
&\le K^{2-2p}\E\Big[\M_{N}(t,\T^2)^p\Big].
\end{align}

\noi
As for the second sum
$A_K^{(2)}$
 in \eqref{moment6}, grouping the terms with the same values of $(k,\ell)$ together, each term within the sum can be written in  the form
\begin{align}
\E\bigg[\prod_{j=1}^n\Big(\int_{\T^2}e^{\be\psi_{N,k_j,\ell_j}(t,y)-\frac{\be^2}2\s_{N}}dy\Big)^{a_j\frac{p}m}\bigg]
\label{BB9}
\end{align}

\noi
for some $n\le m$ and some $a_j\in\{0,...,m-1\}$ such that $\sum_{j=1}^na_j = m$ and 
$(k_j,\ell_j)$, $j=1,\dots,n$,  are all distinct. 
Noting that $\psi_{N,k,\ell}$ are independent and identically distributed, 
it follows from 
 H\"older's inequality with  $a_j\frac{p}m\le m-1$ 
 that 
\begin{align}
\begin{split}
\eqref{BB9} &\leq \prod_{j=1}^n\E\bigg[\Big(\int_{\T^2}e^{\be\psi_{N,k_j,\ell_j}(t,y)-\frac{\be^2}2\s_{N}}dy\Big)^{a_j\frac{p}m}\bigg]\\
&\le\prod_{j=1}^n\E\bigg[\Big(\int_{\T^2}e^{\be\psi_{N,k_j,\ell_j}(t,y)-\frac{\be^2}2\s_{N}}dy\Big)^{m-1}\bigg]^{a_j\frac{p}{m(m-1)}}\\
&=\E\bigg[\Big(\int_{\T^2}e^{\be\psi_{N,k_1,\ell_1}(t,y)-\frac{\be^2}2\s_{N}}dy\Big)^{m-1}\bigg]^{\sum_{j=1}^na_j\frac{p}{m(m-1)}}\\
&= \E\big[\M_N(t,\T^2)^{m-1}\Big]^{\frac{p}{m-1}}.
\end{split}
\label{moment8}
\end{align}

Putting \eqref{moment5}, 
 \eqref{moment5a}, 
  \eqref{moment6},  \eqref{moment7}, and 
\eqref{moment8} together, we obtain 
\begin{align*}
\E\Big[\M_N(t,\T^2)^p\Big] &\le CK^{(p^2-p)\frac{\be^2}{4\pi}}
\cdot K^{2-2p}\E\Big[\M_N(t,\T^2)^p\Big] + C_{K,m}
\E\Big[\M_N(t,\T^2)^{m-1}\Big]^\frac{p}{m-1}.
\end{align*}
Under the assumption\footnote{Recall that we assume $p > 1$ in view of Proposition \ref{PROP:var}\,(i).} that $1<  p< \frac{8\pi}{\be^2}$, the exponent 
$(p^2-p)\frac{\be^2}{4\pi}
+ 2-2p = \big(\frac{\be^2}{4\pi}p -2\big) (p-1)$
of $K$ in the first term on the right-hand side above is negative.
Hence,  by taking $K\gg 1$ (independent of $N$),  we arrive at the bound:
\begin{align}\label{moment9}
\E\Big[\M_N(t,\T^2)^p\Big] \le C_m\E\Big[\M_N(t,\T^2)^{m-1}\Big]^\frac{p}{m-1}, 
\end{align}

\noi
uniformly in $N \in \N$.

We now conclude the proof of Lemma \ref{LEM:GMCmom}
by  induction on $m\ge 2$ with $m-1<p\le m$. 
When  $m=2$, i.e.~$p\in (1,2]$, the conclusion 
of Lemma \ref{LEM:GMCmom}
follows from \eqref{moment9} and  Proposition~\ref{PROP:var}\,(i). 
Now, given an integer $m \geq 3$, assume that Lemma \ref{LEM:GMCmom} holds for all $1<p\le m-1$. 
Fix $1<  p< \frac{8\pi}{\be^2}$ such that  $m-1<p\le m$. 
Then, from \eqref{moment9} and the inductive hypothesis, we have
\begin{align*}
\sup_{t\in\R,N\in\N}\E\Big[\M_N(t,\T^2)^p\Big] 
\le \sup_{t\in\R,N\in\N}C_m\E\Big[\M_N(t,\T^2)^{m-1}\Big]^\frac{p}{m-1} <\infty.
\end{align*}

\noi
Therefore, by induction, 
we conclude the proof of Lemma~\ref{LEM:GMCmom}.

\begin{ackno}

\rm 
The authors would like to thank Massimiliano Gubinelli 
for  the references \cite{Garban, ADG} and also for helpful discussions.
They are also grateful to Jonathan Bennett
for a helpful conversation
on the Brascamp-Lieb inequality. The authors are also thankful to Vincent Vargas, R\'emi Rhodes,  and Christophe Garban for helpful comments on a previous version of this article, which motivated them to write the follow-up paper~\cite{ORTW}. 
Lastly,  the authors wish to thank the anonymous referees for their helpful remarks which improved the quality of the paper.

T.O.~was supported by the European Research Council (grant no.~637995 ``ProbDynDispEq''
and grant no.~864138 ``SingStochDispDyn"). 
T.R.~was supported by the European Research Council (grant no.~637995 ``ProbDynDispEq'').
Y.W.~was supported by 
supported by 
 the EPSRC New Investigator Award 
 (grant no.~EP/V003178/1).

\end{ackno}

\end{document}